\algrenewcommand\algorithmicrequire{\textbf{Input:}}
\algrenewcommand\algorithmicensure{\textbf{Output:}}
\newtheorem{theorem}{Theorem}[section]
\newtheorem{lemma}[theorem]{Lemma} 
\newtheorem{corollary}[theorem]{Corollary}
\newtheorem{proposition}[theorem]{Proposition}
\newtheorem{definition}[theorem]{Definition}
\newtheorem{observation}[theorem]{Observation}
\newtheorem*{remark}{Remark}
\newtheorem*{summary}{Summary}
\def\moverlay{\mathpalette\mov@rlay}
\def\mov@rlay#1#2{\leavevmode\vtop{%
   \baselineskip\z@skip \lineskiplimit-\maxdimen
   \ialign{\hfil$\m@th#1##$\hfil\cr#2\crcr}}}
\newcommand{\charfusion}[3][\mathord]{
    #1{\ifx#1\mathop\vphantom{#2}\fi
        \mathpalette\mov@rlay{#2\cr#3}
      }
    \ifx#1\mathop\expandafter\displaylimits\fi}
\newcommand{\cupdot}{\charfusion[\mathbin]{\cup}{\cdot}}
\newcommand{\multiline}[1]{%
  \begin{tabularx}{\dimexpr\linewidth-\ALG@thistlm}[t]{@{}X@{}}
    #1
  \end{tabularx}
}
\DeclareMathOperator{\join}{\otimes}
\DeclareMathOperator{\union}{\cupdot}
\DeclareMathOperator{\med}{med}
\DeclareMathOperator{\child}{child}
\newcommand{\lca}{\ensuremath{\operatorname{lca}}}
\newcommand{\diam}{\ensuremath{\operatorname{diam}}}
\newcommand{\dist}{\ensuremath{\operatorname{dist}}}
\newcommand{\parent}{\ensuremath{\operatorname{par}}}
\newcommand{\MD}{\ensuremath{\mathbb{M}}}
\newcommand{\MDstrong}{\ensuremath{\mathbb{M}_{\mathrm{str}}}}
\newcommand{\Mmax}{\ensuremath{\mathbb{M}_{\max}}}
\newcommand{\MDT}{\ensuremath{\mathscr{T}}}
\newcommand{\PrimeCat}{\ensuremath{\vcenter{\hbox{\includegraphics[scale=0.01]{./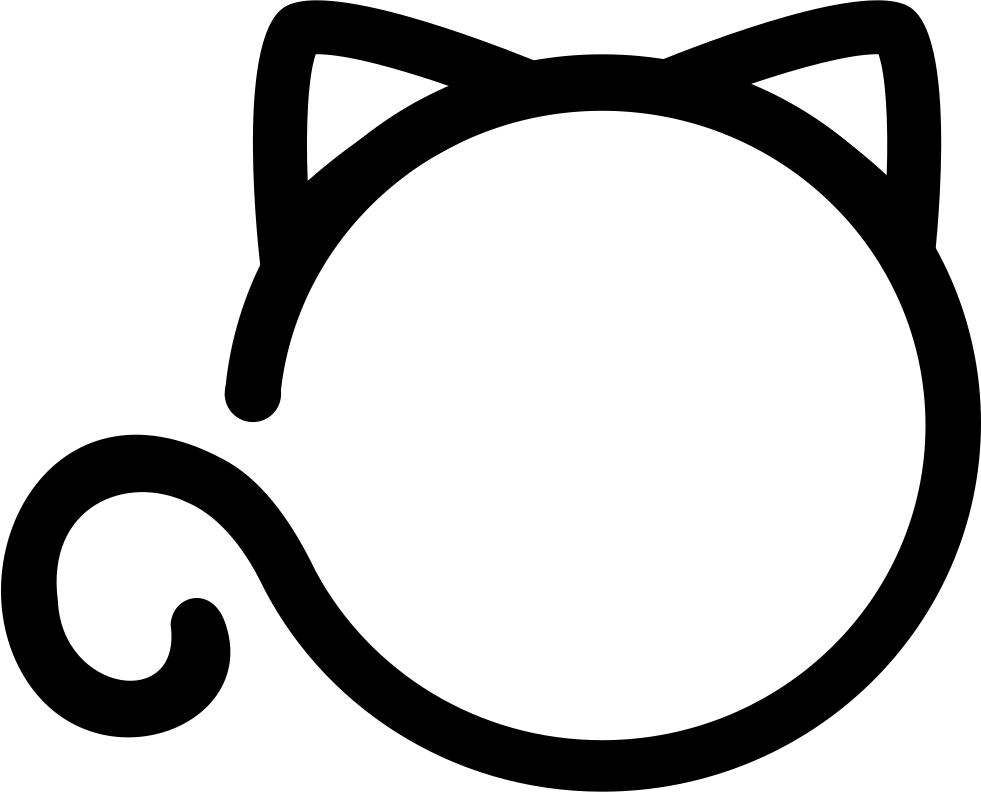}}}^{\textit{prime}}}}
\newcommand{\PolarCat}{\ensuremath{\vcenter{\hbox{\includegraphics[scale=0.01]{./cat.png}}}}}
\providecommand{\keywords}[1]{\textbf{\textit{Keywords: }} #1}
  \title{From Modular Decomposition Trees to Level-1 Networks: Pseudo-Cographs, Polar-Cats and Prime Polar-Cats}
\author[1,*]{Marc Hellmuth} 
\author[2]{Guillaume E. Scholz} 
\affil[1]{Department of Mathematics, Faculty of Science,
  Stockholm University, SE-10691 Stockholm, Sweden 
  \newline \texttt{marc.hellmuth@math.su.se}}
\affil[2]{Bioinformatics Group, Department of Computer Science \&
    Interdisciplinary Center for Bioinformatics, Universit{\"a}t Leipzig,
    H{\"a}rtelstra{\ss}e~16--18, D-04107 Leipzig, Germany.}
\affil[*]{corresponding author}
\date{\ }
\begin{document}
\sloppy

\maketitle

\abstract{ 
The modular decomposition of a graph $G$ is a natural construction to capture
key features of $G$ in terms of a labeled tree $(T,t)$ whose vertices are
labeled as ``series'' ($1$), ``parallel'' ($0$) or ``prime''. However, full
information of $G$ is provided by its modular decomposition tree $(T,t)$ only,
if $G$ does not contain prime modules. In this case, $(T,t)$ explains $G$, i.e.,
$\{x,y\}\in E(G)$ if and only if the lowest common ancestor $\lca_T(x,y)$ of $x$
and $y$ has label ``$1$''. This information, however, gets lost whenever $(T,t)$
contains vertices with label ``prime''. In this contribution, we aim at
replacing ``prime'' vertices in $(T,t)$ by simple 0/1-labeled cycles, which
leads to the concept of rooted labeled level-1 networks $(N,t)$.

We characterize graphs that can be explained by such level-1 networks
$(N,t)$, which generalizes the concept of graphs that can be explained by
labeled trees, that is, cographs. We provide three novel graph classes:
\emph{polar-cats} are a proper subclass of \emph{pseudo-cographs} which forms a
proper subclass of \emph{prime polar-cats}. In particular, every cograph is a
pseudo-cograph and prime polar-cats are precisely those graphs that can be
explained by a labeled level-1 network. The class of prime polar-cats is defined
in terms of the modular decomposition of graphs and the property that all prime
modules ``induce'' polar-cats. We provide a plethora of structural results and
characterizations for graphs of these new classes.

In particular, Polar-cats are precisely those graphs that can be explained
by an elementary level-1 network $(N,t)$, i.e., $(N,t)$ contains exactly one
cycle $C$ that is rooted at the root $\rho_N$ of $N$ and where $\rho_N$ has exactly
two children while every vertex distinct from $\rho_N$ has a unique child that
is not located in $C$. Pseudo-cographs are less restrictive and those
graphs that can be explained by particular level-1 networks $(N,t)$ that contain
at most one cycle. These findings, eventually, help us to characterize the class of all
graphs that can be explained by labeled level-1 networks, namely prime
polar-cats. Moreover, we show under which conditions there is a unique
least-resolved labeled level-1 network that explains a given graph. In
addition, we provide linear-time algorithms to recognize all these types
of graphs and to construct level-1 networks to explain them.}
\smallskip

\noindent 
\keywords{cographs; modular decomposition; prime modules; recognition algorithms; prime
vertex replacement; phylogenetic networks; galled tree
}

\sloppy

\section{Introduction}

Cographs are among the best-studied graph classes. They are characterized by the
absence of induced simple paths $P_4$ on four vertices
\cite{Corneil:81,Sumner74,Seinsche:74} and can be represented by a unique rooted
tree $(T,t)$, called cotree, whose leaf set is $V(G)$ and whose non-leaf
vertices $v$ obtain a binary label $t(v)\in \{0,1\}$. Every cograph $G$ is
\emph{explained} by its cotree, i.e., $\{x,y\}\in E(G)$ if and only if the
lowest common ancestor $\lca_T(x,y)$ has label ``$1$''. Several linear-time
algorithms to recognize cographs and to compute the underlying cotrees have been
established see e.g.\
\cite{Corneil:85,HP:05,bretscher2008simple,DAMIAND200199,GIOAN2012708}. Cographs
are of particular interest in computer science because many combinatorial
optimization problems that are NP-complete for arbitrary graphs become
polynomial-time solvable on cographs \cite{Corneil:85,BLS:99,Gao:13}. Recent
advances in mathematical biology have shown that cographs are intimately linked
to pairwise relationships between genes
\cite{HHH+13,HW:16b,Hellmuth:15a,HSW:16,Geiss:18a,Hellmuth:18a,Schaller:21f,lafond2015orthology,LDEM:16,Lafond2014,geiss2020best}.
By way of example, the orthology relation, a key term in evolutionary biology,
collects all pairs of genes whose last common ancestor in their evolutionary
history was a speciation event (or, equivalently has label ``$1$'') and thus
forms a cograph.

In many applications, however, graphs $G$ usually violate the property of being a
cograph and thus, cannot be represented by a tree $(T,t)$ whose binary labeling $t$
uniquely determines the structure of $G$. This motivates the investigation of
generalizations. While trees are an excellent model of many evolutionary systems or
hierarchical data in general, they are often approximations and sometimes networks
are a better model of reality, in particular, in phylogenomics
\cite{huson2011survey}. In the case of distance-based phylogenetics, this naturally
connects with theory of split-decomposable metrics \cite{Bandelt:92} and their
natural representations, the Buneman graphs \cite{Dress:97,Huber:02}. The latter form
a subclass of median graphs and it has been shown by Bruckmann et al.\ \cite{BHS:21}
that every graph can be explained by a rooted binary-labeled median graph.

In this contribution, we are interested in the characterization of graphs that can be
explained by rooted level-1 networks $N$ with binary labeling $t$. We present the
formal definition for this type of network in the next section but, essentially,
level-1 networks are directed acyclic graphs with a single root and in which any two
``undirected cycles'' \cite{HS18} (also known as blocks \cite{GBP12} or 
galls
\cite{gambette2017challenge}) are edge disjoint. We provide two generalizations of
cographs: the class of \emph{pseudo-cographs} and the class $\PrimeCat$ of \emph{prime 
polar-cats}.
Pseudo-cographs have the appealing property that they ``behave'' nearly like a
cograph up to a single vertex. Note, however, that the existence of a single vertex
$v$ such that $G-v$ is a cograph is not enough to define pseudo-cographs $G$. Every
pseudo-cograph can be explained by a labeled level-1 network. We distinguish here
between weak and strong level-1 networks; a property that is defined in terms of the
cycles a level-1 network contains. In weak networks, all cycles can be locally
replaced by trees and one obtains a phylogenetic tree that still explains the same
graph. In other words, labeled weak networks do not contain more information than a
labeled tree. In strong networks, however, none of the cycles can be removed.
Pseudo-cographs are restrictive in the sense that they can only be explained by
strong level-1 networks that contain at most one cycle. To circumvent this
restriction, we provide the class $\PrimeCat$ and  the class $\PolarCat$ of
\emph{polar-cats}. Polar-cats are
particular pseudo-cographs that can be explained by so-called elementary networks and
$\PrimeCat$ is precisely the class of graphs that can be explained by a labeled
level-1 network. Graphs in $\PrimeCat$ are defined by means of the modular
decomposition; a general technique to decompose a graph into smaller building blocks
\cite{Gallai:67,EHMS:94,MS:89,McConnell:95,HP:10}. Roughly speaking a module in a
graph is a subset $M$ of vertices which share the same neighborhoods outside $M$. A
module $M$ of $G$ is prime if the subgraph $G[M]$ of $G$ induced by the vertices in
$M$ and its complement $\overline{G[M]}$ are both connected. A graph $G$ is contained
in $\PrimeCat$ if the underlying ``quotient of'' $G[M]$ is a polar-cat for all
non-trivial prime modules $M$ . We build upon the ideas provided by Bruckmann et al.\
\cite{BHS:21} and show that prime modules in the modular decomposition tree can be
replaced by elementary networks to obtain a level-1 network that explains a given
graph $G\in \PrimeCat$. In addition, we provide linear-time algorithms for the
recognition of pseudo-cographs, polar-cats as well as graphs in $\PrimeCat$ and for
the construction of level-1 networks to explain them. 

After introducing the notation and some preliminary results, we give
an overview of the main concepts and results in Section \ref{sec:mainR}.

\section{Preliminaries}
\label{sec:prelim}

\paragraph{\bf Miscellaneous}

Let $\Pi(A)=\{A_1,\dots, A_k\}$ be a collection of subsets of a non-empty set $A$.
Then, $\Pi(A)$ is a \emph{quasi-partition} of $A$, if $\cup_{i=1}^k A_i=A$ and
$A_i\cap A_j=\emptyset$ for all distinct $A_i,A_j\in \Pi(A)$. If all elements $A_i\in
\Pi(A)$ are non-empty, then $\Pi(A)$ is a \emph{partition}. A \emph{bipartition} is a
partition consisting of two elements.

\paragraph{\bf Graphs}
We consider graphs $G=(V,E)$ with  vertex set $V(G)\coloneqq V\neq \emptyset$ and 
edge set
$E(G)\coloneqq E$. A graph $G$ is \emph{undirected} if $E$ is a subset of the set of
two-element subsets of $V$ and $G$ is \emph{directed} if $E\subseteq V\times V$.
Thus, edges $e\in E$ in an undirected graph $G$ are of the form $e=\{x,y\}$ and in
directed graphs of the form $e=(x,y)$ with $x,y\in V$ being distinct. An undirected
graph is \emph{connected} if, for every two vertices $u,v\in V$, there is a path
connecting $u$ and $v$. A directed graph is \emph{connected} if its underlying
undirected graph is connected. Moreover, (directed) paths connecting two vertices $x$
and $y$ are also called \emph{$xy$-paths}. 
We write $H \subseteq G$ if $H$ is a subgraph of $G$
and $G[W]$ for the subgraph in $G$ that is induced by some subset $W \subseteq V$.
Moreover $G-v$ denotes the induced subgraph $G[V\setminus \{v\}]$. 
\emph{From here on, we will call an undirected graph simply \emph{graph}.}

Let $G=(V,E)$ be a graph. The set $N_G(v)\coloneqq \{w \in V \mid \{v,w\} \in E\}$
denotes the set of neighbors of $v$ in $G$. The complement $\overline{G}$ of
$G=(V,E)$ has vertex $V$ and edge set $\{\{x,y\}\mid \{x,y\}\notin E, x,y\in V
\text{distinct}\}$. The join of two vertex-disjoint graphs $H=(V,E)$ and $H'=(V',E')$
is defined by $H \join H'\coloneqq (V\cup V',E\cup E' \cup \{\{x,y\}\mid x\in V, y\in
V'\})$, whereas their disjoint union is given by $H \union H'\coloneqq (V\cupdot V',
E\cupdot E')$. The intersection of two not necessarily vertex-disjoint graphs $H$ and
$H'$ is $H \cap H'\coloneqq(V\cap V', E\cap E')$. A graph $G$ admits a
\emph{graph-bipartition}, if there is a bipartition $\{V_1,V_2\}$ of $V$ such that
$\{u,v\}\in E$ implies $u\in V_i$ and $v\in V_j$, $\{i,j\}=\{1,2\}$.
We write $G\simeq H$, if the graphs $G$ and $H$ are isomorphic.

A \emph{star} is a graph that has a single vertex (called \emph{center}) that is
adjacent to all other vertices and no further edges. Complete graphs $G=(V,E)$ are
denoted by $K_{|V|}$ and a path with $n$ vertices by $P_n$. We often write, for
simplicity, $x_1-x_2-\cdots - x_{n}$ for a path $P_n$ with vertices $x_1,\dots,x_n$
and edges $\{x_i,x_{i+1}\}$, $1\leq i<n$. The distance $\dist_G(x,y)$ of two vertices
$x,y\in V$ is the length, i.e. the number of edges, of a shortest $xy$-path in $G$.
Moreover, the \emph{diameter $\diam(G)$} of $G$ is $\max_{x,y\in V}(\dist_G(x,y))$.

\paragraph{\bf Networks and Trees}
We consider rooted (not necessarily binary) phylogenetic trees and networks, called
trees or networks for short (see e.g.\
\cite{steel2016phylogeny,huson_rupp_scornavacca_2010,HSS:22cluster} for an overview of 
phylogenetic
trees and networks). To be more precise, a {\em network} $N=(V,E)$ on $X$ is a
directed acyclic graph (DAG) such that either 
\begin{enumerate}
	\item[(N0)] $V=X = \{x\}$ and, thus, $E=\emptyset$.
\end{enumerate}
or $N$ satisfies  the following three properties 
\begin{enumerate}[noitemsep]
	\item[(N1)] There is a unique \emph{root} $\rho_N$ with indegree 0 and outdegree at least 2; and\smallskip
	\item[(N2)] $x\in X$ if and only if $x$ has outdegree 0 and indegree 1; and \smallskip
	\item[(N3)] Every vertex $v\in V^0 \coloneqq V \setminus X$ with $v\neq \rho_N$ has 
		\begin{enumerate}[noitemsep]
			\item indegree 1 and 	outdegree at least 2 (\emph{tree-vertex}) or
			\item indegree 2 and 	outdegree at least 1 (\emph{hybrid-vertex}).
		\end{enumerate}	
\end{enumerate}
If a network $N=(V,E)$ does not contain hybrid-vertices, then $N$ is a \emph{tree}.
The set $L(N)=X$ is the \emph{leaf set} of $N$ and $V^0$ the set of \emph{inner} 
vertices.
The set $E^0\subset E$ denotes the set of \emph{inner} edges, i.e., edges consisting
of inner vertices only. A graph is \emph{biconnected} if it contains no vertex whose
removal disconnects the graph. A \emph{biconnected component} of a graph is a maximal
biconnected subgraph. If such a biconnected component is not a single vertex or an edge, 
then it is called \emph{non-trivial}.
Non-trivial biconnected components are also known as \emph{cycle} \cite{HS18}
(or \emph{block} \cite{GBP12} or \emph{gall} \cite{gambette2017challenge}). 
A network $N$ is a \emph{level-k} network, if each biconnected
component $C$ of $N$ contains at most $k$ vertices of indegree 2 in $C$, i.e.,  
hybrid-vertices of $N$ whose parents belong to $C$. \cite{CJSS04}. 
Thus, every tree is a level-$k$ network, $k\geq 0$. 
An example of a level-1 network is shown in Fig.\ \ref{fig:L1Exmpl}.

A \emph{caterpillar} is a tree $T$ such that each inner vertex has exactly two
children and the subgraph induced by the inner vertices is a path with the root
$\rho_T$ at one end of this path. A subset $\{x,y\}\subseteq X$ is a \emph{cherry} if
the two leaves $x$ and $y$ are adjacent to the same vertex in $T$. In this case, we
also say that $x$, resp., $y$ is \emph{part of a cherry}. Note that a caterpillar
on $X$ with $|X|\geq 2$ contains precisely two vertices that are part of a cherry.

Let $N=(V,E)$ be a network on $X$. A vertex $u\in V$ is called an \emph{ancestor} of
$v\in V$ and $v$ a \emph{descendant} of $u$, in symbols $v \preceq_N u$, if
there is a directed path (possibly reduced to a single vertex) in $N$ from $u$ to
$v$. We write $v \prec_N u$ if $v \preceq_N u$ and $u\neq v$. If $u \preceq_N v$ or
$v \preceq_N u$ then $u$ and $v$ are \emph{comparable} and otherwise,
\emph{incomparable}. Moreover, if $(u,v)\in E$, vertex $\parent(v)\coloneqq u$ is
called a \emph{parent of $v$} and vertex $v$ is a \emph{child of $u$}. The set of
children of a vertex $v$ in $N$ is denoted by $\child_N(v)$. Note that in a network
$N$ the root is an ancestor of all vertices in $N$. For a non-empty subset of leaves
$A\subseteq V$ of $N$, we define $\lca_N(A)$, or a \emph{lowest common ancestor of
$A$}, to be a $\preceq_N$-minimal vertex of $N$ that is an ancestor of every vertex
in $A$. Note that in trees and level-1 networks the $\lca_N$ is uniquely determined
\cite{HS18,HSS:22cluster}. For simplicity we write $\lca_N(x,y)$ instead of 
$\lca_N(\{x,y\})$. 
For a vertex $v$ of $N$, the \emph{subnetwork $N(v)$ of $N$ rooted at $v$}, is the
network obtained from the subgraph $N[W]$ induced by the vertices in $W\coloneqq
\{w\in V(N)\mid w\preceq_N v\}$ and by suppression of $w$ if it has indegree 0
and outdegree 1 in $N[W]$ or hybrid-vertices of $N$ that have in- and outdegree 1
in $N[W]$. Clearly, $N(v)$ remains a level-1 network in case that $N$ is a level-1
network.

By \cite[Obs.\ 8.1]{HSS:22cluster}, every non-trivial biconnected component in a   
	level-1 network $N$ as defined here forms an undirected cycle in $N$. 
	Therefore, we call by slight abuse of
notation, a non-trivial biconnected component of a level-1 network $N$ a
\emph{cycle of $N$}.
In \cite{Gusfield:03}, galled trees were introduced as phylogenetic networks in which 
all cycles are vertex disjoint. Here we consider a more general version, where cycles are 
allowed to share a cutvertex, see also \cite{HSS:22cluster} for more details.
We remark that a 
cycle $C$ of a level-1 network $N$ is composed of two
directed paths $P^1$ and $P^2$ in $N$ with the same start-vertex $\rho_C$ and
end-vertex $\eta_C$, and whose vertices distinct from $\rho_C$ and $\eta_C$ are
pairwise distinct. These two paths are also called \emph{sides of $C$}. The
\emph{length} of $C$ is the number of vertices it contains. A cycle $C\subseteq N$ is
\emph{weak} if either (i) $P^1$ or $P^2$ consist of $\rho_C$ and $\eta_C$ only or
(ii) both $P^1$ and $P^2$ contain only one vertex distinct from $\rho_C$ and
$\eta_C$. A cycle $C\subseteq N$ that is not weak is called \emph{strong}. A network
$N$ is \emph{weak} if all cycles of $N$ are weak. Networks that do not contain weak
cycles are \emph{strong}. Since trees do not contain cycles, they are trivially both,
weak and strong networks.
By definition, different cycles in level-1 networks may share a common vertex but
never edges and if a hybrid-vertex $\eta_C$ of some cycle $C$ belongs to a further
cycle $C'$, then $\eta_C=\rho_{C'}$ must hold. 

A level-1 network $N$ on $X$ is \emph{elementary} if it contains a single cycle $C$ of 
length $|X|+1$ with
root $\rho_N\in V(C)$, a hybrid-vertex $\eta_N\in V(C)$ and additional edges
$\{v_i,x_i\}$ such that every vertex $v_i\in V(C)\setminus \{\rho_N\}$ is adjacent to
unique vertex $x_i\in X$. In this case, we say that $v_i$ is the vertex in $C$ that
\emph{corresponds} to $x_i$. Examples of elementary network $(N,t)$ is provided in
Fig.\ \ref{fig:discriminating}. 

For later reference, we provide
\begin{lemma}\label{lem:lca}
	Let $N$ be a strong level-1 network on $X$ 
	and $v\in V(N)$ be a vertex with outdegree at least 2. 	
	Then, $v=\lca_N(L(N(v)))$ and 
	there are two leaves $x,y\in X$ such that $\lca_N(x,y) = v$.
	Moreover, if $v$ is not the root of a cycle in $N$, then
	$L(N(u))\cap L(N(w))=\emptyset$ for all distinct children
	$u,w$ of $v$ and, in particular, $\lca_N(x,y)=v$ for all
	$x\in L(N(u))$ and $y\in L(N(w))$.
\end{lemma}
\begin{proof}
Suppose that $N$ is a strong level-1 network on $X$ and that $v\in V(N)$ is a
vertex with outdegree at least 2. Put $Y\coloneqq L(N(v))$. Using the
terminology in \cite{HSS:22cluster}, strong level-1 networks are
``semi-regular'' which allows us to use \cite[Cor.\ 4.6]{HSS:22cluster} and to
conclude that $L(N(u))\subsetneq Y$ for all children $u$ of $v$. Hence, $u\neq
\lca_N(Y)$ for all children $u$ of $v$. Moreover, by \cite[Cor.\
7.10]{HSS:22cluster}, there is a unique vertex $w\in V(N)$ such that $Y
\subseteq L(N(w))$ but $Y \not\subseteq L(N(w'))$ for all children $w'$ of $w$
in which case, $w=\lca_N(Y)$. Taking the latter two arguments together yields
$v=\lca_N(Y)$. Moreover, in $N$ it holds, for every non-empty subset $A
\subseteq X$, that there are $x, y \in A$ such that $\lca_N(x, y) = \lca_N(A)$
(cf.\ \cite[Def.\ 6.24 and L.\ 7.11]{HSS:22cluster}). Hence, there are two
leaves $x,y\in X$ such that $\lca_N(x,y) = v$.
	
Let $u,w$ be two children of $v$ and assume that $v$ is not the root of a
cycle in $N$. Clearly, $u$ and $w$ must be $\preceq_N$-incomparable, otherwise
$v,u,w$ would be contained in a common cycle. Moreover, if $L(N(u))\cap
L(N(w))\neq\emptyset$, then \cite[L.\ 7.8]{HSS:22cluster}) implies that $u$ and
$v$ must be contained in a common cycle, which is only possible if $v$ is the
root of this cycle. Hence, $L(N(u))\cap L(N(w))=\emptyset$. Since $L(N(u))\cap
L(N(w))=\emptyset$, we have $\{x,y\}\not\subseteq L(N(u))$,
$\{x,y\}\not\subseteq L(N(w))$ and $\{x,y\}\subseteq L(N(v))$. As argued above,
$v=\lca_N(x,y)$.
\end{proof}

\begin{figure}[t]
		\begin{center}
			\includegraphics[width = .5\textwidth]{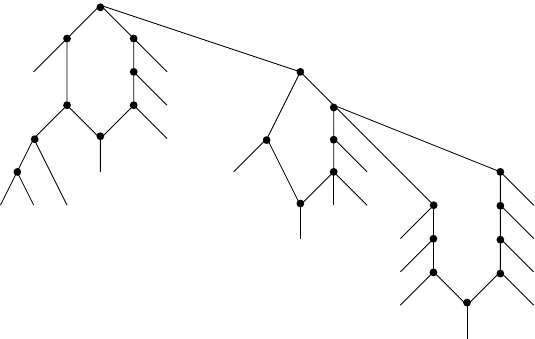}
		\end{center}
		\caption{Shown is a level-1 network. }
		\label{fig:L1Exmpl}
\end{figure}

\paragraph{\bf Labeled networks and Trees}
We consider networks $N=(V,E)$ on $X$ that are equipped with a
\emph{(vertex-)labeling} $t$ i.e., a map $t\colon V\to\{0,1,\odot\}$ such that
$t(x)=\odot$ if and only if $x\in X$. Hence, every inner vertex $v\in V^0$ must have
some \emph{binary} label $t(v)\in \{0,1\}$. The labeling of the leaves $x\in X$ with
$t(x)=\odot$ is just a technical subtlety to cover the special case $V=X=\{x\}$. A
network $N$ with such a ``binary'' labeling $t$ is called \emph{labeled network} and
denoted by $(N,t)$. To recall, for a level-1 network $N$ on $X$, 
 $\lca_N(A)$ is uniquely determined	for all $A\subseteq  X$. This allows us to establish
the following 
\begin{definition}
Given a labeled level-1 network $(N,t)$ on $X$ we denote with $\mathscr{G}(N,t) = 
(X,E)$ the
undirected graph with vertex $X$ and edges $\{x,y\}\in E$ precisely if
$t(\lca_N(x,y))=1$. An undirected graph $G = (X,E)$ is \emph{explained} by $(N,t)$ on
$X$ if $G\simeq \mathscr{G}(N,t)$.
\end{definition}
A network $(N,t)$ is \emph{least-resolved for $G$} if $(N,t)$ explains $G$ and there is
no labeled network $(N',t')$ that explains $G$ where $N'$ is obtained from $N$
by a series of edge contractions (and removal of possible
multi-edges and suppression of vertices with indegree 1 and outdegree 1) and $t'$
is some labeling of $N'$.

A labeling $t$ (or equivalently $(N,t)$) is \emph{discriminating} if $t(u)\neq t(v)$
for all $(u,v)\in E^0$. A labeling $t$ (or equivalently $(N,t)$) is
\emph{quasi-discriminating} if $t(u)\neq t(v)$ for all $(u,v)\in E^0$ with $v$ not
being a hybrid-vertex. The latter implies that every quasi-discriminating tree
$(T,t)$ is discriminating. In networks $(N,t)$, however, $t(u) = t(v)$ is
possible if $v$ is a hybrid-vertex; see Fig.\ \ref{fig:discriminating} for an
illustrative example. 
A cycle $C$ in a labeled network $(N,t)$ is \emph{quasi-discriminating} if
	$t(u)\neq t(v)$ for all $(u,v)\in E(C)$ with $v$ not being the hybrid-vertex
	of $C$.

\begin{figure}[t]
		\begin{center}
			\includegraphics[width = .6\textwidth]{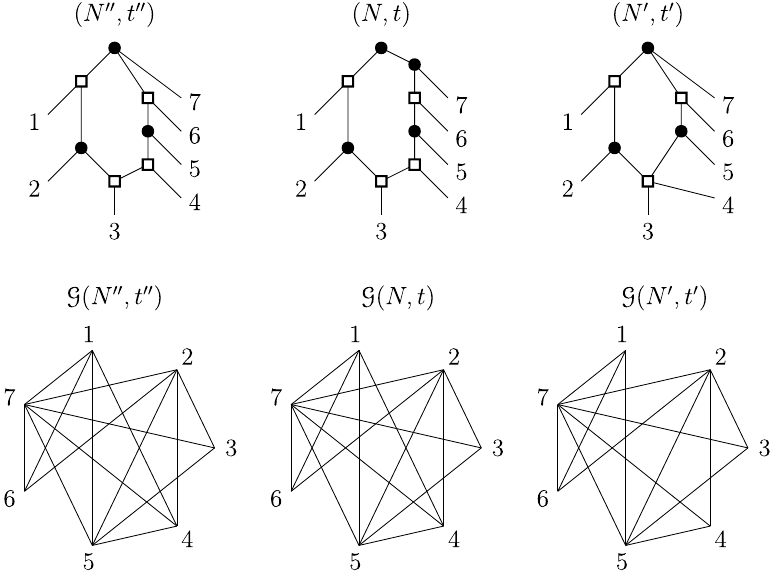}
		\end{center}
		\caption{Shown are three labeled level-1 networks $(N,t)$, $(N',t')$ and
		         $(N'',t'')$ on $X = \{1,\dots,7\}$. While
		         $(N,t)$ is elementary, the networks $(N',t')$ and $(N'',t'')$ are not.
		         Moreover, $(N,t)$ is neither discriminating nor quasi-discriminating;
		         $(N',t')$ is discriminating and thus, quasi-discriminating; and
		         $(N'',t'')$ is quasi-discriminating but not discriminating. Note,
		         $(N',t')$ and $(N'',t'')$ can be obtained from $(N,t)$ by contraction of
		         edges whose vertices have the same label. In this example, we have
		         $\mathscr{G}(N,t) \simeq \mathscr{G}(N'',t'')\not\simeq
		         \mathscr{G}(N',t')$, since $\mathscr{G}(N',t')$ does not contain the
		         edge $\{1,4\}$. This example, in particular, shows that edges that lie
		         on a cycle $C$ in $(N,t)$ and are incident to the hybrid-vertex $\eta_C$
		         can, in general, not be contracted to obtain a network that still
		         explains the same graph $\mathscr{G}(N,t)$ which is one of the reasons
		         to consider quasi-discriminating networks instead of discriminating
		         ones.
			}
		\label{fig:discriminating}
\end{figure}

\begin{remark}
In all drawings that show labeled networks,  
the leaf-label ``$\odot$'' is omitted and inner vertices with label ``$1$'', resp., ``$0$''
are drawn as ``$\bullet$'', resp., ``$\square$''. The label of the hybrid-vertices can often 
be chosen arbitrarily and is sometimes marked as ``$\times$''.
\end{remark}

\paragraph{\bf Cographs}
\emph{Cographs} form a class of undirected graphs that play an important role in
 this contribution. They are defined recursively \cite{Corneil:81}: the
single vertex graph $K_1$ is a cograph and the join $H \join H'$ and disjoint union
$H \union H'$ of two cographs $H$ and $H'$ is a cograph. Many characterizations
of cographs have been established in the last decades from which we summarize
a few of them in the following

\begin{theorem}[\cite{Corneil:81,Sumner74,Seinsche:74}]\label{thm:CharCograph}
Given a graph $G$, the following statements are equivalent:\smallskip
\begin{enumerate}[noitemsep,nolistsep]
	\item $G$ is a cograph.
	\item $G$ does not contain induced $P_4$s.
	\item For every subset $W\subseteq V(G)$ with $|W|>1$ it holds that $G[W]$ is
	      disconnected or $\overline{G[W]}$ is disconnected.
	\item Every connected induced subgraph of $G$ has diameter $\leq 2$.
	\item Every induced subgraph of $G$ is a cograph.
\end{enumerate}
\end{theorem}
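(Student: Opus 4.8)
The plan is to establish all five equivalences through the cycle $(1)\Rightarrow(5)\Rightarrow(2)\Rightarrow(3)\Rightarrow(1)$ and to attach statement $(4)$ via the separate equivalence $(2)\Leftrightarrow(4)$. Before entering the cycle I would record two elementary consequences of the recursive definition, obtained by structural induction on the construction of a cograph: \textbf{(a)} every induced subgraph of a cograph is again a cograph, since an induced subgraph of $H_1\join H_2$ (resp.\ $H_1\union H_2$) decomposes as the join (resp.\ disjoint union) of induced subgraphs of $H_1$ and $H_2$; and \textbf{(b)} every cograph $H$ with $|V(H)|>1$ is disconnected or has disconnected complement, because at the top level $H$ is by definition a join $H_1\join H_2$ (whence $\overline{H}=\overline{H_1}\union\overline{H_2}$ is disconnected) or a disjoint union $H_1\union H_2$ (whence $H$ is disconnected).

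Granting (a) and (b), three of the four implications in the cycle are short. For $(1)\Rightarrow(5)$ I would simply invoke (a). For $(5)\Rightarrow(2)$ I would first note that $P_4$ is connected and $\overline{P_4}\simeq P_4$ is connected, so by (b) the path $P_4$ is \emph{not} a cograph; hence if $G$ satisfied $(5)$ yet contained an induced $P_4$, that $P_4$ would be a cograph, a contradiction. For $(3)\Rightarrow(1)$ I would argue by induction on $|V(G)|$: the case $|V(G)|=1$ is $K_1$; otherwise $(3)$ forces $G$ or $\overline{G}$ to be disconnected, so either $G=G_1\union G_2$ or, when $\overline{G}$ is disconnected, the two sides are completely adjacent in $G$ and $G=G_1\join G_2$. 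As $(3)$ is hereditary (any $W\subseteq V(G_i)$ is a subset of $V(G)$), each $G_i$ satisfies $(3)$, hence is a cograph by the inductive hypothesis, and therefore so is $G$.

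For the separate equivalence $(2)\Leftrightarrow(4)$ I would argue through shortest paths. For $(2)\Rightarrow(4)$, if some connected induced subgraph $H$ had $\diam(H)\ge 3$, then a shortest path $w_0,w_1,w_2,w_3,\dots$ realizing a distance $\ge 3$ has its first four vertices inducing a $P_4$ (the shortest-path property excludes the chords $w_0w_2$, $w_1w_3$, $w_0w_3$), contradicting $(2)$. Conversely, for $(4)\Rightarrow(2)$, an induced $P_4$ is itself a connected induced subgraph of diameter $3$, contradicting $(4)$.

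The only substantial step is $(2)\Rightarrow(3)$, i.e.\ Seinsche's lemma: a $P_4$-free graph $H$ on at least two vertices is disconnected or has disconnected complement. Since $P_4$-freeness is hereditary it suffices to treat $H=G[W]$, and I would proceed by induction on $n=|V(H)|$, assuming for contradiction that both $H$ and $\overline{H}$ are connected (note $\overline{H}$ is $P_4$-free as well, because $P_4$ is self-complementary). Picking a vertex $v$, the graph $H-v$ is $P_4$-free on $n-1$ vertices, so by the inductive hypothesis $H-v$ or $\overline{H}-v$ is disconnected; by complement symmetry assume $H-v$ is disconnected with components $C_1,\dots,C_r$. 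The heart of the argument is the \emph{all-or-nothing} claim that $v$ is adjacent to either every vertex or no vertex of each $C_i$: if $v$ had a neighbor and a non-neighbor inside the same $C_i$, a shortest path between them in $C_i$ yields an edge $a'b'$ with $va'\in E$ and $vb'\notin E$, and combining $v-a'-b'$ with a suitably chosen further vertex (for instance a $v$-neighbor lying in another component, or the next vertex along the path) produces an induced $P_4$. Granting the claim, let $S$ be the union of the components fully adjacent to $v$ and $T$ the union of the rest; connectivity of $H$ gives $S\neq\emptyset$, while $v$ cannot be adjacent to all of $H-v$ (otherwise $v$ would be isolated in the connected $\overline{H}$), so $T\neq\emptyset$. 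But then no edge joins $T$ to $S\cup\{v\}$, contradicting the connectivity of $H$. I expect the delicate point to be precisely the clean extraction of the induced $P_4$ in the all-or-nothing step, where the fourth vertex must be selected so as to avoid every unwanted chord; carefully exhausting these subcases is the main obstacle of the whole proof.
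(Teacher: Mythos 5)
This theorem is a classical result that the paper cites from \cite{Corneil:81,Sumner74,Seinsche:74} and does not prove, so there is no in-paper argument to compare against; your proposal has to stand on its own, and it does. The implication cycle $(1)\Rightarrow(5)\Rightarrow(2)\Rightarrow(3)\Rightarrow(1)$ together with $(2)\Leftrightarrow(4)$ is sound, and the two preliminary facts (a) and (b) are correctly derived from the recursive definition. The one step you flag as delicate --- extracting the induced $P_4$ in the all-or-nothing claim of Seinsche's lemma --- is in fact cleaner than you fear: since $H$ is connected and $H-v$ has components $C_1,\dots,C_r$ with $r\geq 2$, the vertex $v$ has a neighbour $c$ in some component $C_j$ with $j\neq i$, and then $b'-a'-v-c$ is automatically induced because $b'c$ and $a'c$ are non-edges (different components of $H-v$), $b'v$ is a non-edge by choice, and $a'b'$, $a'v$, $vc$ are edges; no further subcase analysis is needed, so your first suggested fourth vertex always works. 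Two small points you should make explicit in a written-up version: the induction in $(2)\Rightarrow(3)$ needs the trivial base case $n=2$ stated separately (the inductive step applies the hypothesis to $H-v$, which requires $n-1\geq 2$), and the ``complement symmetry'' reduction should note that $P_4$-freeness and the conclusion of $(3)$ are both invariant under complementation, which you do implicitly via the self-complementarity of $P_4$.
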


It is well-known that a graph $G$ can be explained by a labeled tree $(T,t)$, called
\emph{cotree}, if and only if $G$ is a cograph \cite{Corneil:81}. In particular, we
have 
\begin{theorem}
	For every cograph $G=(V,E)$ there is a unique (up to isomorphism) discriminating
	cotree $(\widehat T,\widehat t)$ such that $G\simeq \mathscr{G}(\widehat T,\widehat
	t)$. In particular, the recognition of cographs $G$ and, in the affirmative case,
	the construction of the discriminating cotree $(\widehat T,\widehat t)$ that
	explains $G$ can be achieved in $O(|V|+|E|)$ time. 
\end{theorem}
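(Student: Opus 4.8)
The plan is to split the statement into three tasks: existence of a discriminating cotree, its uniqueness up to isomorphism, and the linear-time bound. For existence I would induct on $|V|$ using the recursive definition of cographs. The single-vertex graph $K_1$ is explained by the trivial cotree with one leaf. For the inductive step, if $G = H \union H'$ I form the tree $(T,t)$ whose root carries label ``$0$'' and whose two children are the roots of the cotrees for $H$ and $H'$ (supplied by the induction hypothesis); dually, if $G = H \join H'$ the root is labeled ``$1$''. A direct inspection of the $\lca$-labels confirms $G \simeq \mathscr{G}(T,t)$. This tree need not be discriminating, but whenever an inner edge $(u,v)\in E^0$ satisfies $t(u)=t(v)$ I contract it, redirecting the children of $v$ to $u$. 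Such a contraction leaves the label of the $\lca$ of every leaf pair unchanged, hence preserves $\mathscr{G}(T,t)$; iterating exhaustively (each step lowers the vertex count, so the process terminates) yields a cotree $(\widehat T,\widehat t)$ with no same-label inner edge, i.e.\ a discriminating one.

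For uniqueness I would argue by induction on $|V|$, the base case $|V|=1$ being immediate. For $|V|>1$ the crucial dichotomy is that exactly one of $G$ and $\overline G$ is disconnected: Theorem~\ref{thm:CharCograph}(3) applied with $W=V$ forces at least one of them to be disconnected, while the complement of a disconnected graph is always connected. In any discriminating cotree $(\widehat T,\widehat t)$ explaining $G$, the root $\rho$ cannot carry label ``$1$'' when $G$ is disconnected, since a label-``$1$'' root makes every pair of leaves lying in distinct root-subtrees adjacent and thus renders $G$ connected; hence $\widehat t(\rho)=0$. Symmetrically $\widehat t(\rho)=1$ when $G$ is connected. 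Therefore the dichotomy pins down the root label unambiguously, and to treat the connected case it suffices to pass to $\overline G$, whose discriminating cotree is obtained simply by swapping all labels ``$0$'' and ``$1$'' (flipping labels complements $\mathscr{G}$ and preserves the discriminating property).

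The step I expect to require the most care is showing that the child subtrees of $\rho$ are forced, not merely the root label. Suppose $G$ is disconnected, so $\widehat t(\rho)=0$ and the leaf sets $V_1,\dots,V_k$ of the subtrees rooted at the children of $\rho$ form a partition of $V$ with no edges between distinct parts. Because $(\widehat T,\widehat t)$ is discriminating, no child of $\rho$ may again be labeled ``$0$'', so each $G[V_i]$ is connected; combined with the absence of cross-edges this means the $V_i$ are \emph{exactly} the connected components of $G$ (a part containing two components would be disconnected, forcing a forbidden second ``$0$'' label). Thus the partition into child-subtrees is uniquely determined, and by the induction hypothesis each subtree is the unique discriminating cotree of its component. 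Applying the same reasoning to $\overline G$ settles the connected case, where the children correspond to the connected components of $\overline G$, completing the uniqueness proof.

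Finally, for the complexity claim I would invoke the existing linear-time cograph recognition algorithms \cite{Corneil:85,HP:05,bretscher2008simple}, which decide whether $G$ is a cograph and, in the affirmative case, output its (modular-decomposition) cotree in $O(|V|+|E|)$ time. A single traversal contracting every inner edge whose endpoints share a label converts this cotree into the discriminating cotree $(\widehat T,\widehat t)$ within the same bound, so the entire procedure runs in $O(|V|+|E|)$ time.
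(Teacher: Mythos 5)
Your argument is correct. Note that the paper does not actually prove this theorem: it is stated as a classical result and attributed to the literature (\cite{Corneil:81,Corneil:85} and related work), with the only in-text justification being the remark that for a cograph the labeled modular decomposition tree $(\MDT_G,\tau_G)$ coincides with the unique discriminating cotree. Your self-contained proof therefore supplies what the paper outsources. The structure is sound: existence follows from the recursive union/join definition plus exhaustive contraction of same-label inner edges (which preserves $\lca$-labels and terminates since the vertex count drops); uniqueness follows by induction from the dichotomy that exactly one of $G,\overline G$ is disconnected, which forces the root label, and the discriminating condition on the root's children, which forces the child subtrees to carry exactly the connected components of the disconnected graph among $G,\overline G$. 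Two small points you gloss over but which hold: a child of the root may be a leaf, in which case the corresponding part is a singleton and trivially connected, so the ``each $G[V_i]$ is connected'' step survives; and the passage to $\overline G$ in the connected case does not reduce $|V|$, but it reduces to the disconnected case at the same $|V|$, which then recurses on strictly smaller components, so the induction is well-founded. The complexity claim is handled exactly as the paper intends, by citation of the linear-time recognition algorithms; since their output for a cograph is already the labeled MDT, even the final contraction pass is essentially free. What your approach buys is a transparent explanation of \emph{why} the discriminating cotree is unique, which the paper uses repeatedly (e.g.\ in Lemma~\ref{lem:cograph-caterpillar} and the definition of caterpillar-explainable cographs) but never justifies internally.
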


A cograph is \emph{(strict) caterpillar-explainable} if its unique discriminating
cotree is a caterpillar.

\paragraph{\bf Modular Decomposition}
The concept of \emph{modular decomposition (MD)} is
defined for arbitrary graphs $G$ and allows us to present the structure of $G$ in the
form of a tree that generalizes the idea of cotrees \cite{Gallai:67}. However, in
general much more information needs to be stored at the inner vertices of this tree
if the original graph has to be recovered. We refer to \cite{HP:10} for an excellent
overview about MDs. 

The MD is based on the notion of modules. These are also known as autonomous sets
\cite{MR-84, Moh:85}, closed sets \cite{Gallai:67}, clans \cite{EHMS:94}, stable
sets, clumps \cite{Blass:78} or externally related sets \cite{HM-79}. A \emph{module}
$M$ is a subset $M\subseteq V$ such that for all $x,y\in M$ it holds that
$N_G(x)\setminus M = N_G(y)\setminus M$. Therefore, the vertices within a given
module $M$ are not distinguishable by the part of their neighborhoods that lie
``outside'' $M$. The singletons and $X$ are the \emph{trivial} modules of $G$ and
thus, the set $\MD(G)$ of all modules of $G$ is not empty. 
A graph $G$ is called \emph{primitive} \cite{ER1:90,ER2:90,ER3:90} (or \emph{prime}
\cite{CH:94,HP:10} or \emph{indecomposable} \cite{CI:98,ILLE:97,ST:93}) if it has at
least four vertices and contains trivial modules only. 
The smallest primitive graph is the path $P_4$ on four vertices. In particular,
every primitive graph $G=(V,E)$ must contain an induced $P_4$ and thus, cannot be a
cograph \cite{Sumner:73}. 

A module $M$ of $G$ is \emph{strong} if $M$ does not \emph{overlap} with any other
module of $G$, that is, $M\cap M' \in \{M, M', \emptyset\}$ for all modules $M'$ of
$G$. In particular, all trivial modules of $G$ are strong. We write
$\MDstrong(G)\subseteq \MD(G)$ for the set of all strong modules of $G$. Every strong
module $M\in \MDstrong(G)$ is of one of the following three types:
\begin{center}
\begin{itemize}[noitemsep]
	\item \emph{series}, if $\overline{G[M]}$ is disconnected;\smallskip
	\item \emph{parallel}, if $G[M]$ is disconnected;\smallskip
	\item \emph{prime}, otherwise, i.e., $\overline{G[M]}$ and  $G[M]$ are connected.
\end{itemize}
\end{center}
The set $\MDstrong(G)$ of strong modules is uniquely determined
\cite{HSW:16,EHMS:94}. While there may be exponentially many modules, the cardinality
of the set of strong modules of $G=(X,E)$ is in $O(|X|)$ \cite{EHMS:94}. Since strong 
modules do
not overlap, the set $\MDstrong(G)$ forms a hierarchy and gives rise to a unique tree
representation $\MDT_G$ of $G$, known as the \emph{modular decomposition tree}
(\emph{MDT}) of $G$. The vertices of $\MDT_G$ are (identified with) the elements of
$\MDstrong(G)$. Adjacency in $\MDT_G$ is defined by the maximal proper inclusion
relation, that is, there is an edge $(M',M)$ between $M,M'\in \MDstrong(G)$ if and
only if $M\subsetneq M'$ and there is no $M''\in \MDstrong(G)$ such that $M\subsetneq
M'' \subsetneq M'$. The root of $\MDT_G$ is (identified with) $X$ and every leaf $v$
corresponds to the singleton $\{v\}$, $v\in X$. Hence, $\MDstrong(G) = \{L(\MDT_G(v))
\mid v\in V(\MDT_G)\}$. 

To present the structure of $G$ in the
form of a tree that generalizes the idea of cotrees a labeling
is needed. Since every strong module $M\in \MDstrong(G)$ corresponds to a unique vertex in 
$\MDT_G$ and is either a serial, parallel or prime module, we can establish a labeling 
$\tau_G \colon V(\MDT(G)) \to \{0,1,\mathrm{prime},\odot\}$ defined by setting for all $M\in  \MDstrong(G) = V(\MDT_G)$
\[ \tau_G(M)\coloneqq
				\begin{cases}
             0 & \text{, if } M \text{ is a parallel module}  \\
						 1 & \text{, else if } M \text{ is a  series module}  \\
						 \mathrm{prime} & \text{, else if } M \text{ is a prime module and } |M|>1  \\
						 \odot & \text{, else, i.e., } |M|=1
        \end{cases} \]%
In this way, we obtain a labeled tree $(\MDT_G,\tau_G)$ that, at least to some
extent, encodes the structure of $G$. In particular, $\tau_G(\lca(x,y))=1$ implies
$\{x,y\}\in E(G)$ and $\tau_G(\lca(x,y))=0$ implies $\{x,y\}\notin E(G)$. The
converse, however, is not satisfied in general, since $\tau_G(\lca(x,y))=p$ is
possible. Note that, if $G$ is a cograph, then $(\MDT_G,\tau_G)$ coincides with the
unique discriminating cotree $(\widehat T, \widehat t)$ of $G$
\cite{BLS:99,Corneil:81,HHH+13}. In other words, $G$ is a cograph if and only it 
its cotree does not contain \emph{prime vertices}, i.e., vertices with label ``$\mathrm{prime}$''. 
Hence, $G$ is a cograph if and only it does
not contain prime modules $M$ of size at least two.
Moreover, observe that for every 
	$M\subseteq V(G)$ with $1<|M|\leq 3$ it always holds that
$G[M]$ or $\overline{G[M]}$ is disconnected. Thus, we obtain 
\begin{observation}\label{obs:M4}
	If $M$ is a prime module of $G$, then  $|M|\geq 4$.  
\end{observation}

The first polynomial time algorithm to compute the modular decomposition is due to
Cowan et al.\ \cite{CJS:72}, and it runs in $O(|V|^4)$. Improvements are due
to Habib and Maurer \cite{HM-79}, who proposed a cubic time algorithm, and to
M{\"u}ller and Spinrad \cite{MS:89}, who designed a quadratic time algorithm. The
first two linear time algorithms appeared independently in 1994 \cite{CH:94, CS94}.
Since then a series of simplified algorithms has been published, some running in
linear time \cite{DGC:01,CS:99,TCHP:08}, and others in almost linear time
\cite{DGC:01,CS:00,HPV:00, habib2004simple}.

\section{Main Ideas and Results}

\label{sec:mainR}
The following type of graphs 
will play a central role in this contribution.
\begin{definition}[Pseudo-Cographs]\label{def:pseudo-cograph}
	A graph $G$ is a \emph{pseudo-cograph} if $|V(G)|\leq 2$ or
	if there are induced subgraphs $G_1,G_2\subseteq G$ and a vertex $v\in V(G)$
 such that 
 \begin{enumerate}[noitemsep]
 	\item[(F1)] $V(G)=V(G_1)\cup V(G_2)$, $V(G_1)\cap V(G_2) = \{v\}$,  $|V(G_1)|>1$ and 
 	$|V(G_2)|>1$; and \smallskip
 	\item[(F2)] $G_1$ and $G_2$ are cographs; and \smallskip
 	\item[(F3)] $G-v$ is either 
 	            the join or the disjoint union of $G_1-v$ and $G_2-v$. 
 \end{enumerate} 
 In this case, we also say that $G$ is a  \emph{$(v,G_1,G_2)$-pseudo-cograph}. 
\end{definition}
We emphasize that $G$ can be a $(v,G_1,G_2)$-pseudo-cograph and a
$(w,G'_1,G'_2)$-pseudo-cograph at the same time, see Section \ref{sec:PsC} for
examples. Further illustrative examples of pseudo-cographs are provided in
Fig.\ \ref{fig:PseudoCograph}. As we shall see in Section \ref{sec:PsC},
none of the conditions (F1), (F2) and (F3) is dispensable. Pseudo-cographs
generalize the class of cographs (cf.\ Lemma \ref{lem:cographPscograph}).
Moreover, the property of being a pseudo-cograph is hereditary (Lemma
\ref{lem:heri}) and pseudo-cographs are closed under complementation (Lemma
\ref{lem:complement}). Every connected induced subgraph $H$ of a pseudo-cograph
$G$ must have diameter less or equal than four (Lemma \ref{lem:diam}) and thus,
$G$ cannot contain induced paths of length larger than $5$ (Cor.\
\ref{cor:LongestPath}). The latter properties, however, do not characterize
pseudo-cographs.

If $G$ is a $(v,G_1,G_2)$-pseudo-cograph, then $G-v$ or $\overline{G-v}$ must be
disconnected and, in particular, $G-v$ must be a cograph (cf.\ Obs.\
\ref{obs:G-v-Cograph}). As shown in Lemma \ref{lem:ConnComp-v}, every connected
component of the disconnected graph in $\{G-v,\overline{G-v}\}$ must be entirely
contained in either $G_1$ or $G_2$. Moreover, if there are two connected
components $H$ and $H'$ such that subgraph induced by $V(H)\cup V(H')\cup\{v\}$
contains an induced $P_4$, then $H$ must be contained in $G_i$ and $H'$ in $G_j$
with $i,j\in \{1,2\}$ being distinct. The latter two results provide the
foundation for further characterizations of pseudo-cographs by means of the
connected components of the disconnected graph in $\{G-v,\overline{G-v}\}$, see
Prop.\ \ref{prop:charPseudo} and Thm.\ \ref{thm:CharPsG}. 

After investigating further properties of pseudo-cographs in Section
\ref{sec:PsC} we provide a quite simple construction of level-1 networks to
explain a $(v,G_1,G_2)$-pseudo-cograph $G$. This construction is based on the
two cotrees $(T_1,t_1)$ and $(T_2,t_2)$ that explain $G_1$ and $G_2$,
respectively (Def.\ \ref{def:prop:PsC-l1N}). Roughly speaking, $(T_1,t_1)$ and
$(T_2,t_2)$ are joined under a common root and the edges $(\parent(v),v)$ of the
vertex $v$ that is common in both graphs $G_1$ and $G_2$ are ``glued
together'', see Fig.\ \ref{fig:gamma} for an illustrative example. Hence, every
pseudo-cograph can be explained by a level-1 network (Prop.\
\ref{prop:PsC-l1N}). In particular, pseudo-cographs are characterized in terms
of level-1 networks $(N,t)$ that contain precisely one cycle that is rooted at
$\rho_N$ and whose hybrid-vertex has a unique leaf-child, see Thm.\
\ref{thm:CharPsC-Network-Cycle}. However, not all level-1 networks that explain
a pseudo-cograph do necessarily satisfy latter property as shown in Fig.\
\ref{fig:gamma}. This, in particular, shows that different non-isomorphic
level-1 networks can explain the same graph. As argued at the end of Section
\ref{sec:PsC}, not all graphs that can be explained by a level-1 network are
pseudo-cographs.

	We collect here the main results that we obtained pseudo-cographs. 
	\begin{summary}[Pseudo-Cographs]
		Pseudo-cographs \dots \smallskip
		\begin{itemize}[noitemsep,nolistsep]
			\item[\dots]  form a superclass of cographs (Lemma~\ref{lem:cographPscograph}).
			\item[\dots]  form a hereditary  graph class (Lemma~\ref{lem:heri}).
			\item[\dots]  are closed under complementation (Lemma~\ref{lem:complement})
			\item[\dots]  are characterized in terms of a bipartition of the set of connected 
			components in $G-v$, resp.,  $\overline{G-v}$ (Thm.~\ref{thm:CharPsG}).
		\end{itemize}\smallskip
		
		\noindent
		Moreover,
		a graph $G$ is a pseudo-cograph if and only if $|V(G)|\leq 2$ or 
		$G$ can be explained by a level-1 network $(N,t)$ that contains precisely
		one cycle $C$ such that $\rho_C = \rho_N$ and $\child_N(\eta_C)=\{x\}$ for some $x\in 
		L(N)$ (Thm.~\ref{thm:CharPsC-Network-Cycle}).
		
		In linear-time, pseudo-cographs $G$ can be recognized and, in the
		affirmative case, a labeled level-1 network that
		explains $G$ can be constructed (Thm.\ \ref{thm:PsC-recognition}).
		
	\end{summary}

To gain a deeper understanding into graphs that can be explained by a level-1
network, we first investigate the structure of level-1 networks in some more detail
in Section \ref{sec:Cog}. In particular, we show in Prop.\ \ref{prop:NhatN-sameGraph}
that for every level-1 network $(N,t)$ there is a quasi-discriminating ``version''
$(\widehat N,\widehat t)$ of $(N,t)$ (obtained from $(N,t)$ by contraction of certain
edges whose endpoint have the same label $t$) such that $\mathscr{G}(N,t) =
\mathscr{G}(\widehat N,\widehat t)$. Moreover, as shown in Lemma
\ref{lem:Weak-Reduce-Cycle}, every level-1 network that contains weak cycles can be
``transformed'' into a strong level-1 network that explains the same graph by
replacing all weak cycles locally by trees. As a consequence, a graph $G$ is a
cograph if and only if $G$ can be explained by weak level-1 networks (Thm.\
\ref{thm:WeakIffCograph}). Different weak level-1 networks that explain a given
cograph are shown in Fig.\ \ref{fig:nonUniqueN}. The latter results allow us to
consider quasi-discriminating strong networks only which simplifies many of the
proofs. 

In Section \ref{sec:PolarCat}, we continue to investigate the structure of graphs
that can be explained by elementary networks which leads to the concept of
polar-cats.
\begin{definition}[Polar-Cat]
Let $G$ be a $(v,G_1,G_2)$-pseudo-cograph with $|V(G)|\geq 3$. Then, $G$ is
\emph{polarizing (w.r.t.\ $G_1$ and $G_2$)} if $G_1$ and $G_2$ are both
connected 
(resp., both
disconnected) and $G-v$ is
the disjoint union (resp., join) of $G_1-v$ and $G_2-v$. Moreover, if $|V(G)|\geq 4$
and $G_1$ and $G_2$ are caterpillar-explainable such that $v$ is part of a cherry in
both caterpillars explaining $G_1$, resp, $G_2$ then $G$ is \emph{cat  (w.r.t.\ $G_1$ 
and $G_2$)}. If $G$ is
both, polarizing and cat, it is a \emph{polar-cat}\footnote{Not to be confused with
their living counterparts: \url{https://marc-hellmuth.github.io/polarCat}}. 

The class of polar-cats is denoted by $\PolarCat$ and comprises all graphs $G$ 
	 for which	there is a vertex $v\in V(G)$ and induced subgraphs 
	$G_1,G_2\subseteq G$ such that 
	$G$ is a $(v,G_1,G_2)$-pseudo-cograph that is polarizing and cat w.r.t.\ $G_1$ and $G_2$.
In this case, we also say that $G$ is a \emph{$(v,G_1,G_2)$-polar-cat}.
\end{definition}

We first provide in Prop.\ \ref{prop:polcat-pc} a characterization of polar-cats
in terms of an ordering of their vertex sets. Moreover, polar-cats are closed
under complementation (Lemma \ref{lem:PolarCat-complement-CC}), are always
connected and never cographs (Cor.\ \ref{cor:CographNotPC}). In particular,
polar-cats are primitive graphs, i.e., they have at least four vertices and
consist of trivial modules only (Lemma \ref{lem:primitive-1} and
\ref{lem:primitive-2}). Polar-cats are precisely those graphs that can be
explained by a strong elementary quasi-discriminating network (Thm.\
\ref{thm:StrongElementary}). As a main results, Thm.\ \ref{thm:CharPolarCatPC}
shows that a graph can be explained by quasi-discriminating elementary network
if and only if it is a caterpillar-explainable cograph on at least two vertices
or a polar-cat. Taking the latter results together, a graph $G$ is a polar-cat
if and only if it is primitive and can be explained by a (strong
quasi-discriminating elementary) level-1 network (see Thm.\ \ref{thm:CharPolCat}
and \ref{thm:CharPCPC}).

	We collect here the main results that we obtained for polar-cats.
	\begin{summary}[Polar-Cats]
		Polar-cats \dots \smallskip
		\begin{itemize}[noitemsep,nolistsep]
			\item[\dots]  do not form a hereditary  graph class.
			\item[\dots]  are closed under complementation 	 
			(Lemma \ref{lem:PolarCat-complement-CC}), always connected and
			never cographs (Cor.\ \ref{cor:CographNotPC}) and, in particular, 
			primitive (Thm. \ref{thm:CharPolCat}).
			\item[\dots] are characterized in terms of a vertex ordering (Prop.\ 		
			\ref{prop:polcat-pc}).
			
		\end{itemize}\smallskip

		\noindent
		Moreover, a graph $G$ is a polar-cat 
		if and only if $G$ can be explained by a strong
		\emph{elementary} quasi-discriminating network (Thm.\ \ref{thm:StrongElementary}). 
		Polar-cats that can be explained by a unique network $(N,t)$ (up to the label of
		hybrid-vertices) are characterized in terms of the size of particular vertex
		sets 	(Prop.\ \ref{prop:uniqueNt}).

		In linear-time, polar-cats $G$ can be recognized and, in the affirmative
		case,  an (elementary) level-1 network that explains $G$ can be constructed
		(Thm.\ \ref{thm:polar-recognition}).
	\end{summary}

So far, we have only investigated the structure of graphs that can be explained
by particular level-1 networks that contain at most one cycle. Nevertheless,
these results will help us to define the class of graphs that can be explained
by general level-1 networks. As explained in much more detail in Section
\ref{sec:general}, for every prime module $M$ in the MDT $(\MDT_G,\tau_G)$ of
$G$ one can compute a quotient graph $G[M]/\Mmax(G[M])$ which is always a
primitive graph. This observation leads to the following \begin{definition} The
class of \emph{prime polar-cats}, in symbols $\PrimeCat$, denotes the set of all
graphs $G$ for which $G[M]/\Mmax(G[M])\in \PolarCat$ for all prime modules $M$
of $G$. \end{definition} Note that $\PrimeCat$ includes the class of cographs,
since they do not contain prime modules at all. Our aim is now to replace every
prime module $M$ in the MDT of $G$ by a suitable choice of labeled networks (in
particular, elementary networks that explain the polar-cat $G[M]/\Mmax(G[M])$)
to obtain labeled level-1 networks that explains $G$ (cf.\ Def.\ \ref{def:pvr}).
The latter is based on the concept of prime-vertex replacement (pvr) graphs as
established in \cite{BHS:21}. These results allow us to show that $G\in
\PrimeCat$ if and only if $G$ can be explained by a level-1 network $(N,t)$, see
Thm.\ \ref{thm:CharprimeCat}. Many examples show that level-1 networks that
explain a given graph $G$ can differ in their topology and their label.
Nevertheless, we show in Prop.~\ref{prop:1-1-prime-strong} that there is a
1-to-1 correspondence between prime modules of $G$ and strong
quasi-discriminating cycles in $N$. We then show which type of edges can or
cannot be contracted such that resulting network still explains the same graph
(cf.\ Lemma \ref{lem:noEdgeContractionsAlongCycles} and
\ref{lem:LRTnoEdgeContractionsAlongCycles}). This eventually allows us to
show in Thm.\ \ref{thm:1-1-prime-strong} that every strong quasi-discriminating
level-1 network $(N,t)$ that explains $G$ can be obtained from some pvr graph
$(N^*, t^*)$ of $(\MDT_G,\tau_G)$ after a (possibly empty) sequence of edge
contractions. Note, in this way, we can derive distinct networks that differ, in
particular, in the cycles that are used to replace prime vertices in the MDT.
Nevertheless, there is a unique least-resolved strong level-1 network $(N,t)$
that explains $G$ provided that every primitive graph $G[M]/\Mmax(G[M])$ is a
``well-proportioned'' polar-cat (cf.\ Def.\ \ref{def:well-propo} and Thm.\
\ref{thm:uniqueNt}).

In Section \ref{sec:algo} we provide linear-time algorithms for the recognition of
pseudo-cographs, polar-cats as well as graphs in $\PrimeCat$ and for the construction
of level-1 networks to explain them.

	We collect here the main results that we obtained for prime polar-cats.
\begin{summary}[Prime Polar-Cats]
A graph $G$ can be explained by a  labeled level-1 network  if and only if $G\in  
\PrimeCat$ (Thm. \ref{thm:CharprimeCat}). Prime polar-cats that can be explained by
a unique least-resolved  level-1 network are characterized by the structure 
of the quotients  $G[M]/\Mmax(G[M])$  of their prime modules $M$ (Thm.\ 
\ref{thm:uniqueNt}).

	In linear-time, it can be verified if a given graph $G$ can
	be explained by a labeled level-1 network and, in the affirmative case, 
	such a network that explains $G$  can
	be constructed (Thm.\ \ref{thm:AlgGeneral}).

\end{summary}

\section{Pseudo-Cographs and Level-1 Networks}
\label{sec:PsC}

In this section we investigate the structural properties of pseudo-cographs. We first
argue that none of the conditions in Def.\ \ref{def:pseudo-cograph} is dispensable.
It is easy to see that (F2) together with (F3) does not imply (F1) in general (just
consider a cograph $G=G_1\star G_2$ with $\star\in \{\union, \join\}$ where
$V(G_1)\cap V(G_2) =\emptyset$). Now assume that only (F1) and (F2) holds and
consider the graph $G$ that consists of two disjoint copies $P = 1-2-3-4$ and
$P'=1'-2'-3'-4'$ of an induced $P_4$. Now we can choose $G_1$ and $G_2$ in an
arbitrary way as long as $G_1$ and $G_2$ are cographs that additionally satisfy (F1).
In fact, putting $G_1=G[\{2,1,1'\}]$ and $G_2=G[\{2,3,4,2',3',4'\}]$ shows that such
a choice with $v=2$ is possible. Now consider the graph $G-v$. Both vertices $1$ and
$1'$ are contained $G_1-v$. However, vertex $1$ is not adjacent to any vertex in
$G_2-v$ while vertex $1'$ is adjacent to vertex $2'$ in $G_2-v$. Hence, $G-v$ can
neither be the  join nor the disjoint union of $G_1-v$ and $G_2-v$ which implies
that (F3) is violated. To see that (F1) and (F3) do not imply (F2) in general,
consider the graph $G\simeq P_7 = 1-2-\cdots-7$. To satisfy (F3), we can choose $v\in
\{2,3,\dots,6\}$. For every choice of $v$, $G-v$ is the disjoint union of the two
paths. To satisfy (F3) and (F1) and, in particular, $|V(G_1)|, |V(G_2)|>1$, $G_1-v$ must
consist of one path and $G_2-v$ of the other. But then at least one of $G_1$ and
$G_2$ contains an induced $P_4$ and thus, is not a cograph. Hence (F2) is violated.

Consider now a $(v,G_1,G_2)$-pseudo-cograph $G$. In this case $G_1-v$ and $G_2-v$ are
cographs, since $G_1$ and $G_2$ are cographs and the property of being a cograph is
hereditary by Thm.\ \ref{thm:CharCograph}. This implies that $G-v$ must be a cograph
as well, since it is the join or disjoint union of the two cographs $G_1-v$ and
$G_2-v$. Furthermore, Thm.\ \ref{thm:CharCograph} implies that either $G-v$ or its
complement $\overline{G-v}$ is disconnected. Now consider a vertex $v$ of $G$ that is
contained in all induced $P_4$s of $G$. Then, $G-v$ is a cograph. Thm.\
\ref{thm:CharCograph} implies that $G-v$ or $\overline{G-v}$ is disconnected. The
converse, however, is not satisfied in general. To see this, consider the graph that
consists of the disjoint union of two induced $P_4$s. In this case, for every $v\in
V(G)$, the graph $G-v$ is disconnected, but $v$ is not located on every $P_4$. For
later reference, we summarize the latter discussion in the following:
\begin{observation}\label{obs:G-v-Cograph}
	If $G$ is graph that contains a vertex $v$ that is contained in all induced $P_4$s
	of $G$, then $G-v$ or $\overline{G-v}$ is disconnected. The converse of the latter
	statement is not satisfied in general as $G-v$ is not necessarily a cograph.
	Moreover, if $G$ is a \emph{$(v,G_1,G_2)$-pseudo-cograph}, then $G-v$ is a cograph
	and either $G-v$ or its complement $\overline{G-v}$ is disconnected.
\end{observation}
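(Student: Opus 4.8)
The plan is to treat the three assertions of the observation separately, each resting on the cograph characterizations collected in Thm.~\ref{thm:CharCograph}. For the initial implication, I would first argue that if $v$ lies on every induced $P_4$ of $G$, then $G-v$ contains no induced $P_4$ at all: any induced $P_4$ in $G-v$ would also be an induced $P_4$ of $G$ avoiding $v$, contradicting the hypothesis. Hence $G-v$ is $P_4$-free, and so by the equivalence of items (1) and (2) in Thm.~\ref{thm:CharCograph} it is a cograph. Applying item (3) of the same theorem to the subset $W = V(G-v)$ then yields that $G-v$ or its complement $\overline{G-v}$ is disconnected.

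For the converse, I would exhibit a single counterexample rather than argue abstractly. Take $G$ to be the disjoint union of two induced $P_4$s. For any $v\in V(G)$, removing $v$ destroys at most one of the two paths, so $G-v$ still contains an entire $P_4$ sitting in a separate component and is therefore disconnected; yet $v$ does not meet the induced $P_4$ lying in the other component. Thus disconnectedness of $G-v$ does not force $v$ onto every induced $P_4$, and in this instance $G-v$ is not even a cograph, which is exactly the point the observation records.

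Finally, for the pseudo-cograph part, I would combine heredity of cographs with closure under the two cograph operations. By (F2) both $G_1$ and $G_2$ are cographs, so by item (5) of Thm.~\ref{thm:CharCograph} their induced subgraphs $G_1-v$ and $G_2-v$ are cographs as well. Condition (F3) states that $G-v$ is either the join $\join$ or the disjoint union $\union$ of $G_1-v$ and $G_2-v$; since the class of cographs is closed under both operations directly by its recursive definition, $G-v$ is a cograph. Because $|V(G_1)|,|V(G_2)|>1$ and these sets meet only in $v$ by (F1), we have $|V(G-v)|\geq 2$, so item (3) of Thm.~\ref{thm:CharCograph} again gives that $G-v$ or $\overline{G-v}$ is disconnected.

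The argument is in essence a bookkeeping exercise over Thm.~\ref{thm:CharCograph}, so I do not anticipate a genuine obstacle; the only point requiring slight care is the boundary behaviour of item (3) when $V(G-v)$ has fewer than two vertices. I would dispatch this by noting that such small graphs contain no induced $P_4$, which renders the first assertion vacuous, while the pseudo-cograph case already guarantees $|V(G-v)|\geq 2$ through (F1).
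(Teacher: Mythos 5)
Your proposal is correct and follows essentially the same route as the paper: both derive that $G-v$ is a cograph (via $P_4$-freeness in the first case, and via heredity plus closure under $\join$ and $\union$ in the pseudo-cograph case), then invoke Thm.~\ref{thm:CharCograph} to conclude disconnectedness of $G-v$ or $\overline{G-v}$, and both use the disjoint union of two induced $P_4$s as the counterexample to the converse.
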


Pseudo-cographs generalize the class of cographs.
\begin{lemma}\label{lem:cographPscograph}
Every cograph is a pseudo-cograph. 
\end{lemma}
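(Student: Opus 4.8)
The plan is to prove that every cograph $G$ is a pseudo-cograph by induction on $|V(G)|$, handling the small cases directly and then using the recursive definition of cographs to construct the required decomposition.

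First I would dispose of the base case. If $|V(G)| \leq 2$, then $G$ is a pseudo-cograph immediately by the first clause of Definition \ref{def:pseudo-cograph}, so I may assume $|V(G)| \geq 3$. Since $G$ is a cograph with at least three vertices, the recursive definition guarantees that $G = H \star H'$ where $\star \in \{\union, \join\}$ and both $H$ and $H'$ are (non-empty) cographs. Because $|V(G)| \geq 3$, at least one of $H, H'$ has more than one vertex; without loss of generality say $|V(H')| \geq 2$. Pick any vertex $v \in V(H')$.

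The key step is to define the two induced subgraphs witnessing the pseudo-cograph property. I would set $G_1 \coloneqq G[V(H) \cup \{v\}]$ and $G_2 \coloneqq H' = G[V(H')]$. Then $V(G_1) \cap V(G_2) = \{v\}$ and $V(G_1) \cup V(G_2) = V(G)$, giving the first half of (F1). For the size condition in (F1), note $|V(G_2)| = |V(H')| \geq 2$, and for $G_1$ I must ensure $|V(H) \cup \{v\}| \geq 2$; if $|V(H)| \geq 1$ this holds automatically, and since $H$ is a non-empty cograph this is fine. For (F2), both $G_1$ and $G_2$ are induced subgraphs of the cograph $G$, hence are cographs by the hereditary property (Thm.\ \ref{thm:CharCograph}, part 5). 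The heart of the argument is (F3): I must check that $G - v$ is exactly $G_1 - v$ combined with $G_2 - v$ under the \emph{same} operation $\star$. Here $G_1 - v = H$ and $G_2 - v = H' - v$, and since $G = H \star H'$, removing $v$ preserves the $\star$-structure between $V(H)$ and $V(H') \setminus \{v\}$, so $G - v = H \star (H' - v) = (G_1 - v) \star (G_2 - v)$, as required.

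The main obstacle I anticipate is a subtle edge case in (F1): the requirement $|V(G_1)|, |V(G_2)| > 1$ forces me to choose $v$ and split the decomposition carefully so that neither side collapses to a single vertex inappropriately. The cleanest way to guarantee this is to exploit the freedom in how a cograph decomposes—a cograph on $n \geq 2$ vertices can always be written as $H \star H'$ with both parts non-empty, and by regrouping factors one can always arrange a split where one part has size at least two and the other is non-empty, so that after placing the shared vertex $v$ both $G_1$ and $G_2$ end up with at least two vertices. I would verify this grouping argument explicitly, since it is where the definitional constraints bite; the remaining verifications of (F2) and (F3) are then routine consequences of heredity and the associativity of $\union$ and $\join$.
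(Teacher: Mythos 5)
Your proof is correct and follows essentially the same route as the paper: decompose the cograph as $G = H \star H'$, place the shared vertex $v$ in the larger factor, and verify (F1)--(F3) directly (the paper merely swaps the naming, setting $G_1$ to be the factor containing $v$ and $G_2 = G[V(H')\cup\{v\}]$). The edge case you worry about in your final paragraph does not actually require any regrouping, since $|V(G_1)| = |V(H)|+1 \geq 2$ holds automatically once $H$ is non-empty, exactly as you already observed.
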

\begin{proof}
	Let $G$ be a cograph. If $|V(G)|\leq 2$ we are done. Thus, assume that $V(G)\geq
	3$. Hence, $G$ can be written as $G = H\star H'$, $\star\in \{\join, \union\}$ of
	two cographs $H$ and $H'$ with $|V(H)|, |V(H')|\geq 1$ and $|V(H)| + |V(H')|\geq
	3$. W.l.o.g. assume that $|V(H)|\geq |V(H')|$ and thus, $|V(H)|\geq 2$. Let $v\in
	V(H)$ and put $G_1 = H$ and $G_2 = G[V(H')\cup \{v\}]$. By construction (F1) is
	satisfied for $G_1$ and $G_2$. Since $G_1$ and $G_2$ are induced subgraphs of $G$,
	Thm.\ \ref{thm:CharCograph} implies that the graphs $G_1$ and $G_2$ are cographs
	and hence, (F2) is satisfied. By construction, $G-v = (H\star H')-v = (H\star
	(G_2-v))-v = (H-v)\star (G_2-v) = (G_1-v)\star (G_2-v)$ and therefore, (F3) is
	satisfied. In summary, $G$ is a pseudo-cograph.
\end{proof}
Note, however, that not every pseudo-cograph $G$ is a cograph. By way of example, a
path $P=1-2-3-4$ on four vertices is not a cograph. Nevertheless, $G$ is a
pseudo-cograph. In fact, for every choice of $v\in \{1,2,3,4\}$ there are induced
subgraph $G_1$ and $G_2$ such that $G$ is a $(v,G_1,G_2)$-pseudo-cograph. By way of
example, $G$ is an $(1,G_1,G_2)$-pseudo-cograph for $G_1=G[\{1,3\}]$ and
$G_2=G[\{1,2,4\}]$ as well as a $(2,G_1,G_2)$-pseudo-cograph for $G_1 =G[\{1,2\}]$
and $G_2 =G[\{2,3,4\}]$. In the first case $G-1=(G_1-1)\join (G_2-1)$ and in the
second case $G-2=(G_1-2)\union (G_2-2)$. By symmetry for $v\in\{3,4\}$ there are
induced subgraphs $G_1$ and $G_2$ of $G$ such that $G$ is a
$(v,G_1,G_2)$-pseudo-cograph.

\begin{figure}[t]
		\begin{center}
			\includegraphics[scale = .8]{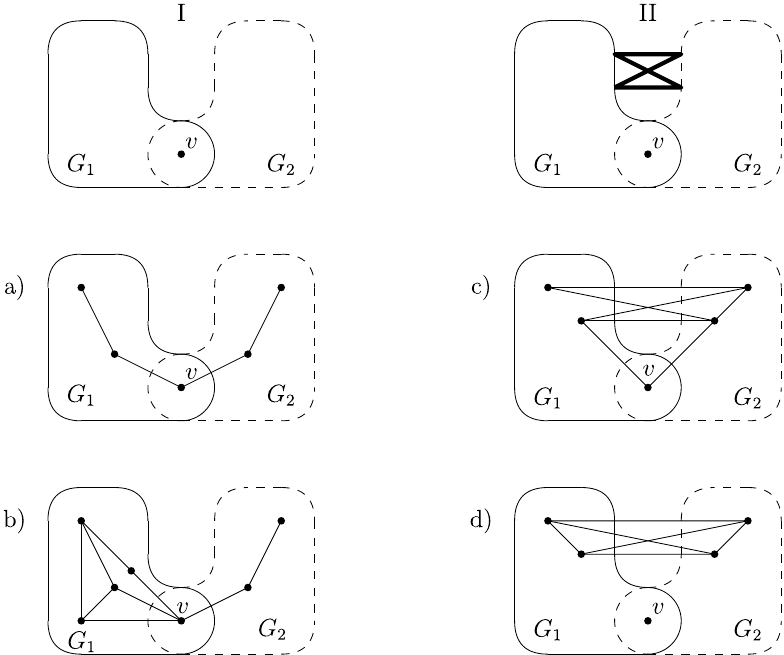}
		\end{center}
		\caption{\emph{Top:} the generic structure of pseudo-cographs where $G-v$ is
		                     either the disjoint union (I) or the join (II) of $G_1-v$ and
		                     $G_2-v$. 
							\emph{Panel a):} A polar-cat and thus, a pseudo-cograph. Note that
							                 polar-cats are always connected and primitive and
							                 thus, cannot be cographs (cf.\ Cor.\
							                 \ref{cor:CographNotPC}).
							\emph{Panel b):} A polarizing pseudo-cograph $G$ that is not a cograph.
							                 Here, $G_1$ is not caterpillar-explainable and thus,
							                 $G$ is not cat. 
						  \emph{Panel c):} A cat pseudo-cograph $G$ that is not polarizing, since
						                   $G_1$ is disconnected while $G_2$ is connected.
							\emph{Panel d)} A polarizing pseudo-cograph $G$ that is a cograph.
							                Although $G_1$ and $G_2$ are caterpillar-explainable,
							                vertex $v$ is not part of cherry in their respective
							                cotrees. Thus, $G$ is not a cat.}
		\label{fig:PseudoCograph}
\end{figure}

As the next result shows, the property of being a pseudo-cograph is
hereditary.
\begin{lemma}\label{lem:heri}
	A graph $G$ is a pseudo-cograph if and only if every induced subgraph
	of $G$ is a pseudo-cograph
\end{lemma}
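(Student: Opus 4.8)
The plan is to prove the nontrivial ``only if'' direction; the ``if'' direction is immediate, since $G=G[V(G)]$ is an induced subgraph of itself. By transitivity of the induced-subgraph relation it suffices to establish the single-vertex-deletion statement: if $G$ is a pseudo-cograph, then $G-w$ is a pseudo-cograph for every $w\in V(G)$. Iterating this, i.e.\ deleting the vertices of $V(G)\setminus W$ one at a time, then shows that every induced subgraph $G[W]$ is a pseudo-cograph.

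To prove the deletion statement, first I would dispose of the base case $|V(G)|\le 2$, where $G-w$ has at most one vertex and is a pseudo-cograph by definition. Otherwise $G$ is a $(v,G_1,G_2)$-pseudo-cograph, and I would split on the location of $w$. If $w=v$, then by (F3) the graph $G-v$ is the join or disjoint union of $G_1-v$ and $G_2-v$, which are cographs by (F2) and Thm.\ \ref{thm:CharCograph}; hence $G-v$ is itself a cograph and therefore a pseudo-cograph by Lemma \ref{lem:cographPscograph}. If $w\neq v$, then since $V(G_1)\cap V(G_2)=\{v\}$ the vertex $w$ lies in exactly one of $V(G_1),V(G_2)$, and by the symmetry of the definition I may assume $w\in V(G_1)\setminus\{v\}$.

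In the case $w\in V(G_1)\setminus\{v\}$ I would further distinguish whether $G_1-w$ is still large enough. If $|V(G_1)|=2$, then $V(G_1)=\{v,w\}$ and, since $v\in V(G_2)$, deleting $w$ leaves $G-w=G_2$, a cograph by (F2) and hence a pseudo-cograph. If $|V(G_1)|\ge 3$, then I claim $G-w$ is a $(v,G_1-w,G_2)$-pseudo-cograph. Condition (F1) is routine: $V(G-w)=V(G_1-w)\cup V(G_2)$, the intersection is still $\{v\}$ because $w\notin V(G_2)$, and both parts now have more than one vertex. Condition (F2) holds because $G_1-w$ is an induced subgraph of the cograph $G_1$ (hereditary by Thm.\ \ref{thm:CharCograph}) while $G_2$ is unchanged. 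The crux is (F3): here I would use that deleting a vertex from one side of a join or disjoint union only affects that side, i.e.\ $((G_1-v)\star(G_2-v))-w=((G_1-v)-w)\star(G_2-v)$ for $\star\in\{\join,\union\}$ and $w\in V(G_1-v)$. Combining this with the original (F3) gives $(G-w)-v=G-\{v,w\}=(G_1-\{v,w\})\star(G_2-v)$, which is exactly the join or disjoint union of $(G_1-w)-v$ and $G_2-v$.

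The main obstacle is the small-case bookkeeping rather than any deep idea: one must notice that deleting a non-$v$ vertex can shrink one of the witness graphs below the size-$2$ threshold of (F1), so the reduction ``reuse $G_1-w$ and $G_2$'' fails exactly when $|V(G_1)|=2$, and that degenerate case must be handled separately by observing that $G-w$ collapses to the cograph $G_2$. The only other point requiring care is the (entirely elementary) verification that vertex deletion commutes with $\join$ and $\union$ on the side containing the deleted vertex, which underlies the argument for (F3).
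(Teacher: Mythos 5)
Your proposal is correct and follows essentially the same route as the paper's proof: reduce to single-vertex deletion, handle $w=v$ via the fact that $G-v$ is a cograph (the paper cites Obs.~\ref{obs:G-v-Cograph}, which encapsulates exactly your join/disjoint-union argument), and for $w\neq v$ split on whether $V(G_1)=\{w,v\}$ or $G_1-w$ still has at least two vertices, verifying (F1)--(F3) for $(v,G_1-w,G_2)$ in the latter case. No gaps.
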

\begin{proof}
	If every induced subgraph	of $G$ is a pseudo-cograph, then the fact that
	$G$ is an induced subgraph of $G$ implies that $G$ must be a pseudo-cograph.
	
	Now let $G$ be a $(v,G_1,G_2)$-pseudo-cograph. For $|V(G)|\leq 2$, every induced
	subgraph of $G$ contains one or two vertices and is, by definition, a
	pseudo-cograph. Thus, assume that $V(G)\geq 3$. Since any induced subgraph of a
	graph can be obtained by removing vertices one by one, it is sufficient to show
	that the removal of a single vertex from a pseudo-cograph yields a pseudo-cograph.
	Let $x\in V(G)$ be chosen arbitrarily and consider the graph $G-x$. If $x = v$,
	then by Obs.\ \ref{obs:G-v-Cograph}, $G-x$ is a cograph and Lemma
	\ref{lem:cographPscograph} implies that $G-x$ is a pseudo-cograph. 
	
	Hence, assume that $x\neq v$. By (F1), $x$ must be contained in either $V(G_1)$ or
	$V(G_2)$. W.l.o.g.\ assume that $x\in V(G_1)$. If $V(G_1) = \{x,v\}$ it follows
	that $G-x = G_2$ and, by (F2), $G-x$ is a cograph and thus, a pseudo-cograph.
	Assume that $V(G_1)\setminus \{x,v\}\neq \emptyset$. In this case, $G_1-x$ remains
	a cograph with at least two vertices. In particular, it holds that
	$V(G-x)=V(G_1-x)\cup V(G_2)$, $V(G_1-x)\cap V(G_2) = \{v\}$ and $|V(G_1-x)|,
	|V(G_2)|>1$. Thus (F1) is satisfied for $G-x$. Since $G_1-x$ is a cograph and $G_2$
	remained unchanged, (F2) is satisfied for $G-x$. Since $G-v$ satisfies $G-v =
	(G_1-v)\star (G_2-v)$, $\star\in \{\join, \union\}$ and since $V(G_1)\setminus
	\{x,v\}\neq \emptyset$, it is straightforward to show that $((G-x)-v=
	((G_1-x)-v)\star (G_2-v)$ is satisfied and thus (F3) holds for $G-x$. Hence, $G$ is
	a pseudo-cograph.
\end{proof}

\begin{lemma}\label{lem:diam}
If $G$ is a pseudo-cograph, then $\diam(H)\leq 4$ for every connected induced subgraph
$H$ of $G$. 
\end{lemma}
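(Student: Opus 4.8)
The plan is to reduce the statement to a claim about the whole graph. Since being a pseudo-cograph is hereditary (Lemma~\ref{lem:heri}), every connected induced subgraph $H$ of $G$ is itself a connected pseudo-cograph; hence it suffices to show that an arbitrary connected pseudo-cograph has diameter at most $4$. If $|V(H)|\le 2$ this is clear, so I would assume $|V(H)|\ge 3$ and use Definition~\ref{def:pseudo-cograph} to fix a decomposition making $H$ a $(v,G_1,G_2)$-pseudo-cograph. Writing $A\coloneqq V(G_1)\setminus\{v\}$ and $B\coloneqq V(G_2)\setminus\{v\}$, condition (F1) gives that $A,B$ are non-empty and disjoint with $A\cup B=V(H)\setminus\{v\}$, so that $H-v=H[A\cup B]$ equals $(G_1-v)\star(G_2-v)$ for some $\star\in\{\join,\union\}$ by (F3). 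The two values of $\star$ split the argument into the join case and the disjoint-union case.

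The backbone of both cases is Theorem~\ref{thm:CharCograph}(4): a connected cograph has diameter at most $2$, and, since $G_1,G_2$ are induced subgraphs of $H$, any $G_i$-path is an $H$-path, so $\dist_H(x,y)\le\dist_{G_i}(x,y)$ whenever $x,y\in V(G_i)$. In the disjoint-union case there are no edges of $H$ between $A$ and $B$. I would first show that $G_1$ and $G_2$ are both connected: if, say, $G_1$ had a component $C$ not containing $v$, then $C\subseteq A$ and every vertex of $C$ would have all its $H$-neighbours inside $C$ (no $A$--$B$ edges and $v\notin C$), making $C$ a union of connected components of $H$ with $\emptyset\ne C\subsetneq V(H)$, contradicting connectivity of $H$. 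Hence $G_1,G_2$ are connected cographs, each of diameter at most $2$, which yields $\dist_H(a,v)\le 2$ and $\dist_H(b,v)\le 2$ for all $a\in A$, $b\in B$, as well as $\dist_H(a,a')\le 2$ and $\dist_H(b,b')\le 2$ within the two sides. The only cross pairs $a\in A$, $b\in B$ are then bounded by routing through $v$: $\dist_H(a,b)\le\dist_H(a,v)+\dist_H(v,b)\le 4$.

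For the join case every $a\in A$ is adjacent to every $b\in B$, so $H[A\cup B]=(G_1-v)\join(G_2-v)$ is connected of diameter at most $2$ (any two vertices on the same side share a common neighbour on the other side, and the two non-empty sides guarantee such a neighbour exists). Since $H$ is connected, $v$ has at least one neighbour $u_0\in A\cup B$, giving $\dist_H(v,x)\le 1+\dist_H(u_0,x)\le 3$ for every $x\in A\cup B$; combined with the diameter-$2$ bound on $H[A\cup B]$ this shows the diameter is at most $3$ here. Taking the two cases together bounds the diameter of $H$ by $4$.

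The step I expect to be the most delicate is the connectivity argument in the disjoint-union case: one must verify that the absence of $A$--$B$ edges together with connectivity of $H$ forces $G_1$ and $G_2$ to be connected, which is exactly what upgrades the componentwise cograph diameter bound into a global statement and lets $v$ play the role of a single cut vertex. The remaining estimates are routine once the comparison $\dist_H(x,y)\le\dist_{G_i}(x,y)$ on $V(G_i)$ is in place.
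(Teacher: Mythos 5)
Your proof is correct and follows essentially the same route as the paper's: both split on whether $G-v$ is the disjoint union or the join of $G_1-v$ and $G_2-v$, invoke Theorem~\ref{thm:CharCograph}(4) to get diameter at most $2$ on each cograph side, and route cross pairs through $v$ to obtain the bound $4$ (respectively $3$ in the join case). The only organizational difference is that you first reduce to connected pseudo-cographs via the heredity Lemma~\ref{lem:heri} and then argue that $G_1,G_2$ are connected, whereas the paper works directly with an arbitrary connected induced subgraph $H$ and its intersections $H\cap G_1$, $H\cap G_2$; both are valid and the distance estimates are identical.
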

\begin{proof}
	Let $G$ be a $(v,G_1,G_2)$-pseudo-cograph. There are two cases (1) $G-v =
	(G_1-v)\union (G_2-v)$ or (2) $G-v = (G_1-v)\join (G_2-v)$. Assume first that $G$
	satisfies (1). Hence, $G$ can be obtained from $G_1$ and $G_2$ by taking copies of
	$G_1$ and $G_2$ that are identified on the single common vertex $v$. Let
	$H\subseteq G$ be a connected induced subgraph. If $H\subseteq G_i$, $i\in
	\{1,2\}$, then $\diam(H)\leq 2$ since $G_i$ is a cograph and by Thm.\
	\ref{thm:CharCograph}. Assume that $H$ is not entirely in one of $G_1$ and $G_2$.
	In this case, $H$ must contain vertex $v$ since $H$ is connected and $G-v =
	(G_1-v)\union (G_2-v)$. Hence, we can subdivide $H$ into $H_1\coloneqq H\cap G_1$
	and $H_2\coloneqq H\cap G_2$. Note, both $H_1$ and $H_2$ contain vertex $v$ and, in
	particular, must be induced connected subgraphs of $G_1$ and $G_2$, respectively.
	Again, since $H_i\subseteq G_i$ is an induced connected subgraph of the cograph
	$G_i$, we can apply Thm.\ \ref{thm:CharCograph} and conclude that $\diam(H_i)\leq
	2$, $i\in \{1,2\}$. The latter, in particular, implies that $\dist_{H_i}(v,w)\leq
	2$ for all $w\in V(H_i)$, $i\in \{1,2\}$. Since $H_1$ and $H_2$ have vertex $v$ in
	common it follows that for every two vertices $w\in V(G_1)$ and $u\in V(G_2)$ it
	holds that $\dist_{H}(w,u)\leq \dist_{H_1}(w,v) + \dist_{H_2}(v,u)\leq 4$. Hence,
	$\diam(H)\leq 4$.
	
	Assume now that $G$ satisfies (2). By similar arguments as in the previous case one
	shows that $\diam(H)\leq 4$ for all connected induced subgraphs $H$ of $G$ in case
	$H$ is entirely contained in one of $G_1$ and $G_2$. Assume that $H$ is not
	entirely in one of $G_1$ and $G_2$ and thus, it contains vertices of both $G_1$ and
	$G_2$. Let $x,y \in H$ be distinct. Assume first that $x,y\neq v$. If $x \in
	V(G_1)$ and $y \in V(G_2)$, then $\{x,y\}$ is an edge of $G$, by assumption, and
	thus $\dist_H(x,y)=1$. If $x,y \in V(G_i)$, $i\in\{1,2\}$, then there exists a
	vertex $z \in V(H) \cap V(G_j-v)$, $j\neq i$, such that both $\{x,z\}$ and
	$\{y,z\}$ are edges of $G$. Therefore, $\dist_H(x,y) \leq 2$. Finally assume that
	one $x$ and $y$ coincides with $v$, say $x=v$. Since $H$ is connected and
	$|V(H)|>1$ there is a vertex $z\in V(H)$ such that $\{z,v\}$ is an edge of $G$. If
	$y=z$, then $\dist_H(x,y)= 1$. Otherwise, if $y\neq z$ we can apply the previous
	arguments to the vertices $y$ and $z$ to conclude that $\dist_H(y,z) \leq 2$ and,
	therefore, $\dist_H(x,y) \leq 3$.
	
	In summary, $\diam(H)\leq 4$ for every connected induced subgraph $H$ of $G$.
\end{proof}

\begin{corollary}\label{cor:LongestPath}
A pseudo-cograph does not contain  induced paths $P_n$ with $n\geq 6$.
\end{corollary}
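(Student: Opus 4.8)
The plan is to obtain this as an immediate consequence of Lemma \ref{lem:diam}. The key observation is that an induced path $P_n$ on vertices $x_1-x_2-\cdots-x_n$ is itself a connected graph, and its diameter equals $n-1$: the two endpoints $x_1$ and $x_n$ realize the distance $\dist_{P_n}(x_1,x_n)=n-1$, and no pair of vertices in $P_n$ lies farther apart, so $\diam(P_n)=n-1$.

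Given this, I would argue by contradiction. Suppose a pseudo-cograph $G$ contains an induced path $P_n$ with $n\geq 6$; that is, there is a subset $W\subseteq V(G)$ with $G[W]\simeq P_n$. Then $H\coloneqq G[W]$ is a connected induced subgraph of $G$, and by the diameter computation above we have $\diam(H)=n-1\geq 5$. This directly contradicts Lemma \ref{lem:diam}, which guarantees $\diam(H)\leq 4$ for every connected induced subgraph $H$ of a pseudo-cograph $G$. Hence no such induced path $P_n$ with $n\geq 6$ can occur in $G$.

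I do not expect any genuine obstacle here: the entire content is packaged into Lemma \ref{lem:diam}, and the only additional ingredient is the elementary fact that $\diam(P_n)=n-1$. In particular, the hereditary property (Lemma \ref{lem:heri}) is not needed, since the required contradiction arises directly by applying the diameter bound to the connected induced subgraph $P_n$ itself; the single routine step worth spelling out is the distance calculation between the endpoints of the path.
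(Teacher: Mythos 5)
Your proof is correct and matches the paper's intent exactly: the corollary is stated without proof immediately after Lemma \ref{lem:diam}, and the intended argument is precisely that an induced $P_n$ with $n\geq 6$ would be a connected induced subgraph of diameter $n-1\geq 5$, contradicting the bound $\diam(H)\leq 4$. Nothing is missing.
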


The converse of Lemma \ref{lem:diam} is not satisfied, in general. To see this
consider a cycle $C$ on five vertices. Here $\diam(C)=2$ and thus, in particular,
$\diam(H)\leq 4$ for every connected induced subgraph $H$ of $C$. However, for all
vertices $v\in V(C)$, the graph $C-v$ is isomorphic to a $P_4$ and $C-v$ is not a
cograph. Thus (F3) cannot be satisfied which implies that $C$ is not a
pseudo-cograph. 

Pseudo-cographs are  closed under complementation. 
\begin{lemma}\label{lem:complement}
	A graph $G=(X,E)$ is a pseudo-cograph if and only if  $\overline{G}$ 
	is a pseudo-cograph. In particular, if $|X|\geq 3$ and 
	$G$ is a $(v,G_1,G_2)$-pseudo-cograph, then
	$\overline{G}$ is $(v,\overline{G_1},\overline{G_2})$-pseudo-cograph. 
\end{lemma}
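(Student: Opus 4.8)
The plan is to prove the ``in particular'' statement first, since it immediately yields the main equivalence by symmetry (using the involution $\overline{\overline{G}}=G$). So suppose $|X|\geq 3$ and let $G$ be a $(v,G_1,G_2)$-pseudo-cograph, witnessed by induced subgraphs $G_1,G_2\subseteq G$ satisfying (F1)--(F3). I claim that $\overline{G_1}$ and $\overline{G_2}$, taken as induced subgraphs of $\overline{G}$, witness that $\overline{G}$ is a $(v,\overline{G_1},\overline{G_2})$-pseudo-cograph. The first routine observation is that complementation commutes with taking induced subgraphs: for $W\subseteq X$ we have $\overline{G}[W]=\overline{G[W]}$. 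Since $G_i=G[V(G_i)]$ (the $G_i$ are induced), this gives $\overline{G}[V(G_i)]=\overline{G[V(G_i)]}=\overline{G_i}$, so $\overline{G_1}$ and $\overline{G_2}$ are genuinely induced subgraphs of $\overline{G}$ on the same vertex sets as $G_1,G_2$.

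With that in hand I verify (F1)--(F3) for $\overline{G}$ with respect to $v,\overline{G_1},\overline{G_2}$. Condition (F1) is immediate because it only constrains the vertex sets, and $V(\overline{G_i})=V(G_i)$, so $V(\overline{G})=X=V(G_1)\cup V(G_2)=V(\overline{G_1})\cup V(\overline{G_2})$, the intersection is still $\{v\}$, and the cardinality bounds are unchanged. Condition (F2) follows from Theorem~\ref{thm:CharCograph}: cographs are characterized by the absence of induced $P_4$s, a property manifestly preserved under complementation (indeed $\overline{P_4}\simeq P_4$), so since $G_1,G_2$ are cographs, $\overline{G_1},\overline{G_2}$ are cographs as well.

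The crux is condition (F3), and it is here that the join/union duality does the work. The key identities are that complementation swaps join and disjoint union: for vertex-disjoint graphs $H,H'$ one has $\overline{H\join H'}=\overline{H}\union \overline{H'}$ and $\overline{H\union H'}=\overline{H}\join\overline{H'}$. Now observe that $(G-v)$ and $(\overline{G})-v$ are complementary: $(\overline{G})-v=\overline{G}[X\setminus\{v\}]=\overline{G[X\setminus\{v\}]}=\overline{G-v}$, and likewise $\overline{G_i}-v=\overline{G_i-v}$. Suppose (F3) holds for $G$ in the union case, $G-v=(G_1-v)\union(G_2-v)$; since $G_1-v$ and $G_2-v$ are vertex-disjoint (their only common vertex was $v$), taking complements gives $(\overline{G})-v=\overline{G-v}=\overline{(G_1-v)\union(G_2-v)}=\overline{G_1-v}\join\overline{G_2-v}=(\overline{G_1}-v)\join(\overline{G_2}-v)$, which is the join case of (F3) for $\overline{G}$. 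The join case of (F3) for $G$ is handled symmetrically and produces the union case for $\overline{G}$. Thus (F3) holds for $\overline{G}$, completing the proof that $\overline{G}$ is a $(v,\overline{G_1},\overline{G_2})$-pseudo-cograph.

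Finally I deduce the main biconditional. If $|X|\leq 2$ then both $G$ and $\overline{G}$ are pseudo-cographs by definition, so the equivalence is trivial. If $|X|\geq 3$, the ``only if'' direction is exactly what was just shown, and the ``if'' direction follows by applying that implication to $\overline{G}$ together with $\overline{\overline{G}}=G$: if $\overline{G}$ is a pseudo-cograph then so is $\overline{\overline{G}}=G$. The only step requiring genuine care is (F3), where one must keep track that the two summands $G_1-v$ and $G_2-v$ are vertex-disjoint so the complementation-swaps-operations identities apply cleanly, and that the union/join case of $G$ maps to the join/union case of $\overline{G}$; everything else is bookkeeping that complementation respects induced subgraphs, vertex sets, and the $P_4$-free characterization of cographs.
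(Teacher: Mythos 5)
Your proposal is correct and follows essentially the same route as the paper's proof: verify (F1) via unchanged vertex sets, (F2) via closure of cographs under complementation, and (F3) via the duality $\overline{H\join H'}=\overline{H}\union\overline{H'}$ together with $\overline{G}-v=\overline{G-v}$, then obtain the full biconditional from $\overline{\overline{G}}=G$. The extra care you take about vertex-disjointness and commuting complementation with induced subgraphs is sound but does not change the argument.
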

\begin{proof}
	Since $\overline{\overline{G}} = G$ it suffices to show the
	\emph{only-if}-direction. Let $G=(X,E)$ be a pseudo-cograph. If $|X|\leq 2$, then
	$\overline{G}$ is trivially a pseudo-cograph. Hence, suppose that $|X|\geq 3$ and
	that $G$ is a $(v,G_1,G_2)$-pseudo-cograph. Since a graph and its complement have
	the same vertex sets, (F1) must hold for $\overline{G_1} $ and $\overline{G_2}$.
	Moreover, since $G_1$ and $G_2$ are cographs, their complements are cographs and
	thus, (F2) holds for $\overline{G_1} $ and $\overline{G_2}$. Since $G$ satisfies
	(F3), we have $G-v = (G_1-v) \star (G_2-v)$, where $\star \in \{\union, \join\}$.
	It remains to show that (F3) holds for $\overline{G}$. To this end note that
	$\overline{G} - v = \overline{G - v}$ and $\overline{G-v} = \overline{(G_1-v) \star
	(G_2-v)} = \overline{(G_1-v) }\overline{\star} \overline{(G_2-v)} =
	(\overline{G_1}-v) \overline{\star} (\overline{G_2}-v)$ with $\overline{\star}\in
	\{\union, \join\}\setminus \{\star\}$. Hence, (F3) is satisfied for $\overline{G}$.
	In summary, $\overline{G}$ is $(v,\overline{G_1},\overline{G_2})$-pseudo-cograph.
\end{proof}

The next lemma provides a key result that we re-use in many
of the upcoming proofs.
\begin{lemma}\label{lem:ConnComp-v}
	Let $G$ be a $(v, G_1, G_2)$-pseudo-cograph. Furthermore, let $\mathfrak{C}$ be the
	set of connected components of $G-v$ (resp., $\overline{G-v}$), whenever $G-v$ is
	disconnected (resp.\ connected). For every $H\in \mathfrak{C}$ it holds that either
	$H\subsetneq G_1$ or $H\subsetneq G_2$. Moreover, if there are two elements
	$H,H'\in \mathfrak{C}$ such that the subgraph of $G$ induced by $V(H) \cup V(H')
	\cup \{v\}$ contains an induced $P_4$, then $H\subsetneq G_i$ and $H'\subsetneq
	G_j$ with $i,j\in \{1,2\}$ being distinct.
\end{lemma}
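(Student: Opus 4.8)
The plan is to leverage condition (F3) together with the definition of $\mathfrak{C}$ to reduce both assertions to a statement about connected components of a disjoint union of two vertex-disjoint graphs. First I would note that, by (F3), $G-v$ is either the disjoint union $(G_1-v)\union(G_2-v)$ or the join $(G_1-v)\join(G_2-v)$; since $|V(G_1)|,|V(G_2)|>1$, both summands are non-empty, so in the first case $G-v$ is disconnected and in the second case $G-v$ is connected. This matches the case distinction in the definition of $\mathfrak{C}$: when $G-v$ is disconnected we take the components of $G-v=(G_1-v)\union(G_2-v)$, and when $G-v$ is connected we take the components of $\overline{G-v}$. In the latter case I would use the identity $\overline{(G_1-v)\join(G_2-v)}=\overline{G_1-v}\union\overline{G_2-v}$ (as already invoked in the proof of Lemma~\ref{lem:complement}) to rewrite $\overline{G-v}$ as a disjoint union as well. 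Thus in both cases $\mathfrak{C}$ is precisely the set of connected components of a graph of the form $A_1\union A_2$, where $A_i$ has vertex set $V(G_i)\setminus\{v\}$ for $i\in\{1,2\}$.

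For the first assertion, since $V(A_1)\cap V(A_2)=\emptyset$ and $A_1\union A_2$ has no edges between $V(A_1)$ and $V(A_2)$, every connected component $H\in\mathfrak{C}$ lies entirely inside $A_1$ or entirely inside $A_2$. Consequently $V(H)\subseteq V(G_i)\setminus\{v\}\subsetneq V(G_i)$ for some $i\in\{1,2\}$, which is exactly the claim $H\subsetneq G_1$ or $H\subsetneq G_2$ (properness being guaranteed by $v\in V(G_i)\setminus V(H)$).

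For the second assertion I would argue by contradiction. Suppose $H,H'\in\mathfrak{C}$ are such that the subgraph of $G$ induced by $W\coloneqq V(H)\cup V(H')\cup\{v\}$ contains an induced $P_4$, but, contrary to the claim, $H$ and $H'$ are contained in the same $G_i$, say $G_1$. Then $W\subseteq V(G_1)$ and, since $G_1=G[V(G_1)]$ is an induced subgraph of $G$ by Def.~\ref{def:pseudo-cograph}, the graph $G[W]$ is an induced subgraph of $G_1$. By (F2) $G_1$ is a cograph, and by Thm.~\ref{thm:CharCograph} every induced subgraph of a cograph is a $P_4$-free cograph; hence $G[W]$ contains no induced $P_4$, contradicting our assumption. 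Therefore $H\subsetneq G_i$ and $H'\subsetneq G_j$ with $i,j\in\{1,2\}$ distinct.

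The steps are largely routine; the only point requiring care is aligning the disconnected/connected dichotomy in the definition of $\mathfrak{C}$ with the disjoint-union/join dichotomy of (F3) and applying the complement identity in the join case, so that both cases collapse to the single observation about components of a disjoint union. Everything else follows from the hereditary, $P_4$-free nature of cographs.
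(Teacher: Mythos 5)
Your proposal is correct and follows essentially the same route as the paper's proof: reduce both cases (via the complement identity for the join) to components of a disjoint union $A_1\union A_2$ with $A_i = G_i - v$ (resp.\ $\overline{G_i - v}$), and then use heredity of the cograph property (Thm.~\ref{thm:CharCograph}) to rule out an induced $P_4$ inside a single $G_i$. The only cosmetic difference is that you derive the connected/disconnected dichotomy directly from (F3) and the non-emptiness of $G_i - v$, whereas the paper first invokes Obs.~\ref{obs:G-v-Cograph}; both are fine.
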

\begin{proof}
 Let $G$ be a $(v, G_1, G_2)$-pseudo-cograph. By Obs.\ \ref{obs:G-v-Cograph}, $G-v$
 is a cograph. By Thm.\ \ref{thm:CharCograph}, $G-v$ or its complement is
 disconnected. In particular, by (F3), $G-v$ is either the join or the disjoint union
 of $G_1-v$ and $G_2-v$. Note, if $G-v$ is connected (resp.\ disconnected) it cannot
 be the disjoint union (resp.\ join) of $G_1-v$ and $G_2-v$. This together with (F3)
 implies that if $G-v$ is connected, then $G-v=(G_1-v) \join (G_2-v)$ and if $G-v$ is
 disconnected, then $G-v=(G_1-v) \union (G_2-v)$. In the latter case, it is easy to
 see that $\mathfrak{C}$ consists of at least two elements and for every connected
 component $H \in \mathfrak C$ it must hold that either $H \subsetneq G_1$ or $H
 \subsetneq G_2$. Assume that $G-v$ is connected and thus, $G-v=(G_1-v) \join
 (G_2-v)$. In this case, we consider the connected components of $\overline{G-v} =
 \overline{(G_1-v) \join (G_2-v)} = (\overline{G_1-v}) \union (\overline{G_2-v})$.
 Again, for every connected component $H \in \mathfrak C$ it must hold that either
 $V(H) \subsetneq V(\overline{G_1}) = V(G_1)$ or $V(H) \subsetneq V(\overline{G_2})
 =V(G_2)$.
	
 Finally, by (F2), the graphs $G_1$ and $G_2$ are cographs. Thus, whenever the
 subgraph induced by $V(H) \cup V(H') \cup \{v\}$ contains an induced $P_4$, it
 cannot be entirely contained in either $G_1$ and $G_2$ and, therefore, $H\subsetneq
 G_i$ and $H'\subsetneq G_j$ with $i,j\in \{1,2\}$ being distinct.
\end{proof}

We are now in the position to provide several characterizations of pseudo-cographs
in terms of the connected components of $G-v$, resp., $\overline{G-v}$.
\begin{proposition}\label{prop:charPseudo}
A graph $G$ is a pseudo-cograph if and only if $|V(G)|\leq 2$ or there is a vertex
$v\in V(G)$ for which the following two conditions are satisfied:
\begin{enumerate}[noitemsep]
\item[(A1)] $G-v$ or $\overline{G-v}$ is disconnected with set of connected
            components $\mathfrak{C}$; and\smallskip
\item[(A2)] there is a bipartition $\{\mathfrak{C}_1, \mathfrak{C}_2\}$ of
            $\mathfrak{C}$ such that $G[V_1]$ and $G[V_2]$ are cographs where $V_i
            \coloneqq \{v\}\cup\left(\bigcup_{H\in \mathfrak{C}_i} V(H)\right)$,
            $i\in\{1,2\}$.
\end{enumerate}
In this case, $G$ is a $(v,G[V_1],G[V_2])$-pseudo-cograph.
\end{proposition}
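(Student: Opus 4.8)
The plan is to prove both directions of the equivalence, using the already-established structural lemmas (especially Observation~\ref{obs:G-v-Cograph} and Lemma~\ref{lem:ConnComp-v}) to bridge the gap between the recursive definition of pseudo-cographs and the condition phrased in terms of connected components.

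For the forward direction, suppose $G$ is a pseudo-cograph with $|V(G)|>2$, so $G$ is a $(v,G_1,G_2)$-pseudo-cograph for some $v$ and some cographs $G_1,G_2$. First I would invoke Observation~\ref{obs:G-v-Cograph} to conclude that $G-v$ is a cograph and that either $G-v$ or $\overline{G-v}$ is disconnected, which immediately gives (A1) with $\mathfrak{C}$ the set of connected components of whichever of $G-v,\overline{G-v}$ is disconnected. For (A2), Lemma~\ref{lem:ConnComp-v} tells us that each $H\in\mathfrak{C}$ satisfies $H\subsetneq G_1$ or $H\subsetneq G_2$; this lets me define $\mathfrak{C}_i\coloneqq\{H\in\mathfrak{C}\mid V(H)\subseteq V(G_i)\}$ and check this is a bipartition of $\mathfrak{C}$. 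The key point is that $V_i=\{v\}\cup\bigcup_{H\in\mathfrak{C}_i}V(H)$ then equals $V(G_i)$ (in the disconnected-$G-v$ case the $V(H)$ partition $V(G_i)\setminus\{v\}$, and dually for the $\overline{G-v}$ case, since complementation fixes vertex sets). Hence $G[V_i]=G_i$, which is a cograph by (F2), establishing (A2).

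For the converse, suppose (A1) and (A2) hold for some vertex $v$; set $G_1\coloneqq G[V_1]$, $G_2\coloneqq G[V_2]$. Conditions (F1) and (F2) are almost immediate: $V(G)=V_1\cup V_2$ and $V_1\cap V_2=\{v\}$ by construction of the $V_i$ from a bipartition of $\mathfrak{C}$, and $|V_i|>1$ because each $\mathfrak{C}_i$ is nonempty as a block of a bipartition (one must verify both blocks are nonempty, using that a disconnected graph has at least two components so $|\mathfrak{C}|\geq 2$); (F2) is exactly the cograph hypothesis in (A2). For (F3) I would argue by cases on which of $G-v,\overline{G-v}$ is disconnected. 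If $G-v$ is disconnected, then since $V_i\setminus\{v\}=\bigcup_{H\in\mathfrak{C}_i}V(H)$ is a union of whole connected components of $G-v$, there are no edges of $G-v$ between $V_1\setminus\{v\}$ and $V_2\setminus\{v\}$, and within each $V_i\setminus\{v\}$ the induced graph is $G_i-v$; this gives $G-v=(G_1-v)\union(G_2-v)$. Dually, if $\overline{G-v}$ is disconnected, applying the same reasoning to $\overline{G-v}$ and then taking complements yields $G-v=(G_1-v)\join(G_2-v)$, using that complementation interchanges $\union$ and $\join$ as recorded in the proof of Lemma~\ref{lem:complement}.

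The main obstacle I anticipate is the bookkeeping around vertex $v$ and the claim $G[V_i]=G_i$, i.e.\ that the $V_i$ constructed from the component bipartition genuinely recover induced subgraphs on which (F2) can be read off, together with confirming both $\mathfrak{C}_1,\mathfrak{C}_2$ are nonempty so that $|V_1|,|V_2|>1$ holds for (F1). In the forward direction one must be careful that the assignment of a component $H$ to $G_1$ or $G_2$ is well-defined and exhaustive; Lemma~\ref{lem:ConnComp-v} handles the ``either/or'' but I should check that the two options are mutually exclusive (which follows from $V(G_1)\cap V(G_2)=\{v\}$ and $v\notin V(H)$ since $H$ is a component of $G-v$). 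The final sentence, that $G$ is then a $(v,G[V_1],G[V_2])$-pseudo-cograph, is just the observation that the witnesses produced in the converse direction are exactly $G_1=G[V_1]$ and $G_2=G[V_2]$.
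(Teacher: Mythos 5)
Your proposal is correct and follows essentially the same route as the paper: Observation~\ref{obs:G-v-Cograph} for (A1), Lemma~\ref{lem:ConnComp-v} to sort components into $\mathfrak{C}_1,\mathfrak{C}_2$ for (A2), and the complementation identity interchanging $\union$ and $\join$ for (F3) in the converse. The only (harmless) deviation is that you treat the forward direction uniformly via Lemma~\ref{lem:ConnComp-v} and derive nonemptiness of both blocks from $|V(G_i)|>1$ in (F1), whereas the paper splits off the cograph case and picks an arbitrary bipartition there, using ``$G$ not a cograph'' to get nonemptiness in the remaining case.
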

\begin{proof}
 Let $G$ be a pseudo-cograph. If $|V(G)|\leq 2$, we are done. Hence assume that
 $|V(G)|\geq 3$ and that $G$ is a $(v, G_1, G_2)$-pseudo cograph. By Obs.\
 \ref{obs:G-v-Cograph}, $G-v$ is a cograph. Thm.\ \ref{thm:CharCograph} implies that
 either $G-v$ or $\overline{G-v}$ is disconnected. Thus, (A1) is satisfied. Let us
 denote with $\mathfrak{C} $ the set of connected components of the respective
 disconnected graph $G-v$ or $\overline{G-v}$. If $G$ is a cograph, then we put
 $\mathfrak{C}_1 = \{H\}$ for some $H\in \mathfrak{C}$ and
 $\mathfrak{C}_2=\mathfrak{C}\setminus H$ to obtain a bipartition of $\mathfrak{C}$.
 Put $V_1 = V(H)\cup\{v\}$ and $V_2 = \{v\}\cup(\cup_{H\in \mathfrak{C}_2} V(H))$.
 Both graphs $G[V_1]$ and $G[V_2]$ are induced subgraphs of $G$ and, by Thm.\
 \ref{thm:CharCograph}, $G[V_1]$ and $G[V_2]$ are cographs. Hence, (A2) is satisfied.
 If $G$ is not a cograph, we can apply Lemma \ref{lem:ConnComp-v} to conclude that
 for every $H\in \mathfrak{C}$ it holds that either $V(H)\subsetneq V(G_1)$ or
 $V(H)\subsetneq V(G_2)$. Hence, there is a quasi-bipartition $\mathfrak{C}_1\union
 \mathfrak{C}_2$ of $\mathfrak{C}$ defined by putting $H\in \mathfrak{C}_i$ whenever
 $V(H)\subsetneq V(G_i)$, $i\in \{1,2\}$. By construction $V(G_i) = V_i \coloneqq
 \{v\}\cup\left(\bigcup_{H\in \mathfrak{C}_i} V(H)\right)$ and thus, $G[V_i]=G_i$,
 $i\in \{1,2\}$ is a cograph. Since $G$ is not a cograph, it follows that neither
 $G[V_1]\simeq G$ nor $G[V_2]\simeq G$, which implies that $\mathfrak{C}_1\neq
 \emptyset$ and $\mathfrak{C}_2\neq\emptyset$. Therefore, $\{\mathfrak{C}_1,
 \mathfrak{C}_2\}$ is a bipartition of $\mathfrak{C}$ that satisfies (A2).
   	 
 For the converse, if $|V(G)|\leq 2$, then we are done. Hence, assume that
 $|V(G)|\geq 3$ and that there is a vertex $v\in V(G)$ such that (A1) and (A2) are
 satisfied. By (A1), $G-v$ or its complement is disconnected and we denote with
 $\mathfrak{C}$ the set of connected components of the respective disconnected graph.
 Since (A2) is satisfied, there is a bipartition $\mathfrak{C}_1\union
 \mathfrak{C}_2$ of $\mathfrak{C}$ such that $G_1\coloneqq G[V_1]$ and $G_2\coloneqq
 G[V_2]$ are cographs where $V_i \coloneqq \{v\}\cup\left(\bigcup_{H\in
 \mathfrak{C}_i} V(H)\right)$, $i\in\{1,2\}$. Therefore, $G_1$ and $G_2$ satisfy
 (F2). Moreover, since $\mathfrak{C}_1\neq \emptyset$ and $\mathfrak{C}_2\neq
 \emptyset$ and by construction of $V_1$ and $V_2$ it follows that (F1) is satisfied.
 We continue with showing (F3). If $G-v$ is disconnected, then $G-v =\union_{H\in
 \mathfrak{C}} H = (\union_{H\in \mathfrak{C}_1} H)\union (\union_{H\in
 \mathfrak{C}_2} H) = (G_1-v) \union (G_2-v)$ and (F3) is satisfied. Assume now that
 $G-v$ is connected, in which case $\overline{G-v}$ is disconnected and, in
 particular, $\overline{G-v} = \union_{H\in \mathfrak{C}} H$. Hence, $G-v =
 \overline{\overline{G-v}} = \overline{\union_{H\in \mathfrak{C}} H} =
 \overline{(\union_{H\in \mathfrak{C}_1} H)\union (\union_{H\in \mathfrak{C}_2} H)}
 =\overline{(\union_{H\in \mathfrak{C}_1} H)}\join \overline{(\union_{H\in
 \mathfrak{C}_2} H)} =(G_1-v)\join (G_2-v)$. Hence, (F3) is satisfied.
			 
	In summary, $G$ and $v$ satisfy (F1), (F2) and (F3) and, therefore, $G$ is a
	$(v,G[V_1],G[V_2])$-pseudo-cograph. 
\end{proof}

\begin{theorem}\label{thm:CharPsG}
The following statements are equivalent for every graph $G$. 
\begin{enumerate}
	\item $G$ is a pseudo-cograph. 
  \item Either $G$ is a cograph or there is a vertex $v\in V(G)$ that satisfies the following conditions:
	\begin{enumerate}
		\item[(B1)] $G-v$ or $\overline{G-v}$ is disconnected with set of connected components $\mathfrak{C}$; and
		\item[(B2)] $G[V(H)\cup \{v\}]$ is a cograph for all $H\in \mathfrak{C}$; and 
		\item[(B3)]  all edges of the graph $\Gamma(G,v)$ are incident to the same
		             vertex, where $\Gamma(G,v)$ is the undirected graph whose vertex set
		             is $\mathfrak{C}$ and that contains all edges $\{H,H'\}$ for which
		             the subgraph of $G$ induced by $V(H)\cup V(H')\cup\{v\}$ contains an
		             induced $P_4$. 
	\end{enumerate}
	\item Either $G$ is a cograph or there is a vertex $v\in V(G)$ that satisfies the following conditions:
	\begin{enumerate}
		\item[(C1)] $G-v$ or $\overline{G-v}$ is disconnected with set of connected components $\mathfrak{C}$; and
		\item[(C2)] There exists a component $H \in \mathfrak C$ such that $G[V(H)\cup 		
		\{v\}]$ and $G[W]$ are cographs, where $W\coloneqq
		            V(G)\setminus V(H)$. Moreover, $\mathfrak C$ contains at most two 
		            components satisfying the latter property.
		            \smallskip
					
								In this case, $G$ or $\overline G$ is a $(v,G',G'')$-pseudo-cograph 
								with $G' =  G[V(H_i)\cup \{v\}]$ and $G''=G[W_i]$, $i\in \{1,2\}$.
	\end{enumerate}
\end{enumerate}
\end{theorem}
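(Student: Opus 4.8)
The plan is to prove the theorem as a cycle of implications, leveraging the already-established Proposition \ref{prop:charPseudo} as the hub. Specifically, I would show $(1)\Leftrightarrow(2)$ and $(2)\Leftrightarrow(3)$, treating the cograph case separately in each direction since it is handled explicitly in both characterizations. Throughout, by Obs.\ \ref{obs:G-v-Cograph} and Thm.\ \ref{thm:CharCograph}, I may assume $|V(G)|\geq 3$ and that $G-v$ is a cograph with exactly one of $G-v,\overline{G-v}$ disconnected; conditions (A1), (B1), (C1) are then the same statement, so the real content lies in relating (A2), (B2)+(B3), and (C2).

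First I would prove $(1)\Rightarrow(2)$. Assume $G$ is a pseudo-cograph that is not a cograph, so by Prop.\ \ref{prop:charPseudo} there is a vertex $v$ and a bipartition $\{\mathfrak{C}_1,\mathfrak{C}_2\}$ of $\mathfrak{C}$ with $G[V_1],G[V_2]$ cographs. Since each $G[V(H)\cup\{v\}]$ is an induced subgraph of the cograph $G[V_i]$ (where $H\in\mathfrak{C}_i$), Thm.\ \ref{thm:CharCograph} gives (B2). For (B3), the crucial tool is the second statement of Lemma \ref{lem:ConnComp-v}: whenever $V(H)\cup V(H')\cup\{v\}$ induces a $P_4$, the components $H,H'$ lie in \emph{distinct} parts $\mathfrak{C}_i,\mathfrak{C}_j$. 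Thus every edge of $\Gamma(G,v)$ is a ``crossing'' edge of the bipartition. Because each $G[V_i]$ is a cograph containing no induced $P_4$, no edge of $\Gamma(G,v)$ can join two components within the same part; hence $\Gamma(G,v)$ is bipartite with all edges crossing. To upgrade ``bipartite'' to ``all edges incident to a single vertex'' (a star), I would argue that if $\Gamma(G,v)$ contained two independent edges $\{H,H'\}$ and $\{K,K'\}$, one could produce a refined bipartition violating the cograph property of one part, or more directly, show that the $P_4$-structure forces all $P_4$-inducing component-pairs to share a common component. This star-structure step is where I expect the main obstacle to lie, and I would carefully analyze how two disjoint $P_4$'s spanning four distinct components interact with the single vertex $v$.

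For the converse $(2)\Rightarrow(1)$, assuming (B1)--(B3) with $\Gamma(G,v)$ a star centered at some $H^*\in\mathfrak{C}$, I would construct the bipartition required by Prop.\ \ref{prop:charPseudo} by setting $\mathfrak{C}_1=\{H^*\}$ and $\mathfrak{C}_2=\mathfrak{C}\setminus\{H^*\}$ (if $\Gamma(G,v)$ has no edges at all, any singleton split works). Then $G[V_1]=G[V(H^*)\cup\{v\}]$ is a cograph by (B2); and $G[V_2]=G[\{v\}\cup\bigcup_{H\neq H^*}V(H)]$ is a cograph because no two components in $\mathfrak{C}_2$ induce a $P_4$ together with $v$ (all $\Gamma$-edges touch $H^*$), so any induced $P_4$ in $G[V_2]$ would have to lie within a single $G[V(H)\cup\{v\}]$, contradicting (B2). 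Verifying that $G[V_2]$ has no induced $P_4$ requires checking that a hypothetical $P_4$ using three or more distinct components also reduces to a forbidden two-component-plus-$v$ configuration; this uses that $\mathfrak{C}$ are the connected components of a graph in $\{G-v,\overline{G-v}\}$, so any cross-component adjacencies are mediated through $v$. Prop.\ \ref{prop:charPseudo} then yields that $G$ is a $(v,G[V_1],G[V_2])$-pseudo-cograph.

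Finally I would establish $(2)\Leftrightarrow(3)$. For $(2)\Rightarrow(3)$, with $\Gamma(G,v)$ a star centered at $H^*$, I would take $H_1=H^*$ and verify (C2): $G[V(H_1)\cup\{v\}]$ is a cograph by (B2), and $G[W_1]=G[V(G)\setminus V(H_1)]$ equals $G[V_2]$ from the previous paragraph, hence a cograph. If $\Gamma(G,v)$ is edgeless one even has more than two valid choices of $H_i$, but (C2) only asserts ``at most two'' such components, which I must reconcile by noting (C2)'s bound is an upper bound characterizing the non-cograph case precisely. For $(3)\Rightarrow(2)$, given $H_i$ with $G[V(H_i)\cup\{v\}]$ and $G[W_i]$ cographs, I recover a bipartition $\{\{H_i\},\mathfrak{C}\setminus\{H_i\}\}$ that satisfies (A2), and then invoke the already-proven equivalence $(1)\Leftrightarrow(2)$ to deduce (B2),(B3); alternatively I derive (B3) directly by showing any $P_4$-inducing pair must involve $H_i$ since otherwise it would sit inside the cograph $G[W_i]$. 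The final clause of (C2)---that $G$ or $\overline G$ is a $(v,G',G'')$-pseudo-cograph with the stated $G',G''$---follows immediately from Prop.\ \ref{prop:charPseudo} applied to $G$ (when $G-v$ is disconnected) or to $\overline G$ via Lemma \ref{lem:complement} (when $\overline{G-v}$ is disconnected, noting $\overline{G-v}=\overline G-v$).
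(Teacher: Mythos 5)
Your overall architecture ((1)$\Leftrightarrow$(2), (2)$\Leftrightarrow$(3), with the complement trick via Lemma \ref{lem:complement} when $\overline{G-v}$ is the disconnected graph) matches the paper's, and the parts you do carry out --- deriving (B2) from Lemma \ref{lem:ConnComp-v}, building the bipartition $\{\{H^*\},\mathfrak{C}\setminus\{H^*\}\}$ for (2)$\Rightarrow$(1), and reducing any induced $P_4$ of $G[V_2]$ to a two-components-plus-$v$ configuration --- are sound. However, there is a genuine gap exactly where you flag it: the proof that all edges of $\Gamma(G,v)$ share a vertex in (1)$\Rightarrow$(2). You establish bipartiteness and then offer two candidate strategies without executing either; the first (refining the bipartition) does not obviously lead anywhere, since two independent edges $\{H_1,H_2\}$ and $\{H_1',H_2'\}$ are perfectly compatible with a bipartition that separates each pair, and the second is essentially a restatement of the goal. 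The paper's argument is concrete: because $H_1$ and $H_2$ are distinct components of the disconnected graph, the witnessing induced $P_4$ in $G[V(H_1)\cup V(H_2)\cup\{v\}]$ must have the form $y-x-v-z$ with $x,y\in V(H_1)$ and $z\in V(H_2)$ (up to swapping roles), and likewise $y'-x'-v-z'$ for the second edge. One then checks directly that $y-x-v-x'$ and $y'-x'-v-z$ are induced $P_4$s (all cross-component pairs are non-adjacent), so $H_1'$ is $\Gamma$-adjacent to both $H_1$ and $H_2$, yielding a triangle in the bipartite graph $\Gamma(G,v)$ --- a contradiction. Without some such argument, (B3) and hence (1)$\Rightarrow$(2) is not proved.

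A second, smaller omission: in (2)$\Rightarrow$(3) you exhibit only one component satisfying (C2) and never address the ``at most two'' bound, which is part of the statement. The paper shows that if $\Gamma(G,v)$ has a single edge $\{H,H'\}$ then both $H$ and $H'$ qualify, while if the star has at least two edges $\{H,H'\}$ and $\{H,H''\}$ then any further candidate $H'''$ would force $G[W''']$ to contain $H$ together with one of its $\Gamma$-neighbours and hence an induced $P_4$, so $G[W''']$ is not a cograph. You would need to supply this counting argument to obtain statement (3) as written.
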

\begin{proof}
 We start with showing that (1) implies (2). Hence, let $G$ be a pseudo-cograph. If
 $G$ is a cograph, we are done. Thus, assume that $G$ is not a cograph and hence,
 $|V(G)|\geq 4$. In particular, $G$ is a $(v,G_1,G_2)$-pseudo-cograph for some $v\in
 V(G)$ and some $G_1,G_2\subset G$. By Obs.\ \ref{obs:G-v-Cograph}, $G-v$ is a
 cograph and, by Thm.\ \ref{thm:CharCograph}, either $G-v$ or $\overline{G-v}$ is
 disconnected, i.e., Condition (B1) holds and therefore, $\Gamma(G,v)$ is
 well-defined. Let $\mathfrak{C}$ be the set of connected components of the
 disconnected graph in $\{G-v, \overline{G-v}\}$. By Lemma \ref{lem:ConnComp-v},
 either $H\subsetneq G_1$ or $H\subsetneq G_2$ for all $H\in \mathfrak{C}$. Hence,
 $G[V(H)\cup \{v\}]$ is an induced subgraph of either $G_1$ or $G_2$ and thus,
 $G[V(H)\cup \{v\}]$ must be a cograph by Thm.\ \ref{thm:CharCograph}. Therefore (B2)
 is satisfied. 
	
 	We continue with showing that (B3) holds. In the following, we assume first that
 $G-v$ is disconnected. If $|\mathfrak{C}| = 2$, then $\Gamma(G,v)$ contains at most
 one edge and the statement is vacuously true. Let $|\mathfrak{C}| \geq 3$. By Prop.\
 \ref{prop:charPseudo} there is a there is a bipartition $\{\mathfrak{C}_1,
 \mathfrak{C}_2\}$ of $\mathfrak{C}$ such that $G[V_1]$ and $G[V_2]$ are cographs
 where $V_i \coloneqq \{v\}\cup\left(\bigcup_{H\in \mathfrak{C}_i} V(H)\right)$,
 $i\in\{1,2\}$. This immediately implies that $\Gamma(G,v)$ can only contain edges
 $\{H,H'\}$ with $H\in V_1$ and $H'\in V_2$ and thus, $\Gamma(G,v)$ must be
 bipartite. Assume, for contradiction, that not all edges of $\Gamma(G,v)$ are
 incident to the same vertex. Since $\Gamma(G,v)$ is bipartite, there must be two
 vertex disjoint edges, say $\{H_1,H_2\}$ and $\{H'_1,H'_2\}$ in $\Gamma(G,v)$.
 W.l.o.g.\ we assume, by Lemma \ref{lem:ConnComp-v}, that $H_1\subset G_1$ and
 $H_2\subset G_2$. Since $H_1\cup H_2\subseteq G-v$ is a cograph and since
 $\{H_1,H_2\}$ is an edge in $\Gamma(G,v)$, there is an induced $P_4$ in $G$ that
 contains $v$ and three further vertices $x,y$ and $z$ such that two of these
 vertices are contained in $H_i$ and one vertex is contained in $H_j$, $\{i,j\} =
 \{1,2\}$. W.l.o.g.\ assume that $x,y\in V(H_1)$ and $z\in V(H_2)$. Since there are
 no edges between vertices of $H_1$ and $H_2$ in $G$, we can assume w.l.o.g.\ that
 the edges of this $P_4$ are $\{v,x\}\{x,y\}$ and $\{v,z\}$. By similar arguments and
 since $H'_1\cup H'_2\subseteq G-v$ is a cograph and $\{H'_1,H'_2\} \in
 E(\Gamma(G,v))$, there is an induced $P_4$ that contains $v$ and three further
 vertices $x',y'$ and $z'$ (that are all distinct from $x,y$ and $z$) and that has
 edges $\{v,x'\}\{x',y'\}$ and $\{v,z'\}$. Moreover, $x',y' \in V(H'_i)$ and $z'\in
 V(H'_j)$, $\{i,j\} = \{1,2\}$. It is now straightforward to verify that the subgraph
 $\Gamma(G,v)$ induced by $H_1,H_2$ and $H'_i$ must be a $K_3$ since all subgraphs of
 $G$ induced by $V(H_1)\cup V(H_2)\cup \{v\}$, $V(H_1)\cup V(H'_i)\cup \{v\}$ and
 $V(H_2)\cup V(H'_i)\cup \{v\}$ contain an induced $P_4$; a contradiction to
 bipartiteness of $\Gamma(G,v)$. Hence, all edges must be incident to the same vertex
 and thus, Statement (2) holds in case that $G-v$ is disconnected. Assume that $G-v$
 is connected. Since $G-v$ is a cograph, $\overline{G-v} =\overline{G}-v$ is
 disconnected. Moreover, Lemma \ref{lem:complement} implies that $\overline{G}$ is a
 $(v,\overline{G_1},\overline{G_2})$-pseudo-cograph and we can apply analogous
 arguments to $\overline{G}$ and $\overline{G}-v$ to conclude that statement (2)
 holds. 
	
	We continue with showing that (2) implies (3). If $G$ is a cograph, there is
	nothing to show. Hence, assume that $G$ is not a cograph and that there is a vertex
	$v\in V(G)$ such that (B1), (B2) and (B3) are satisfied. Note, (B1) and (C1) are
	equivalent. Hence, it remains to show that (C2) is satisfied. Let $\mathfrak{C}$ be
	the set of connected components of the disconnected graph in
	$\{G-v,\overline{G-v}\}$. By (B3), there is a connected component $H\in
	\mathfrak{C}$ to which all edges of $\Gamma(G,v)$ are incident. By (B2),
	$G[V(H')\cup \{v\}]$ is a cograph for all $H'\in \mathfrak{C}$. Hence, $G[V(H)\cup
	\{v\}]$ is a cograph and moreover, every component $H'\in \mathfrak{C}$ is a
	cograph, since they are induced subgraphs of the cographs $G[V(H')\cup \{v\}]$
	(cf.\ Thm.\ \ref{thm:CharCograph}). This implies that $G-v$ is a cograph since
	$G-v$ or $\overline{G-v}$ is disconnected whose connected components are cographs.
	Hence, if there is any induced $P_4$ of $G$, then it must contain vertex $v$. Now
	put $G[W]$, where $W\coloneqq V(G)\setminus V(H)$. Note that $v\in W$. By (B3), the
	subgraph of $\Gamma(G,v)$ induced by the vertex set $\mathfrak{C}\setminus \{H\}$
	does not contain any edges. This together with the fact that $v$ is contained in
	every induced $P_4$ of $G$ implies that $G[W]$ must be a cograph. 

	So-far, we have shown that there is at least one component $H\in \mathfrak{C}$ such 
	that
	$G[V(H)\cup \{v\}]$ and $G[W]$ are cographs. It remains to show that there exists at 
	most two such components. Since $G$ is not a cograph and $G-v$
	is a cograph and since every induced $P_4$ in $G$ contains vertex $v$, we can
	conclude that $\Gamma(G,v)$ contains at least one edge $\{H,H'\}$. It is easy to
	see that if $\Gamma(G,v)$ contains only the edge $\{H,H'\}$, then all of the
	previous arguments hold also for $G'_1\coloneqq G[V(H')\cup \{v\}]$ and $G'_2 =
	G[W']$, where $W'\coloneqq V(G)\setminus V(H')$. Hence, there are at least two
	components in $\mathfrak{C}$ for which (B2) is satisfied. Now assume that
	$\Gamma(G,v)$ contains at least two edges $\{H,H'\}$ and $\{H,H''\}$. Assume, for
	contradiction, that there is a further $H'''\in \mathfrak{C}$, $H\neq H'''$ such
	that $G[V(H''')\cup \{v\}]$ and $G[W''']$ are cographs where $W'''\coloneqq
	V(G)\setminus V(H''')$. Hence, $H$ must be contained in $G[W''']$ and at least one
	of $H'$ and $H''$ must be contained in $G[W''']$ as well. But since $H$ is incident
	both $H'$ and $H''$ one of the induced subgraphs $G[V(H)\cup V(H')\cup v]$ or
	$G[V(H)\cup V(H'')\cup v]$ is an induced subgraph of $G[W''']$ and contains induced
	$P_4$s. Hence, $G[W''']$ is not a cograph; a contradiction. Therefore, (C2) is
	satisfied.

 We finally show that (3) implies (1). If $G$ is a cograph, then it is a
 pseudo-cograph by Lemma \ref{lem:cographPscograph}. Assume that $G$ is not a cograph
 and thus $|X|\geq 4$. Let $v\in V(G)$ be a vertex satisfying (C1) and (C2). Let
 $\mathfrak{C}$ be the set of connected components of the disconnected graph in
 $\{G-v,\overline{G-v}\}$. By assumption, there is a component $H \in \mathfrak{C}$
 such that $G[V(H)\cup \{v\}]$ and $G[W]$ are cographs with $W\coloneqq V(G)\setminus
 V(H)$. If $G-v$ is disconnected, then put $G_1\coloneqq G[V(H)\cup \{v\}]$ and
 $G_2\coloneqq G[W]$, and otherwise, put $G_1\coloneqq \overline{G[V(H)\cup \{v\}}]$
 and $G_2\coloneqq \overline{G[W]}$. We show that $G$ is
 $(v,G_1,G_2)$-pseudo-cograph. By construction and the latter arguments (F1) and (F2)
 are satisfied. Moreover, if $G-v$ is disconnected it is, by construction, the
 disjoint union of $G_1-v$ and $G_2-v$ and, if $G-v$ is connected, then
 $\overline{G-v} = G[V(H)]\union G[W\setminus\{v\}]$ and thus, $G-v =
 \overline{\overline{G-v}} = \overline{G[V(H)]\union G[W\setminus \{v\}]} =
 \overline{G[V(H)]}\join \overline{G[W\setminus \{v\}]} = G_1-v \join G_2-v$. Hence,
 (F3) is satisfied, which completes the proof. 
\end{proof}

\begin{corollary}\label{cor:Gamma}
Let $G$ be a pseudo-cograph and $v\in V(G)$ be a vertex that is contained in every
 induced $P_4$ of $G$. Then, $\Gamma(G,v)$ is either edge-less (in which case $G$ is a
 cograph) or it contains precisely one connected component that is isomorphic to a
 star while all other remaining components (if there are any) are single vertex
 graphs (in which case $G$ is not a cograph).
\end{corollary}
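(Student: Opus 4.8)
The plan is to establish the stated dichotomy in two stages: first that $\Gamma(G,v)$ has an edge if and only if $G$ is not a cograph, and then that in the non-cograph case all edges of $\Gamma(G,v)$ are incident to a common vertex — which is exactly what ``precisely one star component, all others isolated vertices'' encodes. For the set-up, since $v$ lies in every induced $P_4$, Observation \ref{obs:G-v-Cograph} gives that $G-v$ is a cograph and that $G-v$ or $\overline{G-v}$ is disconnected, so $\mathfrak C$ and $\Gamma(G,v)$ are well defined. Because $P_4$ is self-complementary and $\overline{G}-v=\overline{G-v}$, both $\mathfrak C$ and the edge relation of $\Gamma$ are invariant under complementation; together with Lemma \ref{lem:complement} this lets me assume without loss of generality that $G-v$ itself is disconnected, with components $\mathfrak C=\{H_1,\dots,H_k\}$, each a cograph and $k\ge 2$. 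For the first stage I take any induced $P_4$ (it must contain $v$); deleting $v$ leaves either a $P_3$ or an edge plus an isolated vertex, so its three non-$v$ vertices meet at most two components of $G-v$. Hence every $P_4$ already lives in some $G[V(H)\cup V(H')\cup\{v\}]$ and yields an edge of $\Gamma$, while a cograph has no $P_4$ at all; this settles the edge-less case.

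For the non-cograph case I classify each component $H$ by how $v$ attaches: $v$ is adjacent to none of $H$, to all of $H$, or — call $H$ \emph{split} — to a proper nonempty subset (using that $H$ is connected). The two structural facts I aim for are: (i) every edge of $\Gamma$ has a split endpoint, since a $P_4$ witnessing an edge $\{H,H'\}$ lies in $G[V(H)\cup V(H')\cup\{v\}]$ and, inspecting whether $v$ is an end- or inner vertex, one of $H,H'$ must have $v$ adjacent to a proper nonempty subset of it; and (ii) there are at most two split components, since three split components supply edges $a_ib_i$ with $v\sim a_i$, $v\not\sim b_i$, so that $\{v,a_1,b_1,a_2,b_2,a_3,b_3\}$ induces the spider with centre $v$ and legs $v-a_i-b_i$. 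That spider contains the induced spider on $\{v,a_1,b_1,a_2,b_2,a_3\}$ with legs $2,2,1$, which is not a pseudo-cograph (see below), so Lemma \ref{lem:heri} gives a contradiction.

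Granting (i) and (ii), I finish by cases. If there is at most one split component, then by (i) all edges are incident to it, giving a single star — nonempty because $G$ is not a cograph — together with isolated vertices. If there are exactly two split components $H_1,H_2$, they are adjacent, as $b_1-a_1-v-a_2$ is an induced $P_4$; when $k=2$ this is the whole picture, and when $k\ge 3$ I show any further component $X$ is isolated: if $X$ were adjacent to, say, $H_1$, the witnessing $P_4$ together with the pendant path $v-a_2-b_2$ from $H_2$ would induce, according to the position of $v$ in that $P_4$, either an induced $P_6$ or the legs-$2,2,1$ spider, both impossible (the former by Corollary \ref{cor:LongestPath}). Thus the only edge is $\{H_1,H_2\}$ and all other components are isolated, again one star plus isolated vertices. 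The very same pendant-path trick rules out a ``bad'' component (one with $G[V(H)\cup\{v\}]$ already containing a $P_4$, which is automatically split) coexisting with a second split component, so such components need no separate treatment.

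The main obstacle is the combinatorial claim in (ii): the legs-$2,2,1$ spider $S$ on $\{c,a_1,b_1,a_2,b_2,d\}$ with edges $ca_1,a_1b_1,ca_2,a_2b_2,cd$ is not a pseudo-cograph. I expect to prove this through Proposition \ref{prop:charPseudo}. One first checks that the centre $c$ is the only vertex lying in every induced $P_4$ of $S$, so by Observation \ref{obs:G-v-Cograph} it is the only candidate distinguished vertex. For $c$, the components of $S-c$ are $\{a_1b_1\},\{a_2b_2\},\{d\}$, and every bipartition of these into two blocks forces either the two length-two legs into one block (inducing a $P_5$) or a leg together with $d$ (inducing a $P_4$), so condition (A2) fails and no distinguished vertex exists. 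Via Lemma \ref{lem:heri} this single verification simultaneously disposes of the legs-$2,2,2$ spider used in (ii) and of the obstructions arising in the two-split-component finish.
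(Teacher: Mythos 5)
Your argument is correct, but it takes a genuinely different route from the paper. The paper's proof of this corollary is essentially a two-line deduction: it observes that $\Gamma(G,v)$ is well defined and then cites condition (B3) of Theorem~\ref{thm:CharPsG}, whose own proof obtains the star property from the bipartition $\{\mathfrak{C}_1,\mathfrak{C}_2\}$ of Proposition~\ref{prop:charPseudo} together with a bipartiteness/$K_3$ contradiction. You instead rebuild the star structure from scratch: after the (valid) complementation reduction, you classify components by how $v$ attaches (none/all/split), show every edge of $\Gamma$ has a split endpoint, bound the number of split components by two via the forbidden legs-$(2,2,1)$ spider (correctly certified as a non-pseudo-cograph through Proposition~\ref{prop:charPseudo}), and close the remaining cases with the $P_6$-exclusion of Corollary~\ref{cor:LongestPath} and heredity (Lemma~\ref{lem:heri}); I checked the case analysis, including the terse ``bad component'' remark, and it goes through. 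What the paper's route buys is brevity, since Theorem~\ref{thm:CharPsG} already carries the load; what your route buys is that it works directly with an \emph{arbitrary} vertex $v$ lying on all induced $P_4$s rather than with the distinguished center of a pseudo-cograph decomposition (the paper's appeal to (B3), which is an existence statement for some $v$, quietly relies on identifying $P_4$-universal vertices with valid centers as in Lemma~\ref{lem:v-in-allP4}), and it isolates concrete forbidden induced subgraphs (the spider and $P_6$), which is of independent interest given the paper's closing question about forbidden-subgraph characterizations.
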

\begin{proof}
 Let $G$ be a pseudo-cograph and $v\in V(G)$ be a vertex that is contained in every
 induced $P_4$ of $G$. In this case, $G-v$ must be a cograph. If $|V(G-v)|\in
 \{0,1\}$, then $\Gamma(G,v)$ is trivially edge-less. Assume that $|V(G-v)|\geq 2$.
 By Thm.\ \ref{thm:CharCograph}, either $G-v$ or $\overline{G-v}$ must be
 disconnected. Thus, $\Gamma(G,v)$ is well-defined and has as vertex set
 $\mathfrak{C}$ the connected components of the disconnected graph in
 $\{G-v,\overline{G-v}\}$. In particular, $|\mathfrak{C}|\geq 2$ and thus,
 $\Gamma(G,v)$ has at least two vertices. If $G$ is a cograph, then $G$ does not
 contain any induced $P_4$ and one easily verifies that $\Gamma(G,v)$ is edge-less.
 Assume now that $G$ contains induced $P_4$s. By Thm.\ \ref{thm:CharPsG}(B3), all
 edges of the graph $\Gamma(G,v)$ are incident to the same vertex and thus,
 $\Gamma(G,v)$ contains precisely one connected component that is isomorphic to a
 star while all other remaining components are $K_1$s.
\end{proof}

We investigate now in some detail to what extent the choice of the vertices $v$
and subgraphs $G_1$ and $G_2$ are unique for $(v,G_1,G_2)$-pseudo-cographs.

\begin{lemma}\label{lem:v-in-allP4}
	Let $G$ be a $(v,G_1,G_2)$-pseudo-cograph. Then, every induced $P_4\subseteq G$
	must contain vertex $v$ and, if $G$ is not a cograph, there are at most four
	vertices $v'\in V$ such that $G$ is a $(v',G'_1,G'_2)$-pseudo-cograph. In particular,
	if $\{P^1,\dots,P^k\}$, $k\geq 2$ is the set of all induced $P_4$s 
	in $G$, then,
	the number $\ell$ of choices for $v$ is $1\leq \ell = |\cap_{i=1}^kV(P^i)|\leq 3$
	Moreover, if $G$ contains a $P_5$ $x_1-x_2-x_3-x_4-x_5$ or its complement
	$\overline{P_5}$ as an induced subgraph, then $v = x_3$ is uniquely determined.
\end{lemma}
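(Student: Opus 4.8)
The plan is to rest everything on one observation: if $G$ is a $(v,G_1,G_2)$-pseudo-cograph then $G-v$ is a cograph. By (F2) the graphs $G_1,G_2$ are cographs, so by the hereditary property in Thm.~\ref{thm:CharCograph} the induced subgraphs $G_1-v$ and $G_2-v$ are cographs; since cographs are closed under join and disjoint union, (F3) makes $G-v$ a cograph (this is precisely Obs.~\ref{obs:G-v-Cograph}). A cograph has no induced $P_4$, so every induced $P_4$ of $G$ must use $v$. The very same argument applies to any vertex $v'$ for which $G$ is a $(v',G_1',G_2')$-pseudo-cograph, hence every admissible $v'$ lies in every induced $P_4$; that is, the set of admissible choices is contained in $\bigcap_{i=1}^{k}V(P^i)$. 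Trivially $\ell\ge 1$, since the given $v$ is admissible.

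The counting bounds are then elementary. Each $V(P^i)$ has exactly four elements and an induced $P_4$ is determined by its vertex set, so $\bigcap_i V(P^i)$ has at most four elements, giving at most four admissible choices. If $k\ge 2$, two distinct induced $P_4$s have distinct four-element vertex sets and thus meet in at most three vertices, whence $\bigl|\bigcap_i V(P^i)\bigr|\le 3$ and $\ell\le 3$; the extreme value $\ell=4$ is confined to the case $k=1$ with $|V(G)|=4$, i.e.\ to $G\simeq P_4$ (the only non-cograph on four vertices).

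The delicate point, and the step I expect to be the main obstacle, is the reverse inclusion underlying the claimed equality $\ell=\bigl|\bigcap_i V(P^i)\bigr|$: one must decide whether every vertex meeting all induced $P_4$s is actually an admissible decomposition vertex. I would test this against Thm.~\ref{thm:CharPsG}: such a $v'$ automatically secures (B1) and makes $G-v'$ a cograph, but admissibility also demands that $\Gamma(G,v')$ be a star (B3) and that $G[V(H)\cup\{v'\}]$ be a cograph for every component $H$ (B2). Condition (B2) is exactly where the reverse inclusion can fail, since a single component $H$ re-attached to $v'$ may recreate an induced $P_4$ and disqualify $v'$. Hence the cleanly provable statement is the inclusion $\{\text{admissible }v'\}\subseteq\bigcap_i V(P^i)$ together with the bounds above; the inequality may be strict, and indeed the $P_5$ refinement below is itself such an instance.

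For the $P_5$/$\overline{P_5}$ refinement I would argue as follows. If $G$ contains an induced $P_5=x_1-x_2-x_3-x_4-x_5$, then $\{x_1,x_2,x_3,x_4\}$ and $\{x_2,x_3,x_4,x_5\}$ are induced $P_4$s of $G$, so by the first paragraph any admissible $v$ lies in their intersection $\{x_2,x_3,x_4\}$. A short restriction argument then reduces to $P_5$ itself: intersecting $G_1,G_2$ with $W=\{x_1,\dots,x_5\}$ shows that if $G$ is a $(v,G_1,G_2)$-pseudo-cograph with $v\in W$, then $G[W]=P_5$ is a $(v,\cdot,\cdot)$-pseudo-cograph, the conditions (F1)--(F3) being inherited and both parts remaining non-trivial because $P_5$ is not a cograph. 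Inside $P_5$ one checks directly that $v=x_2$ and $v=x_4$ violate (B2) (removing $x_2$ leaves the component $\{x_3,x_4,x_5\}$, and $P_5[\{x_2,x_3,x_4,x_5\}]=P_4$ is not a cograph, symmetrically for $x_4$), whereas $v=x_3$ works since $P_5[\{x_1,x_2,x_3\}]$ and $P_5[\{x_3,x_4,x_5\}]$ are $P_3$s. Thus $v=x_3$ is forced. For an induced $\overline{P_5}$ I would pass to the complement: $\overline{G}$ contains the induced $P_5$ on the same vertices, and by Lemma~\ref{lem:complement} the admissible vertices of $G$ and $\overline{G}$ coincide, so $v=x_3$ is again uniquely determined.
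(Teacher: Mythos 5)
Your proof is correct, and it is worth separating its two halves. The first half (every induced $P_4$ contains $v$ via Obs.~\ref{obs:G-v-Cograph}, hence at most four choices, bounded by $\bigl|\bigcap_i V(P^i)\bigr|$) is exactly the paper's argument. Your explicit warning that only the inclusion $\{\text{admissible } v'\}\subseteq\bigcap_i V(P^i)$ is provable is well taken: the paper's proof likewise only establishes this inclusion (it says the intersections ``restrict the number of possible choices''), and the literal equality $\ell=\bigl|\bigcap_i V(P^i)\bigr|$ in the statement is in fact inconsistent with the lemma's own final sentence (for $G\simeq P_5$ the intersection has three elements but $\ell=1$), so treating the displayed equality as an upper bound is the right reading. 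For the $P_5$/$\overline{P_5}$ refinement you take a genuinely different route: the paper works directly in $G$, ruling out $v=x_4$ (and by symmetry $x_2$) through a case analysis on whether $G-v$ or $\overline{G-v}$ is disconnected, applying Lemma~\ref{lem:ConnComp-v} to a component containing $x_1,x_2,x_3$ and deriving an induced $P_4$ inside some $G_i$; you instead first restrict the decomposition to $W=\{x_1,\dots,x_5\}$ (your justification that both restricted parts stay non-trivial because $P_5$ is not a cograph is exactly what is needed for (F1), and (F2)--(F3) pass to induced subgraphs respecting the split), reduce to the five-vertex graph, and dispose of $\overline{P_5}$ by complementation via Lemma~\ref{lem:complement}. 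Your reduction is more modular and makes the $\overline{P_5}$ case essentially free, at the cost of the small extra restriction lemma; the paper's version avoids that lemma but must run the disconnected/co-disconnected dichotomy by hand in both $G$ and $\overline{G}$. The only cosmetic caveat is your appeal to (B2) of Thm.~\ref{thm:CharPsG} as a necessary condition for a \emph{specific} $v$ --- the theorem is stated existentially, so it is cleaner to cite Lemma~\ref{lem:ConnComp-v} directly as the paper does (the component $\{x_3,x_4,x_5\}$ of $P_5-x_2$ lands in one $G_i$ together with $x_2$, recreating a $P_4$); your parenthetical that $\ell=4$ forces $G\simeq P_4$ is also unproven as stated, but neither point is load-bearing.
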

\begin{proof}
	If $G$ is a cograph, the statement is vacuously true. Assume that $G$ is not a
	cograph. Thus, $G$ contains an induced $P_4$. By Obs.\ \ref{obs:G-v-Cograph}, $G-v$
	is a cograph which immediately implies that every $P_4$ must contain $v$. Since,
	for a given induced $P_4$ there are four possible choices for $v \in V(P_4)$, there
	are at most four vertices $v\in V$ such that $G$ is a $(v,G_1,G_2)$-pseudo-cograph.
	Clearly the number $|V(P)\cap V(P')|$ of vertices in the intersection of distinct
	induced $P_4$s $P$ and $P'$ must be less than four and restrict the number of
	possible choices for $v$ such that $G$ is $(v,G_1,G_2)$-pseudo-cograph.
	
	Now assume that $G$ is a $(v,G_1,G_2)$-pseudo-cograph that contains an induced
	$P_5$ $x_1-x_2-x_3-x_4-x_5$. Clearly neither $v=x_1$ nor $v=x_5$ is possible since,
	otherwise, $G-v$ contains still an induced $P_4$; violating Obs.\
	\ref{obs:G-v-Cograph}. Now assume that $v=x_4$. By Obs.\ \ref{obs:G-v-Cograph},
	$G-v$ is a cograph and, by Thm.\ \ref{thm:CharCograph}, either $G-v$ or
	$\overline{G-v}$ must be disconnected. Assume first that $G-v$ is disconnected. In
	this case, there is one connected component $H$ of $G-v$ that contains the vertices
	$x_1,x_2,x_3$. By Lemma \ref{lem:ConnComp-v}, $H\subset G_i$ for one $i\in
	\{1,2\}$. By (F1), both $G_i$ contains vertex $v=x_4$ and thus, the particular
	graph $G_i$ contains all vertices $x_1,\dots,x_4$ and therefore, an induced $P_4$;
	a contradiction to (F2). Hence, $G-v$ must connected, in which case
	$\overline{G-v}$ must be disconnected. In particular, $\overline{G}$ contains the
	complement $\overline{P_5}$ as an induced subgraph. Note that $\overline{P_5}$ has
	edges $\{x_1,x_3\}$, $\{x_3,x_5\}$ and $\{x_2,x_5\}$ (among others). Hence,
	$\overline{P_5}-v$ is contained in a connected component $H$ of $\overline{G-v}$.
	Lemma \ref{lem:ConnComp-v} implies that $H\subset G_i$ for one $i\in \{1,2\}$.
	Since $v\in V(G_i)$, the graph $\overline{G_i}$ contains an induced
	$\overline{P_5}$ and thus, in particular, the induced $P_4$ $x_3-x_1-x_4-x_2$.
	Therefore, $\overline{G_i}$ and thus, $G_i$ cannot be a cograph and  (F2) is
	violated; a contradiction. Thus, $v=x_4$ is not possible. By analogous
	arguments, one shows that $v=x_4$ is not possible in case that $G$ is a
	$(v,G_1,G_2)$-pseudo-cograph that contains the complement $\overline{P_5}$ as an
	induced subgraph.
	By the same arguments, $v=x_2$ is not possible. Hence, if $G$ is a
	$(v,G_1,G_2)$-pseudo-cograph that contains such an induced $P_5$, then $v=x_3$ is
	uniquely determined. 
\end{proof}

\begin{lemma}\label{lem:NONunique-cograph}
If $G$ is a cograph with at least three vertices, then $G$ is a
$(v,G_1,G_2)$-pseudo-cograph for every $v\in V(G)$ and all graphs $G_1$ and $G_2$
that satisfy $G_1 = G[V_1]$ and $G_2=G[V_2]$ with $V_i \coloneqq
\{v\}\cup\left(\bigcup_{H\in \mathfrak{C}_i} V(H)\right)$, $i\in \{1, 2\}$ for an
arbitrary bipartition $\{\mathfrak{C}_1, \mathfrak{C}_2\}$ of the vertex set
$\mathfrak{C}$ of $\Gamma(G,v)$, i.e., the connected components in the disconnected
graph in $\{G-v,\overline{G-v}\}$.
\end{lemma}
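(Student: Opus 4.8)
The plan is to reduce the statement to the converse (``if'') direction of Proposition~\ref{prop:charPseudo}, which already shows that whenever a fixed vertex $v$ and a fixed bipartition of $\mathfrak{C}$ satisfy conditions (A1) and (A2), the graph $G$ is a $(v,G[V_1],G[V_2])$-pseudo-cograph. It therefore suffices to check, for a cograph $G$ with $|V(G)|\geq 3$, that (A1) and (A2) hold for every vertex $v$ and every bipartition $\{\mathfrak{C}_1,\mathfrak{C}_2\}$ of $\mathfrak{C}$.

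First I would establish (A1) and the well-definedness of $\mathfrak{C}$. Fix $v\in V(G)$. Since $G$ is a cograph and $|V(G)\setminus\{v\}|=|V(G)|-1\geq 2$, Theorem~\ref{thm:CharCograph}(3) guarantees that $G-v$ or $\overline{G-v}$ is disconnected; as a graph and its complement cannot both be disconnected, exactly one of them is, and so its set $\mathfrak{C}$ of connected components is unambiguous and satisfies $|\mathfrak{C}|\geq 2$. Hence bipartitions of $\mathfrak{C}$ exist, and since a bipartition consists of two non-empty parts, each $V_i=\{v\}\cup\big(\bigcup_{H\in\mathfrak{C}_i}V(H)\big)$ contains $v$ together with at least one vertex of some $H\in\mathfrak{C}_i$, giving $|V_i|\geq 2$.

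Next I would verify (A2) for an arbitrary bipartition. The key point is that $G[V_1]$ and $G[V_2]$ are induced subgraphs of the cograph $G$, so by Theorem~\ref{thm:CharCograph}(5) they are themselves cographs, \emph{regardless of how the components of $\mathfrak{C}$ are distributed among $\mathfrak{C}_1$ and $\mathfrak{C}_2$}. Thus (A2) is satisfied for every bipartition, and Proposition~\ref{prop:charPseudo} yields that $G$ is a $(v,G[V_1],G[V_2])$-pseudo-cograph. As $v$ and the bipartition were arbitrary, the lemma follows.

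The argument is routine; the only subtlety is the quantifier mismatch between Proposition~\ref{prop:charPseudo}, which posits the \emph{existence} of one bipartition witnessing (A2), and the present lemma, which asserts the pseudo-cograph property for \emph{all} bipartitions. The resolution is exactly the heredity of cographs (Theorem~\ref{thm:CharCograph}(5)): it places no constraint on how $\mathfrak{C}$ is split, so (A2) holds for each fixed bipartition separately and Proposition~\ref{prop:charPseudo} may be invoked once per bipartition. For a self-contained alternative I would instead check (F1)--(F3) directly: (F1) and (F2) are immediate from the paragraphs above, while for (F3) one distinguishes the case where $G-v$ is disconnected, where $G-v=\union_{H\in\mathfrak{C}}H=(G_1-v)\union(G_2-v)$, from the case where $\overline{G-v}$ is disconnected, where $\overline{G_1-v}=\union_{H\in\mathfrak{C}_1}H$ and $\overline{G_2-v}=\union_{H\in\mathfrak{C}_2}H$, so that taking complements gives $G-v=(G_1-v)\join(G_2-v)$.
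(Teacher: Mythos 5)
Your proposal is correct and rests on the same two observations as the paper's proof: heredity of cographs (Theorem~\ref{thm:CharCograph}) makes $G[V_1]$ and $G[V_2]$ cographs no matter how $\mathfrak{C}$ is split, and the disjoint-union/join structure of $G-v$ gives (F3). The paper simply verifies (F1)--(F3) directly (your ``self-contained alternative'') rather than routing through Proposition~\ref{prop:charPseudo}, but your handling of the quantifier issue --- invoking the proposition once per fixed bipartition, each of which witnesses (A2) --- is sound, so the two arguments are essentially the same.
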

\begin{proof}
	Let $G$ be a cograph with at least three vertices and $v\in V(G)$ be an arbitrary
	vertex. By Thm.\ \ref{thm:CharCograph}, $G-v$ is a cograph and either $G-v$ or
	$\overline{G-v}$ must be disconnected. Thus, $\Gamma(G,v)$ is well-defined and has
	as vertex set $\mathfrak{C}$ the connected components of the disconnected graph in
	$\{G-v,\overline{G-v}\}$. In particular, since $G-v$ has at least two vertices, it
	holds that $|\mathfrak{C}|\geq 2$. By Cor.\ \ref{cor:Gamma}, $\Gamma(G,v)$ is
	edge-less. Let $\{\mathfrak{C}_1 , \mathfrak{C}_2\}$ be any bipartition of
	$\mathfrak{C}$ and put $V_i \coloneqq \{v\}\cup\left(\bigcup_{H\in \mathfrak{C}_i}
	V(H)\right)$, $1\leq i\leq 2$ and $G_1\coloneqq G[V_1]$ and $G_2\coloneqq G[V_2]$.
	By construction, $G_1$ and $G_2$ satisfy (F1) and $G-v$ is still the disjoint union
	or join of $G_1-v$ or $G_2-v$ and thus, (F3) is satisfied. Since both $G_1$ and
	$G_2$ are induced subgraphs of $G$, Thm.\ \ref{thm:CharCograph} implies that $G_1$
	and $G_2$ are cographs. Therefore, (F2) is satisfied. 
\end{proof}

\begin{lemma}\label{lem:star-center-gamma}
If $G$ is a $(v,G',G'')$-pseudo-cograph, then $G$ is a $(v,G_1,G_2)$-pseudo-cograph
for all graphs $G_1$ and $G_2$ that satisfy $G_1 = G[V_1]$ and $G_2=G[V_2]$ with $V_i
\coloneqq \{v\}\cup\left(\bigcup_{H\in \mathfrak{C}_i} V(H)\right)$, $1\leq i\leq 2$
for an arbitrary graph-bipartition $\{\mathfrak{C}_1 , \mathfrak{C}_2\}$ of the
vertex set $\mathfrak{C}$ of $\Gamma(G,v)$.  

In particular, if $G$ is $(v,G_1,G_2)$-pseudo-cograph but not a cograph, then the
center $H$ of the star in $\Gamma(G,v)$ is in $G_i$; all $H'\in \mathfrak{C}$ that
are adjacent to $H$ are in $G_j$, $\{i,j\}=\{1,2\}$; and every $K_1\in \mathfrak{C}$
is in either $G_1$ or $G_2$. If $\Gamma(G,v)$ is connected, then $G_1$ and $G_2$ are
uniquely determined. 
\end{lemma}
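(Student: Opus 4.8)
The plan is to prove Lemma~\ref{lem:star-center-gamma} in two movements. First I would establish the general claim that any graph-bipartition of the vertex set $\mathfrak{C}$ of $\Gamma(G,v)$ yields a valid $(v,G_1,G_2)$-decomposition; then I would specialize to describe where the star's center, its neighbours, and the isolated $K_1$'s must land, and finally derive uniqueness when $\Gamma(G,v)$ is connected. The hypotheses give me a great deal for free: since $G$ is a $(v,G',G'')$-pseudo-cograph, by Obs.~\ref{obs:G-v-Cograph} the graph $G-v$ is a cograph and either $G-v$ or $\overline{G-v}$ is disconnected, so $\Gamma(G,v)$ is well-defined with vertex set $\mathfrak{C}$, the connected components of whichever of $\{G-v,\overline{G-v}\}$ is disconnected. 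By Cor.~\ref{cor:Gamma}, $\Gamma(G,v)$ is either edge-less (when $G$ is a cograph) or is a disjoint union of one star and some isolated vertices.

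For the general claim, let $\{\mathfrak{C}_1,\mathfrak{C}_2\}$ be an arbitrary \emph{graph}-bipartition of the vertex set $\mathfrak{C}$ of $\Gamma(G,v)$, meaning (by the definition of graph-bipartition in the Preliminaries) that every edge $\{H,H'\}$ of $\Gamma(G,v)$ has its endpoints in different parts. Setting $V_i\coloneqq\{v\}\cup\bigl(\bigcup_{H\in\mathfrak{C}_i}V(H)\bigr)$ and $G_i\coloneqq G[V_i]$, I would verify (F1)--(F3). Condition (F1) is immediate from the construction, noting that both parts are non-empty since $\mathfrak{C}$ has at least two elements and a graph-bipartition of a graph containing a star-edge forces both parts non-empty (and for isolated-only cases any non-trivial split works). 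For (F3), I would replay the computation from Prop.~\ref{prop:charPseudo}: if $G-v$ is disconnected then $G-v=\bigcupdot_{H\in\mathfrak{C}}H=(G_1-v)\union(G_2-v)$, and if $G-v$ is connected then passing to complements gives $G-v=(G_1-v)\join(G_2-v)$. The crux is (F2): I must show $G_1$ and $G_2$ are cographs. Here I invoke Thm.~\ref{thm:CharPsG}, whose condition (B3) holds by hypothesis; the proof of (2)$\Rightarrow$(3) already shows that once all edges of $\Gamma(G,v)$ are incident to a single vertex (the star center $H$), putting $H$ on one side and its $\Gamma$-neighbours on the other makes each $G[V_i]$ a cograph, precisely because every induced $P_4$ of $G$ involves $v$ together with one component and a $\Gamma(G,v)$-adjacent component, and no such $P_4$ can be confined to a single $G[V_i]$ when the bipartition separates every $\Gamma$-edge. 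I expect this to be the main obstacle: I have to argue carefully that a $P_4$ living inside $G[V_i]$ would correspond to a $\Gamma(G,v)$-edge within $\mathfrak{C}_i$, contradicting that $\{\mathfrak{C}_1,\mathfrak{C}_2\}$ is a graph-bipartition, while also handling the $P_4$'s that use two vertices of one component and one of another (which correspond to loops/single-component structure and are absorbed into the cograph test on $G[V(H)\cup\{v\}]$ via (B2)).

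With (F1)--(F3) in hand, $G$ is a $(v,G_1,G_2)$-pseudo-cograph, proving the first sentence. For the specialization, assume $G$ is $(v,G_1,G_2)$-pseudo-cograph but not a cograph; then by Cor.~\ref{cor:Gamma} the graph $\Gamma(G,v)$ has exactly one star component with center $H$ and leaves, plus possibly some $K_1$'s. Since the edges of $\Gamma(G,v)$ are exactly the pairs $\{H,H'\}$ for which the star's center is one endpoint, any graph-bipartition must separate $H$ from each of its neighbours; thus if $H\in\mathfrak{C}_i$ then every $\Gamma$-neighbour $H'$ of $H$ lies in $\mathfrak{C}_j$ with $\{i,j\}=\{1,2\}$, and the isolated $K_1$'s, being incident to no edge, may be assigned to either side freely. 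This is exactly the stated structure.

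Finally, for uniqueness I would observe that if $\Gamma(G,v)$ is connected, then there are no isolated $K_1$'s and $\Gamma(G,v)$ is a single star with center $H$ and leaf set $\mathfrak{C}\setminus\{H\}$. A graph-bipartition must place $H$ on one side and, by connectivity, all remaining vertices (the leaves) on the other; there is no freedom left. Hence up to swapping the labels $1$ and $2$ the bipartition $\{\{H\},\mathfrak{C}\setminus\{H\}\}$ is forced, so $\{G_1,G_2\}=\{G[V(H)\cup\{v\}],\,G[W]\}$ with $W=V(G)\setminus V(H)$ is uniquely determined, completing the proof.
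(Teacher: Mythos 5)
Your proof is correct and follows essentially the same route as the paper's: verify (F1) and (F3) directly from the construction, reduce (F2) to the fact that a graph-bipartition separates every edge of $\Gamma(G,v)$ (using that every induced $P_4$ contains $v$ and that single-component $P_4$s are excluded by (B2)), and then read off the placement of the star's center, its neighbours, and the isolated $K_1$s from Cor.~\ref{cor:Gamma}. The only blemish is the parenthetical in your (F2) discussion, which mislabels the cases: a $P_4$ using $v$, two vertices of one component and one of another is precisely what defines a $\Gamma(G,v)$-edge (and is thus killed by the graph-bipartition), whereas (B2) handles the $P_4$s whose non-$v$ vertices all lie in a single component — but since both cases and both tools appear in your argument, this is an expository slip rather than a gap.
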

\begin{proof}
	Assume that $G$ is a $(v,G',G'')$-pseudo-cograph. By (F3), $G-v$ is the join or
	disjoint union $G'-v$ and $G''-v$ and thus, $\Gamma(G,v)$ is well-defined and has
	as vertex set $\mathfrak{C}$ the connected components of the disconnected graph in
	$\{G-v,\overline{G-v}\}$. If $G$ is a cograph, then the assertion follows from
	Lemma \ref{lem:NONunique-cograph}. Assume that $G$ is not a cograph. Since $G-v$ is
	a cograph, the vertex $v$ must be contained in contained in every induced $P_4$ of
	$G$. Hence, we can apply Cor.\ \ref{cor:Gamma} to conclude $\Gamma(G,v)$ has at
	least two vertices and contains precisely one connected component that is
	isomorphic to a star while all other remaining components (if there are any) are
	single vertex graphs.
	
	In particular $\Gamma(G,v)$ is bipartite. Let $\{\mathfrak{C}_1 , \mathfrak{C}_2\}$
	be any graph-bipartition of $\mathfrak{C}$. Let $H\in \mathfrak{C}$ be the
	component that is contained in every edge of $\Gamma(G,v)$ and assume w.l.o.g.\
	that $H\in \mathfrak{C}_1$. Thus for every edge $\{H,H'\}$ in $\Gamma(G,v)$ it must
	hold that $H'\in \mathfrak{C}_2$. Now assign every remaining component in $
	\mathfrak{C}$ that is isomorphic to a $K_1$ in an arbitrary way to either
	$\mathfrak{C}_1$ or $\mathfrak{C}_2$. Finally put $V_i \coloneqq
	\{v\}\cup\left(\bigcup_{H\in \mathfrak{C}_i} V(H)\right)$, $1\leq i\leq 2$ and
	$G_1\coloneqq G[V_1]$ and $G_2\coloneqq G[V_2]$. By construction $G_1$ and
	$G_2$ satisfy (F1) and $G-v$ is still the disjoint union or join of $G_1-v$ or
	$G_2-v$ and thus, (F3) is satisfied. It remains to show
	that both $G_1$ and $G_2$ are cographs. By construction of $V_1$ and since
	$\{\mathfrak{C}_1 , \mathfrak{C}_2\}$ is any graph-bipartition of
	$\mathfrak{C}$, it follows that $H''$ and $H'''$ cannot be adjacent for all
	$H'',H'''\in \mathfrak{C}_1$ and thus, $G[V(H'')\cup V(H'')\cup \{v\}]$ does not
	contain any induced $P_4$. Since the latter holds for all $H'',H'''\in
	\mathfrak{C}_1$ it follows that $G[V_1]=G_1$ must be a cograph.
	 Analogously, $G_2$
	is a cograph and therefore, (F2) is satisfied. 
	
	In summary, $G$ is a $(v,G_1,G_2)$-pseudo-cograph. By the latter construction and
	arguments, it is easy to verify that the last part of the statement is satisfied. 
\end{proof}

An example that shows the different construction of subgraphs $G_1$ and $G_2$ based
on $\Gamma(G,v)$ such that $G$ is a $(v,G_1,G_2)$-pseudo-cograph is provided in Fig
\ref{fig:gamma}.

\begin{figure}[t]
		\begin{center}
			\includegraphics[width = 1.\textwidth]{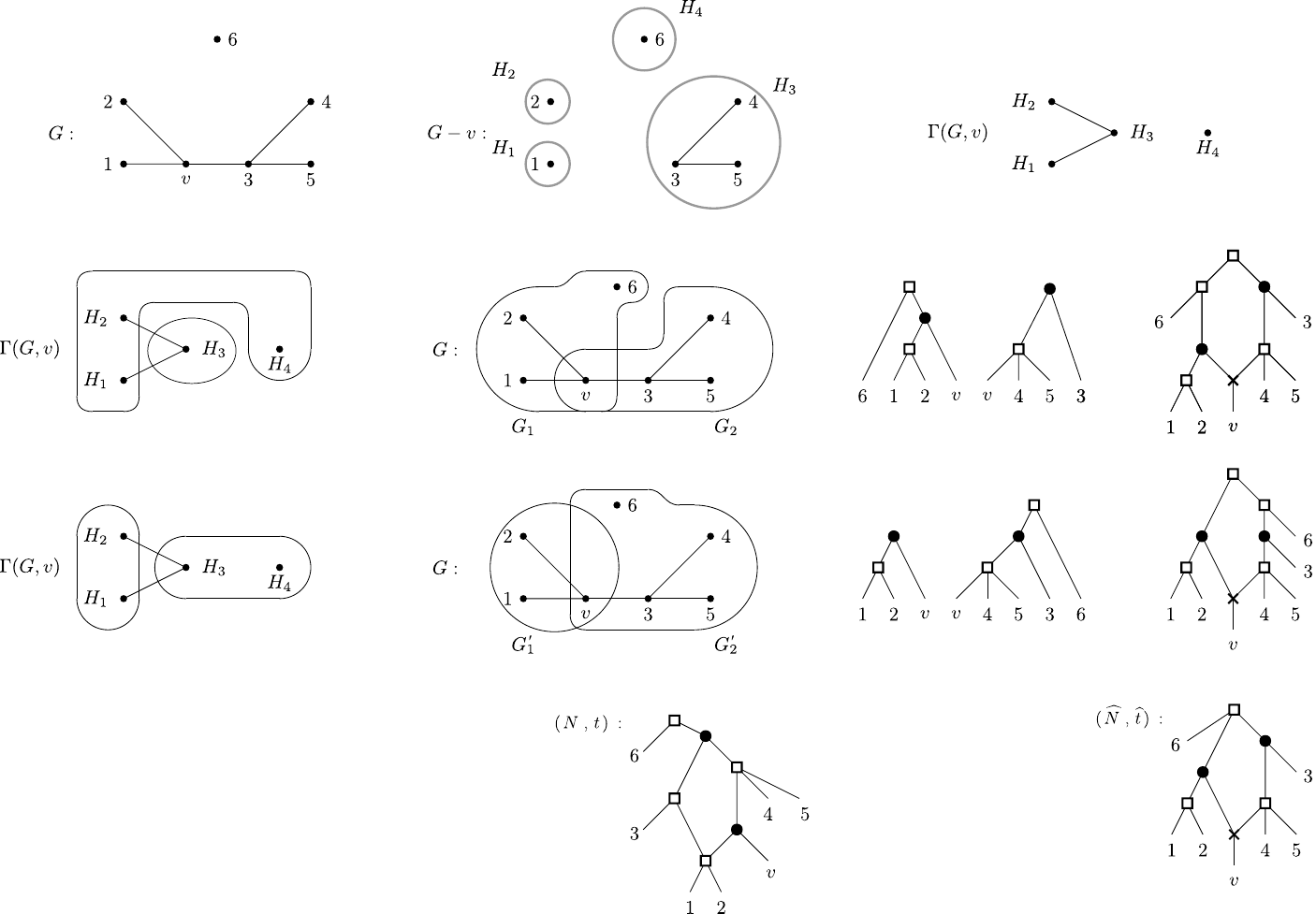}
		\end{center}
		\caption{
		\emph{Upper Row:} 
		Shown is a pseudo-cograph $G$. The connected components $H_1,\dots,H_4$ of $G-v$
		are the vertices of the graph $\Gamma(G,v)$. Here, $\Gamma(G,v)$ has two
		connected components, one is a star with center $H_3$ and the other a single
		vertex. 
		\emph{2nd and 3rd Upper Row:} 
		There are two possible graph-bipartitions of $\Gamma(G,v)$ (shown left) which
		yield two different ways to write $G$ as a $(v,G_1,G_2)$- and
		$(v,G'_1,G'_2)$-pseudo-cograph. The different networks
		$(N(v,G_1,G_2),t(v,G_1,G_2))$ and $(N(v,G'_1,G'_2),t(v,G'_1,G'_2))$ are
		constructed as specified in Def.\ \ref{def:prop:PsC-l1N}. 
		\emph{Bottom Row:}
		The quasi-discriminating versions $(\widehat N,\widehat t)$ of
		$(N(v,G_1,G_2),t(v,G_1,G_2))$ and $(N(v,G'_1,G'_2),t(v,G'_1,G'_2))$ i.e., the
		network obtained after contracting the edge $(\rho_N,\parent_N(6))$ are identical
		(shown right). Left: shown is an additionally quasi-discriminating level-1
		network $(N,t)$ that explains $G$ but that cannot be obtained by means of the
		construction in Def.\ \ref{def:prop:PsC-l1N} since the root of $N$ is not part of
		a cycle and the hybrid-vertex of $N$ has two children. We explain in Section 
		\ref{sec:general} how $(N,t)$ can be constructed.
			}
		\label{fig:gamma}
\end{figure}

There is a quite simple construction to obtain a labeled level-1 network on $X$ that explains
a $(v,G_1,G_2)$-pseudo-cograph $G=(X,E)$. To this end, consider the 
cotrees $(T_1,t_1)$ and $(T_2,t_2)$ for the cographs $G_1=(X_1,E_1)$
and $G_2=(X_2,E_2)$, respectively. Note that, by definition of pseudo-cographs,
$X_1\cap X_2 =\{v\}$, $X_1\cup X_2 = X$ and $|X_1|,|X_2|>1$. Thus, $T_1$ is a tree on
$X_1$, $T_2$ is a tree on $X_2$ and both trees contain $v$ as a leaf. Based on this
we provide	
		\begin{definition}\label{def:prop:PsC-l1N}
			Let $G$ be $(v,G_1,G_2)$-pseudo-cograph and $(T_i,t_i)$ be a cotree for $G_i$, $i\in \{1,2\}$. 
			We construct now a directed graph \emph{$N(v,G_1,G_2)$ with labeling $t(v,G_1,G_2)$} as follows:
			\begin{enumerate}[noitemsep]
				\item Modify  the trees $T_1$ and $T_2$ by adding a new vertex $\eta_i$ along the edge
						  $(\textrm{parent}_{T_i}(v),v)$ in $T_i$, $i\in\{1,2\}$. \smallskip
				\item Take these modified trees,  add a root $\rho_N$ and two new edges $(\rho_N,\rho_{T_1})$ and
						  $(\rho_N,\rho_{T_2})$. \smallskip
				\item identify $\eta_1$ and $\eta_2$ to obtain the vertex $\eta_N$
						  in $N$. \smallskip
				\item remove one copy of the leaf $v$ and its incident edge. \smallskip
			\end{enumerate}
			The labeling $t\coloneqq t(v,G_1,G_2)$ of $N(v,G_1,G_2)$ is defined as follows:
			\begin{enumerate}[noitemsep]
				\item Put $t(u) \coloneqq  t_i(u)$ for all $u\in V^0(T_i)$ with $i\in \{1,2\}$. \smallskip
				\item Choose  $t(\eta_N)\in \{0,1\}$ arbitrarily. \smallskip
				\item  Put  
				$t(\rho_N)=\begin{cases}
											1, & \text{if $G-v$ is the join of $G_1-v$  and $G_2-v$}\\
					            0, & \text{otherwise}
									 \end{cases} $				
			\end{enumerate}
		\end{definition}

Examples for such networks $(N(v,G_1,G_2),t(v,G_1,G_2))$ are provided in Fig.\
\ref{fig:gamma} and Fig.\ \ref{fig:polarCatN}.

\begin{proposition}\label{prop:PsC-l1N}
	Every  pseudo-cograph can be explained by a labeled level-1 network. 
			In particular, if $G$ is a $(v,G_1,G_2)$-pseudo-cograph and $|V(G)|\geq 3$, 
		then $(N(v,G_1,G_2),t(v,G_1,G_2))$ is a labeled level-1 network $G$ that
		explains $G$. 
\end{proposition}
\begin{proof}
	Let $G=(X,E)$ be a pseudo-cograph. If $|X|\leq 2$, then $G$ is a 
	cograph and it can
	be explained by a tree on $X$. Let $|X|\geq 3$ and assume that $G$ is
	$(v,G_1,G_2)$-pseudo-cograph. Consider the two cotrees $(T_1,t_1)$ and $(T_2,t_2)$
	of $G_1=(X_1,E_1)$ and $G_2=(X_2,E_2)$, respectively. By definition of
	pseudo-cographs, $X_1\cap X_2 =\{v\}$, $X_1\cup X_2 = X$ and $|X_1|,|X_2|>1$. Thus,
	$T_1$ is a tree on $X_1$, $T_2$ is a tree on $X_2$ and both trees contain $v$ as a
	leaf. 

	Consider now the network $(N,t)$ with $N\coloneqq N(v,G_1,G_2)$ and $t\coloneqq
	t(v,G_1,G_2)$. It is straightforward to see that $N$ is a level-1 network whose
	single cycle $C$ consists of sides $P^1$ and $P^2$ that are composed of $\rho_N,
	\eta_N$ and the $\rho_{T_1}v$-path in $T_1$ and the $\rho_{T_2}v$-path in $T_2$,
	respectively. Moreover, since $V^0(T_1)\cup V^0(T_2) = V^0(N)\setminus
	\{\rho_N,\eta_N\}$ and $V^0(T_1)\cap V^0(T_2)= \emptyset$, the labeling $t$ is
	well-defined.

	It remains to show that $G=\mathscr{G}(N,t)$. To this end let $x,y\in X$ be chosen
	arbitrarily. Consider first the case that $x,y \in X_1$. Since $G_1$ is explained
	by $(T_1,t_1)$, it holds that $t_1(\lca_{T_1}(x,y)) = 1$ if and only if $\{x,y\}\in
	E$. It is straightforward to verify that $\lca_{T_1}(x,y) = \lca_{N}(x,y)$ and
	thus, by construction of $t$, that $t(\lca_{N}(x,y)) = 1$ if and only if
	$\{x,y\}\in E$. Note, the latter covers also that case that one of $x$ or $y$ is
	$v$, since $v\in X_1$. Similarly, the case $x,y \in X_2$ is shown. Suppose now
	that $x\in X_1\setminus\{v\}$ and $y\in X_2\setminus\{v\}$. The latter arguments
	together with the fact that the paths in $N$ from $\rho_{T_1}$ to $x$ and
	$\rho_{T_2}$ to $y$ are vertex disjoint and $\rho_N$ is the only vertex that is
	adjacent to both $\rho_{T_1}$ and $\rho_{T_2}$ imply that $\lca_N(x,y) = \rho_N$.
	By (F3), $G-v$ is either the join or disjoint union of $G_1-v$ or $G_2-v$. By
	construction, $(\rho_N)=1$ if $G-v$ is the join of $G_1-v$ or $G_2-v$ and,
	otherwise, $t(\rho_N)=0$. Hence, $t(\rho_N)=1$ if and only if $G-v$ is the join of
	$G_1-v$ and $G_2-v$, if and only if $\{x,y\}\in E$. In summary, $\{x,y\}\in E$ if
	and only if $t(\lca_N(x,y))=1$ for all $x,y\in X$ which implies that
	$G=\mathscr{G}(N,t)$. Hence, $(N,t)$ is a labeled level-1 network that explains the
	pseudo-cograph $G$.
\end{proof}

We emphasize that not all graphs that can be explained by a level-1 network are
pseudo-cographs. To see this consider a graph $G$ that is the disjoint union of two
induced $P_4$s. Each $P_4$ can be explained by a level-1 network as shown in Fig.\
\ref{fig:nonUniqueN}. A level-1 network that explains $G$ can be obtained by joining
the two networks that explain the individual $P_4$s under a common root with label
``$0$''. However, $G$ is not a pseudo-cograph, since for every choice of the vertex
$v$, the graph $G-v$ is not a cograph and thus, by Obs.\ \ref{obs:G-v-Cograph}, $G$
is not a pseudo-cograph. In particular, pseudo-cographs are characterized in terms of
level-1 networks $(N,t)$ that contain one cycle that is rooted at $\rho_N$, 
and whose hybrid-vertex has a unique child which is a leaf, see Thm.\
\ref{thm:CharPsC-Network-Cycle}. A characterization of general graphs that can be
explained by level-1 networks is provided in Thm.\ \ref{thm:CharprimeCat}.

\section{Cographs, Quasi-discriminating Strong and Weak Networks}
\label{sec:Cog}

To recap, a network $(N,t)$ is \emph{quasi-discriminating} if for all $(u,v)\in E^0$
with $v$ not being a hybrid-vertex we have $t(u)\neq t(v)$. Of course, not all
labeled level-1 networks $(N,t)$ are quasi-discriminating. Suppose that $(N=(V,E),t)$
is not quasi-discriminating. In this case, there must exist some edge $e = (u,v) \in
E^0$ such that $t(u)=t(v)$ and where $v$ is not a hybrid-vertex. 
For such an edge $e=(u,v)$, we define the network  $(N_e,t_e)$ obtained from
$(N,t)$ by contraction of $e$ as follows:
\begin{enumerate}
	\item Let $N'_e=	(V'_e,E'_e)$ be  the directed graph with vertex set $V'_e = V 
	\setminus \{u,v\} \cup \{v_e\}$, edge set
	$E'_e = E \setminus \{e\} \cup \{(v_e,w) \colon (v,w) \in E \text{ or } (u,w) \in E
	\} \}\cup \{(w,v_e) \colon (w,v) \in E \text{ or } (w,u) \in E \}$.
	
	Note, $N'_e$ is a directed	graph with leaf set $X$ since neither $v$ 
	nor $u$ can be leaves. In the following, we refer to $v_e$ as the 
		vertex in $N$ obtained by contracting the edge $e$.
	
	\item Now suppress all vertices with indegree 1 and outdegree 1 in $N'_e$ to
	obtain the directed graph $N_e = (V_e,E_e)$.
	
	\item To obtain a labeling of $N_e$, we
	define the map $t_e\colon V_e \to \{0,1,\odot\}$ by putting, for all $w\in V_e$,
		$t_e(w) = t(w) \textrm{ if } w \neq v_e  \textrm{ and } t_e(v_e) = t(u)$, 
		otherwise.
\end{enumerate}

Clearly, this construction can be repeated, with $(N_e,t_e)$ now playing the
role of $(N,t)$, until a directed graph $\widehat N=(\widehat V,\widehat E)$ on
$X$ is obtained together with a quasi-discriminating map $\widehat t$ on
$\widehat N$; see Fig.\ \ref{fig:hatN} for an example of such a construction. We
refer to $(N_e,t_e)$ as the directed graph that is \emph{obtained from $(N,t)$
by contraction of $e$} without explicitly mentioning each time that in addition
indegree 1 and outdegree 1 vertices have been suppressed. We emphasize that our
edge contraction operation used to build $N_e$ from $N$ (i.e., Steps 1 and 2 of
the construction above) is an edge contraction in the sense of
\cite{HSS:22cluster}. In \cite{HSS:22cluster}, edge contractions are defined in
a slightly more general way to deal with networks that may contain so-called
shortcut edges $e=(u,v)$ which is based on the properties of cycles in $N$ and
requires $v$ to be a hybrid-vertex. However, we do not contract edges $(u,v)$
with $v$ being a hybrid-vertex at all. This, in particular, allows us to re-use
some of the results established in \cite{HSS:22cluster}.

\begin{lemma}\label{lem:discriminatingN}
	For every labeled level-1 network $(N,t)$ on $X$, the labeled directed graph
	$(\widehat N, \widehat t)$ is a quasi-discriminating level-1 network on $X$. 
\end{lemma}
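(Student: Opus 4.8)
The plan is to prove that $(\widehat N, \widehat t)$ is a quasi-discriminating level-1 network on $X$ by showing that each single contraction step preserves the relevant properties, and then arguing that the process terminates. The claim has really three parts: (i) $\widehat N$ is on the same leaf set $X$; (ii) $\widehat N$ is a level-1 network; and (iii) $\widehat t$ is quasi-discriminating. Since $(\widehat N,\widehat t)$ is obtained from $(N,t)$ by a finite sequence of single-edge contractions (followed by removal of multi-edges and suppression of indegree-1/outdegree-1 vertices), my strategy is to establish an invariant preserved by one contraction step, so that an induction on the number of steps finishes the argument.

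First I would fix one contraction step, say the contraction of an inner edge $e=(u,v)\in E^0$ with $t(u)=t(v)$ and $v$ not a hybrid-vertex, producing $(N_e,t_e)$ from $(N,t)$. The main thing to verify is that $N_e$ is again a level-1 network on $X$. For the leaf set: since $e$ is an inner edge, neither $u$ nor $v$ lies in $X$, so contracting $e$ and suppressing degree-$(1,1)$ vertices touches only inner vertices; hence the outdegree-$0$, indegree-$1$ vertices are exactly $X$, and (N2) is preserved. For the level-1 property I would check that no new hybrid-vertex is created with indegree exceeding $2$ and that biconnected components still contain at most one hybrid-vertex each. The key observation is that because $v$ is \emph{not} a hybrid-vertex, it has indegree $1$ (its only in-edge is $e$), so after merging $u$ and $v$ into $v_e$ the indegree of $v_e$ equals the indegree of $u$; in particular no vertex acquires indegree $3$ or more. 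I would then argue that contracting a tree-edge cannot merge two distinct cycles or raise the number of hybrid-vertices inside any biconnected component, using the structural fact recalled earlier that distinct cycles share at most a vertex and never an edge.

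Next I would confirm that $N_e$ remains acyclic with a unique root and that the conditions (N1) and (N3) hold, which again follows because contracting a tree-edge preserves the directed-acyclic structure and merely shortens a directed path; the root $\rho_N$ is untouched unless $u=\rho_N$, in which case $v_e$ inherits the role of root with indegree $0$. The suppression of indegree-1/outdegree-1 vertices and the removal of multi-edges are standard clean-up operations that keep the network within the level-1 class, so I would invoke them without detailed recomputation. Having established that each step keeps us in the class of labeled level-1 networks on $X$, and that by construction the new labeling $t_e$ agrees with $t$ except on $v_e$ (where $t_e(v_e)=t(u)=t(v)$), an induction over the contraction steps gives that the final object $(\widehat N,\widehat t)$ is a labeled level-1 network on $X$.

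It remains to see that $\widehat t$ is quasi-discriminating, i.e.\ $\widehat t(a)\neq \widehat t(b)$ for every inner edge $(a,b)\in \widehat E^0$ whose head $b$ is not a hybrid-vertex. This is exactly the termination/stopping condition of the construction: the contraction process is repeated precisely as long as some inner edge $(u,v)$ with $t(u)=t(v)$ and $v$ not hybrid exists, so when it halts no such edge remains, which is the definition of quasi-discriminating. The one point requiring care, and what I expect to be the main obstacle, is termination: I would observe that each contraction strictly decreases the number of inner vertices (since two inner vertices are merged into one and suppression only removes vertices), so the number of inner edges is bounded and the process must terminate after finitely many steps. Thus $(\widehat N,\widehat t)$ is well-defined, lies in the class of labeled level-1 networks on $X$, and satisfies the quasi-discriminating condition, completing the proof.
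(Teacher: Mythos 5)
Your overall strategy — reduce to a single contraction step, check (N1)--(N3) and the level-1 property for $(N_e,t_e)$, and induct, with quasi-discrimination following from the stopping condition — is exactly the route the paper takes, and most of your verifications (leaf set preserved, indegree of $v_e$ equals indegree of $u$, root handling, termination) are correct and match the paper's.

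However, the one step that carries the real content of the lemma is left as an assertion. You write that you ``would then argue that contracting a tree-edge cannot \dots raise the number of hybrid-vertices inside any biconnected component, using the structural fact \dots that distinct cycles share at most a vertex and never an edge.'' That fact alone does not deliver the conclusion: the danger is not that two cycles merge, but that the new vertex $v_e$ itself becomes a \emph{second} hybrid-vertex of the biconnected component $C_e$ that already contains $\eta_C$, when the contracted edge $e$ lies on a cycle $C$. The argument you need (and the one the paper supplies) is this: since $v$ is not a hybrid-vertex, its only in-edge is $e$, so both in-edges of $v_e$ in $N_e$ are inherited from $u$. If both of these lay inside $C_e$, then, because the topology of $C_e$ is that of $C$ up to the single contraction, $u$ would have had indegree $2$ within $C$, i.e.\ $u=\eta_C$; but $v\prec_N u$ with $v\in V(C)$ makes that impossible. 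If instead $u$ had indegree $2$ because it was the hybrid-vertex of a \emph{different} cycle $C'$, then $C'$ shares only the cut vertex $u$ (resp.\ $v_e$ after contraction) with $C$ (resp.\ $C_e$), so $C'_e$ and $C_e$ remain distinct biconnected components, each with one hybrid-vertex. You should also note the degenerate case in which $C_e$ fails to be a cycle at all (e.g.\ $u=\rho_C$ with both $(u,\eta_C)$ and $(v,\eta_C)$ edges of $N$, producing a multi-edge that is cleaned up); that case is harmless but must be acknowledged. Without this case analysis the level-1 claim is not established.
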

\begin{proof}
	Clearly if (N0) holds, i.e., $|V(N)| = 1$ then $|\widehat{V}(N)| = 1$ and we are
	done. Thus, assume that $|V(N)| > 1$. By construction, the labeled directed graph
	$(\widehat N, \widehat t)$ is quasi-discriminating. To prove that $(\widehat N,
	\widehat t)$ is a level-1 network, it suffices to show that for $e=(u,v) \in
	E^0(N)$ such that $t(u)=t(v)$ and $v$ is not a hybrid-vertex of $N$, the labeled
	directed graph $(N_e,t_e)$ obtained by contraction of $e$ remains a level-1 network
	on $X$, since $(\widehat N, \widehat t)=(\widehat{N_e}, \widehat{t_e})$.
			
	Cor.\ 3.28 in \cite{HSS:22cluster}, in particular, implies that 
	   $N_e$ is a DAG. Now, 
	let $v_e$ be the vertex obtained by 
	contraction of $e=(u,v)$.
	By assumption, $v$ is not a hybrid-vertex of $N$. Moreover, $v$ is not a leaf of
	$N$, as $e \in E^0(N)$. In particular, $v$ has indegree 1 and outdegree at least
	$2$ in $N$. By construction, all children of $v$ in $N$ become  children of
	$v_e$ after contraction of $e$. It follows that $v_e$ has outdegree at least $2$ in
	$N_e$. In particular, $v_e$ is not an indegree 1 and outdegree 1 vertex, i.e.,
	$v_e$ will not be suppressed and thus, $v_e \in V(N_e)$. Moreover, since $v$ has
	indegree 1 in $N$ and is, therefore, only adjacent to $u$, it follows that all
	in-neighbors of $u$ in $N$ are precisely the in-neighbors of $v_e$ in $N_e$.
	Consequently, the indegree of $v_e$ in $N_e$ is precisely the indegree of $u$ in
	$N$.
					
	To see that (N1) holds, assume first that $u \neq \rho_N$. In this case, $\rho_N
	\in V(N_e)$ and $\rho_N$ is the only vertex of indegree 0 in $N_e$. Moreover, the
	outdegree of $\rho_N$ remains unchanged and thus, (N1) is satisfied. Assume that
	$u=\rho_N$. In this case, $v_e$ is the only vertex of indegree $0$ in $N_e$.
	Moreover, as argued above, the outdegree of $v_e$ in $N_e$ is at least $2$. Hence,
	(N1) is satisfied. Moreover, since $v$ is not a leaf and since contraction of the
	edge $e$ does not create new leaves in $N_e$ it follows that the leaf set of $N_e$
	must be $X$. It is now easy to verify that (N2) holds.
	
	We continue with showing that (N3) is satisfied. To this end, let $w \in V^0(N_e)$
	such that $w\neq \rho_{N_e}$. Assume first that $w=v_e$. As already argued, $w$ has
	outdegree at least $2$ in $N_e$. Moreover, the indegree of $w$ in $N_e$ is
	precisely the indegree of $u$ in $N$. Since $w$ is distinct from $\rho_{N_e}$,
	vertex $u$ must be distinct from $\rho_N$ and, therefore, the indegree of $u$ in
	$N$ is at least 1. It follows that (N3) is satisfied for $w$. Now, assume that $w
	\neq v_e$. In this case, $w$ must be a vertex of $N$ that is distinct from $u$ and
	$v$ since $w\in V(N_e)\setminus \{v_e\} \subseteq V(N)\setminus \{u,v\}$. If $w$
	has indegree $1$ in $N_e$, then $w$ has outdegree at least $2$ in $N_e$, since
	vertices with indegree 1 and outdegree $1$ are suppressed when constructing $N_e$.
	Moreover, contraction of $e$ does not increase the indegree of vertices of $N$
	distinct from $u$ and $v$, so $w$ has indegree at most $2$ in $N_e$. Hence, (N3) is
	satisfied. In summary, $N_e$ is a phylogenetic network on $X$.
			
  It remains to show that $N_e$ is a level-1 network. To this end, we must show that
  every cycle $C$ in $N_e$ contains at most one 
  hybrid-vertex distinct from $\rho_C$.
  If
  $e=(u,v)$ is not part of a cycle in $N$, then it is easy to see that $N_e$ remains
  a level-1 network. Assume that $e$ is contained in a cycle $C$ in $N$ and let $C_e$
  be the biconnected component in $N_e$ that contains $v_e$. There are cases in which
  $C_e$ is not a cycle in $N_e$, in particular if $u=\rho_C$ and both $(u,\eta_C)$
  and $(v,\eta_C)$ are edges of $N$. If $C_e$ is not a cycle, then it cannot contain
  any hybrid-vertex. Now assume that $C_e$ remains a subgraph in $N_e$ that is
  distinct from a single vertex or an edges. In this case, \emph{none} of the
  vertices of $N$ that are distinct from $u$ and $v$ have been suppressed and thus,
  are still present in $N_e$. This, in particular, implies that $C$ and $C_e$ differ
  only in the edge $e$ and the vertex $v_e$, that is, the topology of $C$ and $C_e$
  remains the same up to the single contraction of $e$. Note that $v\neq \eta_C$ by
  assumption. Moreover, since $v\prec_N u$, we also have $u\neq \eta_C$. Hence, the
  only remaining case we need to show is that $v_e$ is not a hybrid-vertex of $C_e$.
  If $v_e$ has indegree 1 in $N_e$ we are done. Thus, assume $v_e$ has indegree 2 in
  $N_e$. Assume for contradiction $C_e$ has $v_e$ and $\eta_C$ as hybrid-vertices.
  Note, removal of any vertex $w$ in $C_e$ keeps $C_e-w$ connected in $N_e$. Hence,
  removal of any vertex $w$ in $C$ keeps $C-w$ connected. This together with the
  facts that the topology of $C$ and $C_e$ remains the same up to the single
  contraction of $e$ and that $v$ has indegree 1 in $N$ implies that $u\neq \eta_c$
  must be a hybrid-vertex of $C$ in $N$; a contradiction. In summary, every
  biconnected components of $N_e$ contains at most one hybrid-vertex and thus, $N_e$
  is a level-1 network.
\end{proof}

\begin{figure}[t]
		\begin{center}
			\includegraphics[width = .8\textwidth]{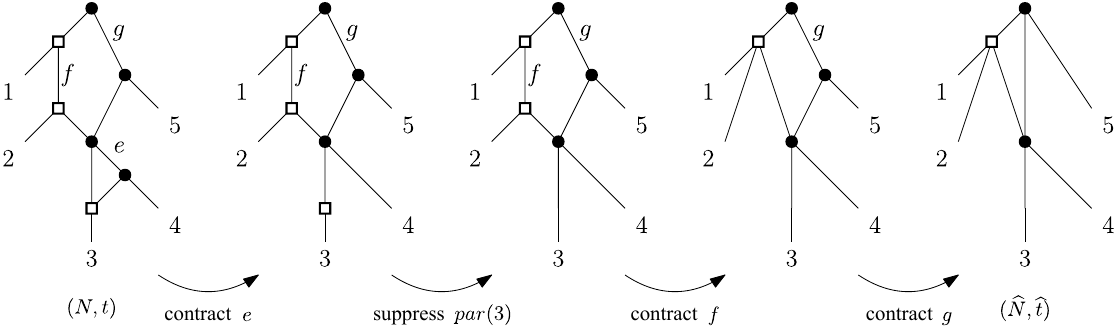}
		\end{center}
		\caption{A level-1 network $(N,t)$ and the resulting quasi-discriminating level-1
		         network $(\widehat N, \widehat t)$. Here, $(\widehat N, \widehat t)$ is
		         weak. Note, although $\widehat N$ contains only one cycle $C$, it is not
		         elementary since there are inner vertices adjacent to more than one
		         leaf and vertex $5$ is adjacent to the root. 
		         Observe that the edge $(\rho_{\widehat N},\eta_{\widehat N})$
		         satisfies $\widehat t(\rho_{\widehat N}) = \widehat t(\eta_{\widehat
		         N})$ but will not be contracted since $\eta_{\widehat N}$ is a
		         hybrid-vertex.}
		\label{fig:hatN}
\end{figure}

 We continue with showing that   $(N,t)$ and $(\widehat N, \widehat t)$ explain
 the same graph. To this end, we provide first the following results which 
 	is slightly adjusted to fit with our notation. 
 \begin{lemma}[{\cite[Prop.\ 7.13]{HSS:22cluster}}]
 		\label{lem:lca-contract}
		Let $N$ be a level-1 network on $X$ and $e = (u,v)\in E^0(N)$ be such that
		$v$ is not a hybrid-vertex and $(N_e,t_e)$ be obtained from $(N,t)$ 
		by contraction of $e$. 
		Then, for all $x,y\in X$, we have $\lca_{N_e}(x,y) =\lca_N(x,y)$ whenever 
		$\lca_N(x,y)\notin \{u,v\}$ and, otherwise $\lca_{N_e}(x,y)=v_e$.
\end{lemma}

\begin{proposition}\label{prop:NhatN-sameGraph}
	If $(N,t)$ is a labeled level-1 network, then it holds that
	$\mathscr{G}(N,t) = \mathscr{G}(\widehat N, \widehat t)$.
\end{proposition}
\begin{proof}
	Let $(N,t)$ be a labeled level-1 network on $X$ and $e=(u,v) \in E^0(N)$ be an edge
	of $N$ that satisfies $t(u)=t(v)$ and where $v$ is not a hybrid-vertex. Consider
	the directed graph $(N_e,t_e)$ that is obtained from $(N,t)$ by contraction of $e$
	and let $v_e$ be the vertex in $N_e$ that corresponds to the contracted edge $e$.
	As shown in the proof of Lemma \ref{lem:discriminatingN}, $(N_e,t_e)$ is a labeled
	level-1 network on $X$. By construction all vertices of $N_e$ that are contained in
	$N$ obtained the same label as in $N$, while $v_e$ obtained label
	$t(v_e)=t(u)=t(v)$. 

  We show first that $\mathscr{G}(N,t) = \mathscr{G}(N_e, t_e)$. It suffices to
 verify that, for all distinct $x,y \in X$, we have $t(\lca_N(x,y)) =
 t_e(\lca_{N_e}(x,y))$. Let $x,y \in X$ be distinct and $w = \lca_N(x,y)$. If
 $w\notin \{u,v\}$, then Lemma \ref{lem:lca-contract} implies that $w =
 \lca_{N_e}(x,y)$. Hence, in particular, $w\neq v_e$ must hold. The latter two
 arguments together with the definition of $t_e$ imply $t(\lca_N(x,y))
 =t_e(\lca_{N_e}(x,y))$. Suppose now that $w\in \{u,v\}$. By Lemma
 \ref{lem:lca-contract}, we have $\lca_{N_e}(x,y)=v_e$. Since $t(u)=t(v)$ and by
 construction of $t_e$, we have $t_e(v_e) = t(u)=t(v)$. In summary, for all
 distinct $x,y \in X$, we have $t(\lca_N(x,y)) = t_e(\lca_{N_e}(x,y))$.
 Consequently, $\mathscr{G}(N,t) = \mathscr{G}(N_e, t_e)$. The latter arguments
 can be repeated, with $(N_e,t_e)$ now playing the role of $(N,t)$, until we
 eventually obtain $(\widehat N, \widehat t)$, which completes the proof. 
\end{proof}

\begin{figure}[t]
		\begin{center}
			\includegraphics[width = .4\textwidth]{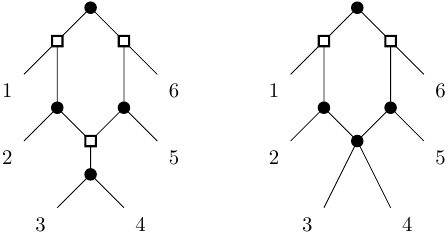}
		\end{center}
		\caption{The left network $(N,t)$ is discriminating but not least-resolved, since 
						the right network $N'$ can be obtained from $N$ by a single edge contraction
						and there is a labeling $t'$ such that $\mathscr{G}(N,t) = \mathscr{G}(N',t')$.}
		\label{fig:LRT}
\end{figure}

We emphasize that discriminating or quasi-discriminating networks are not necessarily
least-resolved, see Fig.\ \ref{fig:LRT}.
We show now that every level-1 network that contains weak cycles can be
`transformed'' into a strong level-1 network that explains the same graph by
replacing all weak cycles locally by trees. 

\begin{lemma}\label{lem:Weak-Reduce-Cycle}
Let $(N,t)$ be a level-1 network that contains $l$ cycles for which $k$ of them are
weak. Then, there is a strong level-1 network $(N',t')$ on $X$ that contains $l-k$
cycles and such that $\mathscr{G}(N',t') = \mathscr{G}(N,t)$.
\end{lemma}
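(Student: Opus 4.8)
The plan is to argue by induction on the number $k$ of weak cycles, the engine being a local surgery that removes a single weak cycle while preserving the explained graph. For $k=0$ the network $N$ is already strong and I take $(N',t')=(N,t)$, which has $l=l-k$ cycles. For the inductive step I fix one weak cycle $C$ with root $\rho_C$, hybrid-vertex $\eta_C$ and sides $P^1,P^2$, and describe how to replace it locally so that the result is a level-1 network with one cycle fewer, exactly $k-1$ of which are weak, and explaining the same graph; the claim then follows by applying the induction hypothesis.

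First I would record the relevant $\lca$'s. Writing $N_\eta\coloneqq N(\eta_C)$ and $Y\coloneqq L(N_\eta)$, note that for any two leaves of $Y$ their $\lca$ lies in $N_\eta$, and for a leaf $x$ descending from a cycle-vertex off the $\eta_C$-path and a leaf $u\in Y$ one has $\lca_N(x,u)=w'$, where $w'$ is the lowest cycle-vertex that is an ancestor of $x$; analogously the $\lca$ of two off-cycle leaves hanging on the two sides is $\rho_C$. All of this uses only that $\lca_N$ is unique in level-1 networks. The key consequence is that $C$ contributes only finitely many distinct ``cross'' labels, and that the whole block $D$ of leaves descending from $C$ meets every leaf outside $C$ exactly at $\rho_C$ (or above), a fact untouched by any surgery that keeps $\rho_C$ and reattaches the interior of $C$ as a single subtree below it.

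In the weak case (i), where one side, say $P^1$, is the single edge $(\rho_C,\eta_C)$ while $P^2=\rho_C\to p_1\to\cdots\to p_m\to\eta_C$, I would simply delete the edge $(\rho_C,\eta_C)$ and clean up (suppress any resulting in/out-degree-$1$ vertex, and if $\rho_C=\rho_N$ becomes a degree-$1$ root, make $p_1$ the new root). Since $\rho_C$ remains an ancestor of $\eta_C$ through $p_1,\dots,p_m$, one checks that every $\lca_N$ is unchanged: for $x$ hanging on $p_i$ and $u\in Y$ it stays $p_i$, for two such leaves on $p_i,p_j$ it stays $p_{\min(i,j)}$, pairs within $Y$ stay in $N_\eta$, and pairs involving an outside leaf still meet at $\rho_C$ or above. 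Hence the explained graph is preserved, $C$ is destroyed, and no new cycle is created (an edge of $C$ lies on no other cycle). In the weak case (ii) the cycle is the diamond on $\rho_C,a,b,\eta_C$; here I replace the three vertices $a,b,\eta_C$ (keeping the off-cycle subtrees $T_a$ at $a$, $T_b$ at $b$ and the subnetwork $N_\eta$ at $\eta_C$ intact) by a small tree gadget $S$ attached under $\rho_C$ by one new edge, with $\rho_C$ and its label retained. The required cross-labels are $t(a)$ between $L(T_a)$ and $Y$, $t(b)$ between $L(T_b)$ and $Y$, and $t(\rho_C)$ between $L(T_a)$ and $L(T_b)$.

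The crux is that these three labels lie in $\{0,1\}$, so by pigeonhole two of them coincide, and precisely this is what lets a \emph{tree} realise them: if $t(a)=t(b)$ I take $s$ with children $N_\eta$ and a vertex $q$ whose children are $T_a,T_b$, and set $t(s)=t(a)$, $t(q)=t(\rho_C)$; if $t(a)=t(\rho_C)$ (resp.\ $t(b)=t(\rho_C)$) I take the caterpillar that splits off $T_a$ (resp.\ $T_b$) first, with the analogous labels. A direct $\lca$ check then shows every within-$D$ and every $D$-to-outside label is reproduced, so the explained graph is unchanged, $C$ becomes a tree, and no cycle is created. In either case I would finally verify that the surgery leaves all other cycles intact: the only vertices $C$ can share with another cycle are $\rho_C$ and $\eta_C$, and these are kept (being a root, resp.\ hybrid, of a neighbouring cycle forces the out-, resp.\ in-degree that makes the vertex survive suppression), so the result is a level-1 network with $l-1$ cycles of which $k-1$ are weak, and the induction closes. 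I expect the main obstacle to be the bookkeeping — checking (N1)--(N3) and the level-$1$ property after edge deletion and suppression, and confirming that the shared vertices $\rho_C,\eta_C$ prevent the surgery from disturbing neighbouring cycles — whereas the genuinely new idea is the short binary-pigeonhole argument that makes the diamond tree-realisable.
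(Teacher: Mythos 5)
Your proposal is correct and follows the same overall skeleton as the paper's proof: induction on the number of weak cycles, with a local surgery that replaces one weak cycle by a tree while preserving all relevant $\lca$'s. For the first weak case (one side of $C$ being the single edge $(\rho_C,\eta_C)$) your surgery — delete that edge, suppress, and check that every $\lca$ survives because $\rho_C$ is only suppressed when it was never an $\lca$ — is essentially identical to the paper's. Where you genuinely diverge is the diamond case. The paper first invokes Lemma~\ref{lem:NhatN-sameGraph} to assume w.l.o.g.\ that $(N,t)$ is quasi-discriminating, which forces $t(u)=t(v)$ for the two interior vertices and permits a single fixed gadget ($\rho'$ with children $\eta_C$ and $w_0$, where $w_0$ carries $t(\rho_C)$ and $\rho'$ carries $t(u)=t(v)$). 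You instead observe that the three required cross-labels $t(a),t(b),t(\rho_C)$ live in $\{0,1\}$, so two must coincide, and you pick one of three caterpillar/cherry gadgets accordingly; your $t(a)=t(b)$ subcase reproduces the paper's gadget exactly, and the other two subcases check out by the same direct $\lca$ computation. Your route buys self-containedness: it avoids the quasi-discriminating reduction inside the induction, which is mildly delicate in the paper since edge contractions can in principle alter the cycle structure (and hence the count $l-k$) — an issue the paper does not address explicitly. The paper's route buys a single uniform gadget at the cost of that extra reduction. Both are valid; your bookkeeping claims about $\rho_C$ and $\eta_C$ surviving suppression when they are shared with neighbouring cycles, and about no new cycles being created, are also correct and mirror the checks in the published argument.
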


\begin{proof}
Let $(N,t)$ be a level-1 network that contains $l$ cycles. Assume that $C$ is a weak
cycle of $N$. Hence, we either have that (a) $(\rho_C,\eta_C)$ is an edge of $N$, or
(b) there exists two vertices $u$ and $v$ in $N$ and edges $(\rho_C,u)$,
$(u,\eta_C)$, $(\rho_C,v)$, $(v,\eta_C)$. We continue with showing that we can
``locally replace'' $C$ by a tree to obtain a level-1 network $(N',t')$ on $X$ such
that $\mathscr{G}(N',t') = \mathscr{G}(N,t)$. In this case, $(N',t')$ contains
contains $l-1$ weak cycles and the statement follows by induction.

Consider first Case (a) and let $N'$ be the network obtained from $N$ by removing the
edge $(\rho_C,\eta_C)$, and suppressing $\rho_C$, resp., $\eta_C$ in case they have
now indegree 1 and outdegree 1 or indegree 0 and outdegree 1. Note, these are the
only vertices in $N$ that may have a different indegree and outdegree than in $N'$.
It is now easy to verify that $N'$ remains a level-1 network with $V(N') \subseteq
V(N)$. If $\rho_C$ has been suppressed in $N'$, then we define $\rho'$ as the unique
child $c$ of $\rho_C$ that is located on $C$ and distinct from $\eta_C$ which is
feasible, since $c$ was not suppressed. In all other cases, we put $\rho'\coloneqq
\rho_C$. In either case, $\rho'\in V(N')$, however, $\rho'\neq \rho_{N'}$ may be
possible. The latter arguments also imply that, for all distinct $x.y\in X$, the
vertex $\lca_N(x,y)$ is not suppressed in $N'$ whenever $\lca_N(x,y)\notin
\{\rho_C,\eta_C$\}, $x,y\in X$. Moreover, if $\lca_N(x,y)=\rho_C$, 
then there is no $v\in V(C)\setminus \{\rho_C\}$ with
	  $x\prec_N v$ and $y\prec_N v$  since then $\lca_N(x,y)\preceq_N v\prec_N \rho_N$. Hence,
at least one of
the vertices $x$ and $y$ must be incomparable to every vertex $v\in V(C)\setminus 
\{\rho_C\}$ which implies
that, after the removal of the edge $(\rho_C,\eta_C)$, vertex $\rho_C$ must have
outdegree at least two and thus, was not suppressed. By similar arguments, if
$\lca_N(x,y)=\eta_C$, then $\eta_C$ must have outdegree at least two in $N'$ and thus,
was not suppressed to obtain $N'$. Hence, in all cases the vertex $\lca_N(x,y)\in V(N)$ is
present in $N'$ for all distinct $x.y\in X$.

We show now that for all distinct $x,y \in X$, we have indeed
$\lca_{N'}(x,y)=\lca_N(x,y)$. Let $x,y\in X$, $x\neq y$ be chosen arbitrarily. If
$\lca_N(x,y)$ and $\rho_C$ are incomparable in $N$, then it is easy to verify that
$\lca_N(x,y) = \lca_{N'}(x,y)$ since we only changed the topology of $N$ ``below''
$\rho_C$. Thus, assume that $\lca_N(x,y)$ and $\rho_C$ are comparable in $N$. We
distinguish the following case: $\lca_N(x,y)\prec_N \rho_C$, $\rho_C\prec_N
\lca_N(x,y)$ and $\rho_C = \lca_N(x,y)$.

Assume first that $\lca_N(x,y)\prec_N \rho_C$. Since we only removed
$(\rho_C,\eta_C)$ and since the $\rho_C\eta_C$-path in $N$ that is distinct from the
single edge $(\rho_C,\eta_C)$ defines a $\rho'\eta_C$-path in $N'$, we still have
$\lca_{N'}(x,y)\prec_{N'} \rho'$ in case $\rho' = \rho_C$ and
$\lca_{N'}(x,y)\preceq_{N'} \rho'$ if $\rho_C$ was suppressed. Moreover, since the
topology along all vertices $v$ with $v \prec_N \rho_C$ remained unchanged, it holds
that $\lca_N(x,y) = \lca_{N'}(x,y)$.

Assume now that $\rho_C\prec_N \lca_N(x,y)$. If $x$ and $y$ are not
descendants of $\rho_C$ in $N$, then the paths from $\lca(x,y)$ to $x$ and $y$
respectively remain unchanged in $N'$. In particular, $\lca_N(x,y) = \lca_{N'}(x,y)$
holds. Otherwise, we can assume w.l.o.g.\ that $x\prec_N \rho_C$ and that $y$ and
$\rho_C$ are incomparable in $N$ and thus, $\lca_N(x,y) = \lca_N(\rho_C,y)$. Since we
have only removed the edge $(\rho_C,\eta_C)$, the vertices $y$ and $\rho'$ remain
incomparable in $N'$ and $x\prec_{N'} \rho'$. Therefore, $\lca_{N'}(x,y) =
\lca_{N'}(\rho',y) = \lca_{N}(\rho_C,y) = \lca_{N}(x,y)$. 

Finally assume that $\rho_C = \lca_N(x,y)$, in which case $\rho'=\rho_C$. 
As argued above, in this
case, at least one of the vertices $x$ and $y$ must be incomparable to every vertex
in $V(C)\setminus \{\rho_C\}$ and $\rho'=\rho_C$. It is easy to verify that $\rho'$ remains the
$\preceq_{N'}$-minimal ancestor of $x$ and $y$, since we did not create new paths in
$N'$. Hence, $\rho' = \lca_{N'}(x,y)$ and therefore, $\lca_{N}(x,y) =
\lca_{N'}(x,y)$.

In summary, for all vertices $x,y \in X$, we have $\lca_{N'}(x,y)=\lca_N(x,y)$. Let
$t'$ be the restriction of $t$ to $V(N')$, that is, $t'(v)=t(v)$ for all $v\in
V(N')\subseteq V(N)$. Since $\lca_{N'}(x,y)=\lca_N(x,y)$ and $t'$ retains the vertex
labels for all such vertices, it follows that $(N',t')$ is a labeled level-1 network
such that $\mathscr{G}(N',t') = \mathscr{G}(N,t)$ and that has one weak
cycle less than $N$.

Consider now Case (b). By Prop.\ \ref{prop:NhatN-sameGraph}, we can assume w.l.o.g.\
that $(N,t)$ is quasi-discriminating and thus, $t(u)\neq t(\rho_C)$ and $t(v)\neq
t(\rho_C)$. Let $N'$ be the network obtained from $N$ by applying the following
steps: Remove the edges $(\rho_C,u)$, $(\rho_C,v)$, $(u,\eta_C)$ and $(v,\eta_C)$;
add new vertices $\rho'$ and $w_0$; add the edges $(\rho_C, \rho')$,
$(\rho',\eta_C)$, $(\rho', w_0)$, $(w_0,u)$ and $(w_0,v)$; and suppress resulting
vertices of indegree and outdegree 1. By construction we have $V(N')\setminus \{w_0,
\rho'\} \subseteq V(N)$. Moreover, we have $\rho_C \in V(N')$ if and only if $\rho_C$
has outdegree three or more in $N$. We next define $t'$ by putting
$t'(w_0)=t(\rho_C)$, $t'(\rho')=t(v)$ and $t'(w)=t(w)$ for all $w \in V(N')\setminus
\{w_0,\rho'\}$. Note, $t'(\rho')=t(v) = t(u)$, since $(N,t)$ is quasi-discriminating.
Note that a vertex $w \in \{\rho_C, u, v\}$ is suppressed if and only if $w$ has
outdegree exactly two in $N$, while $\eta_C$ is suppressed if it has outdegree one in
$N$. All other vertices $w\in V(N)\setminus\{\rho_C, u, v, \eta_C\}$ remain vertices
of $N'$.

We next show that $\mathscr G(N',t')=\mathscr G(N,t)$. To this end, we must verify
that $t'(\lca_{N'}(x,y))=t(\lca_N(x,y))$ holds for all $x,y \in X$. Let $x,y\in X$ be
two arbitrarily chosen vertices. If $\lca_N(x,y)$ and $\rho_C$ are incomparable in
$N$, then the subnetwork of $N$ rooted at $\lca_N(x,y)$ remained unchanged in $N'$.
In particular, we have $\lca_N(x,y)=\lca_{N'}(x,y)$ and, therefore,
$t'(\lca_{N'}(x,y))=t(\lca_N(x,y))$ by definition of $t'$. By similar arguments, if
$\lca_N(x,y) \prec_N \rho_C$ with $\lca_N(x,y) \notin \{u,v\}$, then
$t'(\lca_{N'}(x,y))=t(\lca_N(x,y))$. If $ \rho_C \prec_N \lca_N(x,y)$, then the path
from $\lca_N(x,y)$ to the parent $p$ of $\rho_C$ (possibly of length 0) is preserved
in $N'$. In particular, $\lca_{N'}(x,y)=\lca_N(x,y)$ holds in this case and thus,
$t'(\lca_{N'}(x,y))=t(\lca_N(x,y))$ by definition of $t'$.

It remains to consider the cases $\lca_N(x,y) \in \{\rho_C,v,u\}$. Assume first that
$\lca_N(x,y)=\rho_C$, then two cases can occur: (i) at least one of $x,y$ is not a
descendant of $u$ and not a descendant of $v$ in $N$, and (ii) $x$ is a descendant of $u$ 
and $y$ is a
descendant of $v$ in $N$ or \emph{vice versa}. In Case (i), $\rho_C$ has outdegree at
least three in $N$, so $\rho_C \in V(N')$. By construction, $x$ and $y$ are
descendant of distinct children of $\rho_C$ in $N'$, so
$\lca_{N'}(x,y)=\rho_C=\lca_N(x,y)$. By definition of $t'$ it holds that
$t'(\lca_{N'}(x,y))=t(\lca_N(x,y))$. In Case (ii), assume w.l.o.g.\ that 
$x\prec_N u$ and $y\prec_N v$. We first remark that neither $x$
nor $y$ are descendant of $\eta_C$ in $N$, since then $\lca_{N}(x,y)\prec_N
\rho_C$; a contradiction. This means that $x$ (resp., $y$) is descendant of one child of
$u$ (resp., $v$) that is distinct from $\eta_C$. Note, that these particular
children cannot be located on $C$ since $(u,\eta_C)$ and $(v,\eta_C)$ are edges of
$C$. Hence, by construction, $x$ and $y$ are descendants of two distinct children
of $w_0$ in $N'$ and thus, $\lca_{N'}(x,y)=w_0$. Since $t'(w_0)=t(\rho_C)$ it follows
that $t'(\lca_{N'}(x,y))=t(\lca_N(x,y))$.

Finally, if $\lca_N(x,y) \in \{u,v\}$, then two cases may occur: (i') None of $x$ and
$y$ is a descendant of $\eta_C$ and (ii') exactly one of $x,y$, say $x$, is a
descendant of $\eta_C$ in $N$. In Case (i'), $\lca_N(x,y)$ has outdegree three or
more in $N$. In particular, $\lca_N(x,y)$ has outdegree two or more in $N'$, so
$\lca_N(x,y) \in V(N')$. It follows that $\lca_{N'}(x,y)=\lca_N(x,y)$ and thus,
$t'(\lca_{N'}(x,y))=t(\lca_N(x,y))$. In Case (ii'), $x$ remains a descendant of
$\eta_C$ in $N'$ if $\eta_C$ was not suppressed and, otherwise, a descendant of a
child of $\rho'$ in $N'$ that is distinct from $w_0$. Moreover, $y$ is a descendant
of $w_0$ in $N'$ . Hence, $\lca_{N'}(x,y)=\rho'$. Since $t'(\rho')=t(v)=t(u)$, we
have $t'(\lca_{N'}(x,y))=t(\lca_N(x,y))$.

In summary, $N'$ contains one weak cycle less than $N$ and
$t'(\lca_{N'}(x,y))=t(\lca_N(x,y))$ for all $x,y \in X$ which implies that $\mathscr
G(N',t')=\mathscr G(N,t)$.
\end{proof}

As a direct consequence of Lemma \ref{lem:Weak-Reduce-Cycle}, we obtain a new 
characterization of cographs.

\begin{theorem}\label{thm:WeakIffCograph}
A graph $G$ is a cograph if and only if $G$ can be explained by a weak labeled
level-1 network $(N,t)$.
\end{theorem}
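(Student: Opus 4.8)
The plan is to prove both directions of the equivalence. The theorem states that $G$ is a cograph if and only if $G$ can be explained by a weak labeled level-1 network. I observe that one direction follows almost immediately from earlier results, while the other is the interesting one and will rely on Lemma~\ref{lem:Weak-Reduce-Cycle}.

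\medskip

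\noindent\textbf{Forward direction ($\Rightarrow$).} Suppose $G$ is a cograph. By the earlier discussion, $G$ is explained by its discriminating cotree $(\widehat T,\widehat t)$, i.e.\ $G \simeq \mathscr{G}(\widehat T,\widehat t)$. Since a tree contains no cycles at all, it is trivially a weak network (as noted in the Preliminaries: ``Since trees do not contain cycles, they are trivially both, weak and strong networks''). Moreover every tree is a level-$0$ network and hence a level-$1$ network. Thus the cotree is itself a weak labeled level-1 network explaining $G$, and this direction is immediate.

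\medskip

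\noindent\textbf{Backward direction ($\Leftarrow$).} This is the substantive direction. Suppose $G$ can be explained by a weak labeled level-1 network $(N,t)$, so $G \simeq \mathscr{G}(N,t)$. The idea is to eliminate all cycles. Since $N$ is weak, by definition \emph{all} of its cycles are weak; say there are $l$ cycles in total, all of which are weak. Applying Lemma~\ref{lem:Weak-Reduce-Cycle} with $k = l$, I obtain a strong level-1 network $(N',t')$ on $X$ that contains $l - l = 0$ cycles and satisfies $\mathscr{G}(N',t') = \mathscr{G}(N,t) \simeq G$. A level-1 network with no cycles has no biconnected component that is distinct from a single vertex or an edge, hence contains no hybrid-vertices; therefore $N'$ is a tree. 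Consequently $(N',t')$ is a labeled tree explaining $G$, i.e.\ $G$ can be explained by a cotree, and by the classical result (Theorem~\ref{thm:CharCograph} together with the cited characterization that a graph is explained by a labeled tree iff it is a cograph) it follows that $G$ is a cograph.

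\medskip

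\noindent\textbf{Main obstacle.} I expect essentially all the technical difficulty to have been absorbed into Lemma~\ref{lem:Weak-Reduce-Cycle}, which is precisely engineered to replace weak cycles by trees while preserving the explained graph. The only point requiring care in the present proof is the bookkeeping: verifying that removing \emph{all} weak cycles (taking $k = l$ in the lemma) yields a network with zero cycles, and then confirming that a cycle-free level-1 network is exactly a tree, so that the conclusion reduces to the well-known tree/cograph correspondence. No further calculation is needed.
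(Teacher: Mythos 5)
Your proof is correct and follows essentially the same route as the paper: the forward direction uses the cotree as a trivially weak level-1 network, and the backward direction applies Lemma~\ref{lem:Weak-Reduce-Cycle} with $k=l$ to remove all weak cycles and land on a tree. The extra remark that a cycle-free level-1 network has no hybrid-vertices and is therefore a tree is a harmless elaboration of what the paper leaves implicit.
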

\begin{proof}
If $G$ is a cograph, then $G$ can be explained by a labeled phylogenetic tree $(N,t)$
which is trivially a weak level-1 network since it does not contain cycles. 
Conversely, if $G$ can be explained by a weak labeled level-1 network $(N,t)$, then
all $l$ cycles of $N$ are weak and, by Lemma \ref{lem:Weak-Reduce-Cycle}, there is a
strong level-1 network $(N',t')$ that contains no cycles and still explains $G$.
Hence, $(N',t')$ is a tree and thus, $G$ is a cograph.
\end{proof}

Based on the latter results we can provide an additional characterization of 
pseudo-cographs.

\begin{theorem}\label{thm:CharPsC-Network-Cycle}
 A graph $G$ is a pseudo-cograph if and only if $|V(G)|\leq 2$ or 
 $G$ can be explained by a level-1 network $(N,t)$ that contains precisely
 one cycle $C$ such that $\rho_C = \rho_N$ and $\child_N(\eta_C)=\{x\}$ with $x\in 
 L(N)$.
\end{theorem}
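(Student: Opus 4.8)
The plan is to prove both implications, using the explicit construction of Definition~\ref{def:prop:PsC-l1N} for the forward direction and a ``reverse'' decomposition of the cycle for the backward direction. For the forward direction, suppose $G$ is a pseudo-cograph with $|V(G)|\geq 3$ (the case $|V(G)|\leq 2$ being immediate). Then $G$ is a $(v,G_1,G_2)$-pseudo-cograph, so Proposition~\ref{prop:PsC-l1N} shows that $(N,t)\coloneqq(N(v,G_1,G_2),t(v,G_1,G_2))$ explains $G$. I would then read off the required properties directly from Definition~\ref{def:prop:PsC-l1N}: the network has exactly one cycle $C$ whose two sides are composed of $\rho_N$, $\eta_N$ and the $\rho_{T_1}v$- and $\rho_{T_2}v$-paths, so $\rho_C=\rho_N$; and after Steps~3 and~4 the hybrid vertex $\eta_N=\eta_C$ has indegree $2$ and a single remaining child, namely the leaf $v$, so $\child_N(\eta_C)=\{v\}$ with $v\in L(N)$. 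This establishes the second disjunct of the right-hand side.

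For the backward direction, assume $|V(G)|\geq 3$ and that $G$ is explained by such an $(N,t)$; write $x$ for the leaf with $\child_N(\eta_C)=\{x\}$ and set $v\coloneqq x$. First, if $C$ is weak then $N$ is a weak level-$1$ network, so by Theorem~\ref{thm:WeakIffCograph} $G$ is a cograph and hence a pseudo-cograph by Lemma~\ref{lem:cographPscograph}. Otherwise $C$ is strong, which forces both sides $P^1,P^2$ to contain at least one interior vertex. I would then split $N$ along the cycle into two trees: $T_1$ consists of $\rho_N$, the side $P^1$ with all subtrees hanging off it, all subtrees hanging directly off $\rho_N$, together with $\eta_C$ and its leaf $x$ (keeping only the $P^1$-parent edge into $\eta_C$); and $T_2$ is defined symmetrically from $P^2$, but without the extra $\rho_N$-subtrees. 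After suppressing degree-two vertices, both $T_i$ are genuine trees on leaf sets $X_1,X_2$ with $X_1\cup X_2=X$ and $X_1\cap X_2=\{v\}$, and, since each side has an interior tree-vertex whose off-cycle child carries at least one leaf distinct from $x$, one gets $|X_1|,|X_2|\geq 2$.

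It then remains to verify (F1)--(F3) with $G_i\coloneqq G[X_i]$. The key computation is that for all $y,z\in X_i$ one has $\lca_{T_i}(y,z)=\lca_N(y,z)$: for $y,z\neq x$ the ancestor paths are unchanged by the splitting, and for a pair involving $x$ the branches of $x$ and $z$ diverge at the same interior (or root) vertex in $T_i$ as in $N$. Letting $t_i$ be the restriction of $t$, this yields $G[X_i]=\mathscr{G}(T_i,t_i)$, which is a cograph since it is explained by a labeled tree; hence (F1) and (F2) hold. For (F3), I would show that every cross pair $y\in X_1\setminus\{v\}$, $z\in X_2\setminus\{v\}$ satisfies $\lca_N(y,z)=\rho_N$, because the branch of $y$ (on $P^1$ or in an extra $\rho_N$-subtree) and the branch of $z$ (on $P^2$) meet only at $\rho_N$. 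Thus $\{y,z\}\in E(G)$ precisely when $t(\rho_N)=1$, so $G-v=(G_1-v)\join(G_2-v)$ if $t(\rho_N)=1$ and $G-v=(G_1-v)\union(G_2-v)$ otherwise, and $G$ is a $(v,G_1,G_2)$-pseudo-cograph.

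The main obstacle is the backward direction's bookkeeping: defining $T_1,T_2$ so that cutting the hybrid vertex $\eta_C$ and sharing the root $\rho_N$ reproduces \emph{all} lowest common ancestors, and in particular guaranteeing $|X_1|,|X_2|>1$. This last point is exactly where the weak/strong dichotomy is needed, since a weak cycle may leave one side without any interior vertex (so one part would reduce to $\{v\}$); that degenerate configuration is precisely the one absorbed by the cograph shortcut via Theorem~\ref{thm:WeakIffCograph}.
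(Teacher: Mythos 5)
Your proposal is correct and follows essentially the same route as the paper: the forward direction reads the cycle structure off the construction of Definition~\ref{def:prop:PsC-l1N} via Proposition~\ref{prop:PsC-l1N}, and the backward direction uses the weak/strong dichotomy (Theorem~\ref{thm:WeakIffCograph} plus Lemma~\ref{lem:cographPscograph} for the weak case) and then splits the strong cycle into two cotrees sharing the leaf below $\eta_C$, verifying (F1)--(F3) via the preserved $\lca$'s and the fact that all cross pairs meet at $\rho_N$. The only (immaterial) deviation is that you attach the subtrees hanging directly off $\rho_N$ to $T_1$ whereas the paper's $N_2$ absorbs them; both choices yield valid cographs and the same join/union conclusion.
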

\begin{proof}
	Assume first that $G$ is a pseudo-cograph. If $|V(G)|\leq 2$ there is nothing to
	show. Assume that $|V(G)|\geq 3$. In this case there is a vertex $v$ and subgraphs
	$G_1$ and $G_2$ of $G$ such that $G$ is a $(v,G_1,G_2)$-pseudo-cograph. By Prop.\
	\ref{prop:PsC-l1N}, $G$ can be explained by the level-1 network
	$(N(v,G_1,G_2),t(v,G_1,G_2))$. This network contains precisely one cycle $C$ with
	$\rho_C=\rho_N$, and $\child_N(\eta_C)=\{v\}$ and $v\in L(N)$.
	
	Conversely, if $|V(G)|\leq 2$, then $G$ is a pseudo-cograph by definition. Assume
	that $G$ can be explained by a level-1 network $(N,t)$ that contains precisely one
	cycle $C$ with $\rho_C=\rho_N$ and $\child_N(\eta_C)=\{x\}$, $x\in 
    L(N)$. If $C$ is weak, then $N$ is weak and Thm.\
	\ref{thm:WeakIffCograph} implies that $G$ is a cograph. By Lemma
	\ref{lem:cographPscograph}, $G$ is a pseudo-cograph. Thus, assume that $C$ is
	strong. Since $N$ is level-1, there is a unique hybrid-vertex $\eta_C$ in $C$.
	Consider the two sides $P^1$ and $P^2$ of $C$. Since $C$ is strong it holds, in
	particular, that $1\leq |V(P^1)\setminus\{\rho_C,\eta_C\}|$ and $1\leq
	|V(P^2)\setminus\{\rho_C,\eta_C\}|$ where at least one inequality is strict.  
	Let $u$ be the child of $\rho_C$ that is
	contained in $P^1$ and let $x$ be the leaf that is the unique children of $\eta_C$.
	Consider the subnetwork $(N_1,t_1)$ that is rooted at $u$ and where $\eta_C$ is
	suppressed. Since $N$ contains only one cycle it follows that $(N_1,t_1)$ is a
	labeled (phylogenetic) tree. Hence, $G_1 \coloneqq G[L(N_1)]$ must be a cograph.
	Note that $V(G_1) = L(N_1)$ and thus, $|V(G_1)|\geq 2$ since $|V(P^1)\setminus
	\{\rho_C\}|\geq 2$ and each vertex in $P^1-\rho_C$ must be an ancestor of at least
	one leaf in $L(N_1)$. Consider now the subnetwork $(N_2,t_2)$ that is rooted at
	$\rho_C$ and that contains none of the vertices of $N_1$ except $\eta_C$ and $x$
	and where $\eta_C$ is finally suppressed and $\rho_C$ is removed in case it has
	outdegree 1 in $N_2$. Roughly speaking, $(N_2,t_2)$ consists of the other side
	$P^2$ of $C$ and all vertices that are descendants of the vertices in $P^2$ except
	for $u$ an its descendants that are different from $\eta_C$ and $x$. By similar
	arguments as before, $(N_2,t_2)$ is a labeled (phylogenetic) tree that explains the
	cograph $G_2 \coloneqq G[L(N_2)]$ with $|V(G_2)|\geq 2$. Hence, $|V(G_1)|,|V(G_2)|>
	1$ and, by construction, $V(G_1)\cup V(G_2) = V(G)$ and $V(G_1)\cap V(G_2) =
	\{x\}$. Thus, (F1) is satisfied. Since both $G_1$ and $G_2$ are cographs (F2)
	holds. Moreover, for every $y\in V(G_1)\setminus \{x\}$ and $z\in
	V(G_2)\setminus\{x\}$ it holds that $\lca_N(y,z)=\rho_N$. Hence, depending on the
	label $t(\rho_N)$ it either holds that $\{y,z\}\in E$ or $\{y,z\}\notin E$ for all
	$y\in V(G_1)\setminus \{x\}$ and $z\in V(G_2)\setminus\{x\}$. Thus, $G-x$ is either
	the join or disjoint union of $G_1-x$ and $G_2-x$ which implies that (F3) is
	satisfied. Hence, $G$ is a $(x,G_1,G_2)$-pseudo-cograph.
\end{proof}

Note that not all graphs that can be explained by level-1 networks are
pseudo-cographs, see Fig.\ \ref{fig:non-pseudo-cograph}. Furthermore, observe that
not every level-1 network that explains a pseudo-cograph contains a cycle $C$ such
that $\rho_C = \rho_N$ and $\child_N(\eta_C)=\{x\}$ with $x\in L(N)$ as shown in
Fig.\ \ref{fig:gamma}. Thus,  there can be distinct
quasi-discriminating level-1 networks that explain the same graph $G$. Non-uniqueness
of labeled level-1 networks that explain a given graph is supported by further
examples: Consider a network $N$ that contains a hybrid-vertex $\eta$ that has
outdegree 1. In this case, $\eta$ cannot be the lowest common ancestor of any pair of
leaves. Hence, the label of $\eta$ can be chosen arbitrarily. Let $v$ denote the
unique child of $\eta$ in a quasi-discriminating level-1 network $(\widehat N,
\widehat t)$ and assume that $\widehat{t}(\eta)\neq \widehat{t}(v)$ and that $v$ is
not a leaf. In this case, there is another labeling $t$ of $\widehat N$ that keeps
all vertex labels except for $\eta$ where we put $t(\eta)=t(v)$. In this case,
$(\widehat N, t)$ is not quasi-discriminating any more and we can contract the edge
$e=(\eta,v)$ to obtain a different quasi-discriminating level-1 network $(N_e,t_e)$
for which $\mathscr{G}(N,t) = \mathscr{G}(N_e,t_e)$. Further examples (incl.\
cographs) that show that different labeled level-1 networks can explain the same
graph are provided in Fig.\ \ref{fig:nonUniqueN}. 

\begin{figure}
		\begin{center}
			\includegraphics[scale = .9]{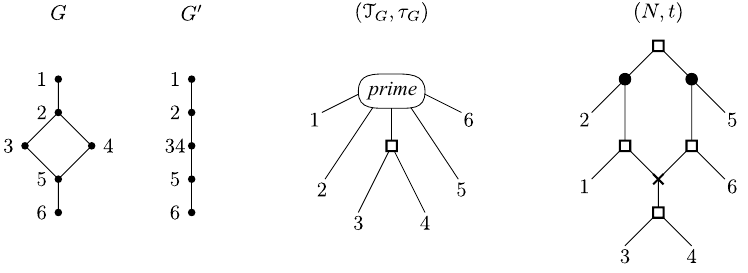}
		\end{center}
		\caption{Shown is a graph $G$ that is not a pseudo-cograph and thus, not a
		         polar-cat. To see this, assume for contradiction that $G$ is a
		         $(v,G_1,G_2)$-pseudo-cograph. There are only two choices for $v$ such
		         that $G-v$ is a cograph, namely $v\in \{2,5\}$. By Lemma
		         \ref{lem:ConnComp-v}, $G_1-v$ must contain one component of $G-v$ and
		         $G_2-v$ the other one. In this case, however, $G_1$ or $G_2$ is not a
		         cograph; a contradiction to (F2). The graph $G' \simeq G/\Mmax(G)$ is a
		         pseudo-cograph and, in particular, a polar-cat that is obtained from $G$
		         by identifying the two vertices $3$ and $4$; see Section
		         \ref{sec:general} for more details. Replacing the prime vertex in the
		         modular-decomposition tree of $(\MDT_G,\tau_G)$ of $G$ by a strong
		         quasi-discriminating elementary network that explains $G'$ yields the
		         level-1 network $(N,t)$ that explains $G$.
		         The only modules of $G$ that are distinct from the singletons
				are $\{3,4\}$ and $M=V(G)$. While $\{3,4\}$ is a parallel module, $M$ is a
				prime. This together with $G'\in \protect{\PolarCat}$ implies that $G\in
				\protect{\PrimeCat}$.
		         }
		\label{fig:non-pseudo-cograph} 
\end{figure}

\begin{figure}
		\begin{center}
			\includegraphics[scale = .7]{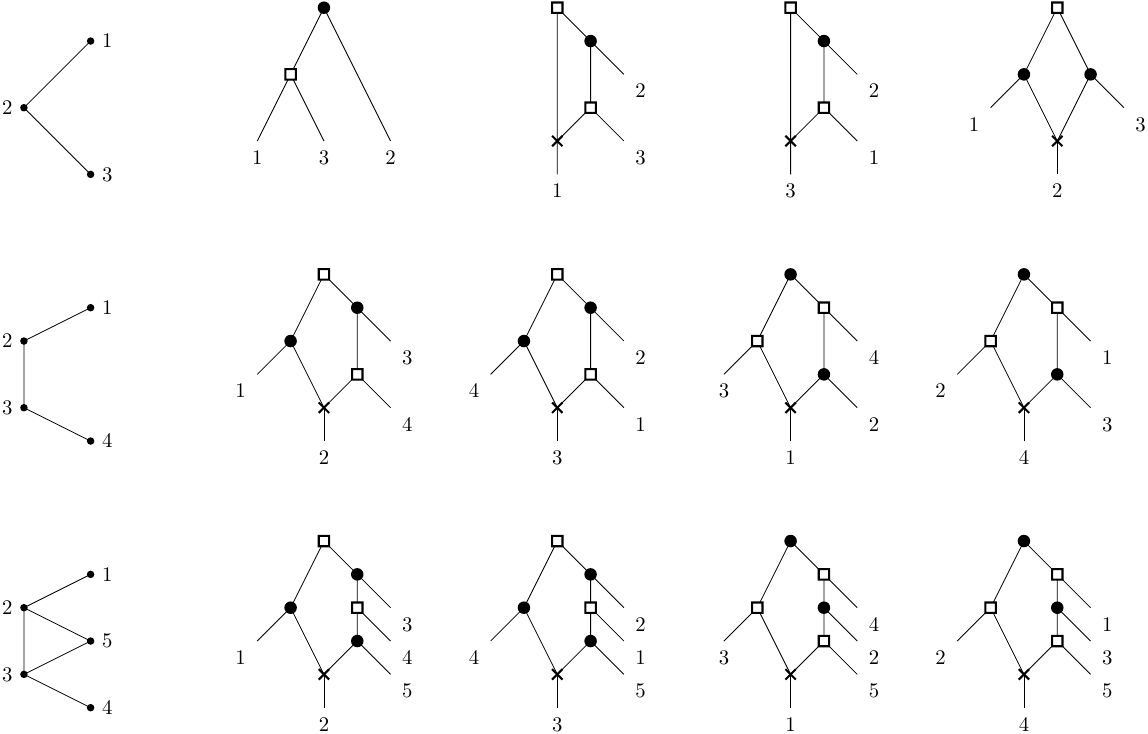}
		\end{center}
		\caption{Three different graphs that can be represented by non-isomorphic
		         quasi-discriminating elementary level-1 networks.
		         By Thm.\ \ref{thm:CharPsC-Network-Cycle}, all graphs shown here are 
		         pseudo-cographs. Moreover, the networks in the upper row are weak, while  
	         		all other networks are strong. By Lemma \ref{thm:WeakIffCograph}, the 
	         		graph in the upper row must be a cograph, while
         			the other two graphs are polar-cats (according to Thm.\ 
         			\ref{thm:CharPCPC}).	
         }
		\label{fig:nonUniqueN}
\end{figure}

\begin{lemma}\label{lem:WeakElementary}
	A graph $G$ can be explained by a weak quasi-discriminating elementary network 
	if and only if $G$ is a  caterpillar-explainable cograph and $|V(G)|\geq 2$. 
\end{lemma}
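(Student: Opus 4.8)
The plan is to prove both directions by combining Theorem~\ref{thm:WeakIffCograph} with the explicit cycle-reduction of Lemma~\ref{lem:Weak-Reduce-Cycle} and the uniqueness of the discriminating cotree of a cograph. The underlying observation is that reducing the single weak cycle of an elementary network turns it into a \emph{caterpillar}, and conversely every caterpillar cotree can be ``un-reduced'' into a weak elementary network.

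For the \emph{only-if} direction, suppose $(N,t)$ is a weak quasi-discriminating elementary network explaining $G$. Since $N$ is elementary, its unique cycle $C$ has length $|X|+1\geq 3$, hence $|V(G)|=|X|\geq 2$. As $N$ is weak, Theorem~\ref{thm:WeakIffCograph} gives that $G$ is a cograph, so it remains to show that its discriminating cotree is a caterpillar. First I would describe the shape of a weak cycle in an elementary network: writing $\rho_C=\rho_N$, $\eta_C=\eta_N$, the $|X|-1$ interior vertices are split between the two sides, and weakness forces either (i) one side to be the single edge $(\rho_C,\eta_C)$, or (ii) $|X|=3$ with exactly one interior vertex on each side. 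In each case I would run the corresponding reduction of Lemma~\ref{lem:Weak-Reduce-Cycle} and track the resulting tree $(N',t')$. In case (i), suppressing $\rho_C$ (which loses an out-edge and becomes the redundant new root $w_1$) and suppressing $\eta_C$ yields a tree whose inner vertices are exactly the interior vertices $w_1,\ldots,w_{|X|-1}$ of the long side, each carrying its attached leaf and with the hybrid's leaf appended to $w_{|X|-1}$, i.e.\ a caterpillar; in case (ii) the reduction produces the three-leaf caterpillar with inner vertices $\rho'$ and $w_0$. In both cases the quasi-discriminating hypothesis on $(N,t)$ forces the labels along the inner path to alternate, so $(N',t')$ is already \emph{discriminating}; by uniqueness of the discriminating cotree it is the cotree of $G$, and it is a caterpillar, so $G$ is caterpillar-explainable.

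For the \emph{if} direction, let $G$ be a caterpillar-explainable cograph with $|V(G)|=n\geq 2$ and let $(\widehat T,\widehat t)$ be its unique discriminating cotree, a caterpillar with inner vertices $c_1,\ldots,c_{n-1}$ (root $c_1$), where $c_i$ carries a leaf $\ell_i$ for $i<n-1$ and $c_{n-1}$ carries the cherry $\{\ell_{n-1},\ell_n\}$. I would reverse the case-(i) reduction: add a new root $\rho_C$ with edges to $c_1$ and to a new hybrid $\eta_C$, reroute the edge $c_{n-1}\to\ell_n$ through $\eta_C$ (so that $c_{n-1}\to\eta_C\to\ell_n$ and $\rho_C\to\eta_C$), and keep all remaining leaf-edges. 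The result has a single cycle $\rho_C\to c_1\to\cdots\to c_{n-1}\to\eta_C$ closed by $(\rho_C,\eta_C)$ of length $n+1$, with every non-root cycle vertex adjacent to a unique leaf; hence it is elementary, and one side being the edge $(\rho_C,\eta_C)$ makes it weak. Setting $t(c_i)=\widehat t(c_i)$, letting $t(\rho_C)$ be the label complementary to $\widehat t(c_1)$, and choosing $t(\eta_C)$ arbitrarily makes $(N,t)$ quasi-discriminating. Applying the case-(a) reduction of Lemma~\ref{lem:Weak-Reduce-Cycle} to $(N,t)$ suppresses $\rho_C$ and $\eta_C$ and returns exactly $(\widehat T,\widehat t)$, so by that lemma $\mathscr{G}(N,t)=\mathscr{G}(\widehat T,\widehat t)=G$.

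I expect the main obstacle to be the bookkeeping in the \emph{only-if} direction: correctly enumerating the two weak-cycle shapes inside an elementary network and verifying, for each, that the vertex suppressions in Lemma~\ref{lem:Weak-Reduce-Cycle} produce precisely a caterpillar whose inner labels alternate, so that it coincides with the unique discriminating cotree. The small cases $|V(G)|=2$ (where case (i) collapses to a single cherry) and $|V(G)|=3$ (where case (ii) first becomes possible) should be checked separately to confirm that both the caterpillar description and the label-alternation argument remain valid.
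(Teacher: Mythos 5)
Your proposal is correct and follows essentially the same route as the paper: Theorem~\ref{thm:WeakIffCograph} for the cograph part, a case split on the two weak-cycle shapes whose reduction yields a discriminating caterpillar cotree, and the identical reverse construction (new root plus hybrid rerouting a cherry leaf) for the converse. The only cosmetic difference is that you justify the reductions by invoking the explicit case-(a)/(b) operations of Lemma~\ref{lem:Weak-Reduce-Cycle}, where the paper performs the case-(a) surgery by hand, identifies the $|X|=3$ case directly as $P_3$ or $K_2\cupdot K_1$, and verifies the $\lca$'s of the converse construction directly; both are sound.
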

\begin{proof}
	Suppose that $G=(X,E)$ can be explained by a weak quasi-discriminating elementary
	network $(N,t)$ on $X$. By Thm.\ \ref{thm:WeakIffCograph}, $G$ is a cograph. Since
	$N$ is elementary it contains a cycle and, in particular, more than one leaf.
	Hence, $|X|\geq 2$. Let $C$ be the unique cycles in $N$ of length $|X|+1$ and $P^1$
	and $P^2$ be the sides of $C$. Since $N$ is weak, $C$ must be weak. Hence,
	either one of $P^1$ and $P^2$ consists of $\rho_C$ and $\eta_C$ only or both $P^1$
	and $P^2$ contain only one vertex that is distinct from $\rho_C$ and $\eta_C$.

	Assume first that one of $P^1$ and $P^2$ consists of $\rho_C$ and
	$\eta_C$ only. Now remove $\rho_C$ and its two incident edges from $(N,t)$,
	suppress $\eta_C$ and keep the labels of all remaining vertices to obtain a tree
	$(T,t')$ on $X$. It is straightforward to verify that $(T,t')$ satisfies $\lca_N(x,y) = \lca_T(x,y)$ for all distinct
	$x,y\in X$ and therefore, $G = \mathscr{G}(N,t) = \mathscr{G}(T,t')$. By
	construction and since $N$ is elementary, $T$ is a caterpillar. Moreover,
	since $\eta_C$ was suppressed in $T$ and since $t$ is a quasi-discriminating
	labeling of $N$, the labeling $t'$ must be a discriminating labeling of $T$. Hence,
	$(T,t')$ is the unique discriminating cotree of $G$. Thus, $G = (X,E)$ is
	caterpillar-explainable. Assume now that both $P^1$, resp., $P^2$ contain only one
	vertex $u$, resp., $v$ distinct from $\rho_C$ and $\eta_C$. Hence, $C$ must be
	cycle of length $4 = |X|+1$ and thus, $|X|=3$. Since $(N,t)$ is
	quasi-discriminating it must hold $t(u)=t(v) \neq t(\rho_N)$. It is now easy to
	verify that $G$ is either isomorphic to a $P_3$ or $K_2+K_1$ whose discriminating
	cotree is a caterpillar. Hence, $G = (X,E)$ is caterpillar-explainable.
	
	Suppose now that $G = (X,E)$ is a caterpillar-explainable cograph and $|X|\geq 2$.
	Let $(T,t)$ be its unique discriminating cotree which is, by definition, a
	caterpillar. Since $|X|\geq 2$ the root of $T$ is distinct from the leaves in $X$
	and thus, an inner vertex. Let $\rho_T$ be the root of $T$, $x\in X$ be one of the
	leaves that is part of the unique cherry in $T$ and $u$ be the parent of $x$ in
	$T$. To construct a weak quasi-discriminating elementary network $(N,t')$ take
	$(T,t)$, remove the edge $(u,x)$, add a vertex $\eta_N$ (the hybrid-vertex in $N$),
	add a vertex $\rho_N$ (the root of $N$) and add edges $(u,\eta_N)$, $(\eta_N,x)$,
	$(\rho_N,\rho_T)$ and $(\rho_N,\eta_N)$. It is straightforward to see that $N$
	consists of a single cycle $C$ for which one side consists of $\rho_N$ and $\eta_N$
	only. Hence, $N$ is a weak elementary network. To obtain a quasi-discriminating
	network, we define $t'\colon V^0(N)\to \{0,1\}$ by putting $t'(v) = t(v)$ for all
	$v\in V^0(N)\setminus \{\rho_N,\eta_N\}$ and put $t'(\rho_N) \neq t(\rho_T)$ and
	choose $t'(\eta_N)\in \{0,1\}$ arbitrarily. It is easy to verify that for all
	distinct $x,y\in X$ the vertex $\lca_N(x,y)$ is located at the non-empty side of
	$C$ and in particular, $\lca_N(x,y)\neq \rho_N$. In other words, for all distinct
	$x,y\in X$ the vertex $\lca_N(x,y)$ is located in the copy of $T$. Hence, $G =
	\mathscr{G}(N,t')$. Therefore, $G$ can be explained by a weak quasi-discriminating
	elementary network.
\end{proof}

\section{Primitive Graphs, Polar-Cats and Elementary Networks}
\label{sec:PolarCat}

In this section, we first characterize cographs that are caterpillar-explainable
		and graphs that are polar-cats. These characterizations are based on particular 
		vertex-orderings. We then show that
		polar-cats are precisely the primitive graphs that can be explained by
		strong quasi-discriminating elementary networks.

\begin{lemma}\label{lem:cograph-caterpillar}
A graph $G=(X,E)$, $|X|=n\geq 1$ can be explained by a discriminating caterpillar
tree $(T,t)$ on $X$ if and only if we can index the vertices of $X$ from $1$ to $n$
such that one of the following holds:
\begin{itemize}[noitemsep]
\item[(a)] $G$ is connected and the edges of $G$ are $\{x_i,x_j\}$ with $1 \leq i<j
           \leq n$ and $i$ being odd.\smallskip
\item[(b)] $G$ is disconnected and the edges of $G$ are $\{x_i,x_j\}$ with $1 \leq
           i<j \leq n$ and $i$ being even.
\end{itemize}
In this case, the indexing of the vertices in $X$ is unique up to permutation of the
indices $n-1$ and $n$. In particular, the vertices $x_{n-1}$ and $x_n$ form the
cherry in the underlying caterpillar.
\end{lemma}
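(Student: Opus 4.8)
The plan is to read the required indexing directly off the inner path of the caterpillar and to derive uniqueness from the uniqueness of the discriminating cotree. First I would fix notation: for a caterpillar $(T,t)$ on $X$ with $n=|X|\geq 2$, the inner vertices form a path $w_1=\rho_T,w_2,\dots,w_{n-1}$, where for $1\leq i\leq n-2$ the vertex $w_i$ has a leaf $x_i$ and the next inner vertex $w_{i+1}$ as its two children, and $w_{n-1}$ carries the two cherry-leaves $x_{n-1},x_n$. The key structural fact is that $\lca_T(x_i,x_j)=w_i$ for all $i<j$: indeed $x_i$ is a child of $w_i$, while $x_j$ lies in the subtree rooted at the other child $w_{i+1}$ of $w_i$, so $w_i$ is the $\preceq_T$-minimal common ancestor (for $i=n-1<j=n$ this reads $\lca_T(x_{n-1},x_n)=w_{n-1}$). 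Since $(T,t)$ is discriminating and the inner edges are exactly the $(w_i,w_{i+1})$, the labels alternate along the path, i.e.\ $t(w_i)=t(w_1)$ if and only if $i$ is odd. The degenerate case $n=1$ (where $G\simeq K_1$ is connected and (a) holds with no edges to prescribe) I would dispose of separately.

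For the forward direction I would take a discriminating caterpillar $(T,t)$ explaining $G$ and index $X$ by the path order above. By the $\lca$ computation, $\{x_i,x_j\}\in E(G)$ iff $t(\lca_T(x_i,x_j))=1$ iff $t(w_i)=1$ (for $i<j$). If $t(w_1)=1$ then $t(w_i)=1$ exactly when $i$ is odd, which is precisely the edge set in (a); if $t(w_1)=0$ it is the edge set in (b). To match the connectivity clauses I would observe that in the first case $x_1$ (index $1$, odd) is adjacent to every $x_j$ with $j>1$, so $G$ is connected, whereas in the second case no edge can contain $x_1$ (it would be the smaller, hence odd, endpoint), so $x_1$ is isolated and $G$ is disconnected. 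Thus exactly one of (a), (b) holds, consistently with $G$ being connected or not.

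For the converse, given an indexing satisfying (a) (resp.\ (b)), I would build the caterpillar $T$ with leaves in the order $x_1,\dots,x_n$ and set $t(w_i)=1$ iff $i$ is odd (resp.\ $i$ is even). This labeling is alternating, hence discriminating, and by the same $\lca$ computation $\mathscr{G}(T,t)$ has precisely the edges prescribed in (a) (resp.\ (b)); therefore $(T,t)$ explains $G$. In particular $G$ is caterpillar-explainable and $(T,t)$ is \emph{the} unique discriminating cotree of $G$ by the uniqueness theorem for cotrees quoted in the excerpt.

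Finally, for uniqueness I would use that the discriminating cotree of a cograph is unique (up to isomorphism, and hence on the nose for a fixed leaf set $X$). Every indexing satisfying (a)/(b) arises by reading off the path order of a discriminating caterpillar explaining $G$ via the construction above, and that caterpillar must coincide with the unique discriminating cotree $(\widehat T,\widehat t)$ of $G$. The inner path of $(\widehat T,\widehat t)$ is rooted at $\rho_{\widehat T}$ and therefore linearly ordered, so the leaf $x_i$ attached at the $i$-th inner vertex is determined for $1\leq i\leq n-2$, while $x_{n-1},x_n$ are exactly the two leaves of the single cherry and can be interchanged. Hence any two valid indexings agree except possibly on the indices $n-1$ and $n$, which yields the claimed uniqueness and shows $x_{n-1},x_n$ form the cherry. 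I expect the main obstacle to be the careful $\lca$ bookkeeping at the end of the path (correctly treating the cherry $x_{n-1},x_n$ as attached to the same vertex $w_{n-1}$) together with the step, in the uniqueness argument, that reduces an abstract indexing satisfying (a)/(b) to "reading off" the canonical inner path of $(\widehat T,\widehat t)$.
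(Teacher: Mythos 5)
Your proof is correct, and the forward direction coincides with the paper's: both read the indexing off the inner path of the caterpillar, use that a discriminating labeling alternates along that path, and compute $\lca_T(x_i,x_j)=w_i$ for $i<j$. Where you genuinely diverge is the converse and the uniqueness claim. The paper proves the converse by computing the modular decomposition of a graph satisfying (a)/(b): it shows every non-trivial module is of the form $M_i=\{x_k\mid i\le k\le n\}$, that these form a chain of non-prime strong modules, and hence that $(\MDT_G,\tau_G)$ is itself a discriminating caterpillar; the cherry claim then falls out because $M_{n-1}=\{x_{n-1},x_n\}$ is the smallest non-trivial strong module. You instead construct the caterpillar directly, verify via the same $\lca$ bookkeeping that it explains $G$, and then import the uniqueness of the discriminating cotree (Theorem 2.2 of the paper) to identify it with $(\widehat T,\widehat t)$ and to deduce that any two valid indexings agree except on the cherry. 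Your route is more elementary and self-contained in its verification step, and it actually spells out the uniqueness-of-indexing argument more explicitly than the paper does; the paper's route buys an explicit description of $\MDstrong(G)$ for these graphs, which is structurally informative but not strictly needed for the statement. Both arguments ultimately rest on the same uniqueness fact (the discriminating cotree of a cograph equals its MDT and is unique), so neither has a logical gap; your reliance on Theorem 2.2 is legitimate since a graph explained by a labeled tree is a cograph.
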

\begin{proof}
It is an easy task to verify that the statements are satisfied in case $n\leq 2$.
Hence, let us assume that $n\geq 3$.

Assume first that $G$ can be explained by a discriminating caterpillar tree
$(T,t)$ on $X$. W.l.o.g.\ assume that the inner vertices are indexed in such a
way that the index $i$ in $v_i$ implies that $\dist_T(\rho_T,v_i)= i-1$. Hence,
$\rho_T = v_1$ and, since there are $|X|=n$ leaves and $T$ is a caterpillar,
$V^0(T) = \{v_1,\dots,v_{n-1}\}$. We index the leaves in such a way that vertex
$v_i$ is adjacent to $x_i$, $1\leq i\leq n-2$ while $v_{n-1}$ is adjacent to
$x_{n-1}$ and $x_n$. We consider now the two different cases: (i) $t(v_1)=1$ and
(ii) $t(v_1)=0$. In Case (i) it holds that $t(v_1)=1$ and thus, $G$ is
connected. Moreover, $t(v_i)=1$ if and only if $i$ is odd ($1 \leq i \leq n-1$),
since $(T,t)$ is discriminating. By definition of caterpillars and the choice of
respective vertex indices, we have $\lca(x_i,x_j)=v_i$, $1 \leq i<j \leq n$. The
latter two arguments imply that $\{x_i,x_j\}$ is an edge of $G$ if and only if
$i$ is odd, $1 \leq i \leq n$. Hence, Condition (a) is satisfied. Now consider
Case (ii). The fact that $t(v_1)=0$ implies that $G$ is disconnected. By
analogous arguments and by interchanging the term ``odd'' by ``even'' one shows
that Condition (b) is satisfied as well.

Conversely, assume that we can index the vertices of $X$ from $1$ to $n$ such that
one of the Conditions (a) and (b) holds. We show that there exists a discriminating
caterpillar tree $(T,t)$ on $X$ explaining $G$.

Assume first that Condition (a) is satisfied. Let $M\subseteq X$ be a module of
$G$ with $|M|\geq 2$. Let $x_i,x_j\in M$, $1\leq i<j\leq n$. In this case, every
vertex $x_k\in X$ with $i<k<j$ must also be contained in $M$ . 
To see this, assume for contradiction that
there exists some $k$ with $i < k < j$ such that $x_k \notin M$.
Let $k$ be the smallest such integer for which $x_k \notin M$. 
Hence, $x_{k-1} \in M$. By (a), exactly one of
the pairs $\{x_{k-1},x_k\}$, $\{x_k,x_j\}$ is an edge of $G$. Since $x_{k-1},
x_j \in M$ and $x_k \notin M$, it follows that $M$ is not a module; a
contradiction. Thus, $\{x_k \mid i\leq k\leq j\}\subseteq M$ with $1\leq
i<j\leq n$. Since $|M|\geq 2$, there are vertices $x_{j},x_{j-1}$ in $M$ and, by
definition of the edges in Case (a), one of them is adjacent to $x_n$ while the
other is not. This implies that $x_n\in M$. Taking the latter arguments
together, every putative module of size at least two must be of the form
$M_i\coloneqq \{x_k \mid i\leq k\leq n\}$, $1\leq i\leq n-1$. By definition,
every $x\in M_i$ is adjacent to $x_{\ell}\in X\setminus M$ if $\ell$ is odd and
non-adjacent of $\ell$ is even. Hence, indeed $M_i$ is module for all $i\in
\{1,\dots,n\}$. It is easy to see that $M_k\cap M_{k'} \in \{M_k,M_{k'}\}$ and
thus these modules do not overlap. Hence $\MDstrong(G) = \{M_1=X,M_2,\dots,
M_n\} \cup (\cup_{x\in X} \{x\})$. Moreover, $G[M_i]$ is disconnected if $i$ is
even (since $x_i\in M_i$ is not connected to any $x_j\in M_i\setminus \{x_i\}$)
and $\overline{G[M_i]}$ is disconnected if $i$ is odd (since $x_i\in M_i$ is
connected to all $x_j\in M_i\setminus \{x_i\}$). Hence, none of the modules
$M_i$ is a prime module. Its now easy to verify that the MDT for $G$ must be a
discriminating caterpillar $(T,t)$. In particular, $M_{n-1} = \{x_{n-1},x_{n}\}$
is the smallest non-trivial module and thus $x_{n-1}$ and $x_n$ form a cheery in
$T$. By similar arguments one shows that $G$ is explained by a discriminating
caterpillar $(T,t)$ in case Condition (b) is satisfied. 
\end{proof}

\begin{proposition}\label{prop:polcat-pc}
Let $G=(X,E)$ be a graph with $|X|\geq 4$. Then, $G$ is a polar-cat if and only if
there exists a vertex $v \in X$ and two ordered sets $Y=\{y_1, \ldots, y_{k-1},
y_{k}=v\}$ and $Z=\{z_1, \ldots, z_{m-1}, z_{m}=v\}$, $k,m \geq 2$ such that $Y \cap
Z=\{v\}$, $Y \cup Z=X$, and one of the following conditions hold:
\begin{itemize}
	\item[(a)] $G[Y]$ and $G[Z]$ are connected and the edges of $G$ are 
		\begin{itemize}[noitemsep, nolistsep]
			\item[(I)] $\{y_i,y_j\}$,		where $1 \leq i<j \leq k$ and $i$ is odd; and
			\item[(II)] $\{z_i,z_j\}$, where $1 \leq i<j \leq		m$ and $i$ is odd.
		\end{itemize}
	\item[(b)] $G[Y]$ and $G[Z]$ are disconnected and the edges of $G$ are 
		\begin{itemize}[noitemsep, nolistsep]
			\item[(I)] $\{y_i,y_j\}$, where $1 \leq i<j \leq k$ and $i$ is even; and
			\item[(II)] $\{z_i,z_j\}$, where $1 \leq i<j \leq m$ and $i$ is even; and 
			\item[(III)] $\{y,z\}$ for all $y\in Y\setminus\{v\}$ and $z\in Z\setminus\{v\}$ 
	  \end{itemize}
	\end{itemize}
In this case, $G$ is a $(v,G[Y],G[Z])$-polar-cat
\end{proposition}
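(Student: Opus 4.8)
The plan is to use Lemma~\ref{lem:cograph-caterpillar} as the bridge between caterpillar-explainability (the ``cat'' half of the definition) and the explicit odd/even vertex-orderings, and to read off the cross-edges between the two parts from the disjoint-union/join dichotomy (the ``polarizing'' half).

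For the forward implication, I would assume $G$ is a polar-cat and unpack the definition: $G$ is a $(v,G_1,G_2)$-pseudo-cograph that is both polarizing and cat. Setting $Y\coloneqq V(G_1)$ and $Z\coloneqq V(G_2)$, condition (F1) immediately yields $Y\cap Z=\{v\}$, $Y\cup Z=X$ and $|Y|,|Z|\geq 2$, so $k,m\geq 2$. Since $G$ is cat, both $G[Y]$ and $G[Z]$ are caterpillar-explainable with $v$ lying in the respective cherry; applying Lemma~\ref{lem:cograph-caterpillar} together with its uniqueness-up-to-swap of the last two indices, I would index $Y$ and $Z$ so that $v=y_k=z_m$ occupies the cherry position. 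I then split on the two polarizing cases. If $G[Y]$ and $G[Z]$ are connected with $G-v=(G_1-v)\union(G_2-v)$, then Lemma~\ref{lem:cograph-caterpillar}(a) supplies the odd-index edge pattern inside $Y$ and inside $Z$ (giving (a)(I) and (a)(II)), and the disjoint-union structure forbids any edge between $Y\setminus\{v\}$ and $Z\setminus\{v\}$, so these are all edges of $G$ and condition (a) holds. If instead $G[Y],G[Z]$ are disconnected with $G-v=(G_1-v)\join(G_2-v)$, then Lemma~\ref{lem:cograph-caterpillar}(b) gives the even-index pattern ((b)(I), (b)(II)) and the join supplies the complete bipartite cross-edges (b)(III), so condition (b) holds.

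For the converse, I would take $v,Y,Z$ satisfying (a) or (b), put $G_1\coloneqq G[Y]$ and $G_2\coloneqq G[Z]$, and verify the pseudo-cograph axioms. (F1) is immediate from the hypotheses on $Y,Z$ and $k,m\geq 2$. For (F2), Lemma~\ref{lem:cograph-caterpillar} applies directly: in case (a) the listed edges make $G[Y]$ and $G[Z]$ connected caterpillar-explainable cographs, and in case (b) disconnected ones. For (F3), in case (a) the absence of cross-edges makes $G-v$ the disjoint union of $G_1-v$ and $G_2-v$, while in case (b) the edges (III) make $G-v$ their join. Polarizing then follows by matching connected/disjoint-union (case (a)) against disconnected/join (case (b)), and cat follows because the prescribed ordering places $v=y_k=z_m$ in each cherry by Lemma~\ref{lem:cograph-caterpillar}. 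Hence $G$ is a $(v,G[Y],G[Z])$-polar-cat.

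I expect the only delicate bookkeeping to be the placement of $v$ at the cherry (last) index, which is exactly what the uniqueness clause of Lemma~\ref{lem:cograph-caterpillar} licenses, and the confirmation that the two polarizing sub-cases line up precisely with (a) and (b). The single point that must be argued carefully is that the enumerated edges in (a) or (b) account for \emph{all} edges of $G$: the within-part edges are controlled by Lemma~\ref{lem:cograph-caterpillar}, and the between-part edges are pinned down by the disjoint-union/join dichotomy, leaving no room for stray adjacencies.
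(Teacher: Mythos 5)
Your proposal is correct and follows essentially the same route as the paper's proof: both directions hinge on Lemma~\ref{lem:cograph-caterpillar} to translate between caterpillar-explainability (with $v$ in the cherry, hence indexed last) and the odd/even edge patterns, with the polarizing dichotomy supplying or forbidding the cross-edges exactly as in cases (a) and (b). No gaps; the point you flag about accounting for \emph{all} edges is handled identically in the paper via the disjoint-union/join structure of $G-v$.
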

\begin{proof}
For the \emph{only-if}-direction, 
assume first that $G$ is a $(v,G_1,G_2)$-polar-cat. In this case, $G_1$ and $G_2$ are
caterpillar-explainable such that $v$ is part of a cherry. We can put $Y=V(G_1)$ and
$Z=V(G_2)$. Since $G_1$ and $G_2$ intersect only in vertex $v$ it holds that $G_1 =
G[Y]$ and $G_2=G[Z]$. By Lemma \ref{lem:cograph-caterpillar} and since $v$ is part of
a cherry in the cotrees of $G_1$ and $G_2$, we can order $Y$ and $Z$ such that
$Y=\{y_1, \ldots, y_{k-1}, y_{k}=v\}$ and $Z=\{z_1, \ldots, z_{m-1}, z_{m}=v\}$ and
(a.I) and (a.II), resp., (b.I) and (b.II) are satisfied. Moreover, since $G$ is
polarizing, there are two cases either (A) $G[Y]$ and $G[Z]$ are connected and $G-v =
(G_1-v)\union (G_2-v)$ or (B) $G[Y]$ and $G[Z]$ are disconnected and $G-v =
(G_1-v)\join (G_2-v)$. In Case (A), all edges of $G$ are contained in $G_1$ and
$G_2$ and are, thus, precisely the edges of $G$ as
specified in (a).  In Case (B), $G$ must contain
all edges as specified in (b.I) and (b.II) and, in addition, all edges $\{y,z\}$ with
$y\in Y\setminus\{v\}$ and $z\in Z\setminus\{v\}$ as specified in (b.III) since $G-v 
=(G_1-v)\join (G_2-v)$. Hence, Condition (b) must hold.

For the \emph{if}-direction, assume that there is a vertex $v\in X$ and subsets 
$Y,Z\subseteq X$ as
specified in the statement of the lemma.
We put $G_1=G[Y]$ and $G_2=G[Z]$. We
start with showing that $G$ is a $(v,G_1,G_2)$-pseudo-cograph. (F1) holds by
definition of the vertex sets $V(G_1)=Y$ and $V(G_2)=Z$. Moreover, the edges of
$G_1=G[Y]$ satisfy (a.I) in case $G_1$ is connected and (b.I) in case $G_1$ is
disconnected. Lemma \ref{lem:cograph-caterpillar} implies that $G_1$ is a
caterpillar-explainable cograph such that $v$ is part of a cheery. By similar
arguments, the latter holds also for $G_2$ and thus, (F2) is satisfied. To see that
(F3) holds, note that in case (a) there are no edges between distinct vertices $y\in
Y\setminus\{v\}$ and $z\in Z\setminus\{v\}$ and thus, $G-v = (G_1-v)\union (G_2-v)$.
In case (b), all $y\in Y\setminus\{v\}$ and $z\in Z\setminus\{v\}$ are adjacent and
thus, $G-v = (G_1-v)\join (G_2-v)$. Hence, (F3) is satisfied. Therefore, $G$ is a
pseudo-cograph.

		It remains to show that $G$ is polarizing and cat. As argued above, if (a) holds,
then $G-v = (G_1-v)\union (G_2-v)$ and thus, $G-v$ is disconnected, while, by
definition, $G[Y]=G_1$ and $G[Z]=G_2$ are connected. Similarly, in case (b) $G-v =
(G_1-v)\join (G_2-v)$ is connected while $G[Y]=G_1$ and $G[Z]=G_2$ are disconnected.
Thus, $G$ is polarizing. As argued above, $G_1$ and $G_2$ are caterpillar-explainable
cograph such that $v$ is part of a cherry. This together with the fact that $|X|\geq
4$ implies that $G$ is cat. In summary, $G$ is a polar-cat. 
\end{proof}

Note that the property of being polar-cat is not  hereditary since the property of
subgraphs being caterpillar-explainable is not hereditary in general. However,
polar-cats are closed under complementation. 
\begin{lemma}\label{lem:PolarCat-complement-CC}
	A graph $G$ is a $(v,G_1,G_2)$-polar-cat if and only if $\overline{G}$ is a
	$(v,\overline{G_1},\overline{G_2})$-polar-cat. Moreover, if $G$ is a polar-cat,
	then the number of connected components in the disconnected graph in $\{G-v,
	\overline{G-v}\}$ is exactly two. 
\end{lemma}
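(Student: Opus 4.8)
The plan is to prove the two assertions separately. For the equivalence, since $\overline{\overline{G}}=G$, it suffices to establish one direction, say that $G$ being a $(v,G_1,G_2)$-polar-cat implies that $\overline{G}$ is a $(v,\overline{G_1},\overline{G_2})$-polar-cat. Because a polar-cat is by definition a pseudo-cograph that is both \emph{polarizing} and \emph{cat}, I would verify these three properties in turn for $\overline{G}$. That $\overline{G}$ is a $(v,\overline{G_1},\overline{G_2})$-pseudo-cograph is immediate from Lemma \ref{lem:complement}.

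For the polarizing property I would use that $G_1$ and $G_2$ are caterpillar-explainable cographs on at least two vertices, so by Thm.\ \ref{thm:CharCograph} exactly one of $G_i$ and $\overline{G_i}$ is disconnected; hence complementation interchanges ``connected'' and ``disconnected'' for $G_1,G_2$. Combining this with $\overline{G}-v=\overline{G-v}$ and $\overline{(G_1-v)\union(G_2-v)}=(\overline{G_1}-v)\join(\overline{G_2}-v)$ (and symmetrically with $\join$ and $\union$ interchanged), case (a) of the polarizing definition for $G$ turns exactly into case (b) for $\overline{G}$ and vice versa. For the cat property I would argue that the unique discriminating cotree of $\overline{G_i}$ is obtained from the caterpillar cotree of $G_i$ by flipping all inner labels $0\leftrightarrow 1$; this keeps the labeling discriminating and leaves the underlying tree, and hence its cherries, unchanged, so $\overline{G_i}$ is again caterpillar-explainable with $v$ part of a cherry. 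Since $|V(\overline{G})|=|V(G)|\geq 4$, the cat property follows, completing the equivalence.

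For the second assertion I would again invoke that a polar-cat is polarizing and split into the two cases. If $G_1,G_2$ are connected with $G-v=(G_1-v)\union(G_2-v)$, then $G-v$ is the disconnected graph of the pair, and I would show that each $G_i-v$ is connected and non-empty: by Prop.\ \ref{prop:polcat-pc}(a) (equivalently Lemma \ref{lem:cograph-caterpillar}(a)) the vertices of $G_i$ can be ordered so that $y_1$ has odd index and is adjacent to all later vertices, i.e.\ $y_1$ is universal in $G_i$; since $v$ lies in the cherry we have $v\neq y_1$, so $y_1$ stays universal in $G_i-v$ (or $G_i-v$ is a single vertex when $|V(G_i)|=2$), and $k,m\geq 2$ guarantees $G_i-v\neq\emptyset$. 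Thus $G-v$ has exactly the two components $G_1-v$ and $G_2-v$. In the remaining case $G_1,G_2$ are disconnected and $G-v=(G_1-v)\join(G_2-v)$ is connected, so the disconnected graph of the pair is $\overline{G-v}$; by the equivalence already proved $\overline{G}$ is a polar-cat falling into the first case, and applying the previous argument to $\overline{G}$ shows that $\overline{G}-v=\overline{G-v}$ has exactly two components.

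The main obstacle I anticipate is the connectivity count in the second part: one must verify that deleting $v$ does not split $G_i-v$ into several pieces, which is precisely where the cat hypothesis enters, since it forces $v$ to be a bottom pendant leaf of the caterpillar and thus distinct from the universal vertex $y_1$. Without this, $G_i-v$ could be disconnected and the count of exactly two components would fail. The remaining steps (the label-flip argument for cotrees and the $\union/\join$ bookkeeping under complementation) are routine once the connectivity is pinned down.
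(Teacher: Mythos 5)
Your proposal is correct and follows essentially the same route as the paper: Lemma \ref{lem:complement} for the pseudo-cograph part, the $\union/\join$ and connected/disconnected duality under complementation for the polarizing part, and the ordering from Prop.\ \ref{prop:polcat-pc}(a) to show $y_1$ is universal in $G_1$ so that $G_1-v$ and $G_2-v$ are exactly the two components. You are in fact slightly more explicit than the paper on the preservation of the cat property (the label-flip argument for the cotree of $\overline{G_i}$), which the paper dismisses as easy to verify; note only that $v\neq y_1$ follows directly from $v=y_k$ with $k\geq 2$ rather than from $v$ being in the cherry.
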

\begin{proof}
	Let $G$ be a $(v,G_1,G_2)$-polar-cat. Hence, $G$ is, in particular, a
	$(v,G_1,G_2)$-pseudo-cograph. By Lemma \ref{lem:complement}, $\overline{G}$ is a
	$(v,\overline{G_1},\overline{G_2})$-pseudo-cograph. Note that $G-v$ is the disjoint
	union (resp., join) of $G_1-v$ and $G_2-v$ if and only if
	$\overline{G-v}=\overline{G}-v$ is the join (resp., disjoint union) of
	$\overline{G_1-v}=\overline{G_1}-v$ and $\overline{G_2-v}=\overline{G_2}-v$. In
	other words, $G-v$ is disconnected (resp., connected) if and only if
	$\overline{G}-v$ is connected (resp., disconnected). Now consider $G_1$ and $G_2$.
	Since $G$ is polarizing and $G_1,G_2$ are cographs, both $G_1$ and $G_2$ are
	connected (resp., disconnected) if and only if $\overline{G_1}$ and
	$\overline{G_2}$ are disconnected (resp., connected). It is now easy to verify that
	$G$ is a $(v,G_1,G_2)$-polar-cat if and only if $\overline{G}$ is a
	$(v,\overline{G_1},\overline{G_2})$-polar-cat.
	
	We continue with proving the second statement. By the latter arguments, we can
	assume w.l.o.g.\ that $G-v$ is disconnected with set of connected components
	$\mathfrak{C}$. Since $G$ is polarizing and $G-v$ is disconnected, it follows that
	$G_1$ must be connected. By Prop.~\ref{prop:polcat-pc}~(a), there exists an
	ordering $y_1, \ldots, y_{k-1}, y_k=v$, $k=|V(G_1)|$ of the elements of $V(G_1)$
	such that, for all $1 \leq i<j \leq k$, $\{y_i,y_j\}$ is an arc of $G_1$ if and
	only if $i$ is odd. In particular, vertex $y_1$ of $G_1$ must be adjacent to all
	vertices of $G_1-y_1$. Since, $v=y_k \neq y_1$, it follows that $G_1 -v$ is
	connected. In particular, $G_1-v$ is a connected component of $G-v$. By analogous
	argumentation, $G_2 -v$ is a connected component of $G-v$. Hence, $G-v$ has exactly
	two connected components, $G_1-v$ and $G_2-v.$
\end{proof}

Note that the converse of the second statement in Lemma
\ref{lem:PolarCat-complement-CC} is not satisfied in general. To see this, consider
the $(v,G_1,G_2)$-pseudo-cograph $G$ that consists of two disjoint edges $\{a,b\}$ and $\{x,y\}$.
In fact, $G$ is a cograph and for every $v\in \{a,b,x,y\}$ the graph $G-v$ has
precisely two connected components. However, $G$ is disconnected and at least one
of $G_1$ and $G_2$ must be disconnected as well. Thus $G$ is not
polarizing and, therefore, not a polar-cat.

We provide now a simple characterization of polar-cats.
\begin{theorem} \label{thm:StrongElementary}  
	A graph $G$ is a polar-cat if and only if $G$ can be explained by a strong
	elementary quasi-discriminating network.
\end{theorem}
\begin{proof}
	For the \emph{if}-direction, let $G =(X,E)$ be a graph that can explained by a
	strong quasi-discriminating elementary network $(N,t)$ on $X$ with root $\rho_N$
	and hybrid-vertex $\eta_N$. Since $N$ is strong, its underlying unique cycle $C$
	consists of sides $P^1$ and $P^2$ such that both contain at least one vertex
	distinct from $\rho_N$ and $\eta_N$, and at least one of $P^1$ and $P^2$ must
	contain two vertices distinct from $\rho_N$ and $\eta_N$. The latter implies that
	$|X|\geq 4$. 
	
	In the following let $i\in \{1,2\}$ and $v\in X$ be the unique leaf adjacent to
	$\eta_N$. Denote with $X_i\subseteq X$ the leaves that are adjacent to the vertices
	in $P^i$ and let $G_1=G[X_1]$ and $G_2=G[X_2]$. We next show that $G$ is a
	$(v,G_1,G_2)$-pseudo cograph. We first remark that $X_1\cap X_2 =\{v\}$, $X_1\cup
	X_2 = X$ and, by the aforementioned arguments, $|X_1|,|X_2|>1$. Hence Condition
	(F1) is satisfied. Consider the tree $T_i$, $i\in \{1,2\}$ that is induced by the
	vertices $P^i -\rho_N$ and its adjacent leaves and for which the
	vertex $\eta_N$ is suppressed. Put $t_i(v) \coloneqq t(v)$ for all $v\in V(T_i)$ to
	obtain a vertex-labeling $t_i\colon V(T_i)\to\{0,1\}$. Since $(N,t)$ is
	quasi-discriminating and the vertex $\eta_N$ is suppressed, $(T_i,t_i)$ is a
	discriminating tree. In particular, $(T_i,t_i)$ is a caterpillar
	and $v$ is part of a cherry in $T_i$. By construction,
	for all $x,y\in X_i$ we have $\lca_N(x,y) = \lca_{T_i}(x,y)$ and thus,
	$\mathscr{G}(T_i,t_i)$ is precisely the graph induced by $X_i$, i.e.,
	$\mathscr{G}(T_i,t_i)=G_i$. Since $G_i$ is explained by $(T_i,t_i)$ it follows that
	$G_i$ is a cograph and thus, Condition (F2) is satisfied. 
 To see that (F3) is satisfied, consider
	the graph $G-v$. Since $G$ is explained by $(N,t)$ and since for all $x\in
	X_1\setminus \{v\}$ and $y\in X_2\setminus \{v\}$ we have $\lca_N(x,y) = \rho_N$ it
	holds that $\{x,y\}\in E$ (resp., $\{x,y\}\notin E$) if and only if $t(\rho_N)=1$
	(resp., $t(\rho_N)=0$) if and only if $G-v$ is the join (resp., disjoint union) of
	$G_1-v$ and $G_2-v$. Therefore, (F3) is satisfied. Moreover, since $(N,t)$ is
	quasi-discriminating and $\rho_{T_1}$ and $\rho_{T_2}$ are the unique children of
	$\rho_N$ in $N$, we have $t(\rho_N)\neq t_{1}(\rho_{T_1})$ and $t(\rho_N)\neq
	t_{2}(\rho_{T_2})$ and thus, $t_{1}(\rho_{T_1}) = t_{2}(\rho_{T_2})$. Hence, if
	$G-v$ is the join (resp., disjoint union) of $G_1-v$ and $G_2-v$, then
	$t_{1}(\rho_{T_1}) = t_{2}(\rho_{T_2}) = 0$ (resp., $t_{1}(\rho_{T_1}) =
	t_{2}(\rho_{T_2}) = 1$) and $G_1$ and $G_2$ are both disconnected (resp.,
	connected). Thus, $G$ is polarizing. 
	Moreover, since $(T_1,t_1)$ and $(T_2,t_2)$
			are discriminating caterpillars,  $G_1$ and $G_2$ are caterpillar-explainable
			cographs. The latter together with the fact that
			$|X|\geq 4$ and that $v$ is part of a cherry in $T_1$ and $T_2$  
			implies that $G$ is a cat.
	In summary $G$ is a $(v,G_1,G_2)$-polar-cat.

	For the \emph{only-if}-direction, let $G=(X,E)$ be a $(v,G_1,G_2)$-polar-cat with
	$G_1=(X_1,E_1)$ and $G_2 = (X_2,E_2)$. Since $G$ is cat we have $|X|\geq 4$.
	Moreover, by definition of pseudo-cographs, $X_1\cap X_2 =\{v\}$, $X_1\cup X_2 = X$
	and $|X_1|,|X_2|>1$. Let $(T_1,t_1)$ and $(T_2,t_2)$ be the unique discriminating
	caterpillars that explain $G_1$ and $G_2$, respectively. Consider now the labeled
	level-1 network $(N,t)$ with $N\coloneqq N(v,G_1,G_2)$ and $t\coloneqq
	t(v,G_1,G_2))$ as specified in Def.\ \ref{def:prop:PsC-l1N}. Indeed, by Prop.\
	\ref{prop:PsC-l1N}, $(N,t)$ is a labeled level-1 network that explains $G$. Since
	both $T_1$ and $T_2$ are caterpillars for which $v$ is cherry, the unique cycle $C$
	in $N$ consists of $\rho_N, \eta_N$ and all inner vertices of $T_1$ and $T_2$. It
	is now easy to see that $(N,t)$ is elementary and since $|X_1|,|X_2|>1$ and $|X_1|
	+ |X_2| = |X| + 1 \geq 5$, $N$ must be strong. Since by assumption, $G_1$ and $G_2$
	are connected (resp., disconnected) while $G-v$ is the disjoint union (resp.\ join)
	of $G_1-v$ and $G_2-v$ we have, by construction of $t$, $t(\rho_N) \neq
	t_1(\rho_{T_1}) = t_2(\rho_{T_2})$. Since $(T_1,t_1)$ and $(T_2,t_2)$ are
	discriminating and $t(\rho_N) \neq t_1(\rho_{T_1}) = t_2(\rho_{T_2})$, the labeled
	network $(N,t)$ is quasi-discriminating. Hence, $(N,t)$ is a strong elementary
	quasi-discriminating network that explains the polar-cat $G$. 
\end{proof}

Note that an elementary network is either weak or strong. This together with Lemma 
\ref{lem:WeakElementary} and Thm.~\ref{thm:StrongElementary} 
implies
\begin{theorem}\label{thm:CharPolarCatPC}
	A graph $G$ can be explained by a quasi-discriminating elementary network
	if and only if $G$ is a caterpillar-explainable cograph on at least two vertices
	or a polar-cat.
\end{theorem}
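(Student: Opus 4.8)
The plan is to combine the two preceding lemmas via a clean dichotomy: an elementary network contains exactly one cycle, so it is either weak or strong and never both. By definition a network is weak precisely when all of its cycles are weak and strong precisely when it has no weak cycle; since an elementary network on $X$ has a single cycle $C$ of length $|X|+1$, the network is weak if and only if $C$ is weak and strong if and only if $C$ is strong. Thus every quasi-discriminating elementary network falls into exactly one of these two classes, and I would structure the proof as a case split on the type of this unique cycle.

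For the forward direction I would assume $G$ is explained by a quasi-discriminating elementary network $(N,t)$ and split on the type of its unique cycle $C$. If $C$ is weak, then $N$ is a weak quasi-discriminating elementary network, and Lemma \ref{lem:WeakElementary} gives that $G$ is a caterpillar-explainable cograph with $|V(G)| \geq 2$. If $C$ is strong, then $N$ is a strong quasi-discriminating elementary network, and Lemma \ref{lem:StrongElementary} gives that $G$ is a polar-cat. For the converse I would use the same two lemmas in their respective ``if''-directions: a caterpillar-explainable cograph on at least two vertices is explained by a weak quasi-discriminating elementary network via Lemma \ref{lem:WeakElementary}, and a polar-cat is explained by a strong one via Lemma \ref{lem:StrongElementary}; in both cases the resulting network is, in particular, a quasi-discriminating elementary network.

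There is no substantial technical obstacle here, as the two lemmas carry the entire argument; the only point needing care is the observation that in a single-cycle network the weak/strong classification collapses to a genuine dichotomy, which is exactly what makes the two cases both exhaustive and mutually exclusive. As a consistency check, the two output classes are disjoint: by Cor.\ \ref{cor:CographNotPC} a polar-cat is never a cograph, so no graph is simultaneously a caterpillar-explainable cograph and a polar-cat.
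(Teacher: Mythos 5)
Your proposal is correct and matches the paper exactly: the paper derives this theorem by simply combining Lemma \ref{lem:WeakElementary} and Lemma \ref{lem:StrongElementary}, using the same weak/strong dichotomy on the unique cycle of an elementary network that you make explicit. Your added remarks on exhaustiveness of the case split and on disjointness of the two output classes via Cor.\ \ref{cor:CographNotPC} are accurate refinements of the same argument.
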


To recall, a graph is primitive it if it has at
	least four vertices and contains only trivial modules, i.e., 
	 only the singletons and $X$.
In the following, we show that polar-cats are precisely the primitive graphs
that can be explained by a level-1 network. To this end, we need the following
\begin{lemma}\label{lem:primitive-1}
If $G$ is a polar-cat, then $G$ is primitive and there is a level-1 network $(N,t)$
that explains $G$. 
\end{lemma}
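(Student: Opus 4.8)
The second assertion is essentially free: by Lemma~\ref{lem:StrongElementary} every polar-cat can be explained by a strong elementary quasi-discriminating network, and an elementary network contains a single cycle and is therefore a level-1 network. So I would dispatch that half in one sentence and devote the work to showing primitivity, i.e.\ that $G$ has at least four vertices and only trivial modules. The size bound $|V(G)|\geq 4$ is built into the definition of \emph{cat}. For the module condition, the plan is to use the vertex-ordering characterization of Proposition~\ref{prop:polcat-pc}. Since $M\subseteq X$ is a module of $G$ if and only if it is a module of $\overline{G}$, and since polar-cats are closed under complementation with cases (a) and (b) of Proposition~\ref{prop:polcat-pc} interchanged (Lemma~\ref{lem:PolarCat-complement-CC}), I would assume without loss of generality that $G$ is a $(v,G[Y],G[Z])$-polar-cat of type~(a): $G[Y]$, $G[Z]$ are connected, the edges inside $Y$ (resp.\ $Z$) are the $\{y_i,y_j\}$ (resp.\ $\{z_i,z_j\}$) with $i<j$ and $i$ odd, and there are no edges between $Y\setminus\{v\}$ and $Z\setminus\{v\}$.

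Now fix a module $M$ with $|M|\geq 2$; the goal is to force $M=X$. I would first record the two facts that drive everything: $y_1$ is adjacent to all of $Y\setminus\{y_1\}$ (in particular to $v=y_k$, as $1$ is odd) and to no vertex of $Z\setminus\{v\}$, and symmetrically for $z_1$; and $v$ is adjacent to $z_1$ but, as soon as $k\geq 3$, non-adjacent to $y_2$ (since $\min(2,k)=2$ is even). The argument splits on whether $M$ lies inside one side. If $M\subseteq Y$ (the case $M\subseteq Z$ being symmetric), then $M$ is a module of the caterpillar-explainable cograph $G[Y]$, so by the module analysis in the proof of Lemma~\ref{lem:cograph-caterpillar} we have $M=\{y_i,\dots,y_k\}$ for some $i\leq k-1$; in particular $v\in M$ and some $y_i\neq v$ lies in $M$. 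But $z_1\notin M$ is adjacent to $v$ and not to $y_i$, contradicting the module property. Hence $M$ cannot be contained in a single side.

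It remains to treat the case where $M$ meets both $Y\setminus\{v\}$ and $Z\setminus\{v\}$, which is the main technical step. Picking $a\in M\cap(Y\setminus\{v\})$ and $b\in M\cap(Z\setminus\{v\})$, I would first argue $y_1\in M$: otherwise $y_1\notin M$ must be adjacent to $a$ (it is universal in $Y$) but not to $b$ (no cross-edges), violating the module property; symmetrically $z_1\in M$. With both universal vertices $y_1,z_1$ inside $M$, any $w\in(Y\setminus\{v\})\setminus M$ would be adjacent to $y_1$ but not to $z_1$, so no such $w$ exists; together with the symmetric statement this gives $X\setminus\{v\}\subseteq M$. Finally, $M=X\setminus\{v\}$ is excluded because $|V(G)|\geq 4$ forces $k\geq 3$ or $m\geq 3$, and then $v$ is adjacent to $y_1$ (resp.\ $z_1$) but not to $y_2$ (resp.\ $z_2$), so $v$ does not see all of $X\setminus\{v\}$ uniformly; thus $M=X$. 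The main obstacle I anticipate is bookkeeping the parity-based adjacencies cleanly in this flooding step and citing the precise module structure of caterpillar cographs from Lemma~\ref{lem:cograph-caterpillar}; once those are in place, collecting the cases to conclude that every module is trivial, hence $G$ is primitive, is routine.
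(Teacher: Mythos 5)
Your proof is correct, but it takes a genuinely different route from the paper's for the primitivity part. The paper proves primitivity by induction on $|X|$ over the strong quasi-discriminating elementary network $(N,t)$ furnished by Lemma~\ref{lem:StrongElementary}: it deletes the leaf whose parent is adjacent to the hybrid-vertex, invokes the induction hypothesis on the smaller network, and then rules out the only possible new modules (size-two sets containing the deleted leaf) by a three-way case analysis on where the second vertex sits on the cycle. You instead argue directly on the graph via the ordering characterization of Proposition~\ref{prop:polcat-pc}, reducing to type~(a) by complementation (Lemma~\ref{lem:PolarCat-complement-CC}) and then ``flooding'' any module $M$ with $|M|\geq 2$ up to $X$ using the universal vertices $y_1$ and $z_1$ of the two sides and the parity of the adjacencies; the case $M\subseteq Y$ (or $M\subseteq Z$) is settled by the suffix structure of modules extracted from the proof of Lemma~\ref{lem:cograph-caterpillar}, and $M=X\setminus\{v\}$ is excluded since $\max(k,m)\geq 3$ makes $v$ see $y_1$ but not $y_2$ (or $z_1$ but not $z_2$). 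All the results you cite precede this lemma in the paper, so there is no circularity. Your argument is arguably more self-contained and avoids the induction entirely, at the cost of the parity bookkeeping; the paper's inductive argument has the side benefit of already exercising the network-side reasoning that is reused in Lemma~\ref{lem:primitive-2}. One small point to make explicit when writing this up: when you conclude $M=\{y_i,\dots,y_k\}$ in the case $M\subseteq Y$, note that a module of $G$ contained in $Y$ is automatically a module of $G[Y]$, and that the suffix description in the proof of Lemma~\ref{lem:cograph-caterpillar} also covers $|M|=2$ (the cherry $\{y_{k-1},y_k\}$), so $v\in M$ in all subcases.
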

\begin{proof}
Let $G=(X,E)$ be a polar-cat. By Thm.~\ref{thm:StrongElementary}, $G$ can be
explained by a strong quasi-discriminating elementary network $(N,t)$ on $X$ and,
thus, by a level-1 network.

Therefore, it remains to show that $G$ is primitive. To this end, let $(N,t)$ be a
strong quasi-discriminating elementary network on $X$ that explains $G$. 
We emphasize that application of Prop.~\ref{prop:polcat-pc} shows that for every integer 
$n\geq 4$
there is a polar-cat on $n$ vertices. Hence, in order to show that $G$ is primitive,
we can proceed by induction on $|X|$. In the following, let $C\subseteq N$ be the 
underlying unique cycle and $P^1$
and $P^2$ be its sides. As base case, let $|X|=4$. Since $N$ is strong, one side of
$C$ must contain one and the other side two vertices that are distinct from $\eta_N$
and $\rho_N$. There are precisely two possible quasi-discriminating labelings $t_1$
and $t_2$ of $N$ up to the choice of the labeling of the unique hybrid-vertex 
$\eta_N$; cf.\
Fig. \ref{fig:nonUniqueN}. It is easy to see that for both labelings $t_1$ and $t_2$,
we have $G\simeq \mathscr{G}(N,t_1) \simeq\mathscr{G}(N,t_2) \simeq P_4$ and thus,
$G$ is primitive. Assume now that $G=(X,E)$ is primitive for all $X$ with $4\leq
|X|<n$ vertices. 
 
Let $G=(X,E)$ be a $(v,G_1,G_2)$-polar-cat with $|X|=n>4$ that is explained by the
strong quasi-discriminating elementary network $(N,t)$. Put $P^{1-}_{-} \coloneqq
P^1\setminus \{\rho_N,\eta_N\}$ and $P^2_{-} \coloneqq P^2\setminus
\{\rho_N,\eta_N\}$. Since $N$ is strong, we can assume w.l.o.g.\ that $1\leq
|P^{1}_{-}|\leq |P^{2}_{-}|$. Moreover, since $N$ is strong and $|X|>4$, we have
$2\leq |P^{2}_{-}|$. Let $x'\in P^2$ be adjacent to unique hybrid-vertex
$\eta_N$ and $x\in X$ be the leaf
adjacent to $x'$ in $N$. Now, remove $x$ and suppress $x'$ to get $(N',t')$ on
$X\setminus \{x\}$ where $t'$ is obtained from $t$ by keeping the label $t$ of all
remaining vertices. Then $(N',t')$ is still a strong quasi-discriminating elementary
network and by induction hypothesis, $G'=\mathscr{G}(N',t')$ is primitive and, by
construction, $G-x = G'$. Since $G'$ is primitive, for all $M\subsetneq X\setminus
\{x\}$ with $|M|>1$, there are vertices $a,b\in M$ such that $\{a,c\}\in E$ and
$\{b,c\}\notin E$ for some $c\in X\setminus M$. The latter clearly 
remains true for $M\cup \{x\}$ with
 $M\subsetneq X\setminus \{x\}$ in $G$ and $|M|>1$. Hence, any 
possible new
non-trivial module in $G$ must be of size two and contain $x$. Let
$M=\{x,y\}\subseteq X$ for some $y\in X$ distinct from $x$ and let $y'$ be the vertex
in $C$ adjacent to $y$. We show that $M$ cannot be a module in $G$. There are three
cases: (i) $y'\in P^{2}_{-}$, (ii) $y' = \eta_N$ and (iii) $y'\in P^{1}_{-}$. In the
following let $h\in X$ be the unique leaf adjacent to $\eta_N$.

In Case (i), we have $y'\in P^{2}_{-}$ and, in particular, $x',y'\neq \eta_N$ and
$x'\prec_N y'$. Therefore, $\lca_{N}(x,h) = x'$ and $\lca_{N}(y,h) = y'$. Since
$M=\{x,y\}$ is a module in $G$, $x$ and $y$ are either both adjacent to $h$ or not.
The latter two arguments imply $t(x') = t(y')$. Since $(N,t)$ is quasi-discriminating
there must be a vertex $z'\in P^2$ with corresponding leaf $z\in X$ such that
$x'\prec_N z' \prec_N y'$ and $t(z') \neq t(x') = t(y')$. Since $x'\prec_N z' \prec_N
y'$ we have $\lca_N(x,z)=z'\prec_N y'= \lca_N(y,z)$. Hence, $\{x,z\}\in E$ and
$\{y,z\}\notin E$ or \emph{vice versa}. Hence, $M$ cannot be a module in $G$. 

In Case (ii), we have $y' = \eta_N$ and $y=h$. Since $1\leq |P^{1}_{-}|$ and $(N,t)$
is quasi-discriminating there must be a vertex $z'\in P^{1}_{-}$ such that $t(z')\neq
t(\rho_N)$. Let $z\in X$ be the leaf adjacent to $z'$ in $N$. Since $y' = \eta_N$, we
have $\lca_N(y,z) = z'$ and since $x'$ and $z'$ are located on different sides of
$C$, it holds that $\lca_N(x,z) = \rho_N$. Since $t(z')\neq t(\rho_N)$, we have
$\{x,z\}\in E$ and $\{y,z\}\notin E$ or \emph{vice versa}. Hence, $M$ cannot be a
module in $G$. 

In Case (iii), $y'\in P^{1}_{-}$. Since $2\leq |P^2|$ and $(N,t)$ is
quasi-discriminating and since $x'$ is adjacent to $\eta_N$, there must be a vertex
$z'\in P^2\setminus \{\rho_N, \eta_N,x'\}$ such that $t(z')\neq t(\rho_N)$. Let $z\in
X$ be the leaf adjacent to $z'$ in $N$. Since $x'$ is adjacent to $\eta_N$, we have
$x'\prec_N z'$. This and $x',z'\in P^2$ implies that $\lca(x,z)=z'$. Since $y'$ and
$z'$ are located on different sides of $C$ and $y',z'\neq \eta_N$, it holds that
$\lca_N(y,z) = \rho_N$. Since $t(z')\neq t(\rho_N)$, we have $\{x,z\}\in E$ and
$\{y,z\}\notin E$ or \emph{vice versa}. Hence, $M$ cannot be a module in $G$. 

In summary, no subset $M\subseteq X$ with $1<|M|<|X|$ can be a module in $G$ and
thus, $G$ contains only trivial modules. Consequently, $G$ is primitive and, as
argued above, $G$ is level-1 explainable. 
\end{proof}

\begin{figure}[t]
		\begin{center}
			\includegraphics[scale = .7]{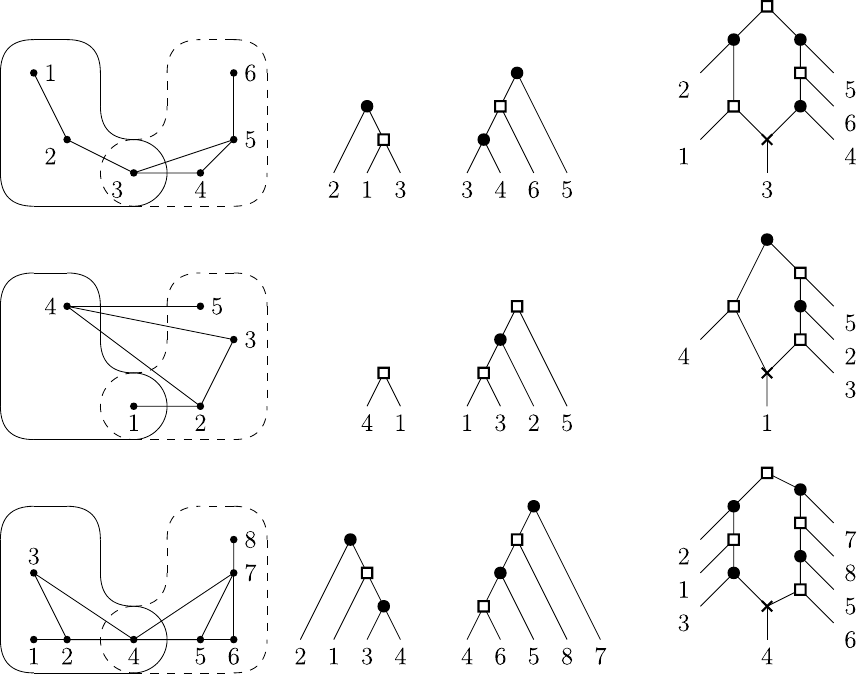}
		\end{center}
		\caption{Shown are three $(v,G',G'')$-polar-cats $G_1,G_2,G_3$ (from top to down)
		         together with the discriminating cotree $(T',t')$ of $G'$ and
		         $(T'',t'')$ of $G''$ and quasi-discriminating elementary networks
		         $(\widehat{N},\widehat{t})$ that explains the respective graph $G_i$.
		         The graph $G'$ and $G''$ are enclosed by dashed and solid-line curves,
		         respectively. The vertex $v$ is the unique vertex that is contained in
		         both $G'$ and $G''$. It is easy to see that the level-1 networks $(N,t)$
		         (right) are strong quasi-discriminating elementary networks that are
		         obtained from the respective cotrees $(T',t')$ and $(T'',t'')$ according
		         to Def.\ \ref{def:prop:PsC-l1N}.}
		\label{fig:polarCatN}
\end{figure}

Since polar-cats are primitive, we obtain
\begin{corollary} \label{cor:CographNotPC}
	All polar-cats are connected and not cographs.
	In particular, the smallest polar-cat is an
	induced $P_4$.
\end{corollary}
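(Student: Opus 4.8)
The plan is to obtain all three assertions as immediate consequences of primitivity, which was just established in Lemma~\ref{lem:primitive-1}. First I would fix an arbitrary polar-cat $G$ and invoke that lemma to conclude that $G$ is primitive; in particular $|V(G)|\geq 4$ and the only modules of $G$ are the trivial ones (the singletons and $V(G)$ itself).

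For connectedness, I would show that the root module $X=V(G)$ cannot be a parallel module of $G$. Suppose, for contradiction, that $G$ is disconnected. Any connected component $C$ of $G$ is a module, since every vertex of $C$ has all its neighbours inside $C$, so $N_G(x)\setminus V(C)=\emptyset$ for all $x\in V(C)$. As $G$ is disconnected we have $V(C)\subsetneq X$. If some component contains at least two vertices, then $V(C)$ is a non-trivial module; and if every component is a singleton, then $G$ is edgeless on $\geq 4$ vertices and any two of its vertices form a non-trivial module. Either case contradicts primitivity, so $G$ must be connected.

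For the remaining two claims I would appeal to facts already recorded for primitive graphs. Since $G$ is primitive it contains an induced $P_4$ and therefore, by Thm.~\ref{thm:CharCograph}, is not a cograph. For the minimality statement, being \emph{cat} already forces $|V(G)|\geq 4$, and the smallest primitive graph is $P_4$; conversely $P_4$ is realised as a polar-cat (cf.\ the base case $|X|=4$ in the proof of Lemma~\ref{lem:primitive-1}, or directly via Prop.~\ref{prop:polcat-pc}). Hence no polar-cat has fewer than four vertices and $P_4$ occurs, so the smallest polar-cat is an induced $P_4$. I expect no genuine obstacle here, as everything reduces to primitivity; the only point requiring a little care is the connectedness argument, specifically covering the degenerate edgeless case when exhibiting a non-trivial module in a putatively disconnected primitive graph.
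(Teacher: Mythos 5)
Your proof is correct and follows essentially the same route as the paper, which derives the corollary directly from the primitivity of polar-cats (Lemma~\ref{lem:primitive-1}) together with the facts that primitive graphs have at least four vertices, contain an induced $P_4$ (hence are not cographs), and that a disconnected graph on at least four vertices always has a non-trivial module. Your extra care with the edgeless case in the connectedness argument is a sensible detail the paper leaves implicit.
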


We proceed with showing that the converse of Lemma \ref{lem:primitive-1}
is satisfied as well.

\begin{lemma}\label{lem:primitive-2}
If $G$ is primitive and there is a level-1 network $(N,t)$ that explains $G$, then
$G$ is a polar-cat. In particular, if 	$(N,t)$ is a level-1 network
that explains a primitive graph $G$, then $(N,t)$ must be a strong
quasi-discriminating elementary network.
\end{lemma}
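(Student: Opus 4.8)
The plan is to show directly that any level-1 network $(N,t)$ explaining a primitive graph $G=(X,E)$ is forced to be strong, quasi-discriminating and elementary, and then to invoke Lemma~\ref{lem:StrongElementary}; this proves both the main claim and the ``in particular'' part in one stroke. The engine is that primitivity collapses any leaf set $L(N(v))$ that happens to be a module, since such a set must be trivial. First I would record the decisive observation that \emph{for every hybrid-vertex $\eta$ the set $L(N(\eta))$ is a module}: if $x,y\preceq_N\eta$ and $z\not\preceq_N\eta$, then every common ancestor of $x$ and $z$ is an ancestor of $\eta$ (an ancestor of $z$ that were $\preceq_N\eta$ would force $z\preceq_N\eta$), whence $\lca_N(x,z)=\lca_N(\eta,z)=\lca_N(y,z)$, so $x,y$ have identical adjacencies outside $L(N(\eta))$. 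As this cannot be all of $X$ (a side-vertex of the cycle through $\eta$ has a descendant leaf outside $N(\eta)$), primitivity forces $L(N(\eta))$ to be a single leaf. Consequently every hybrid-vertex is merely the parent of one leaf; in particular no hybrid can be the root of a further cycle, so cycles cannot nest.

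Next I would locate the root and cut down the rest. If $\rho_N$ lay on no cycle, its children $c_1,\dots,c_k$ ($k\ge2$) would give $\lca_N(x,z)=\rho_N=\lca_N(y,z)$ for $x,y\in L(N(c_i))$, $z\notin L(N(c_i))$, making each $L(N(c_i))$ a module; triviality would force every $c_i$ to be a leaf and $N$ to be a tree, contradicting that the primitive graph $G$ is not a cograph (Thm.~\ref{thm:WeakIffCograph}). Hence $\rho_N=\rho_C$ for a cycle $C$. For a cycle-vertex $w\neq\rho_C$, with $c$ its child on $C$, set $S_w\coloneqq L(N(w))\setminus L(N(c))$. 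Because every hybrid below $w$ carries a single leaf and the level-1 block structure (two cycles meet in at most a cut-vertex) keeps any second cycle confined beneath a single vertex of $C$, every leaf of $S_w$ reaches the top only through $w$; therefore $\lca_N(x,z)=\lca_N(w,z)$ for all $x\in S_w$, $z\notin S_w$, so $S_w$ is a module with $|S_w|\le1$. Since a tree-vertex $w$ has out-degree $\ge2$ with exactly one cycle-child, in fact $|S_w|=1$ and the off-cycle child of $w$ is a single leaf. This simultaneously excludes every secondary cycle (such a cycle would contribute at least two leaves to some $S_w$) and shows each non-root vertex of $C$ (including $\eta_C$) carries exactly one pendant leaf; the analogous module $S_{\rho_N}$ at the root cannot be a singleton $\{\ell\}$ (else $\ell$ is universal or isolated and $X\setminus\{\ell\}$ is a nontrivial module), so $S_{\rho_N}=\emptyset$. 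Thus $N$ is elementary with single cycle $C$ of length $|X|+1$.

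It remains to verify that this elementary $N$ is strong and quasi-discriminating. If $C$ were weak, Lemma~\ref{lem:Weak-Reduce-Cycle} would replace it by a tree explaining $G$, making $G$ a cograph (Thm.~\ref{thm:WeakIffCograph}); impossible, so $N$ is strong. If $(N,t)$ failed to be quasi-discriminating, the offending inner edge would be a side-edge $(u,v)$ with $v\neq\eta_C$ and $t(u)=t(v)$; writing $x_v$ (and $x_u$, when $u\neq\rho_C$) for the pendant leaves, one checks that for $u=\rho_C$ the leaf $x_v$ is universal or isolated, and otherwise $\{x_u,x_v\}$ is a module (its members have coinciding $\lca$ with every third leaf, or $\lca$'s $u$ and $v$ of equal label) --- either way contradicting primitivity. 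Hence $N$ is a strong quasi-discriminating elementary network, and Lemma~\ref{lem:StrongElementary} yields that $G$ is a polar-cat. The delicate point, and the main obstacle, is justifying that each $S_w$ ``exits only through $w$'': this is exactly where one must use that every hybrid sits above a single leaf together with the level-1 fact that distinct cycles share at most a cut-vertex (and that a shared hybrid must be the other cycle's root), which together prevent any cycle from straddling $w$ and thereby guarantee the module property that drives the whole argument.
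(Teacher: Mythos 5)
Your proof is correct and follows essentially the same route as the paper: both force $(N,t)$ to be a strong quasi-discriminating elementary network by exhibiting a non-trivial module whenever the structure deviates (a cycle not rooted at $\rho_N$, a root of out-degree greater than $2$, a cycle vertex with more than one pendant leaf or a non-leaf off-cycle child), and then invoke Lemma~\ref{lem:StrongElementary}. The only cosmetic differences are your up-front observation that hybrid vertices carry singleton leaf sets and your direct module argument (the pair $\{x_u,x_v\}$, or a universal/isolated leaf) for quasi-discrimination, where the paper instead contracts the offending edge and reuses the elementary-structure argument.
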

\begin{proof}
Let $G=(X,E)$ be primitive and assume that there is a level-1 network $(N,t)$ on $X$
that explains $G$. 
To show that $G$ is a polar-cat it suffices, by 
Thm.~\ref{thm:StrongElementary}, to show that $(N,t)$ is a strong  elementary 
quasi-discriminating network. 

Note, $(N,t)$ must contain cycles, since otherwise $G$ would be a cograph and thus, a
non-primitive graph. Let $C\subseteq N$ be such a cycle and let $\rho_C$ be its root and $\eta_C$ its unique hybrid-vertex. We show first that $\rho_C=\rho_N$ and
that the outdegree of $\rho_N$ must be 2. Assume, for contradiction, that $\rho_C\neq
\rho_N$ and therefore, $\rho_C\prec_N \rho_N$. Let $M\subseteq X$ be the set of all
leaves $x\in X$ with $x\prec_N \rho_C$. Using the definition of networks, it is
straightforward to verify that $X\setminus M\neq \emptyset$ and, therefore,
$|M|<|X|$. Note, $1<|M|$ since the cycle $C$ contains the vertex 
$\eta_C$ and at
least one vertex $v$ that is distinct from $\rho_C$ and $\eta_C$ and thus, there are
at least two leaves $x$ and $x'$ with $x\prec_N \eta_C \prec_N \rho_C$ and $x'\prec_N
v \prec_N \rho_C$. Let $y\in X\setminus M$. Hence, $y$ and $\rho_C$ are incomparable
and, in particular, $\lca_N(x,y) = \lca_N(\rho_C,y)$. Hence, for all $x,x'\in X$ we
have $\lca_N(x,y) = \lca_N(x',y)$ and therefore, $t(\lca_N(x,y)) = t(\lca_N(x',y))$.
Hence, $G$ contains a non-trivial module $M$ and is, thus, not primitive; a
contradiction. Therefore, $\rho_C=\rho_N$. 

Assume now, for contradiction, that the outdegree of $\rho_C$ is distinct from 2
and thus, at least 3. Let $u,v\in
C$ be the two vertices in $C$ that are adjacent to $\rho_C$ and $M\subseteq X$ be the
set of all leaves $x\in X$ with and $x\prec u$ or $x\prec v$. Since 
$\rho_C$ has outdegree at
least three, $|M|<|X|$. Moreover, at least one of $u$ and $v$ must be distinct from
$\eta_C$ which implies that $1<|M|$. By construction, for all $x\in M$ it holds that
$x\prec_N \rho_C$. Thus, for every $y\in X\setminus M$ and $x,x'\in M$ it holds that
$\lca_N(x,y) = \lca_N(x',y)$ and therefore, $t(\lca_N(x,y)) = t(\lca_N(x',y))$.
Hence, $M$ is a non-trivial module of $G$; a contradiction. Hence, the outdegree of
$\rho_C$ must be 2. 

In summary, for every cycle $C\subseteq N$ it must hold that $\rho_C=\rho_N$ and the
outdegree of $\rho_N$ is 2. Therefore, $N$ cannot contain a further cycle different
from $C$. By definition of networks, in particular by Condition (N3), every vertex
$u\neq \rho_C$ in $C$ has at least one child $u'$ that is not located in $C$. If
every such vertex $u$ has precisely one such child $u'\in X$ then $C$ is an
elementary network. Assume, for contradiction, that the latter condition is violated.
Hence, there is a vertex $u\neq \rho_C$ in $C$ that has either at least two children
that are not located in $C$ or precisely one child $u'\notin V(C)$ that is not a
leaf. Suppose first that $u$ has at least two children that are not located in $C$.
Let $W$ be the set of vertices $v$ in $C$ with $v\prec_N u$ and $M$ be the collection
of leaves $x\in X$ with $x\prec_N u$ and $x\not\prec_N w$ for any $w\in W$. Since $u$
has at least two children that are not located in $C$, we have $1<|M|$ and since
$u\neq \rho_C$, it holds that $|M|<|X|$. Since there are no other cycles in $N$ than
$C$, it is easy to verify that for all $z\in X\setminus M$ and
$x,y\in M$  it holds
that $u\preceq_N\lca_N(x,z) = \lca_N(y,z)$. Again, $M$ is a non-trivial module of
$G$; a contradiction. Assume now that $u$ has precisely one child $u'\notin V(C)$
that is not a leaf and let $M$ be the collection of leaves $x\in X$ with $x\prec_N
u'$. By similar arguments as in the previous case one shows that $M$ is a non-trivial
module; a contradiction. In summary, every vertex $u\neq \rho_C$ in $C$ has at least
one child $u'$ that is not located in $C$ and this child must be a leaf. Thus, $N$ is
an elementary network. Note that the latter immediately implies that 
$(N,t)$ must be  quasi-discriminating, since otherwise, we could contract
an edge  $(u,v)\in E^0$ with $t(u)=t(v)$ and obtain the network  $(N_e,t_e)$
that still explains $G$ and for which the vertex $v_e$ that corresponds to the edge $e$ in $N_e$ would
have two leaves as children; a case that cannot occur as argued above.
Moreover, $N$ must be strong, since otherwise Lemma \ref{lem:WeakElementary} implies
that $G = \mathscr{G}(N,t)$ is a cograph and thus, not primitive; a contradiction. In
summary, we have shown that $(N,t)$ is a quasi-discriminating strong elementary
network, which completes the proof. 
\end{proof}

\begin{theorem}\label{thm:CharPolCat}
	A graph $G$ is a polar-cat if and only if $G$ is a primitive graph that can be
	explained by a level-1 network.
\end{theorem}
\begin{proof}
	The \emph{only-if}-direction is a direct consequence of Lemma \ref{lem:primitive-1},
		while the  \emph{if}-direction follows from \ref{lem:primitive-2}.	
\end{proof}

Moreover, Thm.\ \ref{thm:CharPolCat} together with Thm.\ \ref{thm:StrongElementary}
implies
\begin{theorem}\label{thm:CharPCPC}
For a primitive graph $G$ the following statements are equivalent:\smallskip
\begin{enumerate}[nolistsep,noitemsep]
\item	$G \simeq \mathscr{G}(N,t)$ for some labeled level-1 network $(N,t)$.
\item $G \simeq \mathscr{G}(N,t)$ for some strong elementary quasi-discriminating
      network $(N,t)$.
\item $G$ is a polar-cat.
\end{enumerate}
\end{theorem}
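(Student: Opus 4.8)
The plan is to prove the three statements mutually equivalent by establishing the cycle of implications $(2)\Rightarrow(1)\Rightarrow(3)\Rightarrow(2)$. All of the substantive work has already been carried out in Thm.\ \ref{thm:CharPolCat} and Lemma \ref{lem:StrongElementary}, so the task here is essentially to chain these results together while keeping track of the standing hypothesis that $G$ is primitive.

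First I would handle $(2)\Rightarrow(1)$, which is purely definitional. A strong elementary quasi-discriminating network is, in particular, an elementary network, and every elementary network contains a single cycle $C$ and is therefore a level-1 network. Hence any $(N,t)$ witnessing $(2)$ is already a labeled level-1 network with $G\simeq\mathscr{G}(N,t)$, which gives $(1)$. Note that the primitivity of $G$ plays no role in this step.

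Next, for $(1)\Rightarrow(3)$, I would combine the standing assumption that $G$ is primitive with $(1)$, which asserts that $G$ can be explained by some labeled level-1 network. Together these two facts are precisely the right-hand side of the biconditional in Thm.\ \ref{thm:CharPolCat}, whose forward direction then immediately yields that $G$ is a polar-cat, establishing $(3)$. Finally, $(3)\Rightarrow(2)$ is exactly the \emph{only-if} direction of Lemma \ref{lem:StrongElementary}: if $G$ is a polar-cat, that lemma supplies a strong elementary quasi-discriminating network explaining $G$, which is what $(2)$ asserts.

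Chaining these three implications closes the cycle and proves the equivalence. I do not expect any genuine obstacle in this argument, since it is a corollary assembled from the two deep results preceding it; the only point requiring a moment of care is the step $(1)\Rightarrow(3)$, where one must check that the hypotheses of Thm.\ \ref{thm:CharPolCat} are actually met. There the primitivity of $G$, available by assumption throughout the theorem, is exactly the ingredient that upgrades ``explainable by a level-1 network'' into the full statement of that theorem's right-hand side.
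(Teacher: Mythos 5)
Your proof is correct and follows essentially the same route as the paper, which derives the theorem directly from Thm.\ \ref{thm:CharPolCat} combined with Lemma \ref{lem:StrongElementary}; your explicit cycle $(2)\Rightarrow(1)\Rightarrow(3)\Rightarrow(2)$ just makes the chaining of those two results (plus the trivial fact that elementary networks are level-1) explicit.
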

\begin{proof}
	Let $G$ be a primitive graph. If Condition (1) is satisfied, then Thm.\ 
	\ref{thm:CharPolCat} implies that $G$ is a polar-cat. Thus we can apply Thm.\
	\ref{thm:StrongElementary} to conclude that 
	there is a strong elementary quasi-discriminating
	network $(N,t)$ with $G \simeq \mathscr{G}(N,t)$. Hence (1) implies (2). 
	If Condition (2) is satisfied, then  Thm.\ \ref{thm:StrongElementary} implies  
	$G$ is a polar-cat. Hence (2) implies (3). 
	Finally assume that $G$ is a polar-cat. By Thm.\ \ref{thm:CharPolCat}, 
	$G \simeq \mathscr{G}(N,t)$ for some labeled level-1 network $(N,t)$.
	Hence, (3) implies (1).
	In summary, the conditions 	(1), (2) and (3) are equivalent. 
\end{proof}

In the following, we show under which conditions a polar-cat can be explained by a
unique labeled level-1 network. To this end, we need
\begin{definition}[well-proportioned]\label{def:well-propo}
	Let $G$ be a $(v,G_1,G_2)$-pseudo-cograph. Then, 
	$(v,G_1,G_2)$  is \emph{well-proportioned} if $|V(G_1)| \geq 3$ and $|V(G_2)| 
	\geq 3$, or $|V(G_i)| = 2$ and $|V(G_j)|\geq 5$ with $i,j\in\{1,2\}$
	distinct.
\end{definition}

Examples of well-proportioned pseudo-cographs are provided in Fig.\
\ref{fig:polarCatN} (upper and lower row). Note that, by definition,
well-proportioned pseudo-cographs $G$ must satisfy $|V(G)|\geq 5$. Consequently,
$G\simeq P_4$ is not well-proportioned. The smallest well-proportioned
$(v,G_1,G_2)$-pseudo-cograph is the path $P_5 = 1-2-3-4-5$ where $G_1 =
G[1,2,3]$, $G_2 = G[3,4,5]$ and $v=3$. It is easy to verify that $G$ is, in
particular, a $(v,G_1,G_2)$-polar-cat.	

As we shall see, for a given $(v,G_1,G_2)$-polar-cat, the choice of $G_1$ and
$G_2$ cannot be changed as long as $v$ is fixed. However, a pseudo-cograph might
be $(v,G_1,G_2)$-polar-cat and $(w,G'_1,G'_2)$-polar-cat with $w\neq v$ at the
same time. By way of example consider the polar-cat $G$ as shown in Fig.\
\ref{fig:nonUniqueN} (lower row). Based on the different networks that explain
$G$ it is easy to verify that $G$ is a $(1,G[\{1,3\}],
G[\{1,2,4,5\}])$-polar-cat as well as a $(2,G[\{1,2\}],
G[\{2,3,4,5\}])$-polar-cat. In both cases, however, $(v,G_1,G_2)$ is not
well-proportioned. As we show in the following, for a well-proportioned
polar-cat $G$ there is no ambiguity for the choice of $v$ and thus, of $G_1$ and
$G_2$ such that $G$ is a $(v,G_1,G_2)$-polar-cat. This, in particular, can then
be used to show that the networks that explain a well-proportioned polar-cat are
uniquely determined.

\begin{lemma}\label{lem:special1}
If $G$ is a $(v,G_1,G_2)$-polar-cat, then there are no subgraphs $G_1', G_2'
\subseteq G$ that are distinct from $G_1$ and $G_2$ such that $G$ is a
$(v,G_1',G_2')$-polar-cat. Moreover, $v$ is the only vertex such that $G$ is a 
$(v,G_1,G_2)$-polar-cat if and only if $(v,G_1,G_2)$ is well-proportioned.
\end{lemma}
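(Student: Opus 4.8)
The plan is to prove the two assertions separately, and to reduce the (harder) second one to the case where $G-v$ is the disjoint union of its two sides, i.e.\ ``case (a)'' of Prop.~\ref{prop:polcat-pc}, by passing to complements. For the first assertion (uniqueness of $G_1,G_2$ once $v$ is fixed), note that a polar-cat is primitive by Cor.~\ref{cor:CographNotPC}, hence not a cograph, so $v$ lies in every induced $P_4$ by Obs.~\ref{obs:G-v-Cograph}. By Lemma~\ref{lem:PolarCat-complement-CC} the disconnected graph in $\{G-v,\overline{G-v}\}$ has exactly two connected components, so the vertex set $\mathfrak{C}$ of $\Gamma(G,v)$ has size two; and Cor.~\ref{cor:Gamma} forces $\Gamma(G,v)$ to contain at least one edge (otherwise $G$ would be a cograph). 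Hence $\Gamma(G,v)\simeq K_2$ is connected, and the last sentence of Lemma~\ref{lem:star-center-gamma} yields that $G_1=G[V_1]$ and $G_2=G[V_2]$ are uniquely determined. This settles the first claim.

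For the second assertion I first invoke Lemma~\ref{lem:PolarCat-complement-CC}: $G$ is a $(v,G_1,G_2)$-polar-cat iff $\overline{G}$ is a $(v,\overline{G_1},\overline{G_2})$-polar-cat, the vertex $v$ and the sizes $|V(G_i)|$ are preserved, and cases (a) and (b) are interchanged. Thus both well-proportionedness and uniqueness of the pivot are invariant under complementation, so I may assume throughout that we are in case (a): $G_1,G_2$ are connected and $G-v=(G_1-v)\union(G_2-v)$. I then split according to $|V(G_1)|,|V(G_2)|$, noting that, since $|X|\ge 4$ and $|V(G_i)|\ge 2$, the not-well-proportioned profiles are exactly $\{2,3\}$ and $\{2,4\}$. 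If $|V(G_1)|,|V(G_2)|\ge 3$, I exhibit an induced $P_5$ centred at $v$: in each connected side the universal first vertex together with the second vertex in the ordering of Prop.~\ref{prop:polcat-pc}(a) yields an induced path $a_i-b_i-v$ with $b_i\sim v$ and $a_i\not\sim v$, and since there are no edges between $V(G_1)\setminus\{v\}$ and $V(G_2)\setminus\{v\}$, the concatenation $a_1-b_1-v-b_2-a_2$ is an induced $P_5$ with middle vertex $v$. Lemma~\ref{lem:v-in-allP4} then forces any pivot to be this middle vertex, so $v$ is unique.

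The case $|V(G_i)|=2$, $|V(G_j)|\ge 5$ is the main obstacle, precisely because such a $G$ contains no induced $P_5$ or $\overline{P_5}$, so Lemma~\ref{lem:v-in-allP4} is inapplicable and uniqueness must be read off the explicit pendant structure. Here $G_i=\{y,v\}$ with $y$ a pendant adjacent only to $v$, while $G_j$ is a connected caterpillar-cograph on $z_1,\dots,z_{n-1},v$ whose first vertex $z_1$ is universal in $G_j$. I produce two induced $P_4$'s in $G$, namely $y-v-z_1-z_2$ and $y-v-z_3-z_4$ (the second requires $z_4$, i.e.\ $n\ge 5$), whose vertex sets meet exactly in $\{y,v\}$; since every pivot lies in every induced $P_4$ (Obs.~\ref{obs:G-v-Cograph}), the only pivot candidates are $v$ and $y$. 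I then rule out $y$: if $G$ were a $(y,\cdot,\cdot)$-polar-cat, then as $G-y=G_j$ is connected with $z_1$ its unique universal vertex, Lemma~\ref{lem:PolarCat-complement-CC} forces the two components of $\overline{G-y}$ to be $\{z_1\}$ and $V(G_j)\setminus\{z_1\}$, so the side containing the latter would be a cograph containing the induced $P_4$ $\,y-v-z_3-z_4$, contradicting (F2). Hence $v$ is the unique pivot, completing the well-proportioned direction.

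For the converse (not well-proportioned $\Rightarrow$ $v$ not unique), in case (a) the profile $\{2,3\}$ gives $G\simeq P_4$ and the profile $\{2,4\}$ gives the ``triangle with two pendant edges'' on five vertices. For $P_4$ I verify directly that one endpoint and one interior vertex each serve as a polar-cat pivot, so by the reflection automorphism all four vertices are pivots; for the five-vertex graph the automorphism exchanging the two pendants (and the two degree-$3$ vertices) maps $v$ to a distinct pivot. In either case there is a pivot different from $v$, so $v$ is not unique. Combining the two directions with the observation that well-proportioned equals ``both sides $\ge 3$ or profile $\{2,\ge5\}$'', and not-well-proportioned equals ``profile $\{2,3\}$ or $\{2,4\}$'', yields the stated equivalence.
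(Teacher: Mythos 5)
Your proof is correct and follows essentially the same route as the paper: Lemma~\ref{lem:PolarCat-complement-CC} plus Lemma~\ref{lem:star-center-gamma} for uniqueness of $G_1,G_2$; the complementation reduction; an induced $P_5$ centred at $v$ together with Lemma~\ref{lem:v-in-allP4} when both sides have at least three vertices; and, in the $\{2,\ge 5\}$ case, induced $P_4$s meeting only in $\{y,v\}$ followed by ruling out $y$ via the two components of $\overline{G-y}$ and a $P_4$ inside $G-z_1$. The only cosmetic differences are that you use two $P_4$s where the paper exhibits three, and an automorphism argument for the not-well-proportioned direction where the paper writes out the alternative decompositions explicitly.
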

\begin{proof}
Let $G$ be a $(v,G_1,G_2)$-polar-cat. By Lemma~\ref{lem:PolarCat-complement-CC}, the
disconnected graph in $\{G-v,\overline{G-v}\}$ contains exactly two connected
component. Lemma \ref{lem:star-center-gamma} implies that there are no subgraphs $G_1',
G_2' \subseteq G$ that are distinct from $G_1$ and $G_2$ and such that $G$ is a
$(v,G_1',G_2')$-polar-cat.

We continue with showing that the second statement is satisfied. Note, since $G$ is a
$(v,G_1,G_2)$-polar-cat, we have $|V(G_1)|, |V(G_2)|\geq 2$. For the
\emph{only-if}-direction assume, by contraposition, that $(v,G_1,G_2)$ is not
well-proportioned. Hence, we can w.l.o.g.\ assume that $|V(G_1)|= 2$ and
$|V(G_2)|\leq 4$. By Lemma \ref{lem:primitive-1}, $G$ is primitive and thus,
$|V(G)|\geq 4$. This together with $|V(G_1)| = 2$ and $|V(G_2)|\leq 4$ implies that
$|V(G_2)|\in \{3,4\}$. In view of Lemma~\ref{lem:PolarCat-complement-CC}, we can
w.l.o.g.\ assume that $G-v$ is disconnected. Moreover, by
Lemma~\ref{lem:PolarCat-complement-CC}, $G-v$ has exactly two connected components
$H_1=G_1-v$ and $H_2 = G_2-v$. By Prop.~\ref{prop:polcat-pc}(a), the
vertices of $G$ can be ordered and, based on the particular ordering and the number
of vertices of $G$, the graph $G$ is either isomorphic to an induced $P_4$ of the form
$y_1-v-z_1-z_2$ (in case $|V(G_2)|=3$), or to a graph that contains an induced $P_4$
of the form $y_1-v-z_1-z_2$ and a vertex $z_3$ that is adjacent to $v$ and $z_1$ (in
case $|V(G_2)|=4$). It is easy to verify that, in case $|V(G_2)|=3$, $G$ is a
$(v,G[\{y_1,v\}],G[\{v,z_1,z_2\}])$ and a $(z_1,G[\{z_1,z_2\}],G[\{z_1,v,y_1\}])$
polar-cat and, in case $|V(G_2)|=4$, $G$ is a $(v,G[\{y_1,v\}],G[\{v,z_1,z_2,z_3\}])$-
and a $(z_1,G[\{z_1,z_2],G[\{z_1,z_3,v,y_1\}])$-polar-cat. Hence, $v$ is not the only
vertex such that $G$ is a $(v,G_1,G_2)$-polar-cat.

For the \emph{if}-direction assume that $(v,G_1,G_2)$ is well-proportioned and
thus, $|V(G_1)|, |V(G_2)|\geq 3$ or $|V(G_i)| = 2$ and $|V(G_j)|\geq 5$,
$\{i,j\}=\{1,2\}$. Again, we assume w.l.o.g.\ that $G-v$ is disconnected, that
$H_1=G_1-v$ and $H_2=G_2-v$ are the two connected components of $G-v$ and that
the vertices are ordered according to Prop.~\ref{prop:polcat-pc}(a). Assume first
that $|V(G_1)|, |V(G_2)|\geq 3$. Based on the vertex ordering, there 
exists vertices $y_1, y_2 \in V(H_1)$
such that $\{y_1,v\}$, $\{y_1,y_2\}$ are edges of $G$, while $\{y_2,v\}$ is not.
Similarly, there exists vertices $z_1,z_2 \in V(H_2)$ such that $\{z_1,v\}$,
$\{z_1,z_2\}$ are edges of $G$, while $\{z_2,v\}$ is not an edge of $G$. Moreover,
since $G-v$ is disconnected, there are no edges of the form $\{y_i,z_j\}$, $i,j\in
\{1,2\}$. Hence, $y_2-y_1-v-z_1-z_2$ is an induced $P_5$ of $G$. By
Lemma~\ref{lem:v-in-allP4}, the vertex $v$ is uniquely determined.

Assume now that $|V(G_i)| = 2$ and $|V(G_j)|\geq 5$ and let w.l.o.g. $i=1$ and $j=2$.
Let $x$ be the unique vertex of $H_1=G_1-v$. By assumption, the vertices of
$G_2$ are indexed as $y_1, y_2,
y_3, y_4, \dots,y_k=v$, $k \geq 5$ such that $\{y_i,y_j\}$
is an edge of $G$ if and only if $i$ is odd, for $1 \leq i<j \leq k$. It is now easy
to verify that $G$ contains two induced $P_4$s: 
$x-v-y_1-y_2$ and
$x-v-y_3-y_4$. Both induced $P_4$s have only vertices $x$ and $v$ in 
common.
Hence, by Lemma~\ref{lem:v-in-allP4}, there is only one further choice for $G$ being
a $(v',G_1',G_2')$-polar-cat, namely $v'=x$. We continue with showing that $G$ is not
an $(x,G'_1,G'_2)$-polar-cat for any two subgraphs $G'_1$ and $G'_2$ of $G$. To this
end, consider the graph $G-x$. Note that $G-x=G_2$. In particular, $G-x$ is
connected, and $\overline{G-x}$ has exactly two connected components,
$(\{y_1\},\emptyset)$ and $G_2-y_1$. By Prop.~\ref{prop:charPseudo}, it follows
that the only choices for $G_1'$ and $G_2'$ are $G_1'=G[\{x,y_1\}]$ and $G_2'=G-y_1$.
However, $G_2'$ is not a cograph, since it contains the induced $P_4$ $x-v-y_3-y_4$.
Hence, $G$ is not an $(x,G_1',G_2')$-pseudo-cograph. In particular, $G$ is not a
$(x,G_1',G_2')$-polar-cat and $v$ is, therefore, uniquely determined.
\end{proof}

We are now in the position to show that  well-proportioned polar-cats
are characterized by the existence of a unique network that explains them.

\begin{proposition}\label{prop:uniqueNt}
Let $G$ be a $(v,G_1,G_2)$-polar-cat. Then, there is a unique network $N$ with unique
labeling $t$ (up to the label of the hybrid-vertices) that explains $G$ if and
only if $(v,G_1,G_2)$ is well-proportioned. In this case, $(N,t)$ is
in particular a strong quasi-discriminating elementary network.
\end{proposition}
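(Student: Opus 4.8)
The plan is to reduce the uniqueness question for networks to the uniqueness question for polar-cat structures, which is already resolved by Lemma~\ref{lem:special1}. First I would record that, by Lemma~\ref{lem:primitive-1}, the polar-cat $G$ is primitive, and therefore, by Lemma~\ref{lem:primitive-2}, \emph{every} level-1 network $(N,t)$ that explains $G$ is forced to be a strong quasi-discriminating elementary network. This already settles the final sentence of the statement (the ``in particular'' clause) and restricts attention to this single class of networks.

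Next I would set up the correspondence between such networks and the ways of writing $G$ as a polar-cat. Given a strong quasi-discriminating elementary network $(N,t)$ that explains $G$, read off the vertex $v'$ as the unique leaf adjacent to the hybrid vertex $\eta_N$ and put $G_i'\coloneqq G[X_i]$, where $X_i$ is the set of leaves adjacent to the side $P^i$ of the unique cycle. As shown in the proof of Lemma~\ref{lem:StrongElementary}, this yields a $(v',G_1',G_2')$-polar-cat structure of $G$, and conversely the construction of Def.~\ref{def:prop:PsC-l1N} applied to $(v',G_1',G_2')$ produces a strong quasi-discriminating elementary network explaining $G$. The key point I would verify is that these two operations are mutually inverse up to the (free) label of the hybrid vertex and the symmetry $G_1'\leftrightarrow G_2'$: the caterpillars obtained from the two sides of the cycle are discriminating, hence are \emph{the} unique discriminating cotrees of $G_1'$ and $G_2'$, so rebuilding via Def.~\ref{def:prop:PsC-l1N} returns the original $(N,t)$. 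Thus networks (up to isomorphism and the hybrid label) correspond bijectively to the polar-cat structures of $G$.

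With this correspondence in hand, the two directions follow from Lemma~\ref{lem:special1}. For the \emph{if}-direction, suppose $(v,G_1,G_2)$ is well-proportioned. Then Lemma~\ref{lem:special1} guarantees that $v$ is the only vertex for which $G$ admits a polar-cat structure and that, given $v$, the pair $G_1,G_2$ is uniquely determined; hence there is exactly one polar-cat structure, and the correspondence yields a unique network $(N,t)$ up to the hybrid label. For the \emph{only-if}-direction I would argue by contraposition: if $(v,G_1,G_2)$ is not well-proportioned, Lemma~\ref{lem:special1} produces a vertex $v'\neq v$ with $G$ a $(v',G_1',G_2')$-polar-cat; applying Def.~\ref{def:prop:PsC-l1N} to the two distinct structures gives two networks whose hybrid vertices have leaf-children $v$ and $v'$ respectively, so the two networks are non-isomorphic and uniqueness fails.

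The main obstacle I anticipate is the careful verification that the ``read-off'' and ``reconstruct'' maps are genuinely inverse --- in particular, that the caterpillar read off from a side of the cycle is discriminating and therefore coincides with the unique discriminating cotree of the corresponding $G_i'$, so that Def.~\ref{def:prop:PsC-l1N} returns precisely the network one started from. Everything else is bookkeeping: the primitivity and the forced shape of any explaining network come directly from Lemmas~\ref{lem:primitive-1} and~\ref{lem:primitive-2}, and the count of distinct polar-cat structures is exactly Lemma~\ref{lem:special1}.
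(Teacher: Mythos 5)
Your proposal is correct and follows the same overall skeleton as the paper's proof: both use Lemma~\ref{lem:primitive-1} and Lemma~\ref{lem:primitive-2} to force any explaining network to be a strong quasi-discriminating elementary network, both read off a polar-cat structure $(h,G[W_1],G[W_2])$ from the cycle as in Lemma~\ref{lem:StrongElementary}, and both invoke Lemma~\ref{lem:special1} to pin down that structure. The one genuine difference lies in how you establish that the network is determined by its polar-cat structure: the paper proves directly that the leaf $y_i$ must hang off the $i$-th vertex of the side $P^1_-$, by assuming a misplacement and deriving a contradiction with quasi-discrimination via a chain of label equalities. You instead observe that each side of the cycle, with $\eta_C$ suppressed, is a discriminating cotree of the corresponding $G_i'$ and hence coincides with \emph{the} unique discriminating cotree, so the side caterpillar (and the position of $v'$ in its cherry) is forced; this reuses an existing uniqueness theorem and avoids the paper's most delicate computation. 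You also make the \emph{only-if} direction explicit (two choices of the apex vertex yield two networks distinguished by the leaf-child of the hybrid vertex), whereas the paper's proof only writes out the \emph{if} direction and leaves the converse to follow implicitly from Lemma~\ref{lem:special1}. Both routes are sound; yours buys a cleaner determinacy argument at the cost of having to check carefully that the read-off and the reconstruction of Def.~\ref{def:prop:PsC-l1N} are mutually inverse, which you correctly flag as the point needing verification.
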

\begin{proof}
Let $G$ be a $(v,G_1,G_2)$-polar-cat. For the \emph{only-if}-direction assume,
by contraposition, that $(v,G_1,G_2)$ is not well-proportioned. Since $G$ is a
polar-cat, Cor.\ \ref{cor:CographNotPC} implies that $G$ is not a cograph and
thus, it must contain an induced $P_4$. Consequently, $|V(G)|\geq 4$. We can now
apply the same arguments as used to verify the \emph{only-if}-direction of Lemma
\ref{lem:special1} and conclude that $G$ is a $(v,G_1,G_2)$- and a
$(w,G'_1,G'_2)$-polar-cat with $v,w\in V(G)$ being distinct. By definition, $G$
is a pseudo-cograph and thus, by Prop.\ \ref{prop:PsC-l1N}, we can construct a
network $(N(v,G_1,G_2),t(v,G_1,G_2))$ and a network
$(N(w,G'_1,G'_2),t(w,G'_1,G'_2))$ according to Def.\ \ref{def:prop:PsC-l1N} that
both explain $G$. Since $v$, resp., $w$ is, by construction, the unique child of
the unique hybrid in $N(v,G_1,G_2)$, resp., $N(w,G'_1,G'_2)$ it follows that
$N(v,G_1,G_2)\neq N(w,G'_1,G'_2)$. Thus, there are two labeled network that
explain $G$. 

For the \emph{if}-direction assume that $G$ is well-proportioned. Let $(N,t)$ be
an arbitrary level-1 network that explains $G$. By Lemma
\ref{lem:primitive-1}, $G$ is primitive and Lemma \ref{lem:primitive-2} implies
that $(N,t)$ must be a strong elementary quasi-discriminating network. We
proceed with showing that $(N,t)$ is uniquely determined by $G$.

Let $C$ be the unique cycle of $N$ with sides $P^1$ and $P^2$. Put $P^{1}_{-}
\coloneqq P^1\setminus \{\rho_N\}$ and $P^2_{-} \coloneqq P^2\setminus \{\rho_N\}$.
Note, since $C$ is strong, it holds that neither $P^1_{-}$ nor $P^2_{-}$ is empty. We
denote by $W_1$ and $W_2$ the set of leaves of $N$ whose parent is a vertex of
$P^1_{-}$ and $P^2_{-}$, respectively. Moreover, $h$ denotes the unique child of
$\eta_C$. Note that by definition, $W_1 \cup W_2=V(G)$ and $W_1 \cap W_2=\{h\}$ hold.
The same arguments as in the proof for the \emph{if}-direction of Thm.\
\ref{thm:StrongElementary} show that $G$ is a $(h,G[W_1],G[W_2])$-polar-cat. Since
$G$ is a well-proportioned $(v,G_1,G_2)$-polar-cat, we can apply
Lemma~\ref{lem:special1} to conclude that $v=h$, and that
$\{G[W_1],G[W_2]\}=\{G_1,G_2\}$. Note, since $(N,t)$ was chosen arbitrarily and since
$v$ as well as $G_1$ and $G_2$ are uniquely determined by $G = \mathscr{G}(N,t)$, it
follows that for any network $(N,t)$ that explains $G$ it must hold that all vertices
of $G_1$, resp., $G_2$ must be adjacent to vertices in $P^{1}_{-}$, resp.,
$P^{2}_{-}$ or \emph{vice versa}.

W.l.o.g.\ assume that $G[W_1]=G_1$ and $G[W_2]=G_2$. Let $y_1, \ldots, y_k=v$ be the
ordering of the elements of $W_1$ as postulated by Prop.~\ref{prop:polcat-pc}. Since
$N$ is elementary and by the latter arguments, we have in particular that
$|V(P^1_-)|=k$. Consider now the induced directed path $P^1_-=u_1 - \ldots -u_k$ in
$N$. By definition, $u_k =\eta_C$. We next show that $y_i$ must be a child of $u_i$
in $N$ for all $i\in \{1,\dots,k\}$. To this end, assume, for contradiction, that
there is some vertex $u_i$ with child $y_j$ in $N$, with $1 \leq i \neq j \leq k$.
Without loss of generality, we may assume that $i$ is the smallest element of $\{1,
\ldots k\}$ with that property which, in particular, implies that $j>i$, and $y_i$ is
the child of some vertex $u_l$ satisfying $l>i$. Note that $l=k$ does not hold, as we
have already established that $v=y_k$ is the unique child of $\eta_C=u_k$. Note that
$u_l\prec_N u_i$. Since $(N,t)$ explains $G$ and $\lca(y_j,v)=u_i=\lca(y_j,y_i)$, it
follows that $\{y_i,y_j\}$ is an edge of $G$ if and only if $\{y_j,v\}$ is an edge of
$G$. By choice of the labels, we also have that $\{y_i,y_j\}$ is an edge of $G$ if
and only if $\{y_i, v\}$ is an edge of $G$ (depending on whether $j$ is odd or even).
In particular, $t(u_i)=t(u_l)$ must hold. Since $N$ is quasi-discriminating, is
follows that $l>i+1$. Now, let $y_m$ be the child of $u_{i+1}$, $m=i+1$ is possible.
By choice of $i$, $i<m$ must hold. Using the same arguments as above with $y_m$
playing the role of $x_j$, it follows that $t(u_{i+1})=t(u_l)$. However, this
equality, together with $t(u_i)=t(u_l)$, implies that $t(u_i)=t(u_{i+1})$, a
contradiction since $(N,t)$ is discriminating.

Hence, the ordering of leaves along the path $P^1_-$ in $N$ is uniquely determined by
$G$. By the same arguments, the latter holds also for the ordering of the leaves
along the path $P^2_-$. Therefore, $N$ is uniquely determined by $G$. Since the label of
a tree-vertex of $N$ is uniquely determined by $N$ and $G$, it follows that, up to
the choice of $t(\eta_C)$, $(N,t)$ is the unique labeled level-1 network explaining
$G$.
\end{proof}

\section{General Graphs, the Class $\protect\PrimeCat$ and Level-1 Networks}
\label{sec:general}

In this section, we characterize the class $\PrimeCat$ of graphs that can
be explained by level-1 networks. 
For later reference, we show first that the property of a graph being explainable by a
level-1 network is hereditary.

\begin{lemma}\label{lem:level-1-inher}
	A graph $G$ can be explained by a labeled level-1 network if and only if every
	induced subgraph of $G$ can be explained by a level-1 network.	
\end{lemma}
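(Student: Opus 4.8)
The plan is to prove the nontrivial \emph{only-if} direction, since the \emph{if} direction is immediate: $G$ is an induced subgraph of itself, so if every induced subgraph of $G$ is level-1 explainable then so is $G$. For the converse, because every induced subgraph of $G$ is obtained by deleting vertices one at a time, it suffices to show that whenever $(N,t)$ is a labeled level-1 network explaining $G=(X,E)$ and $x\in X$, the graph $G-x$ can be explained by a level-1 network.

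First I would construct a candidate network $N'$ from $N$ by deleting the leaf $x$ together with its incoming edge $(\parent_N(x),x)$ and then repeatedly applying the usual clean-up operations: suppress every vertex of indegree $1$ and outdegree $1$; if the root acquires outdegree $1$, delete it and promote its unique child to the new root; and if $\parent_N(x)$ was a hybrid-vertex whose only child was $x$, delete this now childless hybrid-vertex, remove its two incoming edges, and continue the clean-up on the two affected vertices. All of these cascading modifications are local to a single cycle of $N$. Let $t'$ be the restriction of $t$ to $V(N')$.

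Next I would verify that $(N',t')$ is a labeled level-1 network on $X\setminus\{x\}$. Deleting a leaf and suppressing degree-$(1,1)$ vertices cannot create a directed cycle, so $N'$ is a DAG, and a direct check of the degree conditions (mirroring the bookkeeping in the proof of Lemma~\ref{lem:discriminatingN}) shows that (N1)--(N3) hold and that the leaf set is exactly $X\setminus\{x\}$. For the level-$1$ property, the only delicate case is when $\parent_N(x)$ is the hybrid-vertex $\eta_C$ of a cycle $C$ and $x$ is its unique leaf-child: deleting $\eta_C$ ``opens up'' $C$ into its two sides $P^1,P^2$ hanging below $\rho_C$, which strictly decreases the number of cycles and keeps every biconnected component with at most one hybrid-vertex. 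In all other cases the cycle structure of $N$ is preserved up to a single suppression, exactly as in the local surgery of Lemma~\ref{lem:Weak-Reduce-Cycle}; hence $N'$ remains level-$1$.

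Finally I would show $\mathscr{G}(N',t')=G-x$, i.e.\ $t'(\lca_{N'}(y,z))=t(\lca_N(y,z))$ for all distinct $y,z\in X\setminus\{x\}$. The key point is that $w\coloneqq\lca_N(y,z)$ survives in $N'$: since $w$ is the lowest common ancestor of $y,z\neq x$, it has two distinct children from which paths reach $y$ and $z$, and these paths avoid $x$, so $w$ keeps outdegree $\geq 2$ and is never suppressed. Because the clean-up only composes existing directed paths and never creates new ancestor relations, $w$ remains the $\preceq_{N'}$-minimal common ancestor of $y$ and $z$; using uniqueness of the lca in level-$1$ networks \cite{HS18}, we obtain $\lca_{N'}(y,z)=w$ and hence $t'(\lca_{N'}(y,z))=t(w)=t(\lca_N(y,z))$. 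Thus $(N',t')$ explains $G-x$, completing the induction. The main obstacle is precisely the case analysis around $\parent_N(x)$ being a hybrid-vertex, where deleting $x$ may force the removal of the hybrid-vertex and a cascade of suppressions; verifying that this cascade is confined to a single cycle and preserves both the level-$1$ property and all relevant lca's is the crux of the argument.
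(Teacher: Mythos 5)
Your proposal is correct and follows essentially the same route as the paper's proof: delete the leaf $x$, repeatedly apply local clean-up operations (suppressing degree-$(1,1)$ vertices, removing a childless hybrid-vertex or an outdegree-$1$ root), restrict $t$ to the surviving vertices, and argue that $\lca_N(y,z)$ keeps outdegree at least $2$, is never suppressed, and remains the lca in the reduced network. The only bookkeeping item the paper includes that you omit is the removal of multi-edges that can arise when a vertex on a weak cycle is suppressed, but this does not change the structure of the argument.
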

\begin{proof}
	The \emph{if}-direction immediately follows from the fact that $G$ is an induced
	subgraph of $G$. For the \emph{only-if}-direction, assume that $G=(X,E)$ can be
	explained by a labeled level-1 network $(N,t)$ on $X$. If $|X|\leq 3$, 
	then $G$ and each of its
	induced subgraphs are cographs and the statement follows from Thm.\
	\ref{thm:WeakIffCograph}. Hence, let $|X|\geq 4$. Since any induced subgraph of a
	graph can be obtained by removing vertices one by one, it is sufficient to show
	that the removal of a single vertex from $G$ yields a graph that can be explained
	by a level-1 network. Let $x\in X$ and put $G'\coloneqq G-x$. Let $N'$ be the
	network that is obtained from $N-x$ after repeating the following four steps until
	no vertices and edges that satisfy (1), (2), (3) and (4) exist: (1) suppression
	of indegree 1 and outdegree 1 vertices; (2) removal of indegree 2 and outdegree 0
	vertices and its two incident edges; (3) removal of indegree 0 and outdegree 1
	vertices and its incident edge; and (4) removal of all but one of possible
	resulting multi-edges. Since $V(N')\subseteq V(N)$, we can put $t'(v)=t(v)$ for all
	$v\in V(N')$. 
  
 We show now that $(N', t')$ is a level-1 network that explains $G'$. We start
	with showing that $N'$ is a level-1 network. Since $|X|>1$, we need to verify
	(N1), (N2) and (N3).  Note first that Step (1) and (4) preserve connectedness
	of $N'$. Moreover, if we remove in Step (2) indegree 2 and outdegree 0 vertices
	and its two
 incident edges, then $x$ must have been incident to a hybrid-vertex $\eta_C$ in $N$.
 Hence, if Step (2) was applied, then $x$ was the only vertex that is incident to
 $\eta_C$ in $N$ and one easily verifies that Step (2) preserves connectedness of
 $N'$. Furthermore, assume we have applied Step (3). For the first application of
 Step (3), there can only be one vertex in $N-x$ that has indegree 0 and outdegree 1,
 namely the root $\rho_N$ of $N$. In this case, the outdegree of $\rho_N$ must be 2
 in $N$. If $x$ is adjacent to $\rho_N$, then $\rho_N$ cannot be the root of a cycle
 in $N$ and it follows that the unique child $u\neq x$ of $\rho_N$ must have
 outdegree at least 2. After applying Step (3) on $\rho_N$, vertex $u$ becomes the
 root of $N'$. If $x$ is not adjacent to $\rho_N$, then $x$ is adjacent to a vertex
 $v$ that is located on a weak cycles $C$ in $N$ and $C$ consists of edges
 $(\rho_N,v)$, $(v,\eta_C)$ and $(\rho_N,\eta_C)$ (otherwise, if $C$ is not weak,
 $C'$ obtained after possible suppression of $v$ would remain a cycle $C'$ in $N'$
 since no multiple-edges occur and thus, Step (3) would not have been applied on
 $\rho_N$). After removal of $x$, Step (1) is applied on $v$ and we obtain two
 multi-edges $(\rho_N,\eta_C)$ of which one is removed by Step (2) and now, Step (3)
 is applied on $\rho_N$ and $\eta_C$ becomes the new root of the resulting network.
 It could be that we apply the last Step (3) again on $\eta_C$, in case it has now
 indegree 0 and outdegree 1. However, in each of these cases it is ensured that there
 remains a vertex $u$ from which each leaf$x'\in X\setminus\{x\}$
 can be reached by a directed path in $N'$ and, thus, $N'$ remains connected. 
 It is now easy to verify that $N'$
 remains a DAG that satisfies (N1) and (N2). To see that (N3) is satisfied, let $v\in
 V^0(N')$. This vertex cannot have indegree 1 and outdegree 1 since, otherwise, it
 would have been suppressed. Hence, if $v$ has indegree 1 it must have outdegree 0 or 
 at least  2, i.e., (N3.a) holds for all indegree 1 vertices. Moreover, $v$ can also not 
 have indegree 2 and outdegree 0 due to Step (2). Hence, if $v$ has indegree 2 it must have 
 outdegree least 1, i.e., (N3.b) holds for all indegree 2 vertices. Moreover, 
 there cannot be vertices in $N'$ with indegree at least 3, 
 since we only suppressed
 vertices and removed edges (and possibly vertices) to obtain $N'$ and thus, never
 increased the indegree of any vertex. Hence, $N'$ satisfies (N1), (N2) and (N3) and
 is, therefore, a level-1 network.
  
  It remains to show that $(N',t')$ explains $G'$. Let $y,z\in X\setminus \{x\}$.
  Consider $u\coloneqq \lca_N(y,z)$. Since $u$ is a $\preceq_N$-minimal ancestor of
  both $y$ and $z$ it follows that there are two children $u_y$ and $u_z$ of $u$ such
  that $y\preceq_N u_y$, $z\preceq_N u_z$ and $z\not\preceq_N u_y$, $y\not\preceq_N
  u_z$. This implies that $u$ must have outdegree at least 2 in $N$. Since $u
  =\lca_N(y,z)$, any two directed paths $P_y$ and $P_z$ from $u$ to $y$ and $u$ to
  $z$ in $N$, respectively, can only intersect in vertex $u$. As argued above, there
  are still directed paths $P'_y$ and $P'_z$ from $u$ to $y$ and $u$ to $z$ in $N'$,
  respectively. Since Step (1)-(4) can never identify vertices of disjoint paths in
  $N$ it follows that all such directed paths $P'_y$ and $P'_z$ can only intersect in
  $u$. This implies that $u$ must have outdegree at least 2 in $N'$ and thus, will
  never be suppressed and thus $u\in V(N')$. In particular, the latter arguments
  imply that $u= \lca_{N'}(y,z)$ for all $y,z\in X\setminus \{x\}$. By the choice of
  $t'$, we have $t'(u)=t(u) = t(\lca_N(x,y))$ and thus, $(N',t')$ explains $G'$.
\end{proof}

Now, let $G=(X,E)$ be a graph. Recall, the set of strong modules $\MDstrong(G)$ forms a
hierarchy and gives rise to a unique tree representation with leaf set $X$, the MDT
$(\MDT_G,\tau_G)$ of $G$. We aim at extending the MDT of $G$ to a labeled level-1
network that explains $G$. Uniqueness and the hierarchical structure of
$\MDstrong(G)$ implies that there is a unique partition $\Mmax(G) = \{M_1,\dots,
M_k\}$ of $X$ into maximal (w.r.t.\ inclusion) strong modules $M_j\ne X$ of $G$
\cite{ER1:90,ER2:90}. Since $X\notin \Mmax(G)$ the set $\Mmax(G)$ consists of $k\ge
2$ strong modules, whenever $|X|>1$.

Let $M, M'\in \MD(G)$ be disjoint modules. We say that $M$ and $M'$ are adjacent (in
$G$) if each vertex of $M$ is adjacent to all vertices of $M'$; the sets are
non-adjacent if none of the vertices of $M$ is adjacent to a vertex of $M'$. By
definition of modules, every two disjoint modules $M, M'\in \MD(G)$ are either
adjacent or non-adjacent \cite[Lemma 4.11]{ER1:90}. One can therefore define the
\emph{quotient graph} $G/\Mmax(G)$ based on $G$ and the inclusion-maximal subsets in
$\MDstrong(G)\setminus \{X\}$. The quotient graph $G/\Mmax(G)$ has $\Mmax(G)$ as its
vertex set and $\{M_i,M_j\}\in E(G/\Mmax(G))$ if and only if $M_i$ and $M_j$ are
adjacent in $G$.
\begin{observation}[\cite{HP:10}]\label{obs:quotient} 
The quotient graph $G/\Mmax(G)$ with $\Mmax(G) = \{M_1 , \dots , M_k\}$ is isomorphic
to any subgraph induced by a set $W\subseteq V$ such that $|M_i \cap W | = 1$ for all
$i \in \{1, \dots,k\}$.
\end{observation}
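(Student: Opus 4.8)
The plan is to construct an explicit isomorphism between $G/\Mmax(G)$ and $G[W]$ and to check that it preserves adjacency in both directions. First I would record that $W$ is forced to be a transversal of the partition: since $\Mmax(G)=\{M_1,\dots,M_k\}$ is a partition of $X=V$ and $|M_i\cap W|=1$ for every $i$, the set $W$ consists of exactly $k$ pairwise distinct vertices, say $W=\{w_1,\dots,w_k\}$ with $w_i$ the unique element of $M_i\cap W$ (distinctness is immediate, as the $w_i$ lie in distinct blocks of a partition). Hence the map $\phi\colon \Mmax(G)\to W$ defined by $\phi(M_i)=w_i$ is a bijection between the vertex set $\Mmax(G)$ of $G/\Mmax(G)$ and the vertex set $W$ of $G[W]$.

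The key step is then to show that $\{M_i,M_j\}\in E(G/\Mmax(G))$ if and only if $\{w_i,w_j\}\in E(G[W])$ for all distinct $i,j$. The only tool required is the dichotomy recalled immediately before the statement: two disjoint modules of $G$ are either \emph{adjacent}, meaning every vertex of one is joined to every vertex of the other, or \emph{non-adjacent}, meaning there are no edges between them \cite[Lemma 4.11]{ER1:90}. For the forward direction, if $\{M_i,M_j\}$ is an edge of the quotient then, by definition of $G/\Mmax(G)$, the modules $M_i$ and $M_j$ are adjacent in $G$; in particular $w_i\in M_i$ and $w_j\in M_j$ are adjacent, so $\{w_i,w_j\}\in E$, and since $w_i,w_j\in W$ this yields $\{w_i,w_j\}\in E(G[W])$.

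The converse is where the dichotomy is genuinely used, and the only place I expect any subtlety. Suppose $\{w_i,w_j\}\in E(G[W])$, so that $w_i$ and $w_j$ are adjacent in $G$. The modules $M_i$ and $M_j$ are disjoint, hence by the dichotomy they are either adjacent or non-adjacent; they cannot be non-adjacent, since that would forbid the edge $\{w_i,w_j\}$, so $M_i$ and $M_j$ are adjacent in $G$, which by definition means $\{M_i,M_j\}\in E(G/\Mmax(G))$. Combining the two directions shows $\phi$ is a graph isomorphism, and since $W$ was an arbitrary transversal of $\Mmax(G)$, every such induced subgraph $G[W]$ is isomorphic to $G/\Mmax(G)$. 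The argument is otherwise entirely routine: its whole content is promoting the single observed edge (or non-edge) between the representatives $w_i$ and $w_j$ to full (non-)adjacency of the blocks $M_i$ and $M_j$ via the module dichotomy, with no further case analysis needed.
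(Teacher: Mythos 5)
Your proof is correct. The paper itself offers no proof of this statement---it is recorded as an Observation and attributed to the survey \cite{HP:10}---so there is nothing to compare against; your argument is the standard one, and it uses exactly the two ingredients the paper sets up immediately beforehand: that $\Mmax(G)$ is a partition of $V$ (so $W$ is a transversal and $\phi(M_i)=w_i$ is a bijection) and the adjacency dichotomy for disjoint modules from \cite[Lemma 4.11]{ER1:90}, which is what lets you promote a single edge or non-edge between representatives to full adjacency or non-adjacency of the blocks. No gaps.
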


By \cite[Lemma 3.4]{HFWS:20}, every prime module $M$ is strong and thus, associated with
a unique vertex $v$ that satisfies $L(\MDT_G(v))=M$ in the MDT of $G$. Recall that q such 
vertices are called ``prime
vertices'' of $(\MDT_G, \tau_G)$. If $M$ is a prime module, the graph $G[M]$ is not
necessarily primitive, however, its quotient $G[M]/\Mmax(G[M])$ is always primitive. In order to infer $G$ from
$(\MDT_G, \tau_G)$ we need to determine the information as whether $x$ and $y$ are
adjacent or not for all pairs of distinct leaves $x,y\in X$. In the case of prime
modules, however, we must therefore drag the entire information of the quotient graphs.
An alternative idea is to replace prime vertices in $\MDT_G$ by suitable graphs and
to extend the labeling function $\tau_G$ that assigns the ``missing information'' to
the inner vertex of the new graph. This idea has been fruitful for median graphs
\cite{BHS:21}. We will apply this idea now on the MDT to obtain level-1 networks and 
replace prime modules in the MDT by a suitable choice of labeled
cycles to obtain labeled level-1 networks. This is achieved by the following
\begin{definition}[prime-vertex replacement (pvr) graphs]
  \label{def:pvr}
  Let $G\in \PrimeCat$ and $\mathcal{P}$ be the set of all prime vertices in
	$(\MDT_G,\tau_G)$. The \emph{prime-vertex replacement} (\emph{pvr}) graph $(N^*,	
	t^*)$ of $(\MDT_G,\tau_G)$ is obtained by the following procedure:
\begin{enumerate}[noitemsep] 
\item For all $v\in \mathcal{P}$, let $(N_v,t_v)$ be a
			 strong quasi-discriminating elementary network with root $v$
			 that explains $G[M]/\Mmax(G[M])$ with $M = L(\MDT_G(v))$.			
  \label{step:Gv} \smallskip
\item For all $v\in \mathcal{P}$, remove all edges $(v,u)$ with
  $u\in \child_{\MDT_G}(v)$ from $\MDT_G$ to obtain the forest
  $(T',\tau_G)$ and \label{step:T'}  
	add $N_v$ to $T'$ by identifying the root
  of $N_v$ with $v$ in $T'$ and each leaf $M'$ of $N_v$ with the
  corresponding child $u\in \child_{\MDT_G}(v)$ for which $M' = L(\MDT_G(u))$.  \smallskip

\noindent
 \emph{This results in the pvr graph $N^*$.}\smallskip
\item \label{step:color} 
 Define the labeling $t^*\colon V(N^*)\to \{0,1,\odot\}$ by putting, for
  all $w\in V(N^*)$,
  \begin{equation*}
    t^*(w) =
    \begin{cases} 
      \tau_G(v) &\mbox{if } v\in V(\MDT_G)\setminus \mathcal P \\
      t_v(w) &\mbox{if } w \in V(N_v)\setminus X \text{ for some } v\in \mathcal P
    \end{cases}
  \end{equation*}
\end{enumerate}
\end{definition}

The construction of pvr graphs has first been studied in the context of median graphs
by Bruckmann et al.\ \cite{BHS:21}. If there is a unique vertex $\med(x,y,z)$
for vertices $x,y,z$ that belongs to shortest paths between each pair of them, 
it is called \emph{median of} $x,y,z$. Median graphs are connected graphs in which
all three vertices have a unique median.
Our definition of
pvr graphs is just a special case of \cite[Def.\ 3.7]{BHS:21}), see also Remark 4.2
in \cite{BHS:21}. To be more precise, pvr graphs have been defined in \cite{BHS:21}
for modular decomposition trees of symmetric maps $\delta\colon X\times X \to
\Upsilon$ with $\Upsilon$ being a set of arbitrary colors. In our context, these maps
$\delta$ can be simplified to $\delta_G\colon X\times X \to \{0,1\}$ such that
$\delta_G(x,y) = 1$ if and only if $\{x,y\}\in E$. In other words, $\delta_G$
characterizes the adjacencies in $G=(X,E)$, i.e., every graph $G$ is uniquely determined by
$\delta_G$. Thus, we can focus on graphs $G$ rather than on such maps $\delta$. It
has been shown that every graph $G$ can be explained by a ``general'' pvr graph that
is obtained by replacing prime modules $M$ in the modular decomposition tree by
labeled graphs $(N_v,t_v)$ for which $\med(\rho_{N_v}, x,y)$ is uniquely determined
and that explain $G[M]/\Mmax(G[M])$. In our specialized definition, we use strong
quasi-discriminating elementary network $(N_v,t_v)$ instead. Observe first that, for
all prime modules $M$ of $G$, the quotient graph $G[M]/\Mmax(G[M])$ is supposed to be
a polar-cat since $G \in \PrimeCat$. By Thm.\ \ref{thm:CharPCPC}, $G[M]/\Mmax(G[M])$
is explained by a strong quasi-discriminating elementary network $(N_v,t_v)$ with
leaf set $\Mmax(G[M])$ and where $M=L(\MDT_G(v))$. The results established in
\cite{BHS:21} rely on the fact that there is a unique vertex $z$ in $N_v$ that
satisfies $M_i,M_j\prec_{N_v}z\preceq_{N_v} \rho_{N_v}$ and whose label $t_v(z)$
determines as whether $M_i$ and $M_j$ are adjacent or not  in $G[M]/\Mmax(G[M])$, for
all $M_i, M_j\in \Mmax(G[M]$. This implies that, in the pvr graph $(N^*,t^*)$, vertex
$z \in V^0(N_v) \subseteq V^0(N^*)$ satisfies $x,y\prec_{N^*}z\preceq_{N^*}
\rho_{N_v} \preceq_{N^*} \rho_{N^*}$ and the label $t_v(z)$ determines if $x$ and $y$
are adjacent or not in $G$,  for all $x,y\in X$. By construction, $t^*$ retains all
labels of non-prime vertices in the MDT and uses labels $t_v$ for the vertices
contained in $N_v - \Mmax(G[M]) \subseteq N^*$. Instead of employing a vertex $z = \med(\rho_{N_v}, x,y)$
(which may not exist in $N_v$ e.g.\ if $N_v$ contains an odd-length cycle), we use $z
= \lca_{N_v}(M_i,M_j)$ which is uniquely determined for all $M_i,M_j \in L(N_v) = \Mmax(G[M])$
and thus, $z= \lca_{N^*}(x,y)$ for all $x\in M_i$, $y\in M_j$ and all $M_i,M_j \in \Mmax(G[M])$. This
makes it possible to reuse the results established in \cite{BHS:21}  which eventually shows that
$(N^*,t^*)$ explains $G$. Note that all
prime vertices are ``replaced'' in $\MDT_G$ by strong elementary networks, which
implies that all cycles in $N^*$ are strong. Thus, $N^*$ is strong. Moreover,
different prime vertices are replaced by different elementary networks and thus,
cycles in $N^*$ are pairwise vertex disjoint. In summary, $(N^*, t^*)$ is a strong
level-1 network. Even more, adjusting \cite[Thm.\ 3.11]{BHS:21} to our special case
we obtain
\begin{proposition}\label{prop:well-defPVR}
	If $G\in \PrimeCat$, then a pvr graph $(N^*, t^*)$ of $(\MDT_G,\tau_G)$ is
	well-defined and, in particular, a strong level-1 network that explains $G$, i.e.,
	$\mathscr{G}(N^*, t^*) = G$. 
\end{proposition}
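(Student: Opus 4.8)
The plan is to prove the three assertions in turn---that $(N^*,t^*)$ is well-defined, that it is a strong level-1 network, and that $\mathscr{G}(N^*,t^*)=G$---with the last carrying the real content. For well-definedness I would observe that each vertex $w$ of $N^*$ lies in exactly one of three classes: a leaf in $X$, a non-prime vertex of $\MDT_G$, or an inner (non-leaf) vertex of a unique replacement network $N_v$. Potential clashes occur only at the identifications of Step~\ref{step:T'}. The root $\rho_{N_v}$ is identified with the prime vertex $v$; since $v\in\mathcal P$, the first branch of the labeling in Step~\ref{step:color} does not apply, so this vertex unambiguously receives $t_v(\rho_{N_v})\in\{0,1\}$ and in particular sheds the label $\mathrm{prime}$. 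Each leaf $M'$ of $N_v$ is identified with the child $u\in\child_{\MDT_G}(v)$ having $L(\MDT_G(u))=M'$; as a \emph{leaf} of $N_v$ it is not among the inner vertices $V(N_v)\setminus L(N_v)$, so it retains its $\MDT_G$-role and is labeled consistently (by $\tau_G$ if non-prime, or as the root of a further $N_u$ if prime). Hence $t^*$ is single-valued and $(N^*,t^*)$ is well-defined.

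Next I would check the network axioms and the strong/level-1 property. Each $(N_v,t_v)$ is a strong elementary network by Thm.~\ref{thm:CharPCPC}, and the surviving non-prime $\MDT_G$-vertices are tree-vertices, so (N1)--(N3) need rechecking only at identifications: there $\rho_{N_v}=v$ gains the incoming MDT-edge from its parent (becoming a tree-vertex, or the root when $v=X$), and each leaf of $N_v$ gains the outgoing edges of its MDT-child, while the hybrid-vertex of $N_v$ keeps indegree $2$ and its single outgoing edge. Since the replacement networks of distinct prime vertices overlap in at most one vertex, which is a leaf of one of them and hence off that network's cycle, the cycles of distinct $N_v$ are pairwise vertex-disjoint, each carrying one hybrid-vertex; thus $N^*$ is level-1. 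As every $N_v$ is strong, all its cycles are strong, so $N^*$ is strong.

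The core step is $t^*(\lca_{N^*}(x,y))=1\iff\{x,y\}\in E(G)$ for all distinct $x,y\in X$. I would fix such $x,y$, set $w\coloneqq\lca_{\MDT_G}(x,y)$, and let $M_x,M_y\in\child_{\MDT_G}(w)$ be the (distinct) children containing $x,y$. If $w$ is series or parallel it survives as a tree-vertex with $t^*(w)=\tau_G(w)\in\{1,0\}$ and $\lca_{N^*}(x,y)=w$, so $t^*(\lca_{N^*}(x,y))=1$ precisely when $w$ is series, i.e.\ precisely when vertices in distinct children of $w$ are adjacent in $G$. If $w$ is prime it is replaced by $N_w$, and the crucial claim is the \emph{lca-reduction}
\[
\lca_{N^*}(x,y)=\lca_{N_w}(M_x,M_y),
\]
whose common value I call $z$. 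Granting this, $z$ is an inner vertex of $N_w$, so $t^*(z)=t_w(z)$; since $G\in\PrimeCat$, the network $(N_w,t_w)$ explains the polar-cat $G[w]/\Mmax(G[w])$, whence $t_w(z)=1$ iff $M_x,M_y$ are adjacent in the quotient, which by Observation~\ref{obs:quotient} and the dichotomy that disjoint modules are either adjacent or non-adjacent in $G$ is equivalent to $\{x,y\}\in E(G)$.

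The main obstacle is the lca-reduction. I would prove it by noting that $z\in V(N_w)$ satisfies $z\preceq_{N^*}w$ and $M_x,M_y\preceq_{N^*}z$ by definition of $\lca_{N_w}$, so $x\preceq_{N^*}M_x\preceq_{N^*}z$ and $y\preceq_{N^*}M_y\preceq_{N^*}z$ exhibit $z$ as a common ancestor; minimality follows because below $z$ the two directed paths realizing $\lca_{N_w}(M_x,M_y)$ separate the descendants leading to $M_x$ from those leading to $M_y$, and the subnetworks hanging below these leaves (which contain $x$ and $y$) are disjoint, so no strictly smaller common ancestor exists, while uniqueness of $z$ is guaranteed since $N_w$ is level-1. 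This is exactly where replacing the median $\med(\rho_{N_v},x,y)$ of \cite{BHS:21} by $z=\lca_{N_w}(M_x,M_y)$ is essential, as the latter exists even when $N_w$ contains an odd cycle; modulo this substitution the argument specializes \cite[Thm.~3.11]{BHS:21}, and I would invoke that correspondence to conclude $\mathscr{G}(N^*,t^*)=G$.
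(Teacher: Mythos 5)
Your proposal is correct and follows essentially the same route as the paper: the paper likewise argues that cycles arising from distinct prime vertices are vertex-disjoint and strong, and reduces the adjacency question to the label of $z=\lca_{N_v}(M_x,M_y)$ (in place of the median of \cite{BHS:21}), deferring the remaining details to \cite[Thm.~3.11]{BHS:21}. The only difference is one of presentation --- you spell out the lca-reduction and the series/parallel/prime case split explicitly where the paper invokes the cited theorem --- so no further comparison is needed.
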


Note that a pvr graph $(N^*, t^*)$ of $G$ is not necessarily quasi-discriminating.
Nevertheless, by Lemma \ref{lem:discriminatingN}, it can easily be transformed into a
quasi-discriminating network that still explains $G$, see Fig.\ \ref{fig:pvr} for an
illustrative example. Note, there can be different pvr graphs that explain the same
graph $G$ since the choice of the elementary networks $(N_v,t_v)$ that are used to
replace prime vertices $v$ in the MDT $\MDT(G)$ of $G$ are not unique in general, see
Fig.\ \ref{fig:nonUniqueN}. 

\begin{figure}[t]
		\begin{center}
			\includegraphics[width = .9\textwidth]{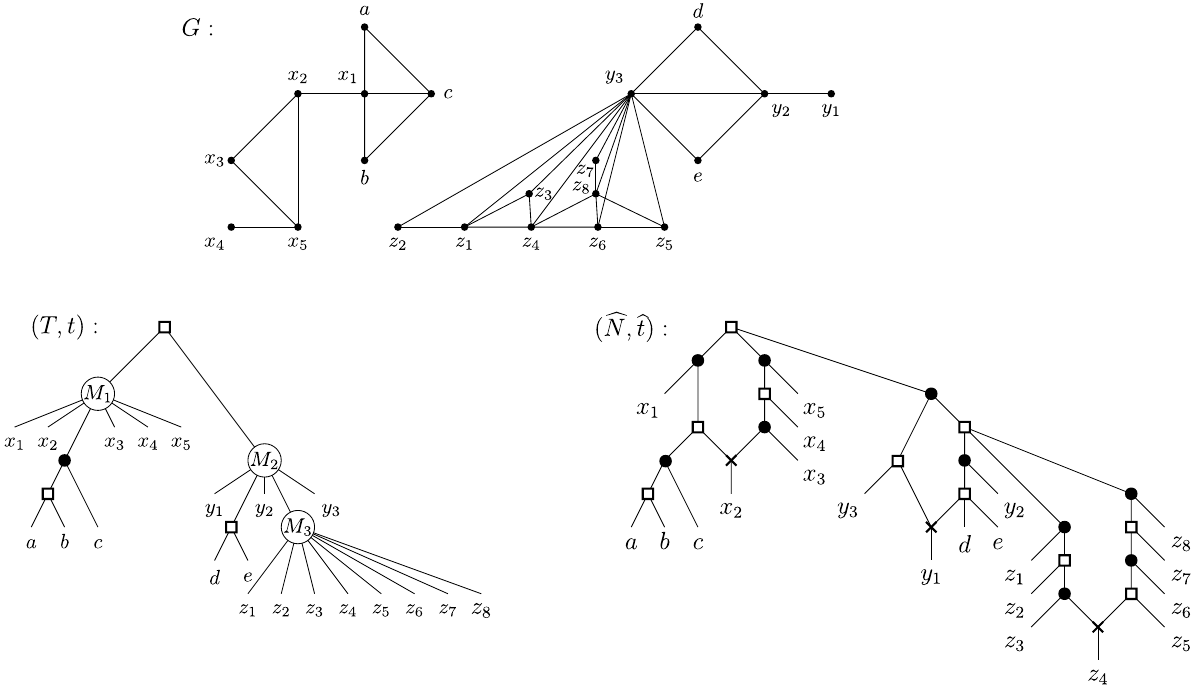}
		\end{center}
		\caption{Shown is a graph $G$ together with its MDT $(T,t)$ that has prime
		         modules $M_1, M_2$ and $M_3$. For each module $M_i$, the quotient
		         $G[M_i]/\Mmax(G[M_i])$ is isomorphic to the graph $G_i$ as shown in
		         Fig.\ \ref{fig:polarCatN}. Replacing each prime module $M_i$ by the
		         respective elementary network that explains $G_i$ (as shown in Fig.\
		         \ref{fig:polarCatN}) and contraction of inner edges whose endpoints have
		         the same label yields the quasi-discriminating level-1 network
		         $(\widehat N, \widehat t)$ that explains $G$. }
		\label{fig:pvr}
\end{figure}

We are now in the position to characterize graphs that can be explained by labeled
level-1 networks.

\begin{theorem}\label{thm:CharprimeCat}
A graph $G$ can be explained by a  labeled level-1 network  if and only if $G\in  \PrimeCat$.
\end{theorem}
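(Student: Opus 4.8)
The plan is to prove the two implications separately: the forward implication ($G\in\PrimeCat \Rightarrow$ level-1 explainable) will follow almost immediately from the prime-vertex replacement construction, whereas the reverse implication will combine heredity of level-1 explainability with the characterization of polar-cats as the primitive level-1 explainable graphs.

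For the \emph{if}-direction I would argue as follows. Suppose $G\in \PrimeCat$, so that $G[M]/\Mmax(G[M])\in \PolarCat$ for every prime module $M$ of $G$. By Thm.\ \ref{thm:CharPCPC} (equivalently Lemma \ref{lem:StrongElementary}), each such quotient is explained by a strong quasi-discriminating elementary network, so the networks $(N_v,t_v)$ demanded in Step \ref{step:Gv} of Def.\ \ref{def:pvr} exist for all prime vertices $v$ of $(\MDT_G,\tau_G)$. Hence the pvr graph $(N^*,t^*)$ is well-defined, and Prop.\ \ref{prop:well-defPVR} directly gives that it is a (strong) level-1 network with $\mathscr{G}(N^*,t^*)=G$. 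Thus $G$ is explained by a labeled level-1 network.

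For the \emph{only-if}-direction I would start from a level-1 network explaining $G=(X,E)$ and fix an arbitrary prime module $M$, writing $Q\coloneqq G[M]/\Mmax(G[M])$; recall from the discussion preceding Def.\ \ref{def:pvr} that $Q$ is always primitive. The crucial step is to realize $Q$ concretely as an induced subgraph of $G$: by Obs.\ \ref{obs:quotient} applied to $G[M]$, there is a transversal $W\subseteq M$ with $|M_i\cap W|=1$ for every $M_i\in \Mmax(G[M])$ such that $Q\simeq (G[M])[W]=G[W]$, and since $W\subseteq M\subseteq X$ this $G[W]$ is an induced subgraph of $G$. Because $G$ is level-1 explainable, Lemma \ref{lem:level-1-inher} guarantees that $G[W]$, and hence $Q\simeq G[W]$, is explained by a level-1 network. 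As $Q$ is primitive and level-1 explainable, Thm.\ \ref{thm:CharPolCat} yields $Q\in \PolarCat$. Since $M$ was arbitrary, every prime-module quotient of $G$ is a polar-cat, i.e., $G\in \PrimeCat$.

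The hard part will be the \emph{only-if}-direction, and the decisive move there is recognizing that the abstract quotient $G[M]/\Mmax(G[M])$ is realized as $G[W]$ for a transversal $W$ of $\Mmax(G[M])$ (Obs.\ \ref{obs:quotient}); once this is in place, heredity of level-1 explainability (Lemma \ref{lem:level-1-inher}) transports the property to $Q$, and primitivity of $Q$ lets the polar-cat characterization (Thm.\ \ref{thm:CharPolCat}) close the argument with no further work. The \emph{if}-direction is, by contrast, essentially a bookkeeping consequence of Prop.\ \ref{prop:well-defPVR}, the only subtlety being to confirm that the elementary replacement networks exist, which is exactly what membership in $\PrimeCat$ provides via Thm.\ \ref{thm:CharPCPC}.
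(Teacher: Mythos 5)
Your proposal is correct and follows essentially the same route as the paper: the \emph{if}-direction via Prop.\ \ref{prop:well-defPVR}, and the \emph{only-if}-direction by realizing each quotient $G[M]/\Mmax(G[M])$ as an induced subgraph $G[W]$ via Obs.\ \ref{obs:quotient}, applying heredity (Lemma \ref{lem:level-1-inher}), and concluding with the primitive-graph characterization of polar-cats. The only cosmetic difference is that you cite Thm.\ \ref{thm:CharPolCat} where the paper invokes Lemma \ref{lem:primitive-2} directly, and you handle the cograph case vacuously rather than explicitly; neither affects correctness.
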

\begin{proof}
	The \emph{if}-direction is an immediate consequence of Prop.\
\ref{prop:well-defPVR}. For the \emph{only-if}-direction assume that $G$ can be
explained by a labeled level-1 network $(N,t)$. If $N$ is weak or a tree, then
Thm.\ \ref{thm:WeakIffCograph} implies that $G$ is a cograph and thus, $G\in
\PrimeCat$. Assume that $G$ is not a cograph and thus, $G$ contains at least one
prime module $M$. By Obs.\ \ref{obs:M4}, $|M|\geq 4$. Consider the quotient
$G'\coloneqq G[M]/\Mmax(G[M])$ with $\Mmax(G[M]) = \{M_1 , \dots , M_k\}$. By
Obs.\ \ref{obs:quotient}, $G'\simeq G[W]$ with $W\subseteq M$ such that $\forall
i \in \{1, \dots,k\}$ we have $|M_i \cap W | = 1$. This together with Lemma
\ref{lem:level-1-inher} implies that $G'$ can be explained by a labeled level-1
network. Since $M$ is a prime module, $G'$ is primitive. The latter two
arguments together with Lemma \ref{lem:primitive-2} imply that $G'$ is a
polar-cat. Since the latter arguments hold for every prime module of $G$ we can
conclude that $G\in \PrimeCat$, which completes this proof. 
\end{proof}

\begin{corollary}
	$G\in \PrimeCat$ if and only if   $H\in \PrimeCat$ for every induced subgraph $H$ of $G$.
\end{corollary}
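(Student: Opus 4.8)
The plan is to derive this corollary immediately from the two results that precede it, namely the characterization in Thm.\ \ref{thm:CharprimeCat} (a graph lies in $\PrimeCat$ if and only if it can be explained by a labeled level-1 network) together with the hereditary property of level-1 explainability established in Lemma \ref{lem:level-1-inher}. Since both of these statements are phrased as equivalences, the corollary will follow by a short chain of ``if and only if''s, with no genuinely new argument required.

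For the \emph{if}-direction I would argue trivially: $G$ is an induced subgraph of itself, so the assumption that every induced subgraph of $G$ belongs to $\PrimeCat$ already forces $G\in\PrimeCat$. For the \emph{only-if}-direction I would start from $G\in\PrimeCat$ and apply Thm.\ \ref{thm:CharprimeCat} to conclude that $G$ can be explained by a labeled level-1 network. Lemma \ref{lem:level-1-inher} then transfers this property to every induced subgraph: each induced subgraph $H$ of $G$ can likewise be explained by a labeled level-1 network. Applying Thm.\ \ref{thm:CharprimeCat} once more, now to $H$, yields $H\in\PrimeCat$, which completes the direction.

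I do not anticipate any real obstacle here, since the substantive work has already been carried out in Lemma \ref{lem:level-1-inher} (whose proof handles the vertex-deletion surgery on the network) and in Thm.\ \ref{thm:CharprimeCat}. The corollary is essentially the observation that membership in $\PrimeCat$ is, via Thm.\ \ref{thm:CharprimeCat}, a mere reformulation of being level-1 explainable, and the latter property is hereditary by Lemma \ref{lem:level-1-inher}. The only point worth stating carefully is that Thm.\ \ref{thm:CharprimeCat} is invoked in both directions, once for $G$ and once for each induced subgraph $H$, so that the equivalence ``$\PrimeCat$ $\Leftrightarrow$ level-1 explainable'' can be pushed through the hereditary statement without re-examining prime modules or quotient graphs directly.
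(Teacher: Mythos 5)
Your proposal is correct and follows exactly the paper's own argument: the \emph{if}-direction is immediate since $G$ is an induced subgraph of itself, and the \emph{only-if}-direction chains Thm.~\ref{thm:CharprimeCat} with the hereditary property of level-1 explainability from Lemma~\ref{lem:level-1-inher}. No gaps.
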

\begin{proof}
	The \emph{if}-direction follows from the fact that $G$ is an induced	subgraph of $G$.
	The \emph{only-if}-direction is a consequence of Thm.\ \ref{thm:CharprimeCat}
	and Lemma \ref{lem:level-1-inher}.
\end{proof}

We summarize the relationship between the different graph-classes
$\textsc{Cograph}$, $\textsc{PseudoCograph}$,  $\PolarCat$ and 
$\PrimeCat$ (see also Fig.\ \ref{fig:PseudoCograph} and the results established above).

\begin{proposition}\label{prop:subsets}
$\textsc{Cograph} \cap \PolarCat =
\emptyset$ and $\textsc{Cograph} \union \PolarCat \subsetneq \textsc{PseudoCograph}
\subsetneq \PrimeCat$.
\end{proposition}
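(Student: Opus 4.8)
The plan is to verify the disjointness statement and then the two proper inclusions one at a time, in each case reducing to an already-established result and, where properness is asserted, supplying one explicit separating graph.

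For the disjointness $\textsc{Cograph}\cap\PolarCat=\emptyset$ I would simply invoke Cor.~\ref{cor:CographNotPC}, which asserts that no polar-cat is a cograph. The first inclusion $\textsc{Cograph}\union\PolarCat\subseteq\textsc{PseudoCograph}$ then follows from two facts already at hand: $\textsc{Cograph}\subseteq\textsc{PseudoCograph}$ is exactly Lemma~\ref{lem:cographPscograph}, and $\PolarCat\subseteq\textsc{PseudoCograph}$ holds by definition, since a polar-cat is by construction a particular $(v,G_1,G_2)$-pseudo-cograph. Together with the disjointness, the left-hand union is a well-defined subclass of $\textsc{PseudoCograph}$.

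To see that this inclusion is proper I would exhibit a single witness $G\in\textsc{PseudoCograph}\setminus(\textsc{Cograph}\union\PolarCat)$. A convenient choice is $G=P_4\union K_1$, the disjoint union of the path $1-2-3-4$ with an isolated vertex $5$. Taking $v=2$, $G_1=G[\{2,3,4\}]$ and $G_2=G[\{1,2,5\}]$ shows that $G$ is a $(v,G_1,G_2)$-pseudo-cograph: (F1) and (F2) are immediate, and $G-v$ is the disjoint union of the cographs $G_1-v$ and $G_2-v$, giving (F3). On the other hand $G$ contains an induced $P_4$, so it is not a cograph by Thm.~\ref{thm:CharCograph}, and $G$ is disconnected, so it is not a polar-cat by Cor.~\ref{cor:CographNotPC}. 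Hence $\textsc{Cograph}\union\PolarCat\subsetneq\textsc{PseudoCograph}$.

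For the last inclusion $\textsc{PseudoCograph}\subseteq\PrimeCat$ I would chain Prop.~\ref{prop:PsC-l1N}, by which every pseudo-cograph is explained by some labeled level-1 network, with Thm.~\ref{thm:CharprimeCat}, which identifies the graphs explainable by level-1 networks with exactly the class $\PrimeCat$; composing the two yields the inclusion. Properness is witnessed by $G=P_4\union P_4$, the disjoint union of two induced $P_4$s. As already observed at the end of Section~\ref{sec:PsC}, joining the two single-$P_4$ networks under a common root labeled $0$ produces a level-1 network explaining $G$, so $G\in\PrimeCat$ by Thm.~\ref{thm:CharprimeCat}; but for every $v\in V(G)$ the graph $G-v$ still contains an induced $P_4$ and is therefore not a cograph, whence $G$ is not a pseudo-cograph by Obs.~\ref{obs:G-v-Cograph}. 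This gives $\textsc{PseudoCograph}\subsetneq\PrimeCat$ and completes the argument. The statement carries no genuine technical difficulty; each inclusion collapses onto a prior lemma or theorem, and the only points demanding care are the two separating examples, in particular confirming that the first witness is simultaneously a pseudo-cograph and excluded from both $\textsc{Cograph}$ and $\PolarCat$, the latter handled cleanly by the fact that polar-cats are connected.
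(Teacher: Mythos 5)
Your proof is correct and matches the paper's (implicit) argument: the proposition is stated there without a written proof, only a pointer to the previously established results, and you assemble exactly those (Cor.~\ref{cor:CographNotPC}, Lemma~\ref{lem:cographPscograph}, the definition of polar-cats, Prop.~\ref{prop:PsC-l1N} and Thm.~\ref{thm:CharprimeCat}). Both separating examples check out; the second ($P_4\union P_4$) is the very example the paper uses at the end of Section~\ref{sec:PsC}.
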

\begin{proof}
By definition, a cograph does not contain prime modules and is, in particular,
not primitive. Moreover, Lemma \ref{lem:primitive-1} implies that every graph
$G\in \PolarCat$ is primitive. Hence, a cograph cannot be contained in
$\PolarCat$. Moreover, Cor.\ \ref{cor:CographNotPC} implies that any graph in
$\PolarCat$ is not a cograph. Hence, $\textsc{Cograph} \cap \PolarCat =
\emptyset$.
		
		We continue with showing that $\textsc{Cograph} \union \PolarCat
\subsetneq \textsc{PseudoCograph}$. By Lemma \ref{lem:cographPscograph}, we have
$\textsc{Cograph} \subseteq \textsc{PseudoCograph}$. Moreover, by definition,
$\PolarCat \subseteq \textsc{PseudoCograph}$. Hence, $\textsc{Cograph} \union
\PolarCat \subseteq \textsc{PseudoCograph}$. Now, consider the pseudo-cograph
$G$ in Fig.\ \ref{fig:gamma}. $G$ contains the non-trivial module $\{1,2\}$ and
is, therefore, not primitive. Contraposition of Lemma \ref{lem:primitive-1}
shows that $G\notin \PolarCat$. Moreover, since $G$ contains induced $P4$s,
$G\notin \textsc{Cograph}$. Hence, $\textsc{Cograph} \union \PolarCat \subsetneq
\textsc{PseudoCograph}$.

		Furthermore, by Prop.\ \ref{prop:PsC-l1N}, every pseudo-cograph can be
explained by a labeled level-1 network. Thm.~\ref{thm:CharprimeCat} implies that
$\textsc{PseudoCograph} \subseteq \PrimeCat$. Now consider that the graph $G$ as
in Fig.\ \ref{fig:non-pseudo-cograph}. Since $G$ can be explained by a labeled
level-1 network, Thm.~\ref{thm:CharprimeCat} implies that $G\in \PrimeCat$.
However, $G\notin \textsc{PseudoCograph}$, since there is no single vertex $v\in
V(G)$ such that $G-v$ is a cograph (cf.\ Obs.\ \ref{obs:G-v-Cograph}). Thus,
$\textsc{PseudoCograph}\subsetneq \PrimeCat$.
\end{proof}

Note that many graphs are not contained in the class $\PrimeCat$. The simplest
example is an induced $P_6$. To see this, observe that Cor.\ \ref{cor:LongestPath}
implies that a $P_6$ is not a pseudo-cograph. Hence, $P_6\notin \PolarCat$. Since a
$P_6$ is primitive, the MDT of a $P_6$ consists of a single root with label ``prime''
and six leaves corresponding the vertices of this $P_6$. Since for $M=V$ the quotient
satisfies $G[M]/\Mmax(G[M])\simeq P_6$, this quotient is not a polar-cat and
therefore, $P_6\notin\PrimeCat$. Further examples of graphs that cannot be explained
by level-1 networks are shown in Fig.\ \ref{fig:forb-primitive}.
\begin{figure}[t]
		\begin{center}
			\includegraphics[width = .7\textwidth]{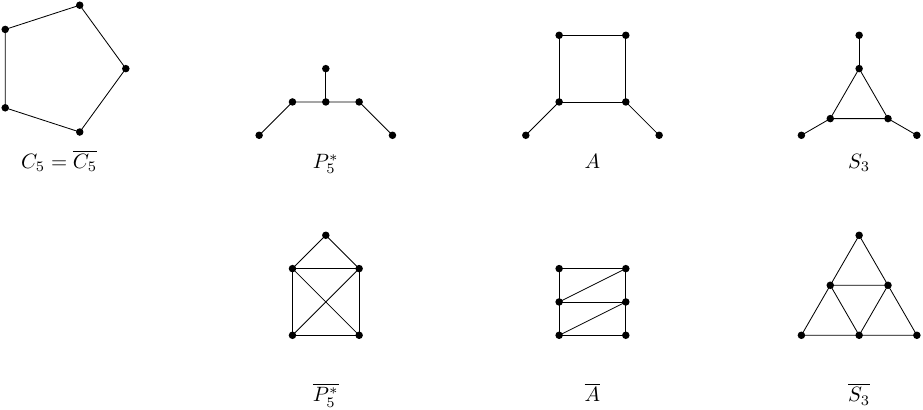}
		\end{center}
	\caption{Shown are several primitive graphs $G\in \{C_5,\overline{C_5},P_5^*,
	         \overline{P_5^*}, A, \overline{A}, S_3, \overline{S_3}\}$. In all cases,
	         application of Lemma \ref{lem:ConnComp-v} shows that none of these graph
	         is a pseudo-cograph and thus, also no polar-cat. Moreover, since every $G$ is
	         primitive, Thm.\ \ref{thm:CharPCPC} implies that none of these graph can
	         be explained by a labeled level-1 network. In particular, since these
	         graphs are primitive they cannot be contained as an induced subgraph in
	         any graph $G\in \protect\PrimeCat$.}
		\label{fig:forb-primitive}
\end{figure}

\section{Least-Resolved Level-1 Networks and Uniqueness Results}
\label{sec:least-resolved}

Thm.~\ref{thm:CharprimeCat} shows that the graphs contained in $\PrimeCat$ are
precisely the graphs that can be explained by some labeled level-1 network. In
this section, we show under which conditions such a network is uniquely
determined. To this end, we show first which (subsets of) leaves in $(N,t)$
correspond to strong modules in $\mathscr{G}(N,t)$. Moreover, we show that there
is a 1-to-1 correspondence between cycles in a network $(N,t)$ and prime modules
of $\mathscr{G}(N,t)$. We are then interested in the type of edges that can be
contracted in a network $(N,t)$ such that the resulting network still explains
$\mathscr{G}(N,t)$. This, eventually, is used to show under which conditions a
level-1 network is least-resolved and when such least-resolved networks are
uniquely determined. 

In the following, we make frequent use of sets $\widetilde L^C_N(v)$ as defined as 
follows.
\begin{definition}\label{def:widetildeL}
	Let $N$ be a network and $C\subseteq N$ be a cycle of $N$. Then, for all
	$v\in V(C)$, we denote with $\widetilde L^C_N(v)$ the set of all descendant
	leaves of $v$ in $N$ that are not descendants of any vertex $w\in V(C)$ with
	$w\prec_N v$. 		
\end{definition}
Note that, for some cycles $C$ and $C'$ it may hold that $v=\eta_C=\rho_{C'}$ in which case 
$\widetilde L^C_N(v) = L(N(v))$ and where  $\widetilde L^C_N(v)\neq \widetilde L^{C'}_N(v)$
is possible. In particular, $\widetilde L^{C'}_N(v)$ could be empty.

In the following, we show that every strong quasi-discriminating level-1 network
$(N,t)$ that explains a given graph can be obtained from a pvr graph after a (possibly
empty) sequence of edge contractions. In other words, non-uniqueness of the networks
$(N,t)$ that explain $G$ mainly depends on the choice of the elementary networks
$(N_v,t_v)$ that are used to replace prime vertices $v$ in the MDT $(\MDT_G,\tau_G)$
of $G$. To this end, we need the following Lemma \ref{lem:strongModuleInCycles} that
provides information of the location and structure of prime and strong modules along
cycles. In this context, we emphasize that every strong module of a graph $G$
coincides with the leaf set $L(\MDT_G(v))$ for some $v\in V(\MDT_G)$ in the MDT of
$G$ which, however, does not longer remain true in networks that contain strong
cycles $C$. In the latter case, some strong modules are defined by the set
$\widetilde L^C_N(v)$. Hence,
$\widetilde L^C_N(v)$ is in many cases a proper subset of $L(N(v))$ and strong modules do
not necessarily coincide with the leaf set $L(N(v))$. 

\begin{lemma}\label{lem:strongModuleInCycles}
Let $(N,t)$ be a strong level-1 network on $X$, $C$ be a quasi-discriminating
cycle in $N$ with root $\rho_C$ and $G=\mathscr{G}(N,t)$. Put $\widetilde
L=\widetilde L^C_N$. Then, $M \coloneqq L(N(\rho_C))\setminus \widetilde
L(\rho_C)$ and $\widetilde L(v)$ with $v\in V(C)\setminus \{\rho_C\}$ are strong
modules of $G$. In particular, $M$ is a prime module of $G$ with maximal modular
decomposition $\Mmax(G[M]) = \{\widetilde L(v) \mid v \in
V(C)\setminus\{\rho_C\}\}$.
\end{lemma}
\begin{proof}
Let $(N,t)$ be a strong level-1 network on $X$, $C$ be a quasi-discriminating
cycle in $N$ with root $\rho_C$ and $G=\mathscr{G}(N,t)$. Note that $M\coloneqq
L(N(\rho_C))\setminus \widetilde L(\rho_C)$ is precisely the set $L(N(u))\cup
L(N(u'))$ for the two unique children $u$ and $u'$ of $\rho_C$ in $V(C)$. It is
straightforward to verify that $M$ and $\widetilde L(v)$, $v\in V(C)\setminus
\{\rho_C\}$ are modules. 

We show now that the set $\widetilde L(v)$ is a strong module in $G[M]$ for all $v\in
V(C)\setminus \{\rho_C\}$ and that $\widetilde L(v)$ is an inclusion-maximal module
of $G[M]$ distinct from $M$ for all $v\in V(C)$, i.e., $\Mmax(G[M]) = \{\widetilde
L(v) \mid v \in V(C)\setminus\{\rho_C\}\}$. 

Let us start with showing that $\widetilde L(v)$ is a strong module in $G[M]$ for all
$v\in V(C)\setminus \{\rho_C\}$. To this end, we modify the network induced by the
vertices in $N(u), N(u')$ and $\rho_C$ with $u, u'\in V(C)$ being the two unique
children of $\rho_C$ in $C$ as follows: Take $C$ and, for all $v\in V(C)\setminus
\{\rho_C\}$, remove all paths in $N$ from $v$ to any leaf in $\widetilde L(v)$ and
add an edge $(v,\widetilde v)$. This results in a strong elementary
quasi-discriminating network $(\widetilde{N}, \widetilde{t})$. Let $\widetilde
G\coloneqq \mathscr{G}(\widetilde{N}, \widetilde{t})$ be the graph with vertex set
$\{\widetilde v\mid v\in V(C)\setminus \{\rho_C\}\}$ that is explained by
$(\widetilde{N}, \widetilde{t})$. Thm.\ \ref{thm:CharPolarCatPC} implies that
$\widetilde G$ is a polar-cat. Thus, we can apply Lemma \ref{lem:primitive-1} to
conclude that $\widetilde G$ is primitive. Hence, it contains only the modules
$V(\widetilde G)$ and the singletons $\{\widetilde v\}$ with $v\in V(C)\setminus
\{\rho_C\}\}$. In particular, all these modules are strong modules of $\widetilde G$
and the singletons are the only (inclusion-maximal) modules that are distinct from
$V(\widetilde G)$. This together with the fact that each vertex $\{\widetilde v\}$ in
$\widetilde G$ is uniquely identified with the module $\widetilde L(v)$ of $G$
implies that $\widetilde L(v)$ must be a strong module of $G[M]$ for all $v\in
V(C)\setminus \{\rho_C\}$ and that $\widetilde L(v)$ is an inclusion-maximal module
of $G[M]$ and distinct from $M$ for all $v\in V(C)$. Therefore, $\Mmax(G[M]) =
\{\widetilde L(v) \mid v \in V(C)\setminus\{\rho_C\}\}$. 

By construction, $G[M]/\Mmax(G[M]) \simeq \widetilde G$ and thus, $G[M]/\Mmax(G[M])$
is primitive. Hence, $M$ is a prime module of $G[M]$ and thus, $G[M]$ and
$\overline{G}[M]$ must be connected. This together with the fact that $M$ is a module
of $G$ implies that $M$ is a prime module of $G$. By Lemma 3.4 in \cite{HFWS:20}, $M$
is a strong module of $G$. In addition, Lemma 3.1 in \cite{HFWS:20} states that $M'$
is a strong module of $G[M]$ if and only if $M'$ is a strong module of $G$. Hence, we
can conclude that $\widetilde L(v)$ is strong module of $G$ for all $v\in
V(C)\setminus \{\rho_C\}$. 
\end{proof}

\begin{proposition}\label{prop:1-1-prime-strong}
Let  $(N,t)$ be a strong level-1 network that explain $G$
and for which all cycles are quasi-discriminating.
Then, there is a 1-to-1 correspondence between cycles in $N$ and prime modules of $G$.
\end{proposition}
\begin{proof}	
	Let $(N,t)$ be an arbitrary strong level-1 network that explains $G=(V,E)$
	and assume that all cycles of $N$ are quasi-discriminating. Assume
	first that $G$ is a cograph. In this case, $(\MDT_G,\tau_G)$ does not
	contain prime vertices. Moreover, Thm.\ \ref{thm:WeakIffCograph}
	implies that any network $(N',t')$ that explains $G$ must be weak. Since
	$(N,t)$ is strong and explains the cograph $G$, it can therefore not contain
	any cycle and  the statement is vacuously true. Assume that $G$ is not a
	cograph and let $M$ be an arbitrary prime module of $G$. Let $G'\coloneqq
	G[M]/\Mmax(G[M])$ where $\Mmax(G[M]) = \{M_1, \ldots, M_k \}$. By Obs.\
	\ref{obs:quotient}, $G'\simeq G[W]$ with $W\subseteq M\subseteq L(N)$
	such that $\forall i \in \{1, \dots,k\}$ we have $|M_i \cap W | = 1$. As
	argued in the proof of Lemma \ref{lem:level-1-inher}, a network $(N',t')$
	that explains $G[W]$ can be obtained from $(N,t)$ byremoving
	step-by-step the leaves of $N$ not in $W$ and by repeating the
	following four steps until no vertices and edges that satisfy (1), (2), (3)
	and (4) exist: (1) suppression of indegree 1 and outdegree 1 vertices; (2)
	removal of indegree 2 and outdegree 0 vertices and its two incident edges;
	(3) removal of indegree 0 and outdegree 1 vertices and its incident edge;
	and (4) removal of all but one of possible resulting multi-edges. Since
	$G[W]$ is primitive and is explained by $(N',t')$, Lemma
	\ref{lem:primitive-2} implies that $(N',t')$ must be a strong elementary
	quasi-discriminating network. Let $C'$ denote the unique (strong
	quasi-discriminating) cycle in $N'$. Since $(N',t')$ is obtained from
	$(N,t)$ it follows that $C'$ must be obtained from a cycle $C$ in $N$
	after application of a (possibly empty) sequence of Steps (1)-(4).
	However, since Steps (2)-(4) involve edge or vertex removals and since the
	maximal biconnected components in $N$ are precisely the cycles, only Step
	(1) could have been applied to obtain $C'$ from $C$. To simplify the
	notation, put $\widetilde{L'}\coloneqq \widetilde L_{N'}^{C'}$ and
	$\widetilde L\coloneqq \widetilde L_{N}^{C}$ Since $N'$ is elementary it
	holds that $\rho_{C'}=\rho_{N'}$ and $\widetilde{L'}(\rho_{C'})=\emptyset$
	and, therefore, $W = L(N'(\rho_{C'}))\subseteq L(N(\rho_C))$. Since $W$ can
	be chosen arbitrarily as long as $W \subseteq M$ and $|M_i \cap W | = 1$ for
	all $i \in \{1, \dots,k\}$ hold, and since $\cup_{i=1}^k M_i=M$, we can
	conclude that $M \subseteq K\coloneqq L(N(\rho_C))\setminus
	\widetilde{L}(\rho_C)$. We next show that these sets are, in fact, equal.
	Too see this, assume for contradiction that $M \subsetneq K$. By Lemma
	\ref{lem:strongModuleInCycles}, $K$ is a strong prime module of $G$ with
	$\Mmax(G[K]) = \{\widetilde{L}(v) \mid v \in V(C)\setminus\{\rho_C\}\}$.
	Since $M\subsetneq K$ and since $M$ is a strong module of $G$ (and thus does
	not overlap with any other module of $G$), it must hold that $M\subseteq
	\widetilde{L}(v)$ for some $v \in V(C)\setminus\{\rho_C\}$. But then $W =
	L(N'(\rho_{C'})) \subseteq M \subseteq \widetilde{L}(v)$ must hold and thus,
	$C$ and $C'$ can have at most vertex $v$ in common. In this case, however,
	$C'$ cannot have been obtained from $C$ by suppression of vertices only; a
	contradiction. Hence, $M = L(N(\rho_C))\setminus \widetilde{L}(\rho_C)$ must
	hold and thus, every prime module $M$ of $G$ is associated with a cycle
	$C\subseteq N$. Moreover, reusing the latter arguments, we have for distinct
	prime modules $M$ and $M'$ that $M = L(N(\rho_C))\setminus
	\widetilde{L}(\rho_C)\neq M' = L(N(\rho_{C'}))\setminus
	\widetilde{L}_N^{C'}(\rho_{C'})$ for some
	cycles $C,C'\subseteq N$ and thus $C\neq C'$. Thus, every prime module $M$
	of $G$ is associated with a unique cycle $C\subseteq N$. 
	
	Moreover, since every cycle of $N$ is quasi-discriminating, we can apply
	Lemma \ref{lem:strongModuleInCycles} to conclude that every cycle
	$C\subseteq N$ is associated with the unique prime module $M =
	L(N(\rho_C))\setminus \widetilde{L}(\rho_C)$. Therefore, there is a 1-to-1
	correspondence between prime modules of $G$ and cycles in $N$.
\end{proof}

The restriction to $\widetilde L^C_N(v)$ as in Lemma \ref{lem:strongModuleInCycles} 
is, however, not always required. To see this, 
we continue with characterizing modules of the form $L(N(v))$.

\begin{lemma}\label{lem:strong-nonCycle}
	Let $(N,t)$ be a strong level-1 network on $X$
	that explains $G$ and for which all cycles are quasi-discriminating. Furthermore, let $v \in V(N)$.
	Then, the set $L(N(v))$ is a module of $G$ if and only if there exists 
 	no cycle $C$ of $N$ such that $v \in V(C)\setminus \{\rho_C,\eta_C\}$. Moreover, if the 
 	latter holds and $(N,t)$ is quasi-discriminating, then $L(N(v))$ is a strong module of $C$.
\end{lemma}
\begin{proof}
	Suppose first that there exists a cycle $C$ of $N$ such that $v \in
V(C)\setminus \{\rho_C,\eta_C\}$. Let $x \in \widetilde L_N^C(v)$ and $y \in
L(N(\eta_C))$. Since $C$ is strong and quasi-discriminating, there exists a
vertex $w \in V(C)$ such that $w$ and $v$ lie on different sides of $C$, and
$t(w) \neq t(\rho_C)$. Let $z \in \widetilde L_N^C(w)$. Note that $x,y \in
L(N(v))$, while $z \notin L(N(v))$. It holds that $\lca(x,z)=\rho_C$ and
$\lca(y,z)=w$. By choice of $w$, it follows that $t(\lca(x,z)) \neq
t(\lca(y,z))$ which implies that $x$ is adjacent to $z$ in $G$ while $y$ is not,
or vice versa. Hence, $L(N(v))$ is not a module of $G$ which establishes the
\emph{only-if} direction.
	
	For the \emph{if} direction, assume that $v\in V(N)$ is a vertex such that
there is no cycle $C\subseteq N$ with $v \in V(C)\setminus \{\rho_C,\eta_C\}$.
Put $M\coloneqq L(N(v))$. We show first that $M$ is a module of $G$. If
$v=\rho_N$ and thus, $M=X$, then we are done. Similarly, if $v\in L(N)$ or $v$
is a vertex that has precisely one child and this child is a leaf then $|M|=1$
and there is nothing to show. In all other cases, $|M|>1$ must holds. Hence,
assume that $v$ is a vertex such that $|M|>1$. Let $x,y\in M$. Since $M\neq X$,
we can choose a vertex $z\in L(N)\setminus M$. We next show that $\lca_N(x,z) =
\lca_N(y,z)$. To see this, put $v_x=\lca_N(x,z)$ and $v_y=\lca_N(y,z)$. Since $z
\notin M=L(N(v))$, neither $v_x$ nor $v_y$ are descendant of $v$. Suppose now
that one of these vertices, say $v_x$, is not an ancestor of $v$. In this case,
$v_x$ and $v$ must be $\preceq_N$-incomparable. Since $x\preceq_N v_x$ and
$x\preceq_N v$ hold, there must be a hybrid-vertex $\eta_C$ of some cycle $C$
that satisfies $x \preceq_N \eta_C$, $\eta_C \preceq_N v_x $ and $\eta_C
\preceq_N v$. In particular, $v$ and $v_x$ belong to different sides of $C$ and
$\eta_C\neq v$; a contradiction to the choice of $v$. Hence, $v_x$ and $v_y$
must be ancestors of $v$. In particular, we have $y \in L(N(v_x))$ and $x \in
L(N(v_y))$. Since $z$ also belongs to $L(N(v_x))$ and to $L(N(v_y))$, it follows
that $v_x$ is an ancestor of $\lca_N(y,z)=v_y$, and that $v_y$ is an ancestor of
$\lca_N(x,z)=v_x$, so $\lca_N(x,z)= \lca_N(y,z)$ must hold. Hence,
$t(\lca_N(x,z))=t(\lca_N(y,z))$. Thus, $N_G(x)\setminus M = N_G(y)\setminus M $.
Since $x,y\in M$ and $z\in z\in L(N)\setminus M$ were chosen arbitrarily, $M$ is
a module of $G$.

	We continue with showing that $M$ is a strong module of $G$ under the
assumption that $(N,t)$ is, in addition, quasi-discriminating. To simplify
writing, we call a vertex $w\in V(N)$ \emph{qualified} if there is no cycle
$C\subseteq N$ such that $w \in V(C)\setminus \{\rho_C,\eta_C\}$. By assumption,
$v$ is qualified. Assume, for contradiction, that $M$ is not strong. By
\cite[Lemma 3.4]{HFWS:20}, $M$ cannot be a prime module. Note that
$L(N(\rho_N))=X$ and thus, $L(N(\rho_N))$ is a strong module. Hence, we can
assume w.l.o.g.\ that $v$ is a ``first top-down'' vertex in $N$ such that
$M=L(N(v))$ is not strong, i.e., for all qualified vertices $w$ with $v\prec_N
w$ the set $L(N(w))$ is a strong module. W.l.o.g.\ assume that $M$ is a parallel
module and, therefore, that $G[M]$ is disconnected (otherwise, consider
$\overline G$ in which case $\overline{G[M]}$ must be disconnected since $M$ is
not a prime module). Since $M$ is not strong, it must overlap with some other
module $M'$ and \cite[Lemma 3.1]{HFWS:20} implies that $M\cap M'$ and $M\cup M'$
must be a module of $G$. In particular, $M\cup M'$ must be a parallel module. To
see this, assume for contradiction that $M\cup M'$ is a series module. In this
case, all vertices $x\in M$ must be adjacent to at least one vertex $y\in
M'\setminus M$, since $G[M]$ is disconnected, $M$ a module and $G[M\cup M']$ is
connected. Moreover, since $G[M]$ is disconnected, there is a vertex $z\in M\cap
M'\subseteq M$ and a vertex $x\in M\setminus M'$ such that $z$ is not adjacent
to $x$. Since $y,z\in M'$ and $y$ is adjacent to all $x\in M$ and thus, to all
$x\in M\setminus M'$ but $z$ is not adjacent to all $x\in M\setminus M'$, $M'$
cannot be a module of $G$; a contradiction. Hence, $M\cup M'$ is a parallel
module. By \cite[Cor.\ 7.10]{HSS:22cluster}, there is a unique vertex $w\in
V(N)$ such that $w=\lca_N(M\cup M')$. Clearly, the outdegree of $w$ must be at
least 2, since otherwise $w$ is a leaf or a hybrid-vertex with a single child,
in which cases $w$ cannot be the last common ancestor of any subset of $X$.
Moreover, $M=L(N(v))\subsetneq (M\cup M') \subseteq L(N(w))$ implies together
with \cite[Obs.\ 4.3 + L.\ 7.1]{HSS:22cluster} that $v\prec_N w$. One easily
verifies that $t(v)=t(w)=0$, since otherwise, $G[M]$ and $G[M\cup M']$ would be
connected. Since $(N,t)$ is quasi-discriminating and $v$ not a hybrid-vertex,
$(w,v)\notin E(N)$. Hence, there is a vertex $u$ with $v\prec_N u \prec_N w$.
There are two cases: (i) $u$ is qualified or (ii) $u$ is not qualified in which
case there is a cycle $C$ such that $u\in V(C)\setminus \{\rho_C, \eta_C\}$. In
Case (i), put $u^*\coloneqq u$ and $M^*\coloneqq L(N(u))$ which is a strong
module by the choice of $v$. Now consider Case (ii). Since $N$ is level-1, for
any two vertices $a,b\in V(C)\setminus \{\rho_C\}$ it must hold that $\widetilde
L_N^C(a)\cap \widetilde L_N^C(b)=\emptyset$. In particular, $L(N(u))$ is the
disjoint union $\cupdot_{i=1}^k \widetilde L_N^C(w_i)$ with $w_i\preceq_N u$ and
$w_i\in V(C)$. Since $N$ is level-1 it is now an easy task to verify that
$L(N(v))\subseteq \widetilde L_N^C(w_i)$ for one of these vertices. Hence, in
Case (ii), $u^*\preceq_N u$ denotes the unique vertex in $C$ such that
$L(N(v))\subseteq \widetilde L_N^C(u^*)$ and we put $M^*=\widetilde L_N^C(u^*)$
which is a strong module by Lemma \ref{lem:strongModuleInCycles}. By
construction and \cite[L.\ 3.33]{HSS:22cluster}, we have $M=L(N(v))\subseteq
M^*\subseteq L(N(w))$. Note that $M\cup M' \subseteq M^*$ is not possible since
then $\lca_N(M\cup M')\preceq_N u^*$; a contradiction to $u^* \prec_N
w=\lca_N(M\cup M')$ (cf.\ \cite[Obs.\ 6.4]{HSS:22cluster}). This together with
$M\subseteq M^*$ implies that there is a $x'\in M'$ such that $x'\not\in M^*$
and thus, $x'\in M'\setminus M^*$. Furthermore, since $M$ and $M'$ overlap and
$M\subseteq M^*$, there is a $x\in M\setminus M'\subseteq M^*\setminus M'$.
Moreover, $M\cap M'\subseteq M \subseteq M^*$ implies that $ M^*\cap M'\neq
\emptyset$. In summary, $M'$ and $ M^*$ must overlap; a contradiction since $
M^*$ is a strong module. Consequently, $L(N(v))$ must be a strong module for all
qualified vertices which completes the proof.
\end{proof}

The results in Lemma \ref{lem:strongModuleInCycles} and \ref{lem:strong-nonCycle}
show which subsets of leaves form (strong) modules of the graph $G$ under consideration. 
We continue now with showing that, in fact, every strong module is ``displayed'' by
the network that explains $G$.

\begin{lemma}\label{lm-wherestrong}
	Let $(N,t)$ be a strong level-1 network on $X$ that explains $G$ and for
	which all cycles are quasi-discriminating. Then, every strong module $M$ of
	$G$ is \emph{displayed by} $N$, that is, $M$ is one of the sets $L(N(v))$,
	$\widetilde L^C_N(v)$, or $L(N(\rho_C))\setminus \widetilde L_N^C(\rho_C)$
	for some $v \in V(N)$ and, possibly, a cycle $C\subseteq N$.
\end{lemma}
\begin{proof}
	Let $(N,t)$ be a strong quasi-discriminating level-1 network on $X$ that
	explains $G$. Clearly, if $v\in X$, then $L(N(v))=\{v\}$ is displayed by
	$N$. Assume, for contradiction, that not all strong modules $M$ of $G$ are
	displayed by $N$. By the latter argument, $|M|>1$ must hold. Since the
	strong module $X=L(N(\rho_N))$ is displayed by $N$, we can choose among the
	strong modules that are not displayed by $N$, a strong module $M$ of $G$
	such that the inclusion-minimal strong module $\hat M$ of $G$ that contains
	$M$ is displayed by $N$. By Lemma \ref{lem:strongModuleInCycles} and Prop.\
	\ref{prop:1-1-prime-strong}, $\hat M$ cannot be prime since, otherwise,
	$\hat M=L(N(\rho_C))\setminus \widetilde L_N^C(\rho_C)$ for some cycle $C$
	in $N$, in which case $M=\widetilde L_N^C(v)$ for some $v\in V(C)$, and
	thus, $M$ would be displayed by $N$. Moreover, $M$ cannot be prime since,
	otherwise, Lemma \ref{lem:strongModuleInCycles} and Prop.\
	\ref{prop:1-1-prime-strong} imply that $M=L(N(\rho_C))\setminus \widetilde
	L_N^C(\rho_C)$ for some cycle $C\subseteq N$ and thus, $M$ would be
	displayed by $N$. W.l.o.g.\ assume that $\hat M$ is a parallel module. Since
	$\hat M$ is the inclusion-minimal strong module that contains $M$ and since
	both $\hat M$ and $M$ are strong modules it follows that $\hat M$ and $M$
	are adjacent in the MDT $(\MDT_G,\tau_G)$ of $G$. Since $(\MDT_G,\tau_G)$ is
	discriminating, $\tau_G(\hat M)\neq \tau_G(M)$ which, together with the fact
	that $M$ is not prime and $|M|>1$, implies that $M$ is a series module. In
	particular, $G[\hat M]$ is disconnected and $G[M]$ must be connected. Note,
	since $\hat M$ is not prime, $\hat M= L(N(\rho_C))\setminus \widetilde
	L_N^C(\rho_C)$ is not possible by Lemma \ref{lem:strongModuleInCycles}. This
	together with the assumption that $\hat M$ is displayed by $N$ implies that
	there is a vertex $v$ such that $\hat M = \mathcal L\in \{L(N(v)),
	\widetilde L^C_N(v) \text{ for some cycle } C \text{ in the latter case}\}$. 
   
	Consider the subgraph $N'$ of $N$ induced by all vertices that are located
	on some path from $v$ to $x \in \hat M$. Note that in case, $\mathcal L=
	\widetilde L^C_N(v)$, the vertex $v$ may have outdegree 1 in $N'$ and thus
	$\widetilde L^C_{N}(v) = \widetilde L^C_{N'}(v')$ for the unique child $v'$
	of $v$ in $N'$ which must then have outdegree at least 2. In this case, we
	remove $v$ and its incident edge from $N'$ and rename $v'$ as $v$. It is now
	an easy task to verify that $N'$ remains a strong level-1 network. Moreover,
	keeping the labels of all vertices in $N'$ yields a network $(N',t')$ whose
	cycles are all quasi-discriminating. Since $\hat M$ is not prime, Lemma
	\ref{lem:strongModuleInCycles} and Prop.\ \ref{prop:1-1-prime-strong} imply
	that not all children of $v$ can be contained in a single cycle $C\subseteq
	N'$. Let $\mathcal C$ be the set of cycles of $N'$ rooted at $v$ and
	$\child^*(v)$ be the children of $v$ that are not contained in any cycle
	rooted at $v$. Note that at least one $\mathcal C$ and $\child^*(v)$ must be
	non-empty and $|\mathcal C|+|\child^*(v)|\geq 2$, since $\hat M$ is not
	prime. Hence, we can partition $\mathcal L$ into the sets $M_1,\dots, M_k$,
	$k\geq 2$ with $M_i=L(N'(v))\setminus \widetilde L_{N'}^C(v)$, $C\in
	\mathcal C$ or $M_i = L(N(w))$, $w\in \child^*(v)$, $1\leq i \leq k$. If
	$M_i=L(N'(v))\setminus \widetilde L_{N'}^C(v)$, then Lemma
	\ref{lem:strongModuleInCycles} implies that $M_i$ is a strong module. We
	continue with showing that $M_i = L(N(w))$ with $w\in \child^*(v)$ must be a
	strong module as well. To this end, we argue first that, for all $w \in
	\child^*(v)$, there exists no cycle $C$ in $N$ such that $w$ and $v$ are
	vertices of $C$. Assume, for contradiction, that $v$ and $w$ belong both to
	some cycle $C\subseteq N$. If $v=\rho_C$, then $w \notin \child^*(v)$; a
	contradiction. Otherwise, $v \notin \{\rho_C, \eta_C\}$ must hold, in which
	case Lemma~\ref{lem:strong-nonCycle} implies that $\hat M=L(N(v))$ is not
	possible. But then $\hat M = \mathcal L\in \{L(N(v)), \widetilde L^C_N(v)\}$
	implies that $\hat M=\widetilde L_N^C(v)$ must hold for some cycle $C$,
	which, in turn, implies that $w \notin V(N')$; a contradiction. In summary,
	$v$ and $w$ cannot belong to some cycle $C$ of $N$ for all $w\in
	\child^*(v)$. This together with the fact that $N$ is level-1 and $w\prec_N
	v$ implies that, if there is any cycle $C$ containing $w$, then $w=\rho_C$.
	Thus, we can apply Lemma \ref{lem:strong-nonCycle} to conclude that all $M_i
	= L(N(w))$ with $w\in \child^*(v)$ must be a strong module as well. Since
	$\hat M$ is the inclusion-minimal module that contains $M$ and since $M$ is
	strong as well, it must hold that $\cup_{i\in I} M_i\subseteq M$ for some
	non-empty subset $I\subseteq \{1,\dots,k\}$ with $|I|\geq 2$. By Lemma
	\ref{lem:lca}, $\lca_N'(x,y)=v$ for all $x\in M_i$ and $y\in M_j$ with $j\in
	I\setminus \{i\}$ and thus, $t(\lca_{N'}(x,y))=0$ since $\hat M$ is a
	parallel module. But this implies that $G[M]$ must be disconnected; a
	contradiction. Consequently, all strong modules $M$ of $G$ are displayed by
	$N$.
\end{proof}

	To understand in more detail the structure of least-resolved networks 
    that explain a given graph $G$, it is necessary to know which type of 
    edges can or cannot be contracted such that the
    resulting network still explains $G$.  We show first 
	that edges in strong quasi-discriminating
	cycles cannot be contracted.
	
\begin{lemma}\label{lem:noEdgeContractionsAlongCycles}
	Let $(N,t)$ be a level-1 network that explains $G$. 
	If there is a strong quasi-discriminating cycle	$C\subseteq N$,  
	then any contraction
	of an edge $e\in E(C)$ yields a network $N_e$
	for which there is no labeling $t'$ of $N_e$ such that 
	 $(N_e,t')$ explains $G$.
\end{lemma}
\begin{proof}
	Let $(N,t)$ be a level-1 network that explains $G$, $C\subseteq N$ be a
strong quasi-discriminating cycle and $e=(u,v)\in E(C)$. In the following, $C'$
denotes the cycle in $N_e$ that is obtained from $C$ after contracting $e$. In
fact, $C' = C_e$ remains a cycle since $C$ is strong. For simplicity put
$\widetilde L\coloneqq \widetilde L_N^C$ and $\widetilde L'\coloneqq \widetilde
L_{N_e}^{C'}$. 
	
	Suppose first that $v=\eta_C$. Since $C$ is strong, $u\neq \rho_C$ and there
is a second parent $u'$ of $v$ that is also distinct from $\rho_C$. In
particular, $(u',v)\in E(C)$. Let $w$ be the child of $\rho_C$ that satisfies
$u'\preceq_N w$ (note that $u'=w$ may hold, in case $u'$ is a child of
$\rho_C$). Since $C$ is quasi discriminating, $t(\rho_C) \neq t(w)$ holds. Since
$u,w \neq \rho_C$, we have $|\widetilde L(u)|\geq 1$ and $|\widetilde L(w)|\geq
1$. Let $x\in \widetilde L_{N }(u)$, $y \in \widetilde L_{N }(v)=L(N(v))$ and $z
\in \widetilde L_{N }(w)$. We have $\lca_{N}(x,z)=\rho_C$ and $\lca_{N}(y,z)=w$,
and thus, $t(\lca_{N}(x,y)) \neq t(\lca_{N}(x,y'))$. Moreover, one easily
verifies that in $N _e$ we have $x,y \in L(N_e(v_e))$ and $z \in \widetilde
L'(w)$. Hence, $\lca_{N_e}(x,z)=\lca_{N_e}(y,z)$ and, therefore,
$t'(\lca_{N_e}(x,z))=t'(\lca_{N_e}(y,z))$ for any labeling $t'$ of the vertices
of $N_e$. Consequently, there is no labeling $t'$ such that $(N_e,t')$ can
explain $G$. In summary, edges $e=(u,v)\in E(C)$ with $v=\eta_C$ cannot be
contracted to obtain a level-1 network that still explains $G$. 
	
	Suppose now that $u=\rho_C$. Since $C$ is strong, $v\neq \eta_C$ and there
is the second child $w$ of $u$ in $C$ that is also distinct from $\eta_C$. Since
$v,w \neq \rho_C$, $|\widetilde L(v)|\geq 1$ and $|\widetilde L(w)|\geq 1$. Let
$x\in\widetilde L(v)$, $y\in\widetilde L(w)$ and $z\in L(N(\eta_C))$. We have
$\lca_N(x,y)=u$ and $\lca_N(x,z)=v$ which, since $C$ is quasi-discriminating,
implies $t(\lca_N(x,y)) \neq t(\lca_N(x,z))$. In addition, we have
$\lca_{N_e}(x,y)=\lca_{N_e}(x,z)=v_e$. In particular,
$t'(\lca_{N_e}(x,y))=t'(\lca_{N_e}(x,z))$ holds for any labeling $t'$ of the
vertices of $N_e$. Consequently, there is no labeling $t'$ such that $(N_e,t')$
can explain $G$. In summary, edges $e=(u,v)\in E(C)$ with $u=\rho_C$ cannot be
contracted to obtain a level-1 network that still explains $G$.

	Now, let $e=(u,v)\in E(C)$ with $v\neq \eta_C$ and $u\neq \rho_C$. It is
easy to see that $v\neq \rho_C$ and $u\neq \eta_C$. Since both $u$ and $v$ are
not hybrid-vertices in $N$ and $u\neq \rho_C$, we have $|L(N(v))|\geq 2$ and
$|\widetilde L(u)|\geq 1$. Moreover, since $C$ is quasi-discriminating, $t
(u)\neq t (v)$ must hold. Let $x,y\in L(N (v))$ with $\lca_{N }(x,y)=v$ (which
exist by Lemma \ref{lem:lca}) and let $z\in \widetilde L(u)$ One easily verifies
that $\lca_{N }(x,z)=u$. Hence, in $N$ we have $t(\lca_N(x,y))\neq
t(\lca_N(x,z))$. However, in $N _e$, we have $\lca_{N _e}(x,y)=v_e=\lca_{N
_e}(x,z)$ and thus $t'(\lca_{N_e}(x,y))= t'(\lca_{N_e}(x,z))$ for any labeling
$t'$ of $N_e$. Hence, there is no labeling $t'$ of $N _e$ such $(N _e,t')$ can
explain $G$.
	
	Therefore, any contraction of an edge $e\in E(C)$ yields a network $N_e$ for
which there is no labeling $t'$ of $N_e$ such that $(N_e,t')$ explains $G$.
\end{proof}

Lemma \ref{lem:noEdgeContractionsAlongCycles} shows that particular edges cannot
be contracted. We continue with showing that so-called dispensable edges can
always be contracted.

\begin{definition}
	An edge $e=(u,v)$ in a level-1 network $(N,t)$ is \emph{dispensable}
precisely if $e$ is not located on a cycle of $N$ and $t(u)=t(v)$ or $u=\eta_C$
for some cycle $C\subseteq N$ with $v\in V(N)\setminus L(N)$ and
$\child_N(u)=\{v\}$. 
\end{definition}
\begin{lemma}\label{lem:LRTnoEdgeContractionsAlongCycles}
	Let $(N,t)$ be a strong level-1 network whose cycles are all
quasi-discriminating and let $e=(u,v)\in E(N)$. Then, $(N_e,t')$ explains
$\mathscr{G}(N,t)$ for some labeling $t'$ of $N_e$ if and only if $e$ is
dispensable in $(N,t)$. 

	In particular, contraction of all edges $e\in E(N)$ that are dispensable in
$(N,t)$ yields a strong quasi-discriminating least-resolved network that
explains $\mathscr{G}(N,t)$.
\end{lemma}
\begin{proof}
	Let $(N,t)$ be a strong level-1 network that explains $G=\mathscr{G}(N,t)$
and whose cycles are all quasi-discriminating. Let $e=(u,v)\in E(N)$ be an edge
that is not located on a cycle of $N$. Suppose that $(N_e,t')$ explains $G$ for
some labeling $t'$ of $N_e$. If $t(u)=t(v)$, then we are done. Assume that
$t(u)\neq t(v)$. Clearly, $v\in V(N)\setminus L(N)$ must hold since otherwise
$L(N_e)\neq L(N)=V(G)$, in which case $(N_e,t')$ does not explain $G$. Assume,
for contradiction, that (i) $u\neq \eta_C$ for any cycle $C\subseteq N$ or (ii)
if $u= \eta_C$ then $|\child_N(u)|>1$. In Case (i), Condition (N3.a) must hold
and, therefore, $|\child_N(u)|>1$. Hence, in both Cases (i) and (ii), vertex $u$
must have an additional child $v'\neq v$. Note that $|L(N(v'))|\geq 1$. By Lemma
\ref{lem:noEdgeContractionsAlongCycles}, $e$ cannot be located in a cycle of $N$
and thus, $v\neq \eta_C$ for all cycles $C\subseteq N$ and, therefore,
$|L(N(v))|\geq 2$. Let $x,y\in L(N(v))$ such that $\lca_{N}(x,y)=v$ which exist
by Lemma \ref{lem:lca}. Since $u$ and $v$ are not located in a common cycle, one
easily verifies that $L(N(v))\cap L(N(v'))=\emptyset$ and thus, there is a leaf
$z\in L(N(v'))$ with $\lca_{N}(x,z)=u$. Hence, $t(u)\neq t(v)$ implies
$t(\lca_{N}(x,y))\neq t(\lca_{N}(x,z))$. In $N_e$, however, we have
$\lca_{N_e}(x,y) = \lca_{N_e}(x,z)$ and thus, $t'(\lca_{N_e}(x,y)) =
t'(\lca_{N_e}(x,z))$ for all labelings $t'$ of $N_e$. Hence, there is no
labeling $t'$ such that $(N_e,t')$ can explain $G$; a contradiction. Thus, $u =
\eta_C$ and $\child_N(u)=\{v\}$ must hold. This together with $v\in
V(N)\setminus L(N)$ implies that $e$ is dispensable.

	For the converse, assume that $e=(u,v)$ is dispensable in $(N,t)$. Hence,
$e$ is not located on a cycle of $N$ and thus, $v\neq \eta_C$ for all cycles $C$
in $N$. Assume first that $t(u)=t(v)$. Hence, we can use the arguments in the
proof of Prop.\ \ref{prop:NhatN-sameGraph} to show that $(N_e,t_e)$ explains
$G$. Assume now that $t(u)\neq t(v)$ but $v\in V(N)\setminus L(N)$,
$\child_N(u)=\{v\}$ and $u=\eta_C$ for some cycle $C\subseteq N$. This together
with that fact that $N$ is strong implies that $v$ cannot be a hybrid of any
cycle in $N$. Since $\child_N(u)=\{v\}$, vertex $u$ cannot be the last common
ancestor of any two leaves. Hence, we can relabel $u$ by replacing the label
$t(u)$ of $u$ by $t(v)$. In this way, we obtain a labeling $t''$ of $N$ such
that $t''(u)=t''(v)$ and such that $(N,t'')$ still explains $G$. As argued
above, such edges can be contracted to obtain $(N_e,t''_e)$ that explains $G$. 
	
	In summary, $(N_e,t')$ explains $G$ for some labeling $t'$ of $N_e$ if and
only if $e$ is dispensable in $N$.  
	
	We continue with showing that contraction of all edges $e\in E(N)$ that are
dispensable in $(N,t)$ yields a strong quasi-discriminating network that
explains $G$. Reusing the aforementioned arguments, we can always relabel all
vertices $u$ for which $u = \eta_C$ for some cycle $C\subseteq N$ and
$\child_N(u) = \{v\}$ by replacing the label $t(u)$ of $u$ by $t(v)$ which
yields the network $(N,t'')$ that explains the same graph as $(N,t)$. Hence,
contraction of all edges that are dispensable in $(N,t)$ is equivalent to
contract all edges in $e=(u,v)\in E(N)$ with $t''(u)=t''(v)$. By definition,
after contraction of all such edges in $(N,t'')$ the resulting network is
precisely the quasi-discriminating network $(\widehat N, \widehat t'')$ that, by
Prop.\ \ref{prop:NhatN-sameGraph}, explains $G$. Since none of the edges
contained in cycles have been contracted, $(\widehat N, \widehat{t}'')$ remains
strong. 
	
	It remains to show that $(\widehat N, \widehat t'')$ is least-resolved.
Observe that an edge $f \in E(N)\setminus \{e\}$ is dispensable in $(N,t)$
precisely if the corresponding edge $f' \in E(N_e)$ is dispensable in
$(N_e,t')$. Hence, after contraction of all dispensable edges in $(N,t)$ we
obtained the network $(\widehat N, \widehat t'')$ that does not contain any
dispensable edge. The results established above imply that there is no edge at
all that can be contracted in $(\widehat N, \widehat t'')$ such that the
resulting network together with some labeling still explains $G$. Consequently,
$(\widehat N, \widehat t'')$ is least-resolved.
\end{proof}

We are now in the position to prove one of the main results in this section
which shows that one can derive every strong quasi-discriminating level-1
network that explains a given graph $G$ from a pvr graph of the modular
decomposition tree of $G$.

\begin{theorem}\label{thm:1-1-prime-strong}
	Every strong quasi-discriminating level-1 network $(N,t)$ that explains $G$
can be obtained from some pvr graph $(N^*, t^*)$ of $(\MDT_G,\tau_G)$ after a
(possibly empty) sequence of edge contractions. None of the edges in this
sequence are contained in a cycle of $N^*$. 
\end{theorem}
\begin{proof}
Let $(N,t)$ be a strong quasi-discriminating level-1 network that explains $G$. 
	 We now apply the following steps (in this order) on $(N,t)$
	 until no vertices $v$ with the listed properties exist. 
	\begin{enumerate}[noitemsep]
	\item For all hybrid-vertices $v$ that have outdegree at least two
		 ``expand'' $v$ by replacing $v$ by an edge
	       $(v_1,v_2)$ such that the parents of $v$ become parents of $v_1$
	      and all children of $v$ become the children of $v_2$. \smallskip
	\item For all vertices $v$ that are located on a cycle $C$ 
			  but not the root or the hybrid-vertex of $C$ and that have outdegree at least three, 
			  ``expand'' $v$ by replacing $v$ by an edge
		      $(v_1,v_2)$ such that the unique parent of $v$ become the parent of $v_1$
		      the unique child of $v$ that is located on $C$ become a child  of
	          $v_1$ and all other children of $v$ become children of $v_2$.\smallskip

	\item If $v$ is a root of some cycle and has has outdegree at least three, 
			  then let $C_1,\dots,C_k$, $k\geq 1$ be all cycles that are rooted at $v$. In this case, 
	          ``expand'' $v$ by replacing $v$ by edges
		      $(v_1,w_1), \dots, (v_1,w_k)$ such that the parents of $v$ become the parents of $v_1$, 
		      all children of $v$ that are not located on any cycle become children of $v_1$, and 
		      $w_i$ becomes the root $\rho_{C_i}$ of $C_i$, $1\leq i\leq k$.
	\end{enumerate}

One easily verifies that every vertex $v\in V(N)$ cannot satisfy the properties
as in Step (1) and (2) at the same time. However, it is possible that a vertex
$v\in V(N)$ satisfies the he properties as in Step (1) and (3), resp., Step (2)
and (3). Whenever, we replaced in one of the Steps (1), (2) or (3) a vertex $v$
by and edge $(v_1,v_2)$, resp., $(v_1,w_i)$, the new vertices $v_1$ and $v_2$,
resp., $w_i$ obtain the label $t(v)$ and the label of all other vertices remain
unchanged. In this way we obtain a labeled network $(N',t')$. Note that each
Step (1), (2) and (3) is applied at most once to each $v\in V(N)$. Moreover,
since we never changed the internal structure of cycles, $N'$ remains strong.
In addition, since we always ``expanded'' vertices, the
level of $N$ remained unchanged, that is, $N'$ is a level-1 network. After
application of Step (1), all hybrid-vertices $v$ in $N'$ must have a unique
child $w$ such that $t'(v)=t'(w)$. After application of Step (2), all vertices
$u$ in $N'$ that are located on a cycle $C$ but not the root of $C$ have
precisely two children, one child $u_1$ is located in $C$ and the other child
$u_2$ is not. By construction and since $(N,t)$ is quasi-discriminating,
$t(u)\neq t(u_1)$ implies $t'(u)\neq t'(u_1)$. Hence, all cycles in $(N',t')$
remain quasi-discriminating. After application of Step (3), all roots $\rho_C$
of cycles $C$ must have indegree one and outdegree two. This, in particular,
implies that all cycles in $N'$ are vertex-disjoint. It is an easy exercise to
see that $(N,t)$ is precisely the quasi-discriminating network $(\widehat N',
\widehat t')$ obtained from $(N',t')$ as specified at the beginning of Section
\ref{sec:Cog}. By Prop.\ \ref{prop:NhatN-sameGraph}, $(N',t')$ and $(N,t)$
explain the same graph.

By Lemma \ref{lm-wherestrong}, every strong module $M$ of $G$ is displayed by
$N'$ and thus, $M$ is one of the sets $L({N'}(w))$, $\widetilde L^C_{N'}(w)$, or
$L({N'}(\rho_C))\setminus \widetilde L_{N'}^C(\rho_C)$ for some $w \in V(N')$
and, possibly, a cycle $C\subseteq N'$. Since the root $\rho_C$ of every cycle
in $N'$ has outdegree two, we have $\widetilde L_{N'}^C(\rho_C)=\emptyset$ for
all cycles $C\subseteq N'$. Moreover, since cycles in $N'$ are vertex disjoint
and since every vertex $w\in V(C)\setminus \rho_C$ has exactly one child $w'$
that is not located in $C$, it holds that $\widetilde L^C_{N'}(w)=L(N'(w'))$ for
all such vertices $w$. Consequently, every strong modules $M$ of $G$ satisfies
$M=L(N'(w))$ for some $w \in V(N')$. In particular $w$ is either the root of a
cycle of $N'$ or not contained in any cycle at all. 

Let $W\subseteq V(N')$ be the set of all vertices that are located on some cycle
$C\subseteq N'$ but not the root of $C$. Put $V^*\coloneqq V(N')\setminus W$. We
continue with showing that for all $u\in V^*$, the set $M\coloneqq L(N'(u))$ is
a strong module of $G$. Definition of $V^*$ together with the fact that cycles
in $N'$ are vertex disjoint and Lemma \ref{lem:strong-nonCycle} implies $M$ is a
module of $G$. Note, since $(N',t')$ is not quasi-discriminating, we can not
directly apply Lemma \ref{lem:strong-nonCycle} to conclude that $M= L(N'(u))$ is
a strong module of $G$. However, we can employ
Lemma~\ref{lem:strongModuleInCycles} and \ref{lem:strong-nonCycle} on $(N,t)$
and show instead that, for all $u \in V^*$, there exists a vertex $v$ in $N$
and possibly a cycle $C\subseteq N$ such that $L(N'(u))\in \{L(N(v)), \widetilde
L_N^C(v), L(N(\rho_C)) \setminus \widetilde L_N^C(\rho_C)\}$ with $v \neq
\rho_C$ in the second case and $v = \rho_C$ in the last case. 

Let $u \in V^*$. Assume that there exists a vertex $v$ in $N$ such that $M=
L(N'(u))=L(N(v))$. Note that, in this case, there cannot be a cycle $C$ of $N$
such that $v$ is a vertex of $C$ and distinct from the root of $C$. Since $M$ is a
module of $G$ and $(N,t)$ quasi-discriminating, Lemma \ref{lem:strong-nonCycle}
implies that $M$ is a strong module of $G$. Assume now that no such vertex $v$
with $L(N'(u))=L(N(v))$ exists in $N$. It is easy to verify, that $L(N(v)) =
L(N'(v_1))$ whenever Step (1), (2) and (3) was applied to expand $v$.
Furthermore, $L(N(w)) = L(N'(w))$ for all $w\in V(N')\cap V(N)$. In addition, we
have $L(N(v)) = L(N'(v_1)) = L(N'(v_2))$ when Step (1) was applied to expand
$v$. Taking the latter arguments together, $u$ was created as the end-vertex of
some new edge introduced by application of Step (2) or (3) to expand some vertex
$v$, i.e., $u=v_2$ if Step (2) was applied and $u=w_i$ for some $i$ if Step (3)
was applied. Suppose first that $u$ was created by applying Step (2) to expand
$v$. Then, there exists a cycle $C$ in $N$ such that $v$ is not the root nor the
hybrid of $C$. By construction, the children of $u$ are precisely the children
of $v$ that are not descendant to any vertex $v'$ of $C$ satisfying $v'
\prec_{N} v$. Hence, $L(N'(u))=\widetilde L_N^C(v)$ in that case. Since $v\neq
\rho_C$, Lemma \ref{lem:strongModuleInCycles} implies that $L(N'(u))$ is a
strong module of $G$. We now consider Step (3) in which case $u$ must be the
root of some cycle $C$ in $N'$. First observe that both Steps (1) and (2) are
only applied on vertices that exist in $N$. Step (3) however, might be applied
on newly created vertices. To be more precise, a vertex $v\in V(N)$ can satisfy
the properties as in Step (1) and (3) (resp., Step (2) and (3)) at the same
time. In this case, if Step (3) is applied on vertex $u$ then $u=v_2$ where
$v_2$ was created by either applying Step (1) or (2) on $v\in V(N)$. In all
cases, however, $u$ must be the root of some cycle $C$ in $N'$. This cycle must
exist in $N$ and, in particular, the root of $C$ in $N$ is $v$. By construction,
$u$ has outdegree two in $N'$. In particular, all descendants of $u$ in $N'$ are
descendant of some other vertex $u'$ of $C$ distinct from $u$. Hence,
$L(N'(u))=L(N(v)) \setminus \widetilde L_N^C(v)$ with $v=\rho_C$ which together
with Lemma \ref{lem:strongModuleInCycles} implies that $L(N'(u))$ is a strong
module of $G$. In summary, $M= L(N'(u))$ is a strong module of $G$.

By the aforementioned arguments, every strong module $M$ of $G$ is displayed by $N'$
and must satisfy $M=L(N'(u))$ for some $u\in V^*$. Moreover, by construction of 
$N'$, two distinct $u,u'\in V^*$ must satisfy $L(N'(u))\neq L(N'(u'))$. 
Hence, every $u\in V^*$ represents a unique strong module $M=L(N'(u))$. 
In summary, there is a 1-to-1 correspondence between the vertices in $V^*$
and the strong modules of $G$.
We now transform $(N',t')$ to a labeled tree $(\tilde T, \tilde t)$ 
by contraction of all edges that are located on a cycle. Since the cycles in
$N'$ are vertex disjoint, each cycle $C$ of $N'$ refers to a unique vertex $v_C$
in $\tilde T$ and we put $\tilde t(v_C) =\mathrm{prime}$. All other vertices $V$
of $N'$ that remain vertices in $\tilde T$ obtain label $\tilde t(v)=t'(v)$.
It is easy to see that, due to the 1-to-1 correspondence between the vertices in $V^*$
and the strong modules of $G$, there is a 1-to-1 correspondence between the vertices in $V(\tilde T)$
and the strong modules of $G$. Moreover, if $C$ is a cycle in $N$, then 
by Lemma \ref{lem:strongModuleInCycles}, $L(N(\rho_C))$ is a prime module. 
By construction, $L(N(\rho_C)) = L(N(v_C))$ and thus, $v$ is correctly
labeled as ``prime''. For all other vertices $v\in V^*$ the label remains unchanged.
Hence, $(\tilde T, \tilde t)$ is the MDT of $G$ and, by construction 
 $(N',t')$ is a pvr graph. Moreover, by construction, $(N,t)$ can be 
 obtained from  $(N',t')$ by contraction of edges that are not located on 
 any cycle of $N'$ which completes the proof.
\end{proof}

By Lemma \ref{lem:LRTnoEdgeContractionsAlongCycles}, contraction of all
dispensable edges yields a least-resolved network for a given graph $G$. In
particular, least-resolved networks cannot contain any dispensable edge and
thus, are in particular, quasi-discriminating. However, not all
(quasi-)discriminating network are least-resolved, see Fig.\ \ref{fig:LRT}.
Thus, networks that explain the same graph $G$ are not necessarily unique.
Moreover, networks can differ in the choice of of the elementary networks
$(N_v,t_v)$ that are used to replace prime vertices $v$ in the MDT
$(\MDT(G),\tau_G)$ of $G$. Nevertheless, the choice of the elementary networks
$(N_v,t_v)$ becomes unique (up to the label of the hybrid-vertex) if
$G[M]/\Mmax(G[M])$ is a well-proportioned polar-cat for the prime module $M$
associated with $v$. In particular, we obtain
\begin{theorem}\label{thm:uniqueNt}
There is a unique least-resolved strong level-1 network $(N,t)$ 
that explains $G$, if 
the quotient $G[M]/\Mmax(G[M])$ is a well-proportioned polar-cat for every prime
module $M$ of $G$. This network $(N,t)$ must be quasi-discriminating
and the labeling $t$ of $(N,t)$ is unique up to the label of hybrid-vertices
that have only one child and this child is a leaf.
\end{theorem}
\begin{proof}
	If $G$ is a cograph, then $(\MDT_G,\tau_G)$ does not contain prime vertices
	and it coincides with $(N,t)$, i.e., $(N,t)$ is the unique discriminating
	cotree (and thus, the unique least-resolved strong level-1 network) that
	explains $G$. Assume that $G$ contains prime modules and that
	$G[M]/\Mmax(G[M])$ is a well-proportioned polar-cat for every prime module
	$M$ of $G$. In this case, Prop.\ \ref{prop:uniqueNt} implies that each such
	quotient $G[M]/\Mmax(G[M])$ can only be explained by a strong
	quasi-discriminating elementary network $(N_M,t_M)$ that is unique up to the
	label of the hybrid-vertex. In other words, the networks that are allowed to
	replace prime vertices in $(\MDT_G,\tau_G)$ are uniquely determined (up to
	the label of its hybrid-vertex). This implies that the pvr graph $(N^*,
	t^*)$ is uniquely determined (up to the label of its hybrid-vertex) for a
	given MDT $(\MDT_G,\tau_G)$.  By Thm.\ \ref{thm:1-1-prime-strong}, every
	strong quasi-discriminating level-1 network that explains $G$ can be
	obtained from some pvr graph $(N^*, t^*)$ of $(\MDT_G,\tau_G)$ after a
	(possibly empty) sequence of edge contractions while keeping the labels of
	all remaining vertices. In particular, Lemma
	\ref{lem:LRTnoEdgeContractionsAlongCycles} shows that we can assume that all
	dispensable edges in $N^*$ have been contracted, which yields the least-resolved network
	$(N,t)$. Moreover, Lemma \ref{lem:LRTnoEdgeContractionsAlongCycles} implies
	that that none of the contracted edges $(u,v)\in E(N^*)$ are contained in a
	cycle of $N^*$. Hence, all strong quasi-discriminating cycles of $(N^*,t^*)$
	are also present in $(N,t)$. This together with the fact that the set of
	dispensable and, therefore, the set of contracted edges in $(N^*,t^*)$
	to obtain $(N,t)$ are uniquely determined implies that $(N,t)$ is uniquely
	determined for the given pvr graph $(N^*, t^*)$. 

	Moreover, since the MDT of $G$ is unique and since every prime-vertex $v$ in
	$(\MDT_G,\tau_G)$ is associated with a unique prime module $M$ and thus,
	every prime vertex $v$ can only be replaced by the unique (up to the label
	of its hybrid-vertex) strong quasi-discriminating elementary network
	$(N_M,t_M)$ on which no edges can be contracted, it follows that
	$(N,t)$ is the unique least-resolved strong level-1 network that
	explains $G$. Since $(N,t)$ does not contain dispensable edges, it must be
	quasi-discriminating. Henceforth, we call hybrid-vertices ``special'' if
	they have only one child and this child is a leaf. It is easy to see that
	the label of special hybrid-vertices can be chosen arbitrarily, since they
	are not the last common ancestor of any two leaves. As shown in the proof of
	Lemma \ref{lem:LRTnoEdgeContractionsAlongCycles}, all hybrid-vertices in $N$
	that are not special have outdegree at least two. Hence, \emph{all} inner
	vertices in $N$ (except special hybrid-vertices) have outdegree at least
	two. By Lemma \ref{lem:lca}, for \emph{all} inner vertices $v$ distinct from
	special hybrids there are at least two leaves $x,y$ such that
	$v=\lca_N(x,y)$. Therefore, the label $t$ of all such inner vertices cannot
	be changed without violating the property that the resulting network still
	explains $G$. Hence, the labeling $t$ of $N$ is uniquely determined up to
	the label of special hybrid-vertices, which completes the proof.	
\end{proof}

\section{Algorithms}
\label{sec:algo}

In this section, we provide linear-time algorithms for the recognition of
pseudo-cographs, polar-cats as well as graphs in $\PrimeCat$ and for the construction
of level-1 networks to explain them. Pseudocodes are provided in Alg.\
\ref{alg:construct-N}, \ref{alg:pseudoCograph}, \ref{alg:elementary} and
\ref{alg:general} and detailed description can be found in the parts proving their
correctness in Lemma \ref{lem:construct-network}, Thm.\ \ref{thm:PsC-recognition},
\ref{thm:polar-recognition} and \ref{thm:AlgGeneral}, respectively.

\begin{lemma}\label{lem:construct-network}
	Algorithm \ref{alg:construct-N} correctly constructs a level-1 network $(N,t)$
	that explains a pseudo-cograph $G=(V,E)$ in  $O(|V|+|E|)$ time. 
\end{lemma}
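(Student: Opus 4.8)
The plan is to show that Algorithm~\ref{alg:construct-N} faithfully realizes the construction $N(v,G_1,G_2)$ of Def.~\ref{def:prop:PsC-l1N}, so that correctness reduces immediately to Prop.~\ref{prop:PsC-l1N}, and then to bound the running time of each phase. Since the input $G$ is promised to be a pseudo-cograph, the substantive work is only to (i) fix an admissible vertex $v$ together with a valid split into induced subgraphs $G_1,G_2$ making $G$ a $(v,G_1,G_2)$-pseudo-cograph, and (ii) assemble the labeled network from the two cotrees of $G_1$ and $G_2$. Given such a triple, the output is by construction exactly $N(v,G_1,G_2)$ with labeling $t(v,G_1,G_2)$, so Prop.~\ref{prop:PsC-l1N} yields $G=\mathscr{G}(N,t)$ and that $N$ is a level-1 network; thus correctness of the network hinges entirely on correctness of the triple and of the cotree/assembly steps.

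First I would dispatch the cograph case: run a linear-time cograph-recognition routine, which either certifies that $G$ is a cograph or returns an induced $P_4$. If $G$ is a cograph, then by (the proof of) Lemma~\ref{lem:cographPscograph} any split $G=H\star H'$ read off the discriminating cotree, with $\star\in\{\join,\union\}$, gives an admissible triple for an arbitrary $v\in V(H)$. If $G$ is not a cograph, the returned $P_4$ bounds the search: by Obs.~\ref{obs:G-v-Cograph} every admissible $v$ lies on every induced $P_4$, and by Lemma~\ref{lem:v-in-allP4} the admissible vertices form exactly the intersection of all induced $P_4$s, hence a subset of this one $P_4$. For each of the at most four candidates I would test in $O(|V|+|E|)$ whether $G-v$ is a cograph; Obs.~\ref{obs:G-v-Cograph} guarantees at least one passes, and any passing candidate is admissible.

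Having fixed $v$, the next step recovers $G_1,G_2$. I would compute the connected components $\mathfrak{C}$ of the disconnected graph among $\{G-v,\overline{G-v}\}$ and then exploit Cor.~\ref{cor:Gamma}, which says $\Gamma(G,v)$ is a single star together with isolated vertices, and Lemma~\ref{lem:star-center-gamma}, which says that placing the star-center and all isolated components on one side and the star-leaves on the other yields cographs $G_1=G[V_1]$ and $G_2=G[V_2]$ with $G$ a $(v,G_1,G_2)$-pseudo-cograph. The step I expect to be the main obstacle is identifying the star-center within the $O(|V|+|E|)$ budget without building $\Gamma(G,v)$ explicitly, which would cost $O(|\mathfrak{C}|^2)$ in general. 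The way around this is to reformulate $\Gamma(G,v)$-adjacency as a purely local condition: one checks that a pair $\{H,H'\}$ spans an induced $P_4$ through $v$ precisely when one of the two components is \emph{active} (contains an edge joining a neighbor of $v$ to a non-neighbor of $v$) while $v$ has a neighbor in the other. Both predicates are computable by a single scan over vertices and edges, so the center (the unique active component witnessing such a $P_4$) and the star-leaves are determined in linear time.

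Once $V_1,V_2$ are known, I would compute the discriminating cotrees $(T_1,t_1)$ and $(T_2,t_2)$ of $G_1,G_2$ in linear time, and then assemble $N(v,G_1,G_2)$ exactly as in Def.~\ref{def:prop:PsC-l1N}: insert the vertices $\eta_1,\eta_2$ along the parent-edges of $v$, add a common root $\rho_N$ with edges to $\rho_{T_1},\rho_{T_2}$, identify $\eta_1,\eta_2$ into $\eta_N$, delete the duplicate copy of the leaf $v$, and set $t(\rho_N)=1$ iff $G-v$ is the join of $G_1-v$ and $G_2-v$. This surgery touches only the roots, the two parent-edges of $v$, and a constant number of new vertices, hence is $O(|V|)$. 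For the time bound, each phase — cograph recognition and the at-most-four $G-v$ tests, the single component-classification scan, the two cotree computations, and the constant-size assembly — runs in $O(|V|+|E|)$, and there are only constantly many phases, giving the claimed $O(|V|+|E|)$ total.
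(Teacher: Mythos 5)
Your proof is correct, but it does substantially more work than the lemma requires, and in doing so it diverges from the paper's route. Algorithm~\ref{alg:construct-N} receives the pseudo-cograph \emph{already presented} as a $(v,G_1,G_2)$-pseudo-cograph (see its \textbf{Input} line), so the paper's proof is essentially two observations: the algorithm literally implements the construction of Def.~\ref{def:prop:PsC-l1N}, whence correctness is immediate from Prop.~\ref{prop:PsC-l1N}; and the runtime is linear because the discriminating cotrees of $G_1$ and $G_2$ are computable in $O(|V|+|E|)$ time and the subsequent surgery (inserting $\eta_1,\eta_2$, adding $\rho_N$, identifying, deleting the duplicate leaf) touches only a constant number of vertices and edges. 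Your opening and closing paragraphs reproduce exactly this argument, so the core is sound. The middle two paragraphs --- locating an admissible $v$ among the four vertices of an induced $P_4$, classifying the components of the disconnected graph in $\{G-v,\overline{G-v}\}$, and identifying the star-center of $\Gamma(G,v)$ via the ``active component'' predicate without materializing $\Gamma(G,v)$ --- are not needed here: that is precisely the content of Alg.~\ref{alg:pseudoCograph} and Thm.~\ref{thm:PsC-recognition}, which the paper keeps separate. Your local reformulation of $\Gamma(G,v)$-adjacency is in fact a nice explicit justification of a step the paper's recognition proof treats somewhat tersely (it matches the $y$--$x$--$v$--$z$ structure of the induced $P_4$s used in the proof of Thm.~\ref{thm:CharPsG}), so that material would be better directed at Thm.~\ref{thm:PsC-recognition}; for Lemma~\ref{lem:construct-network} itself it only obscures how short the argument is.
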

\begin{proof}
	If $G$ is a cograph, then the tree $(T,t)$ is returned. Otherwise, $G$ is a
	$(v,G_1,G_2)$-pseudo-cograph. The construction of $(N,t)$ is precisely the label
	level-1 network $(N(v,G_1,G_2),t(v,G_1,G_2))$ as specified in Def.\
	\ref{def:prop:PsC-l1N}. Prop.\ \ref{prop:PsC-l1N} implies that $(N,t)$ is a level-1
	network that explains $G =(V,E)$. For the runtime, observe that computation of the
	cotrees in Line \ref{alg:cotree} and \ref{alg:cotrees-1} can be done in
	$O(|V|+|E|)$ time \cite{Corneil:85}. Moreover, it is an easy task to verify that
	the construction of $(N,t)$ in Line \ref{alg:construct1} and \ref{alg:construct2}
	can be done within the same time complexity. 
\end{proof}

\begin{theorem}\label{thm:PsC-recognition}
	Pseudo-cographs $G=(V,E)$ can be recognized in $O(|V|+|E|)$ time. In the
	affirmative case, one can determine a vertex $v$ and subgraphs $G_1,G_2\subset G$
	such that $G$ is a $(v,G_1,G_2)$-pseudo-cograph and a labeled level-1 network that
	explains $G$ in $O(|V|+|E|)$ time. 
\end{theorem}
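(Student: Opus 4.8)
The plan is to turn the characterization in Thm.\ \ref{thm:CharPsG} into a linear-time test, exploiting Lemma \ref{lem:v-in-allP4} to reduce the search for the ``special'' vertex $v$ to a constant number of candidates. First I would dispose of the trivial case $|V|\le 2$ and then run a linear-time cograph recognition algorithm on $G$ (e.g.\ \cite{Corneil:85,bretscher2008simple}). If $G$ is a cograph, it is a pseudo-cograph by Lemma \ref{lem:cographPscograph}; one outputs its discriminating cotree as an explaining (weak level-1) network and any admissible decomposition via Lemma \ref{lem:NONunique-cograph}. If $G$ is not a cograph, the recognition routine also returns a certificate, namely an induced $P_4$ on some vertex set $\{p_1,p_2,p_3,p_4\}$. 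By Obs.\ \ref{obs:G-v-Cograph} together with Lemma \ref{lem:v-in-allP4}, any vertex $v$ for which $G$ is a $(v,G_1,G_2)$-pseudo-cograph must lie on every induced $P_4$, hence $v\in\{p_1,\dots,p_4\}$. Thus it suffices to test at most four candidates.

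For a fixed candidate $v$ I would test conditions (B1)--(B3) of Thm.\ \ref{thm:CharPsG}. I compute $G-v$ and check whether it is a cograph; if not, $v$ is rejected by Obs.\ \ref{obs:G-v-Cograph}. From the cotree of $G-v$ (returned by its cograph recognition) I read off, in linear time, whether $G-v$ or $\overline{G-v}$ is disconnected together with the corresponding set $\mathfrak{C}$ of (co-)components; working with the cotree avoids ever materialising the possibly dense complement $\overline{G}$, which is what keeps the complement case within $O(|V|+|E|)$. Note that $G-v$ being a cograph implies via Thm.\ \ref{thm:CharCograph} that $G-v$ or $\overline{G-v}$ is disconnected, so (B1) holds and $\Gamma(G,v)$ is well-defined; moreover $v$ then lies on every induced $P_4$ of $G$. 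Condition (B2), that $G[V(H)\cup\{v\}]$ is a cograph for every $H\in\mathfrak{C}$, is checked by running cograph recognition on each of these induced subgraphs; since their vertex sets are disjoint apart from $v$ and each edge of $G$ is inspected only a bounded number of times, the total cost over all of $\mathfrak{C}$ is $O(|V|+|E|)$.

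The main obstacle is to verify (B3), that all edges of $\Gamma(G,v)$ are incident to a common vertex, without ever building $\Gamma(G,v)$ explicitly: a naive construction would examine all pairs of components and cost quadratic time. The key observation I would establish is a local description of the edges of $\Gamma(G,v)$. Assume, w.l.o.g.\ (via Lemma \ref{lem:complement}), that $G-v$ is disconnected, so each $H\in\mathfrak{C}$ is connected and contains no induced $P_4$ (by (B2)); then any induced $P_4$ of $G[V(H)\cup V(H')\cup\{v\}]$ must pass through $v$ and use exactly one vertex of one component and two of the other. A short analysis shows that, setting $a(H)=1$ iff $v$ has a neighbour in $H$ and $b(H)=1$ iff $v$ has both a neighbour and a non-neighbour in $H$, one has $\{H,H'\}\in E(\Gamma(G,v))$ iff $(a(H)\wedge b(H'))\vee(a(H')\wedge b(H))$. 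Hence, with $A=\{H:a(H)=1\}\supseteq B=\{H:b(H)=1\}$, the graph $\Gamma(G,v)$ is the join of a clique on $B$ with an independent set on $A\setminus B$, together with isolated vertices; under (B2) and $G$ non-cograph one has $|B|\ge 1$. Its edge set forms a star, i.e.\ (B3) holds, precisely when $|B|=1$, or $|B|=2$ and $A=B$. The flags $a(H),b(H)$ are computed in one scan of the neighbourhood of $v$ together with the component labels, so the whole test runs in $O(|V|+|E|)$; by Lemma \ref{lem:complement} and Cor.\ \ref{cor:Gamma} the symmetric complement case is handled identically on the co-components read from the cotree.

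Putting the pieces together, $G$ is a pseudo-cograph iff it is a cograph or some candidate $v$ passes (B1)--(B3), and the correctness of this equivalence is exactly Thm.\ \ref{thm:CharPsG} combined with the completeness of the candidate set from Lemma \ref{lem:v-in-allP4}. In the affirmative case the successful $v$, together with the star-centre component of $\Gamma(G,v)$ (the unique element of $B$, or either element of $B$ when $A=B$ and $|B|=2$), yields via Lemma \ref{lem:star-center-gamma} explicit subgraphs $G_1,G_2$ with $G$ a $(v,G_1,G_2)$-pseudo-cograph; their cotrees are obtained from the cotree of $G-v$. Finally, feeding $v,G_1,G_2$ into Alg.\ \ref{alg:construct-N} produces a labeled level-1 network explaining $G$ in $O(|V|+|E|)$ time by Lemma \ref{lem:construct-network}. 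As only a constant number of candidates are processed and each step above runs in linear time, the overall running time is $O(|V|+|E|)$.
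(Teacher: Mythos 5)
Your proof is correct, and it reaches the same overall architecture as the paper's (handle cographs separately; otherwise restrict the candidate $v$ to the four vertices of one induced $P_4$ via Obs.~\ref{obs:G-v-Cograph} and Lemma~\ref{lem:v-in-allP4}; finish with Alg.~\ref{alg:construct-N} and Lemma~\ref{lem:construct-network}), but the core verification step for a fixed candidate $v$ is genuinely different. The paper's Alg.~\ref{alg:pseudoCograph} sidesteps $\Gamma(G,v)$ entirely: it uses the witness $P_4$ a second time to pin down the unique component $H'$ of the disconnected graph in $\{G-v,\overline{G-v}\}$ that contains two of its vertices --- which by Lemma~\ref{lem:star-center-gamma} is the only possible star center --- and then simply tests whether $G[V(H')\cup\{v\}]$ and $G[V\setminus V(H')]$ are cographs, i.e.\ verifies condition (C2) of Thm.~\ref{thm:CharPsG} directly with two cograph-recognition calls. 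You instead verify (B1)--(B3), and your contribution is a structural description of $\Gamma(G,v)$ that the paper never states: under (B2) and connectedness of the components, $\{H,H'\}\in E(\Gamma(G,v))$ iff $\bigl(a(H)\wedge b(H')\bigr)\vee\bigl(a(H')\wedge b(H)\bigr)$, so that $\Gamma(G,v)$ is the join of a clique on $B$ with an independent set on $A\setminus B$ plus isolated vertices, and (B3) reduces to $|B|=1$, or $|B|=2$ with $A=B$. This flag-based test is correct (your reduction of ``there is an edge of $H$ with exactly one endpoint adjacent to $v$'' to ``$v$ has both a neighbour and a non-neighbour in $H$'' is valid because each $H$ is connected) and runs in linear time, so both routes meet the stated bound. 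The paper's route is shorter and needs no new lemma; yours yields, as a by-product, an explicit and reusable description of $\Gamma(G,v)$ that sharpens Cor.~\ref{cor:Gamma}.
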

\begin{proof}
	To show that pseudo-cographs $G=(V,E)$ can be recognized in $O(|V|+|E|)$ time we
	use Alg.\ \ref{alg:pseudoCograph}. We show first that Alg.\ \ref{alg:pseudoCograph}
	correctly verifies that $G$ is a pseudo-cograph or not. Let $G=(V,E)$ be an
	arbitrary graph. If $|V|\leq 2$, then Line \ref{alg:true} ensures that
	\texttt{true} is correctly returned and the algorithm stops. Suppose that $|V|\geq
	3$. 
	
	We first check in Line \ref{alg:if-cograph-1} if $G$ is a cograph or not. Assume
	that $G$ is a cograph. By Lemma \ref{lem:NONunique-cograph}, $G$ is a
	$(v,G_1,G_2)$-pseudo-cograph for every $v\in V(G)$ and all graphs $G_1$ and $G_2$
	that satisfy $G_1 = G[V_1]$ and $G_2=G[V_2]$ with $V_i \coloneqq
	\{v\}\cup\left(\bigcup_{H\in \mathfrak{C}_i} V(H)\right)$, $1\leq i\leq 2$ for an
	arbitrary bipartition $\{\mathfrak{C}_1, \mathfrak{C}_2\}$ of the connected
	components in the disconnected graph in $\{G-v,\overline{G-v}\}$. The latter task
	is precisely what is computed in Line \ref{alg:if-cograph-1} to
	\ref{alg:vG1G2-CoG}. 
	
	Assume now that $G$ is not a cograph. In this case, $G$ contains an induced $P_4$,
	called here $P$, and we put $U\coloneqq V(P)$ in Line \ref{alg:defU}. By Lemma
	\ref{lem:v-in-allP4}, if $G$ is a $(v,G_1,G_2)$-pseudo-cograph then $v$ must be
	located on all induced $P_4$s of $G$. Hence, it suffices to consider only the
	vertices in $U$ to verify that $G$ is a $(v,G_1,G_2)$-pseudo-cograph. If for all
	$v\in U$ it holds that both $G-v$ and $\overline{G-v}$ are connected, then Thm.\
	\ref{thm:CharCograph} implies that $G-v$ cannot be a cograph for all such $v\in U$
	and thus, Obs.\ \ref{obs:G-v-Cograph} implies that $G$ cannot be a pseudo-cograph.
	In this case, Line \ref{alg:CC}-\ref{alg:end-if-disc} are never executed and
	\texttt{false} is correctly returned in Line \ref{alg:return-noPsC}. 
	
	Assume that $G-v$ or $\overline{G-v}$ is disconnected for some $v\in U$ (Line
	\ref{alg:if-disc}). We then consider the connected components in $\mathfrak{C}$ of
	the disconnected graph in $\{G-v,\overline{G-v}\}$ in Line \ref{alg:CC}. Since $v$
	is located on $P$ and $P$ is self-complementary it follows that at least one edge of 
	$P$ must be contained in a
	connected component of the disconnected graph in $\{G-v,\overline{G-v}\}$. Hence,
	there is a connected component $H'\in \mathfrak{C}$ that contains two vertices of
	$P$. Note, that we can assume w.l.o.g.\ that $G-v$ is disconnected,
	otherwise we compute its complement and proceed with $\overline{G-v}$. Hence, in
	case $v$ is adjacent in $G$ to some vertex in another connected component $H''\in
	\mathfrak{C}\setminus \{H'\}$ then $G[V(H')\cup V(H'')\cup\{v\}]$ must contain an
	induced $P_4$. Speaking in terms of the graph $\Gamma(G,v)$ and referring to Lemma
	\ref{lem:star-center-gamma}, if $G$ is a pseudo-cograph, then the latter implies
	that $H'$ must be a center of the unique star in $\Gamma(G,v)$. In other words,
	$H'$ is a candidate such that $G[V(H')\cup \{v\}]$ and $G[V(G)\setminus V(H')]$ are
	cographs (cf.\ Thm.\ \ref{thm:CharPsG}(C2)) which is checked in Line
	\ref{alg:If-coG1}. 	
	
	To summarize, if $G$ passes the latter tests, then (F1) holds by construction of
	$G_1$ and $G_2$ in Line \ref{alg:induced}, (F2) holds because of Line
	\ref{alg:If-coG1} and (F3) is satisfied because of Line \ref{alg:if-disc} and the
	construction of $G_1$ and $G_2$ in Line \ref{alg:induced}. Hence, $G$ is a
	$(v,G_1,G_2)$-pseudo-cograph and $(v,G_1,G_2)$ is correctly returned in Line
	\ref{alg:retG1G2-2}. If any of these tests fail for all $v\in U$, \texttt{false} is
	correctly returned in Line \ref{alg:return-noPsC}. 
	
	We proceed now with determining the runtime of Alg.\
	\ref{alg:pseudoCograph}. We emphasize first that an explicit construction of
	the complement of $G-v$ is not needed (in case $G-v$ is connected) to
	determine the connected components of $\overline{G-v}$, see
	\cite{ITO1998209}. In particular, Line \ref{alg:CC-coG}, \ref{alg:if-disc}
	and \ref{alg:CC} can be performed in $O(|V|+|E|)$ time using breadth first
	search.  	
	Line	\ref{alg:true} takes constant time. Checking whether $G=(V,E)$ is a cograph or not
	in Line \ref{alg:if-cograph-1} can be done $O(|V|+|E|)$ time with the algorithm of
	Corneil et al.\ \cite{Corneil:85}. This incremental algorithm constructs the cotree
	of a subgraph $G[V']$ of $G$ starting from a single vertex and then increasing $V'$
	by one vertex at each step of the algorithm. If $G$ is not a cograph, then at some
	point there is a set $V'$ and a chosen vertex $w$ such that $G[V']$ is a cograph
	and $G(V'\cup \{w\})$ contains an induced $P_4$. Based on these findings, Capelle
	et al.\ \cite{capelle1994cograph} showed that one can find an induced $P_4$
	containing $w$ in $O(\deg_G(w)) \subseteq O(|V|)$ time. Hence, Line
	\ref{alg:if-cograph-1} and \ref{alg:defU} can be done in $O(|V|+|E|)$. Moreover,
	the subtasks in Line \ref{alg:CC-coG} to \ref{alg:vG1G2-CoG} can be done within the
	same time complexity. Since $|U|=4$ the \emph{for}-loop in Line \ref{alg:for-v-U}
	runs at most four times	 and the computation in Line \ref{alg:if-disc} can be done
	in $O(|V|+|E|)$ time. Within the same time-complexity we can determine with a
	simple breadth-first search the set $\mathfrak{C}$ and the connected component $H'$
	that contains two vertices of the respective induced $P_4$ without the explicit
		construction of the complement \cite{ITO1998209}. The induced subgraphs
	in Line \ref{alg:induced} can be computed in $O(|V|+|E|)$ time. We finally check
	two times in Line \ref{alg:If-coG1} whether $G_1$ and $G_2$ are cographs in
	$O(|V|+|E|)$ time. Since the \emph{for}-loop in Line \ref{alg:for-v-U} runs runs at
	most four times, the overall time-complexity of Alg.\ \ref{alg:pseudoCograph} is in
	$O(|V|+|E|)$. By Lemma \ref{lem:construct-network}, a labeled level-1 network that
	explains $G$ can be constructed in $O(|V|+|E|)$ provided that $G$ is a
	pseudo-cograph.
\end{proof}

\begin{algorithm}[tb] 
\small 
  \caption{\texttt{Construction of Level-1 network that explains Pseudo-Cograph}}
\label{alg:construct-N}
\begin{algorithmic}[1]
  \Require  Pseudo-cograph $G$ given as $(v,G_1,G_2)$-pseudo-cograph in case $|V(G)|\geq 4$.
  \Ensure 	Labeled level-1 network $(N,t)$ that explains $G$
	 \If{$G$ is a cograph} \Return cotree $(T,t)$ \label{alg:cotree}
	 \Else \Comment{ $G$ is a  $(v,G_1,G_2)$-pseudo-cograph}
  	\State Compute discriminating cotree $(T_1,t_1)$ and $(T_2,t_2)$ for $G_1$ and $G_2$, respectively \label{alg:cotrees-1}
		\State add a new vertex $\eta_i$ along the edge $(\textrm{parent}_{T_i}(v),v)$ in $T_i$, $i\in\{1,2\}$   \label{alg:construct1}
		\State \multiline{ 																							%
									construct $(N,t)$ by first joining $T_1$ and $T_2$ under a common 												\label{alg:construct2}
									root $\rho_N$ which is adjacent to $\rho_{T_1}$ and $\rho_{T_2}$ \\ 
									and identify vertices $\eta_1$ and $\eta_2$ in both trees and
									remove one copy of the leaf $v$ and its incident edge
									}%
		\State \Return  $(N,t)$			 \label{alg:return-N}
\EndIf	
	\end{algorithmic}
\end{algorithm}

\begin{algorithm}[tb]
\small 
  \caption{\texttt{Pseudo-Cograph Recognition}} 
\label{alg:pseudoCograph}
\begin{algorithmic}[1]
  \Require Graph $G=(V,E)$
  \Ensure \multiline{%
  					returns \texttt{true} if $G$ is a pseudo-cograph with $|V(G)|\leq 2$; \\
  					returns $(v,G_1,G_2)$ if $G$ is a $(v,G_1,G_2)$-pseudo-cograph with $|V(G)|\geq 3$;\\ 
  					returns \texttt{false} in all other cases
  					}
  	\If{$|V(G)|\leq 2$}		\Return \texttt{true} \EndIf \label{alg:true}		
  	\If{$G$ is a cograph}  \label{alg:if-cograph-1}
				\State Let $\{H_1,\dots, H_k\}$ be the set of connected components of the disconnected graph in $\{G-v,\overline{G-v}\}$ \label{alg:CC-coG}	  
				\State $V_1\gets \{v\}\cup V(H_1)$
				\State $V_2\gets \{v\}\cup\left(\bigcup_{i=2}^k V(H_i)\right)$
				\State \Return $(v,G[V_1],G[V_2])$  \label{alg:vG1G2-CoG}
	  \EndIf
	  	\State $U\gets V(P_4)$ for some induced  $P_4$  in $G$  \label{alg:defU}
		\For{all vertices $v\in U$}  \label{alg:for-v-U}
					\If{ one of $G-v$ or $\overline{G-v}$ is disconnected} \label{alg:if-disc}
							\State Let $\mathfrak{C}$ 
										 be the set of connected components of the disconnected graph in $\{G-v,\overline{G-v}\}$ \label{alg:CC}
							\State Let $H'\in \mathfrak{C}$ be the connected that contains two vertices of the $P_4$. 
							\State $G_1\gets G[V(H')\cup \{v\}]$ and $G_2\gets G[V(G)\setminus V(H')]$ \label{alg:induced}
              			\If{$G_1$ and $G_2$ are cographs} \label{alg:If-coG1}
											\State \Return $(v,G_1,G_2)$ \label{alg:retG1G2-2}
										\EndIf 														
							\EndIf \label{alg:end-if-disc}
			\EndFor	 \label{alg:end-for-i}
	\State \Return \texttt{false} \label{alg:return-noPsC}
\end{algorithmic}
\end{algorithm}

\begin{algorithm}[tb] 
\small 
  \caption{\texttt{Polar-Cat Recognition and Construction of Explaining Level-1 network }}
\label{alg:elementary}
\begin{algorithmic}[1]
  \Require Graph $G=(V,E)$
  \Ensure \multiline{%
  					Strong quasi-discriminating elementary network $(N,t)$ that explains $G$, if $G$ is a Polar-Cat and, \\ otherwise, \texttt{false}
  					}%
	 \If{$G$ is a cograph} \Return \texttt{false} \label{alg:CheckCograph}
  \EndIf
  	\State Find induced path $P$ on four vertices in $G$ \label{alg:FindP4}
		\For{all vertices $v\in V(P)$}  \label{alg:for-all-v-P4}
		  \If{one of $G-v$ or $\overline{G-v}$ is disconnected with set of connected components $\mathfrak{C}$}\label{G-v-CC}
				\If{$|\mathfrak{C}| \neq 2$} \label{G-v-CC-4}
					\State \textbf{go-to} Line \ref{alg:for-all-v-P4}
			 \EndIf
			\State Let $\mathfrak C = \{H_1, H_2\}$ \label{alg:for-all-part}
				\State	$G_1\gets G[V(H_1) \cup v]$ \label{G1}
				\State $G_2\gets G[V(H_2) \cup v]$ \label{G2}
				\If{$G_1$ and $G_2$ are both connected (resp., disconnected) cographs} \label{alg:def1}
					\If{$G-v$ is the disjoint union (resp., join) of $G_1-v$ and $G_2-v$} \label{alg:ddd}
						\State Compute discriminating cotree $(T_1,t_1)$ and $(T_2,t_2)$ for $G_1$ and $G_2$, respectively \label{alg:cotrees}
						\If{both $T_1$ and $T_2$ are caterpillars in which $v$ is part of a cherry} \label{alg:caterpillar}
								\State \Return $(N,t)$ as computed with  Alg.\ \ref{alg:construct-N} with input $(v,G_1,G_2)$			 \label{alg:return2}
					  \EndIf
					\EndIf
				\EndIf			
			\EndIf				
		\EndFor
	\State \Return \texttt{false} \label{alg:return3}
\end{algorithmic}
\end{algorithm}

\begin{theorem}\label{thm:polar-recognition}
	Polar-cats $G=(V,E)$ can be recognized in $O(|V|+|E|)$ time. In the affirmative
	case, one can determine a vertex $v$ and subgraphs $G_1,G_2\subset G$ such that $G$
	is a $(v,G_1,G_2)$-polar-cat and an (elementary) strong quasi-discriminating
	level-1 network that explains $G$ in $O(|V|+|E|)$ time. 
\end{theorem}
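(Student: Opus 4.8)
The plan is to prove correctness and then the runtime of Alg.~\ref{alg:elementary} separately. For correctness, I would argue that the algorithm faithfully tests the definition of a polar-cat as refined by the results of Section~\ref{sec:PolarCat}. Since every polar-cat is primitive and hence never a cograph (Cor.~\ref{cor:CographNotPC}), the cograph test in Line~\ref{alg:CheckCograph} correctly rejects all cographs; thus whenever the algorithm proceeds past it, $G$ is not a cograph and so contains an induced $P_4$, which justifies Line~\ref{alg:FindP4}. By Lemma~\ref{lem:v-in-allP4}, if $G$ is a $(v,G_1,G_2)$-polar-cat then $v$ must lie on every induced $P_4$ of $G$; consequently it suffices to range over the four vertices of a single chosen $P_4$, as the \emph{for}-loop in Line~\ref{alg:for-all-v-P4} does, and no valid choice of $v$ is missed.

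Next I would verify that, for each candidate $v$, the remaining tests capture exactly (F1)--(F3) together with the polarizing and cat conditions. By Lemma~\ref{lem:PolarCat-complement-CC}, the disconnected graph in $\{G-v,\overline{G-v}\}$ has exactly two connected components whenever $G$ is a polar-cat, which is checked in Lines~\ref{G-v-CC}--\ref{G-v-CC-4}; setting $G_1,G_2$ as in Lines~\ref{G1}--\ref{G2} then forces (F1). The test in Line~\ref{alg:def1} establishes (F2) and records whether $G_1,G_2$ are both connected or both disconnected, and combined with the disjoint-union/join test in Line~\ref{alg:ddd} it is precisely the polarizing condition and simultaneously yields (F3). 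Finally, computing the discriminating cotrees of $G_1,G_2$ and confirming in Line~\ref{alg:caterpillar} that both are caterpillars in which $v$ is part of a cherry certifies that $G$ is cat. Hence $G$ is accepted if and only if it is a $(v,G_1,G_2)$-polar-cat. In the affirmative case, the network returned in Line~\ref{alg:return2} is $N(v,G_1,G_2)$ from Def.~\ref{def:prop:PsC-l1N}, which by Prop.~\ref{prop:PsC-l1N} explains $G$; the polarizing property guarantees $t(\rho_N)\neq t_1(\rho_{T_1})=t_2(\rho_{T_2})$, so (as in the proof of Lemma~\ref{lem:StrongElementary}) it is a strong quasi-discriminating elementary network.

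For the runtime I would account for each step and use that the loop executes at most four times. The cograph recognition and the extraction of an induced $P_4$ cost $O(|V|+|E|)$ by the algorithms of Corneil et al.~\cite{Corneil:85} and Capelle et al.~\cite{capelle1994cograph}, exactly as in the proof of Thm.~\ref{thm:PsC-recognition}. Within each iteration, testing connectivity of $G-v$ and $\overline{G-v}$, extracting the components $\mathfrak{C}$ by breadth-first search, forming $G_1,G_2$, and testing whether they are cographs are all linear; the discriminating cotrees are computed in $O(|V|+|E|)$, and checking the caterpillar-plus-cherry property reduces to a single traversal of each cotree. The closing network construction is $O(|V|+|E|)$ by Lemma~\ref{lem:construct-network}. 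As a constant number of linear-time passes are performed, the total runtime is $O(|V|+|E|)$.

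The main obstacle will be arguing rigorously that the polarizing condition is detected correctly by the \emph{conjunction} of the connectivity test on $G_1,G_2$ and the disjoint-union/join test, and that whenever $G$ is genuinely a polar-cat some vertex of the chosen $P_4$ survives \emph{all} of the tests. This is precisely where Lemma~\ref{lem:v-in-allP4} (to confine $v$ to the $P_4$) and Lemma~\ref{lem:PolarCat-complement-CC} (to guarantee exactly two components for the true $v$) must be invoked, ensuring that the early rejections in Lines~\ref{G-v-CC-4}, \ref{alg:def1}, and \ref{alg:ddd} never discard a correct witness.
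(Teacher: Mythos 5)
Your proposal is correct and follows essentially the same route as the paper's proof: the same algorithm, the same appeals to Cor.~\ref{cor:CographNotPC}, Lemma~\ref{lem:v-in-allP4}, Obs.~\ref{obs:G-v-Cograph}, Lemma~\ref{lem:PolarCat-complement-CC}, and the construction from Def.~\ref{def:prop:PsC-l1N} via Prop.~\ref{prop:PsC-l1N} and Lemma~\ref{lem:StrongElementary}, together with the same linear-time accounting. The one point you flag as an obstacle---that the forced bipartition into the two components never discards a valid witness---is exactly what the paper settles by combining Lemma~\ref{lem:PolarCat-complement-CC} with Lemma~\ref{lem:star-center-gamma}, so your outline is complete once that step is written out.
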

\begin{proof}
	To show that polar-cats can be recognized in $O(|V|+|E|)$ time we use Alg.\
	\ref{alg:elementary}. We show first that Alg.\ \ref{alg:elementary} correctly
	verifies that $G$ is a polar-cat or not. Let $G=(V,E)$ be an arbitrary graph. 

	In Algorithm \ref{alg:elementary}, we first check if $G$ is a cograph (Line
	\ref{alg:CheckCograph}). If $G$ is a cograph, then Cor.\ \ref{cor:CographNotPC}
	implies that $G$ is not a polar-cat and \emph{false} is returned. We then proceed
	to find an induced $P_4$ in Line \ref{alg:FindP4}. Since $G$ is not a cograph, at
	least one such path $P$ must exists. By Lemma \ref{lem:v-in-allP4}, possible
	candidates $v$ for $G$ being a $(v,G_1,G_2)$-polar-cat must be located on $P$.
	Hence, it suffices to check for all $v\in V(P)$ whether $G$ is a
	$(v,G_1,G_2)$-polar-cat (Line \ref{alg:for-all-v-P4} to \ref{alg:caterpillar})
	until we found one, in which case $(N,t)$ is returned (Line \ref{alg:return2}) or
	none of these choices yields polar-cat, in which case \emph{false} is returned
	(Line \ref{alg:return3}). We check in Line \ref{G-v-CC} whether $G-v$ or
	$\overline{G-v}$ is disconnected. If $G-v$ and $\overline{G-v}$ are connected, then
	$G-v$ cannot be a cograph and thus, Obs.\ \ref{obs:G-v-Cograph} implies that $G$
	cannot be a pseudo-cograph. Let $\mathfrak{C}$ be the set of connected components
	of the disconnected graph $G-v$ or $\overline{G-v}$. If $|\mathfrak{C}| \neq 2$
	(Line \ref{G-v-CC-4}), then Lemma \ref{lem:PolarCat-complement-CC} implies that $G$
	cannot be a $(v,G_1,G_2)$-polar-cat and we go back to Line \ref{alg:for-all-v-P4}.
	If $G$ is a pseudo-cograph, then Lemma \ref{lem:star-center-gamma} implies that
	there must be at least one partition $\{\mathfrak{C}_1,\mathfrak{C}_2\}$ of
	$\mathfrak{C}$ such that $G$ is a $(v,G_1,G_2)$-pseudo cograph with $G_1\gets
	G[\cup_{H\in \mathfrak{C}_1}V(H)\cup \{v\}]$ $G_2\gets G[\cup_{H\in
	\mathfrak{C}_2}V(H)\cup \{v\}]$. Since $|\mathfrak{C}|=2$, there is only one such
	partition, that is, $\{\{H_1\},\{H_2\}\}$. Finally, we check in Line \ref{alg:def1}
	to \ref{alg:caterpillar} simply whether the definition of polar-cats is satisfied
	for $(v,G_1,G_2)$. If this is the case, then we compute in a level-1 network
	$(N,t)$ with Alg.\ \ref{alg:construct-N} with input $(v,G_1,G_2)$. Note, $(N,t)$ is
	precisely the strong elementary quasi-discriminating network as constructed in the
	proof of the \emph{only-if}-direction of Thm.\ \ref{thm:StrongElementary}. In summary,
	Alg.\ \ref{alg:elementary} is correct.

	We proceed now with determining the runtime of Alg.\ \ref{alg:elementary}. Checking
	whether $G=(V,E)$ is a cograph or not can be done $O(|V|+|E|)$ time with the
	algorithm of Corneil et al.\ \cite{Corneil:85}. As argued in the proof of Thm.\
	\ref{thm:PsC-recognition}, finding an induced $P_4$ containing $w$ can be achieved
	in $O(|V|)$ time \cite{capelle1994cograph}. Checking if $G-v$ or $\overline{G-v}$
	is disconnected and computing the respective set of connected components
	$\mathfrak{C}$ can be done in $O(|V|+|E|)$ time. Within the same time-complexity we
	can compute $G_1$ and $G_2$ in Line \ref{G1} and \ref{G2} and check if both $G_1$
	and $G_2$ are connected (resp., disconnected) cographs. While doing the latter
	task, we also compute the discriminating cotree $(T_1,t_1)$ and $(T_2,t_2)$ for
	$G_1$ and $G_2$, respectively in Line \ref{alg:cotrees}. We then verify in Line
	\ref{alg:caterpillar} in $O(L(T_1)) = O(|V|)$ $O(L(T_2)) = O(|V|)$ time, if $T_1$
	and $T_2$ are caterpillars in which $v$ is part of a cherry. If this is the case,
	we compute with Alg.\ \ref{alg:construct-N} the network $(N,t)$ in $O(|V|+|E|)$
	time. Hence, the overall runtime of Alg.\ \ref{alg:elementary} is in $O(|V|+|E|)$. 
\end{proof}

\begin{algorithm}[tbp] 
\small 
  \caption{\texttt{Check if $G\in \protect\PrimeCat$ and construct Level-1 network that explains $G$}}
\label{alg:general}
\begin{algorithmic}[1]
  \Require Graph $G=(V,E)$
  \Ensure \multiline{%
  					Labeled level-1 network $(N,t)$ that explains $G$, if $G\in \PrimeCat$
  					and, \\ otherwise, statement ``\emph{$G$ cannot be explained by a level-1
  					network}''
  					}%
	\State  Compute MDT $(\MDT_G,\tau_G)$ of $G$ \label{alg:MDT}
	\State $(N,t)\gets (\MDT_G,\tau_G)$ \label{alg:init}
	\State $\mathcal{P}\gets$ set of prime vertices in $(\MDT_G,\tau_G)$
	\For{all $v\in \mathcal{P}$}
		\State $M\gets$ prime modules of $G$ with $M=L(\MDT_G(v))$. \label{alg:prime-M}
		\If{$G[M]/\Mmax(G[M])$ is not a polar-cat} \label{alg:if-quotient}
			\State \Return  \emph{$G$ cannot be explained by a level-1 network}
		\Else	
	   \State\multiline{%
	   			 $(N,t) \gets$ level-1 network obtained from $(N,t)$ by
			    replacing $v$ by strong quasi-discriminating elementary network \\
			    \hspace{1cm} $(N_v,t_v)$ 
				 that explains $G[M]/\Mmax(G[M])$ according to Def.\ \ref{def:pvr}
			    }
		\EndIf	    
	\EndFor	    
	\State \Return $(N,t)$
\end{algorithmic}
\end{algorithm}

\begin{theorem}\label{thm:AlgGeneral}
	It can be verified in $O(|V|+|E|)$ time if a given graph $G=(V,E)$ can
	be explained by a labeled level-1 network and, in the affirmative case, 
	a strong labeled level-1 network $(N,t)$  that explains $G$  can
	be constructed within the same time complexity. 
\end{theorem}
\begin{proof}
We use Alg.\ \ref{alg:general}. This algorithm checks for all prime modules $M$ of
$G$ if $G[M]/\Mmax(G[M])$ is a polar-cat. If the latter is violated for some prime
module $M$ then $G\notin \PrimeCat$ and Thm.\ \ref{thm:CharprimeCat} implies that $G$
cannot be explained by a labeled level-1 network. Moreover, all prime modules $M$ of
$G$ correspond to some vertex $v$ in the MDT $(\MDT_G,\tau_G)$ of $G$. All such
vertices $v$ are replaced by the respective elementary strong networks $(N_v,t_v)$
that explains $G[M]/\Mmax(G[M])$. According to Def.\ \ref{def:pvr}, we obtain the pvr
graph $(N, t)$ and Prop.\ \ref{prop:well-defPVR} implies that $(N, t)$ is a strong
labeled level-1 network that explains $G$. 

   To obtain a bound on the running time, we first note that the MDT
$(\MDT_G,\tau_G)$ for $G=(V,E)$ and thus, the initial graph $(N,t)$ in Line
\ref{alg:MDT} and \ref{alg:init} can be computed in $O(|V|+ |E|)$ time \cite{HP:10}.
Within the same time complexity we obtain the set $\mathcal{P}$ of prime vertices in
$(\MDT_G,\tau_G)$. Let $n_v$ denote the children of a vertex $v$ in $\MDT_G$. We then
compute for every prime vertex $v$ and its corresponding module $M$ (cf.\ Line
\ref{alg:prime-M}) the quotient $G[M]/\Mmax(G[M])$ in Line \ref{alg:if-quotient}.
As shown in \cite[Lemma 2]{DUCOFFE2021201}, \emph{all} quotients
 $G[M]/\Mmax(G[M])$ can be computed in total $O (|V| + |E|)$ time 
 whenever $(\MDT_G,\tau_G)$ is given.
The total effort is therefore in $O(|V|+|E|)$.
\end{proof}

\section{Outlook and Summary}
\label{sec:outlook}

We characterized the class $\PrimeCat$ of graphs that can be explained by labeled
level-1 networks $(N,t)$. Subclasses of $\PrimeCat$ are the classes of cographs,
pseudo-cograph and polar-cats $\PolarCat$. It particular, a graph $G$ is contained in
$\PrimeCat$ if $H\in \PolarCat$ for each of its prime subgraphs $H$. We provided
several characterizations for $\PrimeCat, \PolarCat$ and $\textsc{PseudoCograph}$
and designed linear-time algorithms for their recognition and the construction of
level-1 networks to explain them. These new graph classes open many further
directions for future research. 
		
Many types of graphs are defined by the kind of subgraphs they are not permitted to
contain, e.g.\ forests, cographs \cite{Corneil:81} or $P_4$-sparse graphs
\cite{Hoang:85,Jamison:89,Jamison:92}. The property of being in one of the classes
$\PrimeCat$ and $\textsc{PseudoCograph}$ is hereditary. It is therefore natural to
ask if these classes can be solely characterized in terms of a (finite) collection of
forbidden induced subgraphs, see Fig.\ \ref{fig:forb-primitive} for some examples.
Lemma \ref{lem:star-center-gamma} already provides strong structural hints for such
forbidden subgraphs. Moreover, are there characterizations of graphs $G\in \PrimeCat$
in terms of their diameter and possibly additional properties?
	
In many applications, constructions of labeled trees that explain a given graph $G$
are based on collection triples and clusters that are encoded by $G$ and that must be
``displayed'' by any tree that also explains $G$
\cite{BD98,HSW:16,HHH+13,Hellmuth:19}. Hence, one may ask, if there is a similar way
to obtain the information of clusters (cf.\ \cite[Sec.\
8.5]{huson_rupp_scornavacca_2010}, \cite{HSS:22cluster} or \cite{gambette2017challenge}) or trinets
\cite{huber2010practical,huber2017reconstructing,Huber2012EncodingAC} directly from
$G$ and to construct a level-1 network that explains $G$ based on this information
without the explicit construction of the quotients $G[M]/\Mmax(G[M])$ of prime
modules $M$ of $G$.
		
Many NP-hard problems become easy for graphs for which the particular solution can be
constructed efficiently on the quotients $G[M]/\Mmax(G[M])$ of prime modules $M$
\cite{VHSH:20,COURCELLE200077,bonomo2014minimum,BRANDSTADT2003521,Giakoumakis1997OnPG}.
Since $G[M]/\Mmax(G[M])$ must be a polar-cat, that is, a graph with strong structural
constraints, it is of interest to understand in more detail if the class $\PrimeCat$
admits also efficient solutions for general intractable problems. Moreover, what can
be said about invariants of pseudo-cographs or graphs in $\PrimeCat$ as the path-width
and tree-width \cite{BM:93,ADLER2019126}, the chromatic number
\cite{Corneil:81,VHSH:20}, the clique-, stability- or scattering-number
\cite{Corneil:81,Giakoumakis1997OnPG}, among many others.
				
In our contribution, we considered graphs that can be explained by level-1 networks.
In applications, such a graph $G$ reflects pairwise relationship between objects
that are represented as vertices in $G$. In many cases, one is interested in a single
tree or level-1 network that explains not only one but many different such
relationships at once. This leads to the more general concept of 2-structures (or
equivalently symbolic ultrametrics or multi-edge colored graphs)
\cite{ER1:90,ER1:90,BD98,HSW:16} or 3-way maps
\cite{huber2019three,grunewald2018reconstructing}. The latter type of structures are
all defined in terms of (rooted or unrooted) trees and we suppose that our current
results can be used to generalize these structures to (rooted or unrooted) level-1
networks. Moreover, it is of interest to understand to which extent our results can
be generalized to  level-$k$ networks with $k>1$ or other types of
phylogenetic networks \cite{huson_rupp_scornavacca_2010}.

\section*{Acknowledgments}
We thank the anonymous referees for their valuable comments and David Schaller
 for stimulating discussions about level-1 networks. Moreover,
we thank the anonymous designer of the symbol $\PolarCat$ which we downloaded
from \cite{CatData}.

\bibliographystyle{plain}      
\bibliography{biblio}   

\end{document}